\theoremstyle{plain}
\newtheorem{theorem}{Theorem}[section]
\newtheorem{corollary}[theorem]{Corollary}
\newtheorem{lemma}[theorem]{Lemma}
\newtheorem{proposition}[theorem]{Proposition}
\theoremstyle{definition}
\theoremstyle{remark}
\begin{document}

\title{Hardy Spaces over Half-strip Domains}
\author{Deng Guantie\thanks{
      E-mail: denggt@bnu.edu.cn, School of Mathematical Sciences, 
      Beijing Normal University, Beijing, China.} 
    \ and Liu Rong\thanks{
      Corresponding author, E-mail: rong.liu@mail.bnu.edu.cn
      School of Mathematical Sciences, 
      Beijing Normal University, Beijing, China.}
}

\maketitle

\begin{abstract}
  We define Hardy spaces $H^p(\Omega_\pm)$ on half-strip domain~$\Omega_+$ 
  and $\Omega_-= \mathbb{C}\setminus\overline{\Omega_+}$, where $0<p<\infty$, 
  and prove that functions in $H^p(\Omega_\pm)$ has non-tangential boundary
  limit a.e.\@ on $\Gamma$, the common boundary of $\Omega_\pm$. 
  We then prove that Cauchy integral of functions in $L^p(\Gamma)$ are in 
  $H^p(\Omega_\pm)$, where $1<p<\infty$, that is, Cauchy transform is bounded.
  Besides, if $1\leqslant p<\infty$, then $H^p(\Omega_\pm)$ functions are 
  the Cauchy integral of their non-tangential boundary limits. 
  We also establish an isomorphism between $H^p(\Omega_\pm)$ and 
  $H^p(\mathbb{C}_\pm)$, the classical Hardy spaces over upper and lower half 
  complex planes.
\end{abstract}

{\bf Keywords:}
  Hardy space, Half-strip domain, non-tangential boundary limit, 
  Cauchy integral representation

{\bf 2010 Mathematics Subject Classification:}
  Primary: 30H10, Secondary: 30E20, 30E25  

\section{Introduction}

Calder\'{o}n studied Cauchy integrals on Lipschitz curves in~\cite{Ca77}, 
and Coifman, Jones and Semmes provided two elementary proofs for boundedness 
on Cauchy transform on Lipschitz curves in~\cite{CJS89}. Kenig gave two 
equivalent definitions for weighted Hardy spaces over Lipschitz domains in 
his doctoral thesis~\cite{Ke80}, and Meyer and Coifman studied some basic 
properties of Hardy spaces over Lipschitz domains in~\cite{MC97}, in order to 
solve one of Calder\'{o}n's problem about generalized Hardy spaces. Let 
$\Gamma$ be a locally rectifiable Jordan curve, $\Omega_\pm$ be the two simply 
connected domains on two sides of $\Gamma$, and we could define two Hardy 
spaces $H^p(\Omega_\pm)$. Calder\'{o}n's problem states that
whether $L^p$ ($1<p<\infty$) functions on $\Gamma$ are sum of two functions 
in $H^p(\Omega_+)$ and $H^p(\Omega_-)$, respectively. However, Meyer and Coifman
only considered upright down boundary limit in their book. More general Hardy
space theories has been researched by Duren in~\cite{Du70} as well.

In our paper~\cite{DL171,DL172}, we adopt Meyer and Coifman's definitions of 
Hardy spaces over Lipschitz domains~$\Omega_\pm$, and proved the exsistence of 
non-tangential boundary limit of $H^p(\Omega_\pm)$ functions. The Cauchy
and ``Poisson'' representations of functions in $H^p(\Omega_\pm)$ 
($1\leqslant p<\infty$) are aslo proved. We offered a characterization of 
$L^p(\Gamma)$ ($1\leqslant p<\infty$) functions to be non-tangential boundaries
of $H^p(\Omega_\pm)$ functions. More importantly, we established an isomorhpism
between $H^p(\Omega_\pm)$ and $H^p(\mathbb{C}_\pm)$, the classical Hardy spaces 
over upper and lower half complex planes.

In this paper, we will change our attention to Hardy spaces over half-strip 
domains, which are still denoted as $\Omega_\pm$ and may be viewed as limit of 
Lipschitz domains, and will prove nearly all
results mentioned above by using similar method, although many adaptations 
must be made. Our definitions of Hardy spaces over half-strip domains are 
influenced by Vinnitskii's paper~\cite{Vi94}, in which proofs of some results 
below are sketched. Besides, as the boundary of half-strip domains are part of
straight lines, the boundedness of Cauchy transform are proved for all 
$1<p<\infty$ by utilizing theorems from $H^p(\mathbb{C_+})$. This is contrast
with the case when $\Omega_\pm$ are Lipschitz domains, where the boundedness 
of Cauchy transform is only proved for $p=2$. Thus, Calder\'{o}n's problem
mentioned above is solved if we consider half-strip domains. However, the 
``Poisson'' represention of functions in $H^p(\Omega_\pm)$ for 
$1\leqslant p<\infty$ are no longer valid in this case.

\section{Basic Definitions}

As usual, the complex plane is denoted as $\mathbb{C}$, and points $w$, $z$ 
on it are denoted as $w=u+\mathrm{i}v$ and $z=x+\mathrm{i}y$, where 
$u$, $v$, $x$, $y$ are in $\mathbb{R}$, the set of real numbers. 
For $s>0$ and $t\in\mathbb{R}$, define half-strip 
$D_{s,t}=\{u+\mathrm{i}v\colon |u|<s, v>t\}$, and its boundary 
\begin{align*}
  \Gamma_{s,t}
  &= \partial D_{s,t}
   = \Gamma_{s,t,1}\cup \Gamma_{s,t,2}\cup \Gamma_{s,t,3}         \\
  &= \{-s+\mathrm{i}v\colon v>t\}\cup \{u+\mathrm{i}t\colon |u|\leqslant s\}
     \cup \{s+\mathrm{i}v\colon v>t\},
\end{align*}
which is oriented in the way that $D_{s,t}$ is on the left side 
of $\Gamma_{s,t}$. Obviously, $D_{s_1,t}\subset D_{s_2,t}$ if $s_1<s_2$, 
and $D_{s,t_1}\subset D_{s,t_2}$ if $t_1>t_2$.

For $0<p\leqslant \infty$ and $F(w)$ defined on $\Gamma_{s,t}$, let 
\[m(s,t,F)
  =\left\{\!\!\begin{array}{ll}
      \bigg(\displaystyle\int_{\Gamma_{s,t}} |F(w)|^p |\mathrm{d}w|
        \bigg)^{\frac1p} & \text{for $0<p<\infty$},                      \\
      \sup\{|F(w)|\colon w\in\Gamma_{s,t}\} & \text{for $p=\infty$},
      \end{array}\right.\]
then Hardy space over the half-strip $D_{s,t}$ is defined as 
\[H^p(D_{s,t})= \{F \text{ is analytic on } D_{s,t}\colon
    \sup_{\substack{0<s_1<s,\\ t_1>t}} m(s_1,t_1,F)< \infty\},\]
and for $F(w)\in H^p(D_{s,t})$, we define the above supremum as 
$\lVert F\rVert_{H^p(D_{s,t})}$ which is called the ``$H^p(D_{s,t})$-norm''
of $F(w)$, while Hardy space over 
$\mathbb{C}\setminus \overline{D_{s,t}}$ is defined as 
\[H^p(\mathbb{C}\setminus \overline{D_{s,t}})
  = \{F \text{ is analytic on } \mathbb{C}\setminus \overline{D_{s,t}}\colon
    \sup_{s<s_1,\, t_1<t} m(s_1,t_1,F)< \infty\},\]
and for $F(w)\in H^p(\mathbb{C}\setminus \overline{D_{s,t}})$, 
its $H^p(\mathbb{C}\setminus \overline{D_{s,t}})$-norm is denoted as 
$\lVert F\rVert_{H^p(\mathbb{C}\setminus \overline{D_{s,t}})}$. Notice that,
the above two $H^p$-norms are really not norm if $0<p<1$, and we choose 
the word ``norm'' only for convenience.

In this paper, we mainly focus on the special cases of $H^p(D_{\sigma,0})$ 
and $H^p(\mathbb{C}\setminus \overline{D_{\sigma,0}})$, 
with $0<p\leqslant\infty$ and $\sigma>0$. We denote $D_{\sigma,0}$ 
as $\Omega_+$, and $\mathbb{C}\setminus \overline{D_{\sigma,0}}$ as $\Omega_-$, 
and their common boundary $\Gamma_{\sigma,0}= \Gamma_{\sigma,0,1}\cup 
  \Gamma_{\sigma,0,2}\cup \Gamma_{\sigma,0,3}$ is denoted as 
  $\Gamma= \Gamma_1\cup \Gamma_2\cup \Gamma_3$. 
It is easy to verify that $H^p(\Omega_\pm)$ are vector spaces equipped with 
norm $\lVert \cdot\rVert_{H^p(\Omega_\pm)}$ if $1\leqslant p\leqslant \infty$,
or with metric $\lVert \cdot\rVert_{H^p(\Omega_\pm)}^p$ if $0<p<1$.

If $1\leqslant p\leqslant \infty$, we denote its conjugate coefficient as $q$, 
that is $\frac1p+\frac1q=1$, then $1\leqslant q\leqslant \infty$. 
If, further, $F(w)\in H^p(\Omega_\pm)$ and $G(w)\in H^q(\Omega_\pm)$, 
we have $F(w)G(w)\in H^1(\Omega_\pm)$ by H\"{o}lder's inequality. 
Let $n$ be a positive integer, then $F(w)\in H^{np}(\Omega_\pm)$ if and only if 
$F^n(w)\in H^p(\Omega_\pm)$.

Our main results of this paper are listed as follows. We first prove in 
Theorem~\ref{thm-170826-1440} that if $1<p<\infty$, the Cauchy transform 
on $\Gamma$ is bounded. Then the existence of non-tangential boundary limit 
of $H^p(\Omega_\pm)$ functions for $1<p<\infty$ is proved in 
Theorem~\ref{thm-170827-1120} and Theorem~\ref{thm-170827-1410}, together 
with the Cauchy integral representation of $H^p(\Omega_\pm)$ functions. 
The existence of non-tangential boundary limit of $H^p(\Omega_\pm)$ functions 
are then extended to the case of $0<p<\infty$ in Theorem~\ref{thm-170827-2300} 
and Theorem~\ref{thm-170827-2301}, and the Cauchy integral representation to the 
case of $1\leqslant p<\infty$. In the end of this paper, 
Theorem~\ref{thm-170914-1100} will give an isomorphism between $H^p(\Omega_\pm)$ 
and $H^p(\mathbb{C}_\pm)$ for $0< p< \infty$. 

\section{Elementary Properties of $H^p(\Omega_\pm)$}

The open disk $\{z\in\mathbb{C}\colon |z-a|<r\}$ where $a\in\mathbb{C}$ 
and $r>0$ is denoted as $D(a,r)$, and the area measure on $\mathbb{C}$ 
is $\mathrm{d}\lambda$. Notice, some results below have already appeared 
in \cite{Vi94}, but usually with little or no proof. We will always provide 
a complete proof when needed.

\begin{lemma}[\cite{Vi94}]\label{lem-170825-1920}
  If $0<p<\infty$, $F(w)\in H^p(\Omega_+)$, and $w=u+\mathrm{i}v\in \Omega_+$, 
  then 
  \[|F(w)|\leqslant \Big(\frac2\pi\Big)^{\frac1p} \lVert F\rVert_{H^p(\Omega_+)} 
      (\min\{\sigma-|u|,v\})^{-\frac1p}.\]
\end{lemma}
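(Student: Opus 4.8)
The plan is to estimate $|F(w)|$ at an interior point $w=u+\mathrm{i}v\in\Omega_+$ by exploiting the mean value / sub-mean-value property of $|F|^p$ together with the $H^p$ bound on the integrals over the translated boundaries $\Gamma_{s_1,t_1}$. Since $F$ is analytic on $\Omega_+$, the function $|F|^p$ is subharmonic there (for $0<p<\infty$ this holds because $\log|F|$ is subharmonic, hence $|F|^p=\exp(p\log|F|)$ is subharmonic as a convex increasing function of a subharmonic function). The first step is to fix the largest disk centred at $w$ that fits inside $\Omega_+$. The distance from $w=u+\mathrm{i}v$ to $\partial\Omega_+=\Gamma$ is exactly $\min\{\sigma-|u|,\,v\}$, since the nearest boundary point lies either on one of the vertical sides $\Gamma_1,\Gamma_3$ (distance $\sigma-|u|$) or on the bottom edge $\Gamma_2$ (distance $v$). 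Call this radius $\rho=\min\{\sigma-|u|,v\}$, so that $D(w,\rho)\subset\Omega_+$.

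Next I would apply the sub-mean-value inequality over the disk $D(w,\rho)$ with respect to area measure:
\[
  |F(w)|^p\leqslant \frac{1}{\pi\rho^2}\int_{D(w,\rho)} |F(\zeta)|^p\,\mathrm{d}\lambda(\zeta).
\]
The task then reduces to bounding the area integral on the right by the $H^p(\Omega_+)$-norm of $F$. The idea is to write the area integral in a form that lets me invoke the definition $\sup m(s_1,t_1,F)\leqslant\|F\|_{H^p(\Omega_+)}$, i.e. the uniform control of $|F|^p$ along the boundaries $\Gamma_{s_1,t_1}$ for $0<s_1<\sigma$, $t_1>0$.

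To carry this out I would foliate the disk (or a suitable box/region containing it inside $\Omega_+$) by a family of these boundary curves and integrate. Concretely, one can bound $\int_{D(w,\rho)}|F|^p\,\mathrm{d}\lambda$ by an integral over a region swept out as $(s_1,t_1)$ varies, on each slice of which $\int_{\Gamma_{s_1,t_1}}|F|^p\,|\mathrm{d}w|\leqslant \|F\|_{H^p(\Omega_+)}^p$. A clean way is to compare with the square $\{|u'-u|<\rho,\ |v'-v|<\rho\}\subset\Omega_+$ and integrate in the vertical variable: for each fixed horizontal level, the portion of $|F|^p$ picks up a bounded contribution, and integrating over the length-$2\rho$ range in the other direction produces the factor $\rho$, yielding
\[
  \int_{D(w,\rho)}|F|^p\,\mathrm{d}\lambda\leqslant 2\rho\,\|F\|_{H^p(\Omega_+)}^p.
\]
Combining with the sub-mean-value inequality gives $|F(w)|^p\leqslant \frac{2\rho}{\pi\rho^2}\|F\|^p=\frac{2}{\pi\rho}\|F\|^p$, and taking $p$-th roots produces exactly $\bigl(\tfrac2\pi\bigr)^{1/p}\|F\|_{H^p(\Omega_+)}\,\rho^{-1/p}$ with $\rho=\min\{\sigma-|u|,v\}$.

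The main obstacle I anticipate is the bookkeeping in that last reduction: the boundaries $\Gamma_{s_1,t_1}$ are the \emph{shifted} half-strip boundaries, not horizontal or vertical line segments, so identifying a one-parameter family of them (or of their pieces) whose union covers $D(w,\rho)$ and along each of which the integral is controlled by $\|F\|^p$ requires care, especially near the corners where the min switches between the vertical-side regime and the bottom-edge regime. I expect the constant $2/\pi$ to be exactly what falls out once the foliation is chosen so that the sweep length matches $2\rho$; if a naive covering overcounts, one recovers the sharp constant by choosing the foliation direction (vertical when $v$ is the smaller of the two, horizontal when $\sigma-|u|$ is smaller) to align with whichever boundary piece realizes the distance $\rho$.
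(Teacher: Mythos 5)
Your proposal is correct and is essentially the paper's own proof: both rest on subharmonicity of $|F|^p$, the areal sub-mean-value inequality on $D(w,\rho)$ with $\rho=\min\{\sigma-|u|,v\}$, enclosing that disk in the axis-parallel square of side $2\rho$ inside $\Omega_+$, and bounding each one-dimensional slice of that square by $\lVert F\rVert_{H^p(\Omega_+)}^p$ because the slice lies on one of the translated boundary curves $\Gamma_{s_1,t_1}$, which produces the factor $2\rho$ and hence the constant $(2/\pi)^{1/p}$. The only cosmetic difference is the slicing direction (the paper integrates over vertical segments sitting on the rays of curves $\Gamma_{s_1,t_1}$ with $s_1=|u'|$, while you use horizontal segments sitting on bottom edges), which changes nothing.
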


\begin{proof}
  Fix $w_0=u_0+\mathrm{i}v_0\in\Omega_+$, and let $\rho=\min\{\sigma-|u|,v\}$, 
  then 
  \[D(w_0,\rho)
    \subset \{u+\mathrm{i}v\colon |u-u_0|<\rho, |v-v_0|<\rho\} 
    \subset \Omega_+.\] 
  Since $|F(w)|^p$ is subharmonic on $\Omega_+$, we have 
  \begin{align*}
    |F(w_0)|^p
    &\leqslant \frac1{\pi\rho^2} \iint_{D(w_0,\rho)} 
        |F(w)|^p\,\mathrm{d}\lambda(w)                            \\
    &\leqslant \frac1{\pi\rho^2} \int_{u_0-\rho}^{u_0+\rho}
        \!\!\int_{v_0-\rho}^{v_0+\rho}
        |F(u+\mathrm{i}v)|^p\,\mathrm{d}v\,\mathrm{d}u              \\
    &\leqslant \frac1{\pi\rho^2}\cdot 2\rho
        \cdot \lVert F\rVert_{H^p(\Omega_+)}^p                  \\
    &= \frac2{\pi\rho} \lVert F\rVert_{H^p(\Omega_+)}^p,
  \end{align*}
  and 
  \[|F(w_0)|\leqslant \Big(\frac2\pi\Big)^{\frac1p} 
      \lVert F\rVert_{H^p(\Omega_+)} \rho^{-\frac1p},\]
  which proves the lemma.
\end{proof}

\begin{lemma}[\cite{Vi94}]\label{lem-170825-2140}
  If $0<p<\infty$, $F(w)\in H^p(\Omega_-)$, and $w=u+\mathrm{i}v\in \Omega_-$, 
  then 
  \[|F(w)|\leqslant\left\{\!\!
       \begin{array}{ll}
          C_p(|u|-\sigma)^{-\frac1p} & \text{if $|u|>\sigma$},\\
          C_p|v|^{-\frac1p} & \text{if $v<0$},
          \end{array}\right.\]
  where $C_p= (2/\pi)^{\frac1p} \lVert F\rVert_{H^p(\Omega_-)}$.
\end{lemma}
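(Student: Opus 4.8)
The plan is to reproduce the subharmonicity argument of Lemma~\ref{lem-170825-1920}: since $|F|^p$ is subharmonic on $\Omega_-$, the value $|F(w_0)|^p$ is at most its area average $(\pi\rho^2)^{-1}\iint_{D(w_0,\rho)}|F|^p\,\mathrm{d}\lambda$ over any disk $D(w_0,\rho)\subset\Omega_-$, and it remains to dominate that average by $\lVert F\rVert_{H^p(\Omega_-)}^p$ so as to recover the factor $(\pi\rho^2)^{-1}\cdot 2\rho=2/(\pi\rho)$. The one genuinely new feature is that $\Omega_-$ is no longer a single half-strip but wraps around $\overline{\Omega_+}$, so there is no uniform formula for the distance to the boundary. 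I would therefore split the argument into the two cases $|u|>\sigma$ and $v<0$ of the statement, choosing in each case $\rho$ to be the distance from $w_0$ to the nearest \emph{straight line} carrying the boundary ($u=\pm\sigma$ in the first case, $v=0$ in the second).

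For $u_0>\sigma$ (the case $u_0<-\sigma$ is symmetric under $u\mapsto-u$, using the left edges instead), set $\rho=u_0-\sigma$. Every point of $D(w_0,\rho)$ has real part exceeding $u_0-\rho=\sigma$, so both the disk and the enclosing square $\{|u-u_0|<\rho,\ |v-v_0|<\rho\}$ lie in the half-plane $\{u>\sigma\}\subset\Omega_-$. Applying the sub-mean-value inequality and then integrating first in $v$ along each vertical segment $\{u+\mathrm{i}v:v_0-\rho<v<v_0+\rho\}$ with $\sigma<u<2u_0-\sigma$ fixed: choosing any $t_1<\min\{0,v_0-\rho\}$, the whole ray $\{u+\mathrm{i}v:v>t_1\}$ lies in $\Omega_-$ and is exactly the right edge $\Gamma_{u,t_1,3}$ of the half-strip $D_{u,t_1}$, so $\int_{v_0-\rho}^{v_0+\rho}|F(u+\mathrm{i}v)|^p\,\mathrm{d}v\le m(u,t_1,F)^p\le\lVert F\rVert_{H^p(\Omega_-)}^p$, since $u>\sigma$ and $t_1<0$ are admissible parameters in the supremum defining the norm. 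Integrating this over $u$ across an interval of length $2\rho$ produces the factor $2\rho$, exactly as in Lemma~\ref{lem-170825-1920}, and gives $|F(w_0)|\le C_p\rho^{-1/p}=C_p(|u_0|-\sigma)^{-1/p}$.

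For $v_0<0$ I would proceed dually, taking $\rho=|v_0|=-v_0$, so that the square $\{|u-u_0|<\rho,\ |v-v_0|<\rho\}$ sits in $\{v<0\}\subset\Omega_-$ (its points satisfy $v<v_0+\rho=0$). This time I integrate first in $u$ along each horizontal segment at a fixed height $v$ with $v_0-\rho<v<v_0+\rho$, hence $v<0$: choosing any $s_1>\max\{\sigma,\ |u_0|+\rho\}$, the segment $\{u+\mathrm{i}v:|u|\le s_1\}$ is the bottom edge $\Gamma_{s_1,v,2}$ of $D_{s_1,v}$, whence $\int_{u_0-\rho}^{u_0+\rho}|F(u+\mathrm{i}v)|^p\,\mathrm{d}u\le m(s_1,v,F)^p\le\lVert F\rVert_{H^p(\Omega_-)}^p$, again because $s_1>\sigma$ and $v<0$ are admissible. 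Integrating over $v$ across an interval of length $2\rho$ yields the factor $2\rho$ and hence $|F(w_0)|\le C_p|v_0|^{-1/p}$.

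The containments and the sub-mean-value step are routine, being identical to Lemma~\ref{lem-170825-1920}. The only point requiring care, and the main obstacle in this proof, is recognizing \emph{which} family of boundary curves $\Gamma_{s_1,t_1}$ to test against in each regime: horizontal growth (the case $|u|>\sigma$) must be controlled by the \emph{vertical} edges $\Gamma_{\cdot,\cdot,1},\Gamma_{\cdot,\cdot,3}$, realized by varying the parameter $s_1$, whereas downward growth (the case $v<0$) must be controlled by the \emph{horizontal} bottom edge $\Gamma_{\cdot,\cdot,2}$, realized by varying $t_1$. Once each interior segment is correctly identified as one edge of an admissible half-strip $D_{s_1,t_1}$, the bound by $\lVert F\rVert_{H^p(\Omega_-)}^p$ is immediate.
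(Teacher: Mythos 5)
Your proposal is correct and is essentially the paper's own proof: the paper simply states that one repeats the subharmonicity argument of Lemma~\ref{lem-170825-1920} with $\rho=|u|-\sigma$ when $|u|>\sigma$ and $\rho=|v|$ when $v<0$, which is exactly your case split. Your additional care in identifying which edges of $\Gamma_{s_1,t_1}$ dominate each inner integral (vertical edges $\Gamma_{\cdot,\cdot,1},\Gamma_{\cdot,\cdot,3}$ in the first case, the bottom edge $\Gamma_{\cdot,\cdot,2}$ in the second) is precisely the detail the paper leaves implicit.
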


The proof is similar to that of Lemma~\ref{lem-170825-1920}, and we should let
$\rho$ be $|u|-\sigma$ if $|u|>\sigma$, and $|v|$ if $v<0$. 
The following theorem shows that $H^p(\Omega_\pm)$ are Banach spaces 
for $1\leqslant p\leqslant\infty$.

\begin{theorem}\label{thm-170827-2120}
  If $0<p<\infty$, then $H^p(\Omega_\pm)$ are complete.
\end{theorem}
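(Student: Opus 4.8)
The plan is to show that any Cauchy sequence in $H^p(\Omega_\pm)$ converges, using the pointwise estimates of Lemmas~\ref{lem-170825-1920} and~\ref{lem-170825-2140} to produce a candidate limit and Fatou's lemma to control both its norm and the speed of convergence. I treat $\Omega_+$ in detail; the argument for $\Omega_-$ is identical once Lemma~\ref{lem-170825-2140} replaces Lemma~\ref{lem-170825-1920}.

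First I would take a sequence $\{F_k\}$ that is Cauchy with respect to $\lVert\cdot\rVert_{H^p(\Omega_+)}$ (for $1\leqslant p<\infty$) or to the metric $\lVert\cdot\rVert_{H^p(\Omega_+)}^p$ (for $0<p<1$). Fix a compact set $K\subset\Omega_+$; on $K$ the quantity $\min\{\sigma-|u|,v\}$ is bounded below by some $\delta>0$, so Lemma~\ref{lem-170825-1920} applied to the difference $F_j-F_k$ gives
\[\sup_{w\in K}|F_j(w)-F_k(w)|\leqslant \Big(\frac2\pi\Big)^{\frac1p}\delta^{-\frac1p}\lVert F_j-F_k\rVert_{H^p(\Omega_+)}.\]
Hence $\{F_k\}$ is uniformly Cauchy on every compact subset of $\Omega_+$, and by the Weierstrass convergence theorem it converges uniformly on compacts to a function $F$ that is analytic on $\Omega_+$.

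Next I would check that $F\in H^p(\Omega_+)$ and that $F_k\to F$. The key point is that each contour $\Gamma_{s_1,t_1}$ (with $0<s_1<\sigma$, $t_1>0$) lies inside $\Omega_+$, so $F_k\to F$ pointwise on $\Gamma_{s_1,t_1}$; since $\Gamma_{s_1,t_1}$ is unbounded I cannot pass the limit through the integral by uniform convergence, and instead invoke Fatou's lemma. Writing $M=\sup_k\lVert F_k\rVert_{H^p(\Omega_+)}<\infty$, which is finite because a Cauchy sequence is bounded, Fatou gives
\[m(s_1,t_1,F)^p\leqslant \liminf_{k\to\infty} m(s_1,t_1,F_k)^p\leqslant M^p,\]
and taking the supremum over $s_1,t_1$ yields $\lVert F\rVert_{H^p(\Omega_+)}\leqslant M$, so $F\in H^p(\Omega_+)$. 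Applying Fatou in the same way to the differences $F_j-F_k$, letting $j\to\infty$ with $k$ fixed, gives
\[m(s_1,t_1,F-F_k)^p\leqslant\liminf_{j\to\infty} m(s_1,t_1,F_j-F_k)^p\leqslant\liminf_{j\to\infty}\lVert F_j-F_k\rVert_{H^p(\Omega_+)}^p,\]
whence $\lVert F-F_k\rVert_{H^p(\Omega_+)}\to 0$ as $k\to\infty$, completing the argument.

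The only genuine obstacle is the unboundedness of the contours $\Gamma_{s_1,t_1}$, which blocks the naive interchange of limit and integral; replacing uniform convergence by Fatou's lemma resolves it cleanly, since Fatou needs only pointwise convergence together with a uniform bound on the integrals, both of which the Cauchy estimate supplies. For $0<p<1$ one simply reads all the displayed inequalities at the level of $\lVert\cdot\rVert_{H^p(\Omega_+)}^p$, which is the relevant metric, so that no triangle inequality in $\lVert\cdot\rVert_{H^p(\Omega_+)}$ is required.
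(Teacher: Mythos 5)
Your proof is correct and follows essentially the same route as the paper: Lemma~\ref{lem-170825-1920} gives uniform convergence on compact subsets to an analytic limit, and Fatou's lemma on the contours $\Gamma_{s_1,t_1}$ controls the $H^p$-norms. The only cosmetic difference is that you bound $\lVert F\rVert_{H^p(\Omega_+)}$ directly by the uniform bound $M$ on the Cauchy sequence, whereas the paper deduces membership from $\lVert F-F_{n_0}\rVert_{H^p(\Omega_+)}\leqslant\varepsilon$ plus the triangle (or $p$-power) inequality; both handle $0<p<1$ the same way.
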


\begin{proof}
  Let $\{F_n(w)\}$ be a Cauchy sequence in $H^p(\Omega_+)$, that is 
  \[\lVert F_m- F_n\rVert_{H^p(\Omega_+)}\to 0\quad 
  \text{as } m,n\to 0.\]
  For  $w=u+\mathrm{i}v\in\Omega_+$, by Lemma~\ref{lem-170825-1920},
  \[|F_m(w)- F_n(w)|
    \leqslant \Big(\frac2\pi\Big)^{\frac1p} (\min\{\sigma-|u|,v\})^{-\frac1p}
    \lVert F_m- F_n\rVert_{H^p(\Omega_+)},\]
  then $\{F_n(w)\}$ converges uniformly on compact subset of $\Omega_+$. 
  We denote the convergence function as $F(w)$, 
  which is also analytic on $\Omega_+$.
  
  For any $\varepsilon>0$, there exists $n_0\in\mathbb{N}$, such that if 
  $n>n_0$, then $\lVert F_{n_0}- F_n\rVert_{H^p(\Omega_+)}\leqslant \varepsilon$. 
  By Fatou's lemma, for $0<s<\sigma$ and $t>0$,
  \[\int_{\Gamma_{s,t}} |F(w)-F_{n_0}(w)|^p |\mathrm{d}w|
    \leqslant \lim_{n\to\infty} \int_{\Gamma_{s,t}} 
        |F_n(w)-F_{n_0}(w)|^p |\mathrm{d}w|
    \leqslant \varepsilon^p,\]
  or $\lVert F- F_{n_0}\rVert_{H^p(\Omega_+)}\leqslant \varepsilon$. We then have 
  \[\lVert F\rVert_{H^p(\Omega_+)}
    \leqslant \varepsilon+ \lVert F_{n_0}\rVert_{H^p(\Omega_+)}\quad
    \text{if $1\leqslant p<\infty$,}\]
  or
  \[\lVert F\rVert_{H^p(\Omega_+)}^p
    \leqslant \varepsilon^p+ \lVert F_{n_0}\rVert_{H^p(\Omega_+)}^p\quad
    \text{if $0< p<1$,}\]
  and both of them show that $F(w)\in H^p(\Omega_+)$. Thus, $H^p(\Omega_+)$ 
  is complete. The $H^p(\Omega_-)$ case is similarly proved.  
\end{proof}

Next lemma may be viewed as a refined version of Lemma~\ref{lem-170825-1920}.
\begin{lemma}\label{lem-170825-1930}
  Let $0<p<\infty$, $F(w)\in H^p(\Omega_+)$, $0<s<\sigma$ and $t>0$, 
  then $F(u+\mathrm{i}v)\to 0$ uniformly for $|u|\leqslant s$ 
  as $v\to +\infty$, and 
  \[\lim_{t\to +\infty} \int_{\Gamma_{s,t}} |F(w)|^p|\mathrm{d}w|= 0\]
  for fixed $s$.
\end{lemma}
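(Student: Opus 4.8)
The plan is to deduce the integral statement from the uniform pointwise statement, so I would first record the easy reduction and then concentrate on the uniform decay, which is the real content.

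For the integral statement, write out the three pieces of $\Gamma_{s,t}$: the two vertical rays contribute $\int_t^\infty\bigl(|F(-s+\mathrm iv)|^p+|F(s+\mathrm iv)|^p\bigr)\,\mathrm dv$ and the bottom segment contributes $\int_{-s}^s|F(u+\mathrm it)|^p\,\mathrm du$. Since the portion of $\Gamma_{s,t_1}$ lying on the line $u=\pm s$ is already part of the quantity defining $\|F\|_{H^p(\Omega_+)}$, letting the base $t_1\to0^+$ and using monotone convergence gives $\int_0^\infty|F(\pm s+\mathrm iv)|^p\,\mathrm dv\le\|F\|_{H^p(\Omega_+)}^p<\infty$; hence the vertical tails $\int_t^\infty$ vanish as $t\to+\infty$. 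The bottom segment is handled by the uniform statement: once $F(u+\mathrm it)\to0$ uniformly for $|u|\le s$, the segment integral is at most $2s\sup_{|u|\le s}|F(u+\mathrm it)|^p\to0$. So everything reduces to the uniform decay.

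For the uniform decay I would \emph{not} use Lemma~\ref{lem-170825-1920} directly: for $|u|\le s<\sigma$ and $v$ large its right-hand side is comparable to $(\sigma-|u|)^{-1/p}$, a bound that does not decay (the same obstruction kills the naive route through the area integral $\iint_{\Omega_+}|F|^p$, which need not be finite). Instead, fix $s<s'<\sigma$. By Lemma~\ref{lem-170825-1920} the subharmonic function $|F|^p$ is \emph{bounded} on the closed narrower half-strip $\{|u|\le s',\,v\ge T_0\}$, because $s'<\sigma$ keeps us away from the edges $u=\pm\sigma$ where blow-up is permitted. A bounded subharmonic function on this half-strip is dominated by the solution $H$ of the Dirichlet problem on the half-strip whose boundary data are the boundary values of $|F|^p$, and the goal becomes to show $H(u+\mathrm iv)\to0$ as $v\to+\infty$, uniformly for $|u|\le s$.

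The crux is this last claim, and it is exactly where the integrability of the vertical edges enters. I would map the half-strip conformally onto the upper half-plane $\mathbb C_+$ (by a sine map, used merely as a computational device and independent of the isomorphism of Theorem~\ref{thm-170914-1100}), so that the infinite end becomes the point at infinity, the bottom segment becomes a bounded interval, and the two vertical rays become the two tails of $\mathbb R$. Under this map $H$ becomes the Poisson integral of a boundary function $f$ that is bounded on the bounded interval, while on the tails the change of variables converts $\int_{T_0}^\infty|F(\pm s'+\mathrm iv)|^p\,\mathrm dv<\infty$ into $\int_{|x|>1}|f(x)|\,|x|^{-1}\,\mathrm dx<\infty$. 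A standard non-tangential Poisson estimate then forces $H\to0$ at the infinite end (the bounded-interval part contributes a term of order $1/\mathrm{dist}$, and the tail part is controlled by $\int|f(x)|\,|x|^{-1}\,\mathrm dx$), uniformly over the cone that is the image of $\{|u|\le s\}$. Transplanting back yields $|F(u+\mathrm iv)|^p\le H\to0$ uniformly for $|u|\le s$, which is the first assertion and, via the reduction above, also the second. The main obstacle is precisely this vanishing of $H$: the elementary estimates at hand — Lemma~\ref{lem-170825-1920}, the uniform boundedness of horizontal slices, even the finiteness of the line integrals on $\Gamma_{s,t}$ — are all individually insufficient to produce decay, and one must genuinely exploit the $L^1$-integrability of $|F|^p$ along the two vertical edges through the Poisson/harmonic-measure structure of the half-strip.
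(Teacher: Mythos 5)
Your reduction of the integral statement to the uniform one is exactly the paper's: the ray integrals $\int_t^\infty|F(\pm s+\mathrm{i}v)|^p\,\mathrm{d}v$ are tails of finite integrals (the rays lie on the contours $\Gamma_{s,t_1}$), and the bottom segment is controlled by $2s\sup_{|u|\leqslant s}|F(u+\mathrm{i}t)|^p$. For the uniform decay, however, you diverge from the paper, and the divergence rests on a claim of yours that is false: the area integral $\iint_{\Omega_+}|F|^p\,\mathrm{d}\lambda$ \emph{is} finite for every $F\in H^p(\Omega_+)$. Indeed, for each $0<|u|<\sigma$ the vertical ray $\{u+\mathrm{i}v\colon v>t_1\}$ is part of the contour $\Gamma_{|u|,t_1}$, so $\int_{t_1}^{\infty}|F(u+\mathrm{i}v)|^p\,\mathrm{d}v\leqslant\lVert F\rVert_{H^p(\Omega_+)}^p$; letting $t_1\downarrow 0$ and integrating in $u$ (Tonelli) gives $\iint_{\Omega_+}|F|^p\,\mathrm{d}\lambda\leqslant 2\sigma\lVert F\rVert_{H^p(\Omega_+)}^p$. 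This is the same mechanism you yourself invoke on the two lines $u=\pm s$, applied to all vertical lines at once; the finiteness comes from the contour definition of the norm, not from the pointwise bound of Lemma~\ref{lem-170825-1920}, which is why the non-decay of that bound is no obstruction. The ``naive route'' you dismiss is precisely the paper's proof: with $\rho=\min\{\frac{\sigma-s}2,\frac{t}2\}$ fixed, the sub-mean-value inequality on $D(w_0,\rho/2)$ gives $|F(u_0+\mathrm{i}v_0)|^p\leqslant\frac4{\pi\rho^2}\iint_{D_{s+\rho,t-\rho}}\chi_{\{|\mathrm{Im}\,w-v_0|<\rho\}}|F|^p\,\mathrm{d}\lambda$, and the right-hand side tends to $0$ as $v_0\to+\infty$, uniformly in $u_0$, because it is the tail of a finite integral.

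Your alternative route --- majorization of $|F|^p$ by the Dirichlet solution $H$ on a narrower substrip, conformal transplantation to $\mathbb{C}_+$, and the cone estimate $P_z(x)\leqslant C_\delta/|x|$ combined with $\int_{|x|>1}|f(x)|\,|x|^{-1}\,\mathrm{d}x<\infty$ --- does appear to go through, and the translation of the edge integrability into the weighted tail condition (via $\mathrm{d}v\mapsto \mathrm{d}x/\sqrt{x^2-1}$) is correct. But it silently uses two nontrivial facts that you should at least name: that a bounded subharmonic function on the \emph{unbounded} strip is dominated by the Perron solution of its boundary data (a Phragm\'en--Lindel\"of statement, since the strip has a boundary point at infinity), and that the transplanted solution coincides with the Poisson integral of $f$ (uniqueness of bounded harmonic extensions). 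So the verdict is: correct, but a heavy detour whose motivating premise --- possible non-finiteness of the area integral --- is mistaken; the elementary argument you rejected is both valid and is what the paper does.
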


\begin{proof}
  The first part of the proof is much like that in Lemma~\ref{lem-170825-1920}.
  Let $\rho= \min\{\frac{\sigma-s}2,\frac{t}2\}$, 
  if $w_0=u_0+\mathrm{i}v_0\in \overline{D_{s,t}}$, then
  \[D(w_0,\rho/2)
    \subset \{u+\mathrm{i}v\colon |u-u_0|<\rho, |v-v_0|<\rho\}
    \subset D_{s+\rho,t-\rho} 
    \subset \Omega_+,\]
  and
  \begin{align*}
    |F(u_0+\mathrm{i}v_0)|^p
    &\leqslant \frac4{\pi\rho^2} \iint_{D(w_0,\frac{\rho}2)} 
        |F(w)|^p\,\mathrm{d}\lambda(w)                            \\
    &\leqslant \frac4{\pi\rho^2} \iint_{D_{s+\rho,t-\rho}} 
        \chi_{\{|\mathrm{Im}\,w-v_0|<\rho\}}|F(w)|^p\,\mathrm{d}\lambda(w).
  \end{align*}
  where $\chi_E$ is the characteristic function of a set $E$. 
  By Lebesgue's dominated convergence theorem and
  \[\iint_{D_{s+\rho,t-\rho}} |F(w)|^p\,\mathrm{d}\lambda(w)
    \leqslant (s+\rho) \lVert F\rVert_{H^p(\Omega_+)}^p,\]
  we have $\lim_{v_0\to\infty} |F(u_0+\mathrm{i}v_0)|= 0$, 
  and the uniform convergence is proved.
  
  Suppose $t>1$, then $D_{s,t}\subset D_{s,1}$, and 
  \begin{align*}
    &\lim_{t\to +\infty} \bigg(\int_{\Gamma_{s,t,1}}
        +\int_{\Gamma_{s,t,3}}\bigg) |F(w)|^p |\mathrm{d}w|             \\
    ={}& \lim_{t\to +\infty} \bigg(\int_{\Gamma_{s,1,1}}
            +\int_{\Gamma_{s,1,3}}\bigg) 
            \chi_{\{\mathrm{Im}\,w>t\}} |F(w)|^p |\mathrm{d}w| 
    = 0.
  \end{align*}
  We also have
  \begin{align*}
    \lim_{t\to +\infty} \int_{\Gamma_{s,t,2}} |F(w)|^p |\mathrm{d}w|
    &\leqslant \lim_{t\to +\infty} 2s\max\{|F(w)|\colon w\in\Gamma_{s,t,2}\}\\
    &= \lim_{t\to +\infty} 2s\max\{|F(u+\mathrm{i}t)|\colon |u|\leqslant s\}
     =0,
  \end{align*}
  then
  \[\lim_{t\to +\infty} \int_{\Gamma_{s,t}} |F(w)|^p |\mathrm{d}w|
    = \sum_{j=1}^3 \lim_{t\to +\infty} \int_{\Gamma_{s,t,j}} 
        |F(w)|^p |\mathrm{d}w|
    = 0,\]
  and the lemma is proved.
\end{proof}

\begin{lemma}\label{lem-170825-2130}
  Let $0<p<\infty$, $F(w)\in H^p(\Omega_-)$ and $\sigma<s_1<s_2$, 
  then $F(u+\mathrm{i}v)\to 0$ uniformly for $s_1\leqslant |u|\leqslant s_2$ 
  as $v\to +\infty$.
\end{lemma}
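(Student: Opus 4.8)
The plan is to follow the proof of Lemma~\ref{lem-170825-1930} closely, combining the subharmonicity of $|F|^p$ with a dominated-convergence argument, but adapting the geometry to the two vertical half-strips of $\Omega_-$ that lie to the right and left of $\overline{\Omega_+}$. By symmetry it suffices to handle $u_0\in[s_1,s_2]$, the case $u_0\in[-s_2,-s_1]$ being identical.

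First I would fix $\rho=\tfrac{s_1-\sigma}2>0$. For every $w_0=u_0+\mathrm{i}v_0$ with $s_1\leqslant u_0\leqslant s_2$, each point of $D(w_0,\rho)$ satisfies $\mathrm{Re}\,w\geqslant u_0-\rho\geqslant\tfrac{s_1+\sigma}2>\sigma$, so $D(w_0,\rho)\subset\{\mathrm{Re}\,w>\sigma\}\subset\Omega_-$; moreover $v_0>\rho$ forces $D(w_0,\rho)\subset\{\mathrm{Im}\,w>0\}$. Since $|F|^p$ is subharmonic on $\Omega_-$, the sub-mean-value inequality would give
\[|F(w_0)|^p
  \leqslant\frac1{\pi\rho^2}\iint_{D(w_0,\rho)}|F(w)|^p\,\mathrm{d}\lambda(w)
  \leqslant\frac1{\pi\rho^2}\iint_S\chi_{\{|\mathrm{Im}\,w-v_0|<\rho\}}
    |F(w)|^p\,\mathrm{d}\lambda(w),\]
where $S=\{u+\mathrm{i}v\colon\sigma<u<s_2+\rho,\ v>0\}$ is a fixed half-strip containing all such disks.

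The crucial step, and the one where the definition of the $H^p(\Omega_-)$-norm enters, is to check $\iint_S|F|^p\,\mathrm{d}\lambda<\infty$. For each fixed $u\in(\sigma,s_2+\rho)$ and any $t_1<0$, the ray $\{u+\mathrm{i}v\colon v>0\}$ is contained in the vertical part $\Gamma_{u,t_1,3}$ of $\Gamma_{u,t_1}$, so that $\int_0^\infty|F(u+\mathrm{i}v)|^p\,\mathrm{d}v\leqslant\int_{\Gamma_{u,t_1}}|F|^p\,|\mathrm{d}w|\leqslant\lVert F\rVert_{H^p(\Omega_-)}^p$ since $u>\sigma$ and $t_1<0$; integrating this over $u\in(\sigma,s_2+\rho)$ then yields $\iint_S|F|^p\,\mathrm{d}\lambda\leqslant(s_2+\rho-\sigma)\lVert F\rVert_{H^p(\Omega_-)}^p<\infty$.

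Finally, the last integral above depends only on $v_0$ and not on $u_0$. As $v_0\to+\infty$ the integrand $\chi_{\{|\mathrm{Im}\,w-v_0|<\rho\}}|F|^p$ tends to $0$ pointwise and is dominated by the integrable function $\chi_S|F|^p$, so Lebesgue's dominated convergence theorem forces the integral to $0$. Taking the supremum over $u_0\in[s_1,s_2]$ then gives $\sup_{s_1\leqslant u_0\leqslant s_2}|F(u_0+\mathrm{i}v_0)|\to0$, i.e.\ the asserted uniform convergence. I expect the only points needing care to be the verification that $D(w_0,\rho)\subset\Omega_-$ and the finiteness of $\iint_S|F|^p$; the uniformity is then automatic, precisely because the dominating integral carries no dependence on $u_0$.
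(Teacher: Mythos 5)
Your proposal is correct and follows essentially the same route as the paper's proof: sub-mean-value inequality for the subharmonic function $|F|^p$ on disks of fixed radius, restriction of the area integral to a horizontal band via a characteristic function, finiteness of the area integral over a fixed strip deduced from the definition of $\lVert F\rVert_{H^p(\Omega_-)}$, and Lebesgue's dominated convergence theorem, with uniformity in $u_0$ coming from the bound depending only on $v_0$. The only differences are cosmetic (you work one side at a time by symmetry and with radius $\rho$ instead of $\rho/2$), and you in fact make explicit the Tonelli-type verification that $\iint_S |F|^p\,\mathrm{d}\lambda<\infty$, which the paper asserts without detail.
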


\begin{proof}
  This is a refinement of Lemma~\ref{lem-170825-2140}.
  Let $t_2<t_1<0$, $\rho= \min\{\frac{s_1-\sigma}2,\frac{|t_1|}2\}$, 
  then $D_{s_1,t_1}\subset D_{s_2,t_2}$, and if we choose 
  $w_0=u_0+\mathrm{i}v_0\in \overline{D_{s_2,t_2}}\setminus D_{s_1,t_1}$ 
  with $v_0>|t_1|$, then
  \[D(w_0,\rho/2)
    \subset \{u+\mathrm{i}v\colon |u-u_0|<\rho, |v-v_0|<\rho\}
    \subset D_{s_2+\rho,t_2-\rho}\setminus \overline{D_{s_1-\rho,t_1+\rho}}
    \subset \Omega_-,\]
  and by denoting $D_{s_2+\rho,t_2-\rho}\setminus 
    \overline{D_{s_1-\rho,t_1+\rho}}\cap 
    \{s_1-\rho<|\mathrm{Re}\,w|< s_2+\rho\}$ as $E$, we have
  \begin{align*}
    |F(u_0+\mathrm{i}v_0)|^p
    &\leqslant \frac4{\pi\rho^2} \iint_{D(w_0,\frac{\rho}2)} 
        |F(w)|^p\,\mathrm{d}\lambda(w)                            \\
    &\leqslant \frac4{\pi\rho^2} \iint_E 
        \chi_{\{|\mathrm{Im}\,w-v_0|<\rho\}}|F(w)|^p\,\mathrm{d}\lambda(w).
  \end{align*}
  Now $\lim_{v_0\to\infty} |F(u_0+\mathrm{i}v_0)|= 0$ comes from
  \[\iint_E |F(w)|^p\,\mathrm{d}\lambda(w)
    \leqslant (s_2-s_1+2\rho) \lVert F\rVert_{H^p(\Omega_+)}^p,\]
  and this proves the lemma.
\end{proof}

In order to show that $H^p(\Omega_\pm)$ is not empty for $0<p\leqslant\infty$, 
we need the following lemma.

\begin{lemma}\label{lem-170826-0720}
  If $1< p\leqslant \infty$, $s>0$ and $w_0\notin \Gamma_{s,t}$, define 
  \[F(w)= \frac1{w-w_0}\quad \text{for } w\in\Gamma_{s,t},\]
  then $F(w)\in L^p(\Gamma_{s,t},|\mathrm{d}w|)$.
\end{lemma}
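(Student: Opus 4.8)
The plan is to exploit the fact that $\Gamma_{s,t}$ is a closed subset of $\mathbb{C}$ while $w_0\notin \Gamma_{s,t}$, so that $d:=\operatorname{dist}(w_0,\Gamma_{s,t})>0$ and hence $|w-w_0|\geqslant d$ for every $w\in\Gamma_{s,t}$. This immediately settles the case $p=\infty$: there $|F(w)|=|w-w_0|^{-1}\leqslant 1/d$ for all $w\in\Gamma_{s,t}$, so $\sup\{|F(w)|\colon w\in\Gamma_{s,t}\}\leqslant 1/d<\infty$.

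For $1<p<\infty$ I would split the integral according to the decomposition $\Gamma_{s,t}=\Gamma_{s,t,1}\cup\Gamma_{s,t,2}\cup\Gamma_{s,t,3}$. The contribution of the horizontal segment $\Gamma_{s,t,2}$ is harmless: it has finite length $2s$ and the integrand is bounded there by $d^{-p}$, so $\int_{\Gamma_{s,t,2}}|F(w)|^p\,|\mathrm{d}w|\leqslant 2s\,d^{-p}<\infty$.

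The substance of the argument lies in the two infinite vertical rays. Writing $w_0=u_0+\mathrm{i}v_0$ and parametrizing $\Gamma_{s,t,3}$ by $w=s+\mathrm{i}v$ with $v>t$ (so that $|\mathrm{d}w|=\mathrm{d}v$), we obtain
\[\int_{\Gamma_{s,t,3}}|F(w)|^p\,|\mathrm{d}w|
   =\int_t^\infty \big[(s-u_0)^2+(v-v_0)^2\big]^{-p/2}\,\mathrm{d}v,\]
with the analogous expression on $\Gamma_{s,t,1}$ (replace $s$ by $-s$). The integrand is continuous and bounded for $v$ near $t$, so the only question is convergence as $v\to+\infty$, where the integrand is asymptotic to $v^{-p}$. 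The crux is simply that $\int^\infty v^{-p}\,\mathrm{d}v$ converges precisely because $p>1$; I would make this rigorous by bounding the integrand by $C(1+v^2)^{-p/2}$ for a suitable constant $C$ and comparing with the convergent integral $\int_t^\infty (1+v^2)^{-p/2}\,\mathrm{d}v$. Adding the three finite contributions then gives $F\in L^p(\Gamma_{s,t},|\mathrm{d}w|)$. This is the only real obstacle, and it pinpoints exactly why $p=1$ must be excluded: at $p=1$ the tail integral $\int^\infty v^{-1}\,\mathrm{d}v$ diverges.
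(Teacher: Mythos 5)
Your proof is correct. The positivity of $d=\operatorname{dist}(w_0,\Gamma_{s,t})$ (legitimate, since $\Gamma_{s,t}$ is closed) settles $p=\infty$ and the finite-length piece $\Gamma_{s,t,2}$, and your comparison of the vertical-ray integrands with $C(1+v^2)^{-p/2}$ is sound: the ratio $(1+v^2)/\bigl((\pm s-u_0)^2+(v-v_0)^2\bigr)$ is continuous on the closed ray (its denominator is bounded below by $d^2>0$) and tends to $1$ at infinity, hence is bounded, and $\int_t^\infty(1+v^2)^{-p/2}\,\mathrm{d}v<\infty$ exactly because $p>1$. Your route, however, differs from the paper's. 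The paper splits into two cases according to whether $w_0\in D_{s,t}$ or $w_0\in\mathbb{C}\setminus\overline{D_{s,t}}$: in the first case it performs an explicit change of variables in each of $I_1$, $I_2$, $I_3$ and evaluates via Euler's Beta function, obtaining the quantitative bounds $I_1\leqslant C(s+u_0)^{1-p}$, $I_2\leqslant C(v_0-t)^{1-p}$, $I_3\leqslant C(s-u_0)^{1-p}$ with $C=B\bigl(\frac12,\frac{p-1}2\bigr)$; in the second case it argues much in your spirit (a piece of controlled length where $|F|\leqslant d^{-1}$, plus tails estimated by $\int(v-v_0)^{-p}\,\mathrm{d}v$), obtaining the uniform bound $6rd^{-p}+\frac2{p-1}$. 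Your argument is shorter and avoids the case distinction, but it is purely qualitative: your comparison constant depends on $w_0$, $s$, $t$ in an uncontrolled way. The paper's explicit constants are not gratuitous — they are reused verbatim later: Corollary~\ref{cor-170826-0740} and its $\Omega_-$ analogue need bounds uniform in $s$ and $t$ to conclude $\frac1{w-w_0}\in H^p(\Omega_\pm)$, and the proof of Lemma~\ref{lem-170827-1010} needs precisely the decay $C^{\frac1p}\bigl((s_1+u_0)^{1-p}+(v_0-t_1)^{1-p}+(s_1-u_0)^{1-p}\bigr)^{\frac1p}\to 0$ as $s_1\to+\infty$, $t_1\to-\infty$. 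So your proof establishes the lemma as stated, while the paper's extra computation buys the quantitative estimates on which several subsequent results depend.
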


\begin{proof}
  The $p=\infty$ case is obvious. Suppose $1<p<\infty$, and write 
  \[\int_{\Gamma{s,t}} |F(w)|^p |\mathrm{d}w|
    = \int_{\Gamma{s,t}} \frac{|\mathrm{d}w|}{|w-w_0|^p}
    = \sum_{j=1}^3 \int_{\Gamma{s,t,j}} \frac{|\mathrm{d}w|}{|w-w_0|^p}
    = \sum_{j=1}^3 I_j,\]
  where
  \begin{align*}
    I_1&= \int_{\Gamma{s,t,1}} \frac{|\mathrm{d}w|}{|w-w_0|^p}
        = \int_t^{+\infty} \frac{\mathrm{d}v}
               {|-s+\mathrm{i}v-u_0-\mathrm{i}v_0|^p},             \\
    I_2&= \int_{\Gamma{s,t,2}} \frac{|\mathrm{d}w|}{|w-w_0|^p}
        = \int_{-s}^{s} \frac{\mathrm{d}u}
               {|u+\mathrm{i}t-u_0-\mathrm{i}v_0|^p},             \\
    I_3&= \int_{\Gamma{s,t,3}} \frac{|\mathrm{d}w|}{|w-w_0|^p}
        = \int_t^{+\infty} \frac{\mathrm{d}v}
               {|s+\mathrm{i}v-u_0-\mathrm{i}v_0|^p}.
  \end{align*}
  And $w_0\in D_{s,t}$ or $\mathbb{C}\setminus\overline{D_{s,t}}$ 
  since $w_0\notin\Gamma_{s,t}$.
  
  If $w_0=u_0+\mathrm{i}v_0\in D_{s,t}$, then $|u_0|<s$, $v_0>t$, and 
  \begin{align*}
    I_1
    &= \int_t^{+\infty} \frac{\mathrm{d}v}{((s+u_0)^2+(v-v_0)^2)^{\frac{p}2}} 
     \leqslant \int_{\mathbb{R}} 
            \frac{\mathrm{d}v}{(s+u_0)^{p-1}(1+v^2)^{\frac{p}2}}    \\
    &= \frac2{(s+u_0)^{p-1}} \int_{\mathbb{R}^+} 
            \frac{\mathrm{d}v}{(1+v^2)^{\frac{p}2}}
     = \frac1{(s+u_0)^{p-1}} \int_{\mathbb{R}^+} 
            \frac{v^{-\frac12}\mathrm{d}v}{(1+v)^{\frac{p}2}}.
  \end{align*}
  after making proper change of variables. Let $x=\frac{v}{v+1}$ for 
  $v\in\mathbb{R}_+$, then $x\in(0,1)$ and 
  \[\int_{\mathbb{R}^+} \frac{v^{-\frac12}\mathrm{d}v}
        {(1+v)^{\frac{p}2}}
    \leqslant \int_0^1 x^{-\frac12}(1-x)^{\frac{p-3}2}\,\mathrm{d}x
    = B\Big(\frac12,\frac{p-1}2\Big),\]
  where $B(\cdot,\cdot)$ is Euler's Beta function. 
  By denoting the above constant as $C$, we have $I_1\leqslant C(s+u_0)^{1-p}$. 
  Similary, $I_3\leqslant C(s-u_0)^{1-p}$ and 
  \[I_2= \int_{-s}^s \frac{\mathrm{d}u} {((u-u_0)^2+(t-v_0)^2)^{\frac{p}2}} 
       \leqslant \int_{\mathbb{R}} 
            \frac{\mathrm{d}u}{(v_0-t)^{p-1}(u^2+1)^{\frac{p}2}}
       \leqslant C(v_0-t)^{1-p},\]
  then 
  \[\int_{\Gamma{s,t}} |F(w)|^p |\mathrm{d}w|
    \leqslant C((s+u_0)^{1-p}+ (v_0-t)^{1-p}+ (s-u_0)^{1-p}),\]
  which means that $F(w)\in L^p(\Gamma_{s,t},|\mathrm{d}w|)$.
  
  If $w_0=u_0+\mathrm{i}v_0\in \mathbb{C}\setminus\overline{D_{s,t}}$, 
  then we choose $r>0$ big enough such that
  \[E=\Gamma_{s,t}\cap \{\mathrm{Im}\,w\leqslant |t|+|v_0|+1\}
    \subset D(w_0,r).\]
  Denote $d=\inf\{|w_0-w|\colon w\in\Gamma_{s,t}\}$, then $d>0$, 
  $|F(w)|\leqslant d^{-1}$ for $w\in \Gamma_{s,t}$ and 
  \begin{align*}
    \int_{\Gamma_{s,t}} |F(w)|^p |\mathrm{d}w|
    &= \bigg(\int_E+\int_{\Gamma_{s,t}\setminus E}\bigg) |F(w)|^p|\mathrm{d}w|\\
    &\leqslant d^{-p}\cdot 6r+ 2\int_{|t|+|v_0|+1}^{+\infty} 
          \frac{\mathrm{d}v}{(v-v_0)^p}                                 \\
    &= 6rd^{-p}+ \frac2{p-1}(|t|+|v_0|+1-v_0)^{1-p}                     \\
    &\leqslant 6rd^{-p}+\frac2{p-1}.
  \end{align*}
  Hence, we still have $F(w)\in L^p(\Gamma_{s,t},|\mathrm{d}w|)$.
\end{proof}

\begin{corollary}\label{cor-170826-0740}
  If $1< p\leqslant \infty$, $F(w)$ is a rational function which vanishes 
  at infinity and is with poles lying on $\Omega_-$, then 
  $F(w)\in H^p(\Omega_+)$. 
\end{corollary}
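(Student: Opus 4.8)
The plan is to reduce $F$ to elementary fractions by partial fractions and then to establish an $L^p$-bound on the truncated boundaries $\Gamma_{s_1,t_1}$ that is uniform in the truncation parameters $s_1$ and $t_1$. Since all poles of $F$ lie in $\Omega_-=\mathbb{C}\setminus\overline{\Omega_+}$, the function $F$ is analytic on an open neighbourhood of the closed set $\overline{\Omega_+}$, and in particular on $\Omega_+$. Because $F$ vanishes at infinity, it has no polynomial part, so it admits a decomposition $F(w)=\sum_j\sum_{k=1}^{m_j} c_{j,k}(w-w_j)^{-k}$ over its finitely many poles $w_j\in\Omega_-$. As $H^p(\Omega_+)$ is a normed vector space for $1\leqslant p\leqslant\infty$, it suffices to show that each elementary term $g_{j,k}(w)=(w-w_j)^{-k}$ lies in $H^p(\Omega_+)$ and then recombine by the triangle inequality.

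The crucial point is a distance estimate that is \emph{uniform} in $s_1,t_1$. For every $0<s_1<\sigma$ and $t_1>0$, the curve $\Gamma_{s_1,t_1}$ satisfies $|\mathrm{Re}\,w|\leqslant s_1<\sigma$ and $\mathrm{Im}\,w\geqslant t_1>0$, hence $\Gamma_{s_1,t_1}\subset\overline{\Omega_+}$. Consequently, writing $w_j=u_j+\mathrm{i}v_j$ and $\delta_j=\mathrm{dist}(w_j,\overline{\Omega_+})>0$, every $w\in\Gamma_{s_1,t_1}$ obeys $|w-w_j|\geqslant\delta_j$, a lower bound independent of $s_1$ and $t_1$. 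This settles the horizontal segment $\Gamma_{s_1,t_1,2}$ at once: its length is at most $2\sigma$ and $|g_{j,k}|\leqslant\delta_j^{-k}$ there, so its contribution to $m(s_1,t_1,g_{j,k})^p$ is bounded by $2\sigma\,\delta_j^{-kp}$.

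On the two vertical rays $\Gamma_{s_1,t_1,1}$ and $\Gamma_{s_1,t_1,3}$, where $w=\pm s_1+\mathrm{i}v$ with $v>t_1$, I would use the elementary inequality $|w-w_j|^2=(\pm s_1-u_j)^2+(v-v_j)^2\geqslant\tfrac12\big(\delta_j^2+(v-v_j)^2\big)$, which follows by averaging the two lower bounds $(\pm s_1-u_j)^2+(v-v_j)^2\geqslant\delta_j^2$ and $\geqslant(v-v_j)^2$. This reduces the ray integral to $2^{kp/2}\int_{t_1}^{+\infty}(\delta_j^2+(v-v_j)^2)^{-kp/2}\,\mathrm{d}v\leqslant 2^{kp/2}\int_{\mathbb{R}}(\delta_j^2+v^2)^{-kp/2}\,\mathrm{d}v=2^{kp/2}\delta_j^{1-kp}\int_{\mathbb{R}}(1+t^2)^{-kp/2}\,\mathrm{d}t$, which is finite precisely because $kp\geqslant p>1$ and, crucially, is independent of $s_1$ and $t_1$. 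Summing the three pieces bounds $m(s_1,t_1,g_{j,k})$ uniformly, so $g_{j,k}\in H^p(\Omega_+)$ for $1<p<\infty$; the triangle inequality then gives $F\in H^p(\Omega_+)$. For the endpoint $p=\infty$ the argument is shorter: $F$ is continuous on $\overline{\Omega_+}$ and tends to $0$ as $|w|\to\infty$, hence bounded there, so $\sup_{s_1,t_1}m(s_1,t_1,F)\leqslant\sup_{\overline{\Omega_+}}|F|<\infty$.

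I expect the main obstacle to be making the uniformity transparent rather than any individual estimate. The key realization is that the relevant lower bound on $|w-w_j|$ should be read off the \emph{fixed} distance from the pole to the whole closed half-strip $\overline{\Omega_+}$, not from the moving curve $\Gamma_{s_1,t_1}$, and that the hypothesis $p>1$ (equivalently $kp>1$) is exactly what forces the tail integrals over the infinite rays to converge to a bound independent of the truncation height $t_1$; this is also why the statement excludes $p=1$.
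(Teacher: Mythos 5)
Your proof is correct, and it shares the paper's overall skeleton --- partial fractions, estimates on elementary fractions, then linearity of $H^p(\Omega_+)$ --- but the middle step is carried out by a genuinely different route. The paper only ever estimates the simple fraction $(w-w_0)^{-1}$: it recycles the second half of the proof of Lemma~\ref{lem-170826-0720}, splitting $\Gamma_{s,t}$ into the part inside a fixed disk $D(w_0,r)$ (integrand at most $d^{-p}$, length at most $6r$) and the two infinite tails (contribution at most $\tfrac{2}{p-1}$), and then disposes of higher-order poles in one line via the fact stated in Section~2 that $(w-w_0)^{-1}\in H^{pk}(\Omega_+)$ implies $(w-w_0)^{-k}\in H^{p}(\Omega_+)$. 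You instead estimate every term $(w-w_j)^{-k}$ directly, using the fixed distance $\delta_j=\mathrm{dist}(w_j,\overline{\Omega_+})$ and the averaged lower bound $|w-w_j|^2\geqslant\tfrac12\bigl(\delta_j^2+(v-v_j)^2\bigr)$ on the vertical rays. Your version is self-contained (it never invokes Lemma~\ref{lem-170826-0720} or the $H^{np}$--$H^p$ power trick), makes the uniformity in $(s_1,t_1)$ and the role of $kp>1$ fully explicit, and produces explicit constants; the paper's version is shorter given the machinery already in place. One small imprecision in your closing remark: $p>1$ and $kp>1$ are not equivalent, and the exclusion of $p=1$ is forced only by the simple poles --- indeed $(w-w_0)^{-k}\in H^1(\Omega_+)$ for every $k\geqslant 2$, as your own ray estimate shows.
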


\begin{proof}
  Assume $1<p<\infty$, as the $p=\infty$ case is obvious. We consider 
  the simple case of $F(w)=\frac1{w-w_0}$ first, where $w_0\in\Omega_-$.
  Let $w_0=u_0+\mathrm{i}v_0$, $d=\inf\{|w_0-w|\colon w\in\Omega_+\}$, 
  then $d>0$. Choose $r= |w_0-\frac{\mathrm{i}}2(|v_0|+1)|
    + |\sigma-\frac{\mathrm{i}}2(|v_0|+1)|$, then
  $E= \Gamma\cap \{\mathrm{Im}\,w\leqslant |v_0|+1\}\subset D(w_0,r)$. 
  For $0<s<\sigma$, $t>0$, denote 
  $E_{s,t}= \Gamma_{s,t}\cap \{\mathrm{Im}\,w\leqslant |v_0|+1\}$, 
  then $E_{s,t}\subset E$ and by estimating as the second part in the proof 
  of Lemma~\ref{lem-170826-0720}, we have 
  \[m(s,t,F)= \bigg(\int_{\Gamma_{s,t}} |F(w)|^p |\mathrm{d}w|\bigg)^{\frac1p}
    \leqslant \Big(6rd^{-p}+\frac2{p-1}\Big)^{\frac1p}.\]
  Since the boundary is independent of $s$ and $t$, we know that 
  $F(w)\in H^p(\Omega_+)$.
  
  If $F(w)=\frac1{(w-w_0)^k}$ with $w_0\in\Omega_-$, where $k$ is 
  a positive integer, then $F(w)\in H^p(\Omega_+)$ since 
  $\frac1{(w-w_0)}\in H^{pk}(\Omega_+)$.
  
  For general $F(w)$, we could rewrite it as 
  \[F(w)= \sum_{j=1}^{N_1}\sum_{k=1}^{N_2} \frac{c_{jk}}{(w-w_j)^k}\]
  where $c_{jk}$'s are constants and $w_j\in\Omega_-$, then 
  $F(w)\in H^p(\Omega_+)$ follows from the linearity of $H^p(\Omega_+)$.
\end{proof}

\begin{corollary}
  If $1< p\leqslant \infty$, $F(w)$ is a rational function which vanishes 
  at infinity and is with poles lying on $\Omega_+$, then 
  $F(w)\in H^p(\Omega_-)$. 
\end{corollary}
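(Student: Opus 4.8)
The plan is to mirror the proof of Corollary~\ref{cor-170826-0740}, exploiting the symmetry between $\Omega_+$ and $\Omega_-$ together with the estimates already established in Lemma~\ref{lem-170826-0720}. By partial fractions and the linearity of $H^p(\Omega_-)$, it suffices to treat a single simple pole: I would first show that $F(w)=1/(w-w_0)$ lies in $H^p(\Omega_-)$ whenever $w_0=u_0+\mathrm{i}v_0\in\Omega_+$, then pass to higher-order poles $1/(w-w_0)^k$ using the fact recorded in the preliminaries that $G\in H^{pk}(\Omega_-)$ if and only if $G^k\in H^p(\Omega_-)$, and finally assemble a general rational function vanishing at infinity with all poles in $\Omega_+$. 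As before, the case $p=\infty$ is immediate, since $w_0$ lies in the open set $\Omega_+$ and so has positive distance to $\overline{\Omega_-}$, forcing $F$ to be bounded there.

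For the simple pole with $1<p<\infty$, the crucial observation is a reversal of roles relative to Corollary~\ref{cor-170826-0740}. There the relevant boundary curves $\Gamma_{s,t}$ (with $0<s<\sigma$, $t>0$) all lie inside the strip, so the pole, living in $\Omega_-$, is exterior, and the second (exterior) estimate of Lemma~\ref{lem-170826-0720} applies. Here the $H^p(\Omega_-)$-norm is a supremum of $m(s_1,t_1,F)$ over $s_1>\sigma$ and $t_1<0$; these constraints force $D_{\sigma,0}\subset D_{s_1,t_1}$, so the pole $w_0\in\Omega_+=D_{\sigma,0}$ is interior to every such $\Gamma_{s_1,t_1}$. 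Consequently the first (interior) estimate of Lemma~\ref{lem-170826-0720} applies verbatim, yielding
\[\int_{\Gamma_{s_1,t_1}} |F(w)|^p\,|\mathrm{d}w|
  \leqslant C\big((s_1+u_0)^{1-p}+(v_0-t_1)^{1-p}+(s_1-u_0)^{1-p}\big),\]
with $C=B(1/2,(p-1)/2)$.

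The main point, and the only step requiring genuine care, is to verify that this bound is uniform over the \emph{unbounded} parameter range $s_1>\sigma$, $t_1<0$, which is exactly what the $H^p(\Omega_-)$-norm demands. This is where the sign of the exponent does the work: since $1-p<0$, each of $(s_1\pm u_0)^{1-p}$ decreases as $s_1$ grows and $(v_0-t_1)^{1-p}$ decreases as $t_1\to-\infty$. Thus the supremum over all admissible curves is controlled by the limiting values at $s_1\to\sigma^+$, $t_1\to 0^-$ — the curve closest to the pole — giving the finite bound
\[m(s_1,t_1,F)^p
  \leqslant C\big((\sigma+u_0)^{1-p}+v_0^{1-p}+(\sigma-u_0)^{1-p}\big),\]
where $|u_0|<\sigma$ and $v_0>0$ keep all three bases strictly positive. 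Hence $\sup m(s_1,t_1,F)<\infty$ and $F\in H^p(\Omega_-)$, and the reductions to higher-order and general poles follow as in Corollary~\ref{cor-170826-0740}. I do not expect a real obstacle here: the recession of the boundary curves toward infinity only improves the integrals rather than threatening them, so the geometry genuinely favours this direction, and the argument is if anything cleaner than its $\Omega_+$ counterpart.
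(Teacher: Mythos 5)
Your proposal is correct and takes essentially the same route as the paper's own proof: both reduce to the simple pole $1/(w-w_0)$ with $w_0\in\Omega_+$, observe that for $s>\sigma$, $t<0$ the pole is interior to $\Gamma_{s,t}$ so the first (interior) estimate from the proof of Lemma~\ref{lem-170826-0720} applies, and then use that $1-p<0$ makes each term decreasing in $s$ and $-t$ to get a bound uniform over all admissible curves (the paper records this as $C(2(\sigma-|u_0|)^{1-p}+v_0^{1-p})$, a trivially coarser form of your bound). The passage to higher-order poles and general rational functions is likewise handled exactly as in Corollary~\ref{cor-170826-0740}.
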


\begin{proof}
  Let $1<p<\infty$ and $F(w)=\frac1{w-w_0}$ with 
  $w_0=u_0+\mathrm{i}v_0\in\Omega_+$, then $|u_0|<\sigma$, $v_0>0$. 
  For $s>\sigma$, $t<0$, by the first part in the proof of 
  Lemma~\ref{lem-170826-0720}, we have 
  \begin{align*}
    \int_{\Gamma_{s,t}} |F(w)|^p |\mathrm{d}w|
    &\leqslant C((s+u_0)^{1-p}+ (v_0-t)^{1-p}+ (s-u_0)^{1-p})         \\
    &\leqslant C(2(\sigma-|u_0|)^{1-p}+v_0^{1-p}),
  \end{align*}
  where $C= B(\frac12,\frac{p-1}2)$, then $F(w)\in H^p(\Omega_-)$.
  The rest cases are treated as in Corollary~\ref{cor-170826-0740}.
\end{proof}

Combing the above two corollaries, we know that $H^p(\Omega_\pm)$ is 
not empty for $1<p\leqslant\infty$. If $0<p\leqslant 1$, we choose 
positive integer $n$ such that $pn>1$, then 
$(w-w_0)^{-1}\in H^{pn}(\Omega_+)$ for $w_0\in\Omega_-$, and 
$(w-w_0)^{-n}\in H^p(\Omega_+)$. The same analysis applies to $H^p(\Omega_-)$
with $0<p\leqslant 1$.

\section{Boundedness of Cauchy Integral on $\Gamma$}

If $1\leqslant p<\infty$, $F(\zeta)\in L^p(\Gamma,|\mathrm{d}\zeta|)$, 
then Cauchy integral (or transform) of $F(\zeta)$ on $\Gamma$ is defined as
\[CF(w)= \frac1{2\pi\mathrm{i}} \int_{\Gamma} \frac{F(\zeta)}{\zeta-w}
      \mathrm{d}\zeta \quad\text{for } w\in\mathbb{C}\setminus\Gamma.\]
By H\"{o}lder's inequality and Lemma~\ref{lem-170826-0720}, $CF(w)$ is 
well-defined on $\mathbb{C}\setminus\Gamma$. In fact, it is also analytic.

\begin{lemma}\label{lem-170828-2000}
  If $1\leqslant p<\infty$, then $CF(w)$ is analytic on 
  $\mathbb{C}\setminus\Gamma$.
\end{lemma}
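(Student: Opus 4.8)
The plan is to prove directly that $CF$ is complex-differentiable at every point of $\mathbb{C}\setminus\Gamma$, with the expected derivative obtained by differentiating the Cauchy kernel under the integral sign. Fix $w_0\in\mathbb{C}\setminus\Gamma$ and set $d_0=\inf\{|\zeta-w_0|\colon \zeta\in\Gamma\}$, which is positive since $\Gamma$ is closed and $w_0\notin\Gamma$. Restricting attention to $w$ with $|w-w_0|<d_0/2$ guarantees $|\zeta-w|\geqslant d_0/2$ for all $\zeta\in\Gamma$, so every integral below is well-defined by H\"{o}lder's inequality together with Lemma~\ref{lem-170826-0720}. The candidate derivative is
\[(CF)'(w_0)=\frac1{2\pi\mathrm{i}}\int_\Gamma \frac{F(\zeta)}{(\zeta-w_0)^2}\,\mathrm{d}\zeta.\]

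Next I would form the difference quotient and simplify the kernel. Using the elementary identity
\[\frac1{w-w_0}\Big(\frac1{\zeta-w}-\frac1{\zeta-w_0}\Big)=\frac1{(\zeta-w)(\zeta-w_0)},\]
the difference quotient $\big(CF(w)-CF(w_0)\big)/(w-w_0)$ becomes $\frac1{2\pi\mathrm{i}}\int_\Gamma \frac{F(\zeta)}{(\zeta-w)(\zeta-w_0)}\,\mathrm{d}\zeta$, so it suffices to prove that this converges, as $w\to w_0$, to the candidate derivative. Writing $g_w(\zeta)=[(\zeta-w)(\zeta-w_0)]^{-1}$ and $g_{w_0}(\zeta)=(\zeta-w_0)^{-2}$, a second application of the same identity gives
\[g_w(\zeta)-g_{w_0}(\zeta)=\frac{w-w_0}{(\zeta-w)(\zeta-w_0)^2},\]
and the problem reduces to estimating $\int_\Gamma F(\zeta)\big(g_w(\zeta)-g_{w_0}(\zeta)\big)\,\mathrm{d}\zeta$.

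The main obstacle is that $\Gamma$ has infinite length, so $F\in L^p(\Gamma)$ need not be integrable when $p>1$, and one cannot simply dominate the integrand by a multiple of $|F|$ and invoke dominated convergence. Instead I would control the integral by H\"{o}lder's inequality, bounding it by $\lVert F\rVert_{L^p(\Gamma)}\,\lVert g_w-g_{w_0}\rVert_{L^q(\Gamma)}$. From the displayed expression and $|\zeta-w|\geqslant d_0/2$ one obtains the pointwise bound $|g_w(\zeta)-g_{w_0}(\zeta)|\leqslant (2|w-w_0|/d_0)\,|\zeta-w_0|^{-2}$; since $p<\infty$ forces $q>1$, we have $2q>1$, and Lemma~\ref{lem-170826-0720} guarantees $(\zeta-w_0)^{-1}\in L^{2q}(\Gamma)$, that is $\int_\Gamma |\zeta-w_0|^{-2q}\,|\mathrm{d}\zeta|<\infty$. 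Hence $\lVert g_w-g_{w_0}\rVert_{L^q(\Gamma)}\leqslant C|w-w_0|\to 0$ as $w\to w_0$ (in the limiting case $q=\infty$, i.e.\ $p=1$, the same estimate holds with the supremum bound $|\zeta-w_0|^{-2}\leqslant d_0^{-2}$ in place of the integral). This shows the difference quotient converges to $(CF)'(w_0)$, so $CF$ is complex-differentiable, hence analytic, at the arbitrary point $w_0\in\mathbb{C}\setminus\Gamma$.
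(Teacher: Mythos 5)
Your proof is correct, and it takes a genuinely different route to analyticity than the paper does. The paper only establishes \emph{continuity} of $CF$ on $\Omega_\pm$ (via the same kernel estimate $|\zeta-w_1|\geqslant \frac12|\zeta-w|$ and the same appeal to H\"{o}lder plus Lemma~\ref{lem-170826-0720} with exponent $2q$), and then invokes Morera's theorem to upgrade continuity to analyticity; that last step silently requires a Fubini-type interchange of the triangle contour integral with the integral over $\Gamma$. You instead prove complex differentiability directly: the algebraic identities for the difference quotient and for $g_w-g_{w_0}$ reduce everything to the estimate $\lVert g_w-g_{w_0}\rVert_{L^q(\Gamma)}\leqslant C|w-w_0|$, which you obtain from exactly the same ingredients (the lower bound $|\zeta-w|\geqslant d_0/2$ near $w_0$, H\"{o}lder, and $(\zeta-w_0)^{-1}\in L^{2q}(\Gamma)$ from Lemma~\ref{lem-170826-0720}), with the $p=1$, $q=\infty$ case handled separately and correctly. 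What your approach buys: it bypasses Morera (and the unstated Fubini argument) entirely, and it yields as a byproduct the explicit formula $(CF)'(w_0)=\frac1{2\pi\mathrm{i}}\int_\Gamma \frac{F(\zeta)}{(\zeta-w_0)^2}\,\mathrm{d}\zeta$, i.e.\ that one may differentiate under the integral sign. What the paper's approach buys is a slightly lighter kernel computation, since a continuity estimate needs only one application of the resolvent identity rather than two; but your argument is self-contained and arguably cleaner as a complete proof.
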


\begin{proof}
  Let $w$, $w_1\in\Omega_+$ with $w$ fixed, then
  \begin{align*}
    |CF(w)- CF(w_1)|
    &= \bigg|\frac1{2\pi\mathrm{i}} \int_{\Gamma} \bigg(\frac{F(\zeta)}{\zeta-w}
          - \frac{F(\zeta)}{\zeta-w_1}\bigg)\mathrm{d}\zeta\bigg|           \\
    &\leqslant \frac{|w-w_1|}{2\pi} \int_{\Gamma} 
          \frac{|F(\zeta)||\mathrm{d}\zeta|}{|\zeta-w||\zeta-w_1|},
  \end{align*}
  and we denote the last integral as $I$. Since $w\in\Omega_+$, there exists
  $\delta>0$, such that $D(w,2\delta)\subset\Omega_+$. For $\zeta\in\Gamma$
  and $w_1\in D(w,\delta)$, we have
  \[|w-w_1|< \delta< 2\delta\leqslant |\zeta-w|,\]
  then
  \[|\zeta- w_1|
    \geqslant |\zeta-w|- |w-w_1|
    \geqslant \frac12|\zeta-w|.\]
  It follows that, 
  \[|I|
    \leqslant \int_{\Gamma} \frac{2|F(\zeta)|}{|\zeta-w|^2}|\mathrm{d}\zeta|
    \leqslant 2\lVert F\rVert_{L^p(\Gamma,|\mathrm{d}\zeta|)}
        \lVert (\cdot- w)^{-1}\rVert_{L^{2q}(\Gamma,|\mathrm{d}\zeta|)}^2,\]
  where $\frac1p+\frac1q=1$ and 
  $(\zeta-w)^{-1}\in L^{2q}(\Gamma,|\mathrm{d}\zeta|)$
  by Lemma~\ref{lem-170826-0720}, since $1< q\leqslant \infty$. 
  We have proved that $I$ is bounded by 
  a constant which depends on $w$ only. Now let $w_1\to w$, then
  \[|CF(w)- CF(w_1)|\leqslant \frac{|w-w_1|}{2\pi} I\to 0,\]
  and $CF(w)$ is continuous on $\Omega_+$. It is then easy to verify, 
  by Morera's theorem, that $CF(w)$ is analytic on $\Omega_+$.
  
  We could prove that $CF(w)$ is analytic on $\Omega_-$ in the same way, 
  thus it is analytic on $\Omega_+\cup\Omega_-= \mathbb{C}\setminus\Gamma$.
\end{proof}

Actually, we could further prove that $CF(w)\in H^p(\Omega_\pm)$ 
for $1<p<\infty$ and the Cauchy transform is bounded, 
see Theorem~\ref{thm-170826-1440}. The following lemma 
has been proved in \cite{Dz66}, and is only a special case of a rather 
generalized theorem which has a long and complicated proof. The proof 
we provide here is greatly simplified, and is with a better transform norm, 
while the main idea still comes from the original one.

Remeber that, the Fourier transform of $f(t)\in L^2(\mathbb{R})$ is defined as
\[\hat{f}(x)= \frac1{\sqrt{2\pi}} \int_{\mathbb{R}} 
    f(t)\mathrm{e}^{-\mathrm{i}xt}\,\mathrm{d}t \quad
    \text{for } t\in\mathbb{R},\]
and, by Plancherel theorem, $\lVert \hat{f}\rVert_{L^2(\mathbb{R})}
  = \lVert f\rVert_{L^2(\mathbb{R})}$, while Parseval formula shows that 
\[\int_{\mathbb{R}} f(t)\overline{g(t)}\,\mathrm{d}t
  = \int_{\mathbb{R}} \hat{f}(x)\overline{\hat{g}(x)}\,\mathrm{d}x,\]
for $f(t)$, $g(t)\in L^2(\mathbb{R})$. See~\cite{Ru87}.

\begin{lemma}[\cite{Dz66}]\label{lem-170826-1020}
  If $f(t)\in L^2(\mathbb{R}_+)$, and define
  \[g(y)= \int_{\mathbb{R}_+} \mathrm{e}^{-yt} f(t)\,\mathrm{d}t\quad
    \text{for } y>0,\]
  then $\lVert g\rVert_{L^2(\mathbb{R}_+)}
    \leqslant \sqrt{\pi} \lVert f\rVert_{L^2(\mathbb{R}_+)}$.
\end{lemma}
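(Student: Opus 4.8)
The plan is to bound $\|g\|_{L^2(\mathbb{R}_+)}^2$ directly by a weighted Cauchy--Schwarz (Schur-test) argument, which yields the sharp constant $\pi$ with no appeal to transform theory. First I would check that $g$ is well defined: for each fixed $y>0$ the function $t\mapsto e^{-yt}$ lies in $L^2(\mathbb{R}_+)$ with $\int_0^\infty e^{-2yt}\,dt=1/(2y)$, so Cauchy--Schwarz shows $g(y)$ is finite. The key step is then to split the exponential weight symmetrically: writing $e^{-yt}f(t)=\big(e^{-yt/2}t^{-1/4}\big)\big(e^{-yt/2}t^{1/4}f(t)\big)$ and applying Cauchy--Schwarz gives
\[ |g(y)|^2 \leqslant \Big(\int_0^\infty e^{-yt}t^{-1/2}\,dt\Big) \Big(\int_0^\infty e^{-yt}t^{1/2}|f(t)|^2\,dt\Big). \]
The first factor is a Gamma integral: the substitution $u=yt$ evaluates it to $y^{-1/2}\Gamma(\tfrac12)=\sqrt{\pi/y}$.

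Next I would integrate in $y$ over $(0,\infty)$ and interchange the order of integration; since the integrand is nonnegative, Tonelli's theorem justifies this with no extra hypotheses. The inner $y$-integral is again a Gamma integral, $\int_0^\infty y^{-1/2}e^{-yt}\,dy=\Gamma(\tfrac12)\,t^{-1/2}=\sqrt\pi\,t^{-1/2}$, and the two powers $t^{1/2}$ and $t^{-1/2}$ cancel, leaving $\int_0^\infty|g(y)|^2\,dy\leqslant \pi\int_0^\infty|f(t)|^2\,dt$. Taking square roots yields $\|g\|_{L^2(\mathbb{R}_+)}\leqslant\sqrt\pi\,\|f\|_{L^2(\mathbb{R}_+)}$, which is exactly the claimed bound.

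The only genuine decision is the choice of the weight $t^{-1/2}$, and I expect this to be the sole delicate point: it is forced by the requirement that, after the Fubini step, the two Gamma integrals combine to the single constant $\pi$. Equivalently, $t^{-1/2}$ is the homogeneous eigenfunction of the Hilbert kernel $1/(s+t)$ with eigenvalue $\pi$, so the whole argument is a Schur test for Hilbert's inequality applied to the identity $\int_0^\infty|g(y)|^2\,dy=\int_0^\infty\!\int_0^\infty f(s)\overline{f(t)}\,(s+t)^{-1}\,ds\,dt$; everything else is a routine Gamma-function computation. As an alternative that uses the Plancherel theorem recorded just above the lemma, one could substitute $y=e^x$, $t=e^u$ and insert the weights $e^{x/2}$, $e^{u/2}$ to exhibit $g$ as a convolution on $\mathbb{R}$ with kernel $K(r)=e^{r/2}e^{-e^r}$; then Plancherel gives $\|g\|_{2}\leqslant\sqrt{2\pi}\,\|\hat K\|_\infty\,\|f\|_{2}$, and since $K\geqslant0$ one has $\|\hat K\|_\infty=\hat K(0)=\Gamma(\tfrac12)/\sqrt{2\pi}=1/\sqrt2$, again producing the constant $\sqrt\pi$.
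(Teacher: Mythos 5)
Your main argument is correct and gives the stated constant, but it follows a genuinely different route from the paper. You prove the bound as a Schur test for Hilbert's inequality: the splitting $e^{-yt}|f(t)|=\big(e^{-yt/2}t^{-1/4}\big)\big(e^{-yt/2}t^{1/4}|f(t)|\big)$, Cauchy--Schwarz, and Tonelli reduce everything to the two Gamma integrals $\int_0^\infty e^{-yt}t^{-1/2}\,\mathrm{d}t=\sqrt{\pi/y}$ and $\int_0^\infty y^{-1/2}e^{-yt}\,\mathrm{d}y=\sqrt{\pi}\,t^{-1/2}$, whose product is exactly $\pi$; I checked the computation and the exponents do cancel as claimed. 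The paper instead makes the exponential substitution $t\mapsto e^t$, $y\mapsto e^y$ to transfer the problem to $L^2(\mathbb{R})$, and then applies Parseval/Plancherel together with the uniform bound $\bigl|\int_0^\infty e^{-t}t^{-1/2+\mathrm{i}x}\,\mathrm{d}t\bigr|\leqslant\Gamma(\tfrac12)=\sqrt{\pi}$ on the resulting Mellin-type multiplier. Your closing alternative (the convolution kernel $K(r)=e^{r/2}e^{-e^r}$ with $\|\hat K\|_\infty=\hat K(0)=1/\sqrt2$) is essentially the paper's proof repackaged through the convolution theorem, so you have in effect both proofs. What the two approaches buy: your Schur-test argument is shorter, entirely elementary (no Fourier analysis at all), and makes transparent why the weight $t^{-1/2}$ and the constant $\pi$ appear --- $t^{-1/2}$ is the homogeneous eigenfunction of the kernel $1/(s+t)$ with eigenvalue $\pi$, as you note, and the constant $\sqrt{\pi}$ is in fact the exact norm of the Laplace transform on $L^2(\mathbb{R}_+)$; the paper's Fourier argument is heavier but uses precisely the Plancherel/Parseval machinery it records immediately before the lemma, which is the toolbox the rest of the section (Paley--Wiener, Corollary~\ref{cor-170826-1220}) runs on.
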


\begin{proof}
  Replace $t$ with $\mathrm{e}^t$ in the above integral,
  \[g(y)= \int_{\mathbb{R}} \mathrm{e}^{-y\mathrm{e}^t} f(\mathrm{e}^t)
      \mathrm{e}^t\,\mathrm{d}t
    = \int_{\mathbb{R}} f(\mathrm{e}^t)\mathrm{e}^{\frac{t}2}\cdot
         \overline{\mathrm{e}^{-y\mathrm{e}^t+\frac{t}2}}\,\mathrm{d}t.\]
  Since
  \[\int_{\mathbb{R}} |f(\mathrm{e}^t)\mathrm{e}^{\frac{t}2}|^2\,\mathrm{d}t
    = \int_{\mathbb{R}_+} |f(t)|^2\,\mathrm{d}t
    = \lVert f\rVert_{L^2(\mathbb{R}_+)}^2,\]
  or $\lVert f(\mathrm{e}^t)\mathrm{e}^{\frac{t}2}\rVert_{L^2(\mathbb{R})}
    = \lVert f\rVert_{L^2(\mathbb{R}_+)}$, and for $y>0$ fixed,
  \[\int_{\mathbb{R}} |\mathrm{e}^{-y\mathrm{e}^t+\frac{t}2}|^2\,\mathrm{d}t
    = \int_{\mathbb{R}_+} \mathrm{e}^{-2yt}\,\mathrm{d}t
    = \frac1{2y},\]
  then both $f(\mathrm{e}^t)\mathrm{e}^{\frac{t}2}$ and 
  $\mathrm{e}^{-y\mathrm{e}^t+\frac{t}2}$ are in $L^2(\mathbb{R})$, 
  and the Fourier transform of the latter is
  \[(\mathrm{e}^{-y\mathrm{e}^t+\frac{t}2})\,\hat{}\,(x)
    = \frac1{\sqrt{2\pi}} \int_{\mathbb{R}} 
        \mathrm{e}^{-y\mathrm{e}^t+\frac{t}2}
        \mathrm{e}^{-\mathrm{i}xt}\,\mathrm{d}t
    = \frac1{\sqrt{2\pi}} \int_{\mathbb{R}_+} 
        \mathrm{e}^{-yt} t^{-\frac12-\mathrm{i}x}\,\mathrm{d}t.\]
  Denote $(f(\mathrm{e}^t)\mathrm{e}^{\frac{t}2})\,\hat{}\,(x)$ as $h(x)$, then
  \[\lVert h\rVert_{L^2(\mathbb{R})}
    = \lVert f(\mathrm{e}^t)\mathrm{e}^{\frac{t}2}\rVert_{L^2(\mathbb{R})}
    = \lVert f\rVert_{L^2(\mathbb{R}_+)},\]
  and, by Parseval formula,
  \begin{align*}
    g(y)
    &= \int_{\mathbb{R}} (f(\mathrm{e}^t)\mathrm{e}^{\frac{t}2})\,\hat{}\,(x) 
           \overline{(\mathrm{e}^{-y\mathrm{e}^t+\frac{t}2})
              \,\hat{}\,(x)}\,\mathrm{d}x                  \\
    &= \int_{\mathbb{R}} h(x) \frac1{\sqrt{2\pi}} \int_{\mathbb{R}_+} 
          \mathrm{e}^{-yt} t^{-\frac12+\mathrm{i}x}\,\mathrm{d}t\,\mathrm{d}x   
  \end{align*}
  After replacing $t$ with $\frac{t}y$, we have 
  \begin{align*}
    g(y)
    &= \frac1{\sqrt{2\pi}} \int_{\mathbb{R}} h(x) \int_{\mathbb{R}_+} 
          \mathrm{e}^{-t} \Big(\frac{t}y\Big)^{-\frac12+\mathrm{i}x} 
          \frac{\mathrm{d}t}{y}\,\mathrm{d}x                        \\
    &= \frac1{\sqrt{2\pi}} \int_{\mathbb{R}} h(x) \int_{\mathbb{R}_+} 
          \mathrm{e}^{-t} t^{-\frac12+\mathrm{i}x}\,\mathrm{d}t 
          \,y^{-\frac12-\mathrm{i}x}\,\mathrm{d}x.
  \end{align*}
  
  Define $h_1(x)= h(x) \int_{\mathbb{R}_+} 
    \mathrm{e}^{-t} t^{-\frac12+\mathrm{i}x}\,\mathrm{d}t$, then
  \[g(y)
    = \frac1{\sqrt{2\pi}} \int_{\mathbb{R}} h_1(x) 
         y^{-\frac12-\mathrm{i}x}\,\mathrm{d}x,\quad
    \text{or\quad}
    g(\mathrm{e}^y)\mathrm{e}^{\frac{y}2}
    = \frac1{\sqrt{2\pi}} \int_{\mathbb{R}} h_1(x) 
         \mathrm{e}^{-\mathrm{i}yx}\,\mathrm{d}x.\]
  Since $h(x)\in L^2(\mathbb{R})$, and
  \[\bigg|\int_{\mathbb{R}_+} \mathrm{e}^{-t} 
        t^{-\frac12+\mathrm{i}x}\,\mathrm{d}t\bigg|
    \leqslant \int_{\mathbb{R}_+} t^{-\frac12}
        \mathrm{e}^{-t} \,\mathrm{d}t
    = \Gamma\Big(\frac12\Big)
    = \sqrt{\pi},\]
  where $\Gamma(\cdot)$ is Eular's Gamma function, then 
  $\lVert h_1\rVert_{L^2(\mathbb{R})}
    \leqslant \sqrt{\pi}\lVert h\rVert_{L^2(\mathbb{R})}$, and it follows that,
  $g(\mathrm{e}^y)\mathrm{e}^{\frac{y}2}= \widehat{h_1}(y)$, and 
  \begin{align*}
    \int_{\mathbb{R}} |g(\mathrm{e}^y)\mathrm{e}^{\frac{y}2}|^2\mathrm{d}y
    &= \lVert \widehat{h_1}\rVert_{L^2(\mathbb{R})}^2
     = \lVert h_1\rVert_{L^2(\mathbb{R})}^2                   \\
    &\leqslant \pi\lVert h\rVert_{L^2(\mathbb{R})}^2
     = \pi\lVert f\rVert_{L^2(\mathbb{R}_+)}^2.
  \end{align*}
  The left side above is obviously $\lVert g\rVert_{L^2(\mathbb{R}_+)}^2$, 
  thus $\lVert g\rVert_{L^2(\mathbb{R}_+)}
    \leqslant \sqrt{\pi} \lVert f\rVert_{L^2(\mathbb{R}_+)}$.
\end{proof}

The next corollary of Lemma~\ref{lem-170826-1020} is crucial to 
our proof of the boundedness of Cauchy transform of 
$L^p(\Gamma,|\mathrm{d}\zeta|)$ ($1<p<\infty$) functions, 
and we need a fatorization lemma on $H^p(\mathbb{C}_+)$ ($0<p<\infty$) 
during its proof.

\begin{lemma}[\cite{Ga07}]\label{lem-170801-1530}
  Let $\{z_n= x_n+\mathrm{i}y_n\}$ be a sequence of points in $\mathbb{C}_+$, 
  such that 
  \[\sum_{n=1}^{\infty}\frac{y_n}{1+|z_n|^2}< \infty,\]
  and $m$ be the number of $z_n$ equal to $\mathrm{i}$. 
  Then the Blaschke product
  \[B(z)= \bigg(\frac{z-\mathrm{i}}{z+\mathrm{i}}\bigg)^m
      \prod_{z_n\neq\mathrm{i}} \frac{|z_n^2+1|}{z_n^2+1}
      \cdot \frac{z-z_n}{z-\overline{z_n}}\]
  converges on $\mathbb{C}_+$, has non-tangential boundary limit $B(x)$ a.e.\@ 
  on $\mathbb{R}$, and the zeros of $B(z)$ are precisely 
  the points $z_n$, both counting multiplicity. Moreover, $|B(z)|<1$ 
  on $\mathbb{C}_+$ and $|B(x)|=1$ a.e.\@ on $\mathbb{R}$.
\end{lemma}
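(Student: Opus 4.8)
The plan is to treat the factors one at a time and then feed the product into the classical boundary theory for bounded analytic functions on $\mathbb{C}_+$. Writing $b_n(z)=\frac{|z_n^2+1|}{z_n^2+1}\cdot\frac{z-z_n}{z-\overline{z_n}}$ for $z_n\neq\mathrm{i}$, I first record that for $z=x+\mathrm{i}y\in\mathbb{C}_+$ one has $|z-\overline{z_n}|^2-|z-z_n|^2=4yy_n>0$, so $|b_n(z)|<1$ on $\mathbb{C}_+$ while $|b_n(x)|=1$ for real $x$; each $b_n$ is a conformal map of $\mathbb{C}_+$ onto the unit disk that sends $z_n$ to $0$. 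Using $(\mathrm{i}-z_n)(\mathrm{i}+z_n)=-(z_n^2+1)$, a short computation shows that the unimodular prefactor is exactly what makes $b_n(\mathrm{i})=|z_n-\mathrm{i}|/|z_n+\mathrm{i}|$ real and positive, whence
\[1-b_n(\mathrm{i})=\frac{4y_n}{|z_n+\mathrm{i}|\,(|z_n+\mathrm{i}|+|z_n-\mathrm{i}|)},\]
a quantity comparable to $y_n/(1+|z_n|^2)$. Thus the Blaschke hypothesis is precisely $\sum_n(1-b_n(\mathrm{i}))<\infty$.

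Next I would establish local uniform convergence of the product on $\mathbb{C}_+$ by bounding $\sum_n|1-b_n(z)|$ on each compact set $K$. Splitting $1-b_n(z)=(1-b_n(\mathrm{i}))+(b_n(\mathrm{i})-b_n(z))$ and computing the difference of the two M\"obius fractions gives
\[b_n(\mathrm{i})-b_n(z)=\frac{|z_n^2+1|}{z_n^2+1}\cdot\frac{2\mathrm{i}\,y_n(\mathrm{i}-z)}{(\mathrm{i}-\overline{z_n})(z-\overline{z_n})},\]
whose modulus is again $O_K\!\big(y_n/(1+|z_n|^2)\big)$; together with the comparability from the first step and the hypothesis, this yields $\sum_n|1-b_n(z)|<\infty$ uniformly on $K$. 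The limit $B(z)$ is therefore analytic and not identically zero, and since each factor vanishes only at $z_n$ with the stated multiplicity, the zeros of $B$ are exactly the $z_n$ counted with multiplicity. Finally $|b_n(z)|<1$ forces $|B(z)|\leqslant 1$, and the maximum principle (as $B\not\equiv 0$) upgrades this to $|B(z)|<1$ on $\mathbb{C}_+$.

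The boundary behavior is the crux of the argument. Since $B\in H^\infty(\mathbb{C}_+)$ with $\sup_{\mathbb{C}_+}|B|\leqslant 1$, the Fatou theorem for bounded analytic functions on the half-plane gives non-tangential limits $B(x)$ a.e.\@ on $\mathbb{R}$, and this part is routine. Showing $|B(x)|=1$ a.e.\@ is the delicate step, and I would argue with the tail products $R_N(z)=\prod_{n>N}b_n(z)$. Factoring $B=\big(\prod_{n\leqslant N}b_n\big)R_N$ and using $|b_n(x)|=1$ a.e.\@ gives $|B(x)|=|R_N(x)|$ a.e.\@ on $\mathbb{R}$. As $\log|R_N|$ is subharmonic and $R_N$ is bounded by $1$, the Poisson majorization at $\mathrm{i}$ yields
\[\log|R_N(\mathrm{i})|\leqslant\int_{\mathbb{R}}\frac{\log|R_N(x)|}{\pi(1+x^2)}\,\mathrm{d}x=\int_{\mathbb{R}}\frac{\log|B(x)|}{\pi(1+x^2)}\,\mathrm{d}x.\]
Letting $N\to\infty$, the tail $R_N(\mathrm{i})=\prod_{n>N}b_n(\mathrm{i})\to 1$, so the left-hand side tends to $0$ and the integral on the right is $\geqslant 0$; but $|B(x)|\leqslant 1$ makes $\log|B(x)|\leqslant 0$ a.e., so the integral is also $\leqslant 0$. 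Hence it vanishes and $\log|B(x)|=0$, i.e.\@ $|B(x)|=1$, a.e.\@ on $\mathbb{R}$.

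The hard part is exactly this last step: justifying the Poisson majorization for $\log|R_N|$ --- equivalently, that the bounded function $R_N$ carries no singular inner factor and that its interior values are dominated by the Poisson integral of its boundary log-modulus --- and controlling the passage to the limit through the normalized Poisson kernel at $\mathrm{i}$. Everything else reduces to the explicit M\"obius computations above and the standard Fatou theorem on $\mathbb{C}_+$. An alternative would be to push the whole problem to the unit disk through the Cayley transform $z\mapsto(z-\mathrm{i})/(z+\mathrm{i})$ and invoke the disk Blaschke theorem directly, but then one must check that non-tangential approach regions are preserved, which is no less delicate.
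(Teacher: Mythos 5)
The paper never proves this lemma at all: it is quoted directly from Garnett \cite{Ga07}, so your proposal can only be compared with the standard textbook argument --- and it is essentially a correct reconstruction of that argument. Your explicit computations all check out: $|z-\overline{z_n}|^2-|z-z_n|^2=4yy_n$; the identity $(\mathrm{i}+z_n)(\mathrm{i}-\overline{z_n})=-|z_n+\mathrm{i}|^2$ giving $b_n(\mathrm{i})=|z_n-\mathrm{i}|/|z_n+\mathrm{i}|$; the two-sided comparison of $1-b_n(\mathrm{i})$ with $y_n/(1+|z_n|^2)$; and the difference formula, whose numerator is $(z_n-\overline{z_n})(\mathrm{i}-z)=2\mathrm{i}y_n(\mathrm{i}-z)$. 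These yield locally uniform convergence, the identification of the zero set, $|B|<1$ by the maximum principle, and the existence of non-tangential limits via Fatou's theorem, exactly as in the classical proof; the tail-product argument for $|B(x)|=1$ a.e.\ is also the standard one.

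The one point you should repair is your description of the crux. The inequality you actually use,
\[\log|R_N(\mathrm{i})|\leqslant\int_{\mathbb{R}}\frac{\log|R_N(x)|}{\pi(1+x^2)}\,\mathrm{d}x,\]
is \emph{not} equivalent to ``$R_N$ carries no singular inner factor'': it holds for \emph{every} function in $H^\infty(\mathbb{C}_+)$ bounded by $1$, singular factor or not; absence of inner factors is what would give equality, which you never need. The distinction matters, because if your argument genuinely required knowing that the tail products carry no singular inner factor, it would be circular --- that assertion is essentially the content of the lemma being proved. As stated, the inequality can be established by elementary means: pull back by the Cayley transform to a bounded analytic $g$ on the disk, apply the sub-mean-value property to the subharmonic functions $\zeta\mapsto\log|g(r\zeta)|$, and let $r\to1$ using the reverse Fatou lemma for the nonpositive integrands (boundary limits exist a.e.\ by the Fatou theorem you already invoked). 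Alternatively, you can avoid logarithms altogether, as Rudin does: the mean-value property plus dominated convergence give
\[|R_N(\mathrm{i})|\leqslant\int_{\mathbb{R}}\frac{|R_N(x)|}{\pi(1+x^2)}\,\mathrm{d}x
  =\int_{\mathbb{R}}\frac{|B(x)|}{\pi(1+x^2)}\,\mathrm{d}x,\]
and letting $N\to\infty$ yields $1\leqslant\int_{\mathbb{R}}\frac{|B(x)|}{\pi(1+x^2)}\,\mathrm{d}x$; since $|B(x)|\leqslant1$ a.e.\ and the kernel integrates to $1$, this forces $|B(x)|=1$ a.e. With either repair your outline becomes a complete proof, following the same route as the source the paper cites.
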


\begin{lemma}[\cite{Ga07}]\label{lem-170801-1550}
  If $0<p<\infty$, $f(z)\in H^p(\mathbb{C}_+)$, $f\not\equiv 0$, 
  and $B(z)$ is the Blaschke product associated with the zeros of $f(z)$, 
  Then
  \[g(z)=\frac{f(z)}{B(z)}\neq 0,\quad
    \text{and } \lVert g\rVert_{H^p(\mathbb{C}_+)}
    = \lVert f\rVert_{H^p(\mathbb{C}_+)}.\]
\end{lemma}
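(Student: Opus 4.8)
The plan is to compare $f$ with $g=f/B$ line by line and to reduce the infinite Blaschke product to its finite partial products. Before anything else one checks that the zeros $\{z_n\}$ of $f$ satisfy the Blaschke condition $\sum_n y_n/(1+|z_n|^2)<\infty$ (a standard consequence of $f\in H^p(\mathbb{C}_+)$ via Jensen's formula), so that $B$ is well defined by Lemma~\ref{lem-170801-1530}. The easy half of the norm identity is then immediate: since $|B(z)|<1$ on $\mathbb{C}_+$, we have $|g(z)|=|f(z)|/|B(z)|\geqslant|f(z)|$ for all $z\in\mathbb{C}_+$, so $\int_{\mathbb{R}}|g(x+\mathrm{i}y)|^p\,\mathrm{d}x\geqslant\int_{\mathbb{R}}|f(x+\mathrm{i}y)|^p\,\mathrm{d}x$ for each $y>0$, and taking the supremum over $y$ gives $\lVert g\rVert_{H^p(\mathbb{C}_+)}\geqslant\lVert f\rVert_{H^p(\mathbb{C}_+)}$. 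Moreover, since by Lemma~\ref{lem-170801-1530} the zeros of $B$ are precisely the $z_n$ with matching multiplicity, $g=f/B$ extends analytically across each $z_n$ and has no zeros, which already proves the assertion $g\neq0$. Everything thus reduces to the reverse inequality $\lVert g\rVert_{H^p(\mathbb{C}_+)}\leqslant\lVert f\rVert_{H^p(\mathbb{C}_+)}$.

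For the reverse inequality I would first treat finite Blaschke products. Let $B_N$ be the product of the factors coming from $z_1,\dots,z_N$ and set $g_N=f/B_N$. Each factor $\tfrac{z-z_n}{z-\overline{z_n}}$ is unimodular on $\mathbb{R}$ and has modulus strictly less than $1$ on $\mathbb{C}_+$, so $|B_N|<1$ on $\mathbb{C}_+$ and $|B_N(x)|=1$ for a.e.\ $x\in\mathbb{R}$, while $g_N$ is analytic on $\mathbb{C}_+$. The key claim is that $g_N\in H^p(\mathbb{C}_+)$ with $\lVert g_N\rVert_{H^p(\mathbb{C}_+)}=\lVert f\rVert_{H^p(\mathbb{C}_+)}$. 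Because $|B_N|=1$ on the boundary, the non-tangential limits satisfy $|g_N(x)|=|f(x)|$ a.e., so $f$ and $g_N$ carry identical boundary $L^p$-data. To turn this boundary identity into an identity of $H^p$-norms I would use the harmonic-majorant description of $H^p(\mathbb{C}_+)$: the subharmonic function $|g_N|^p$ is majorized by the Poisson integral of $|f|^p\in L^1(\mathbb{R})$, so that $\int_{\mathbb{R}}|g_N(x+\mathrm{i}y)|^p\,\mathrm{d}x\leqslant\int_{\mathbb{R}}P[|f|^p](x+\mathrm{i}y)\,\mathrm{d}x=\int_{\mathbb{R}}|f(x)|^p\,\mathrm{d}x$ for every $y>0$, the last equality because the Poisson kernel has total mass $1$. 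Combined with $|g_N|\geqslant|f|$ this gives $\lVert g_N\rVert_{H^p(\mathbb{C}_+)}=\lVert f\rVert_{H^p(\mathbb{C}_+)}$.

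Finally I would pass to the limit. By Lemma~\ref{lem-170801-1530} the partial products $B_N$ converge to $B$ locally uniformly on $\mathbb{C}_+$, and since $B$ is zero-free off $\{z_n\}$ this gives $g_N=f/B_N\to f/B=g$ locally uniformly there. Fixing $y>0$ and applying Fatou's lemma to the nonnegative functions $|g_N(x+\mathrm{i}y)|^p$ yields $\int_{\mathbb{R}}|g(x+\mathrm{i}y)|^p\,\mathrm{d}x\leqslant\liminf_{N\to\infty}\int_{\mathbb{R}}|g_N(x+\mathrm{i}y)|^p\,\mathrm{d}x\leqslant\lVert f\rVert_{H^p(\mathbb{C}_+)}^p$, and taking the supremum over $y>0$ gives $\lVert g\rVert_{H^p(\mathbb{C}_+)}\leqslant\lVert f\rVert_{H^p(\mathbb{C}_+)}$; with the first paragraph this is the desired equality. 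The main obstacle is the finite-product step: passing from the matched boundary modulus $|g_N|=|f|$ on $\mathbb{R}$ to uniform control of $g_N$ on every interior line is exactly where the Poisson/harmonic-majorant theory of $H^p(\mathbb{C}_+)$ enters, and for $0<p<1$ one must argue through the subharmonicity of $|g_N|^p$ together with the usual $\limsup$ boundary estimates rather than a linear Poisson representation. If one prefers to avoid redeveloping this machinery on the half-plane, an equally valid route is to transfer the statement to the unit disk via the Cayley transform, where the Blaschke factorization and its norm invariance are entirely classical.
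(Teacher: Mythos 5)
The paper itself offers no proof of this lemma --- it is quoted verbatim from Garnett's book --- so your proposal must stand on its own. Its architecture (finite partial products $B_N$, the bound $\lVert g_N\rVert_{H^p}\leqslant\lVert f\rVert_{H^p}$, then a monotone/Fatou passage as $N\to\infty$, plus the trivial reverse inequality from $|B|<1$) is the classical Duren-style argument, and your first and last paragraphs are correct.

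The gap is in the finite-product step, and it is genuine. You deduce that the subharmonic function $|g_N|^p$ is majorized by the Poisson integral of $|f|^p$ from the boundary identity $|g_N^*|=|f^*|$ a.e. On the unbounded domain $\mathbb{C}_+$ this inference is false without a growth hypothesis: $u(z)=\operatorname{Im}z$ is harmonic, vanishes identically on $\mathbb{R}$, yet is not dominated by the Poisson integral of $0$. The majorization $|h|^p\leqslant P[|h^*|^p]$ is a theorem about functions \emph{already known} to lie in $H^p(\mathbb{C}_+)$, and $g_N\in H^p(\mathbb{C}_+)$ is exactly what you are trying to prove --- so the step is either circular or, read as a bare maximum-principle claim, wrong. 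There is a second circularity: for $0<p<1$ the existence of the non-tangential boundary values $f^*$ you invoke is classically \emph{derived from} this factorization lemma, so it cannot be an ingredient of its proof (for $p\geqslant1$ it can be obtained independently, but the lemma is stated for all $p>0$, and the paper uses it precisely to reach $p<1$). Both defects are repairable along your own lines: replace $f$ and $g_N$ by the vertical translates $f(\cdot+\mathrm{i}\delta)$, $g_N(\cdot+\mathrm{i}\delta)$, which are bounded on $\overline{\mathbb{C}_+}$ (by the pointwise bound $|f(x+\mathrm{i}y)|\leqslant Cy^{-1/p}\lVert f\rVert_{H^p}$, and because $1/|B_N|$ is bounded on $\{y\geqslant\delta\}$ once the removable singularities are removed), so that the bounded-function Phragm\'en--Lindel\"of/Poisson argument is legitimate for them; then use the elementary estimate, valid for a finite product, $1-|B_N(x+\mathrm{i}\delta)|^2\leqslant\sum_{n\leqslant N}4\delta/y_n$, which tends to $0$ uniformly in $x$ as $\delta\to0$ --- this is the half-plane substitute for Duren's ``$|B_N(re^{\mathrm{i}\theta})|\to1$ uniformly as $r\to1$'' --- and let $\delta\to0$ to obtain $\lVert g_N\rVert_{H^p}\leqslant\lVert f\rVert_{H^p}$ with no mention of boundary values. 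Your closing alternative (transfer to the disc by the Cayley map, which is in effect what the paper's citation to Garnett amounts to) is sound but not a one-liner either: one must establish the isometry $f\mapsto c\,(1+w)^{-2/p}f(\phi(w))$ between $H^p(\mathbb{C}_+)$ and $H^p(\mathbb{D})$ and check that half-plane Blaschke products transfer to disc Blaschke products without acquiring a singular inner factor.
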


\begin{corollary}\label{cor-170826-1220}
  If $0<p<\infty$, $f(z)\in H^p(\mathbb{C}_+)$, $y>0$ and $x\in\mathbb{R}$, 
  then there exists a positive function $g(\mathrm{i}y)$ on $\mathbb{R}_+$,
  such that $|f(x+\mathrm{i}y)|\leqslant g(\mathrm{i}y)$ for all 
  $x\in\mathbb{R}$, and
  \[\lVert f(x+\mathrm{i}\cdot)\rVert_{L^p(\mathbb{R}_+)}
    \leqslant \lVert g(\mathrm{i}\cdot)\rVert_{L^p(\mathbb{R}_+)}
    \leqslant 2^{-\frac1p} \lVert f\rVert_{H^p(\mathbb{C}_+)}.\]
\end{corollary}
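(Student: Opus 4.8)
The plan is to reduce the whole statement to the Hilbert space case $p=2$, where Lemma~\ref{lem-170826-1020} applies almost verbatim, and then to bootstrap to general $p$ by the Blaschke factorization provided by Lemmas~\ref{lem-170801-1530} and~\ref{lem-170801-1550}. The appearance of the Laplace-type operator $g(y)=\int_{\mathbb{R}_+}\mathrm{e}^{-yt}f(t)\,\mathrm{d}t$ in Lemma~\ref{lem-170826-1020} is the signal that one should pass through the Paley--Wiener representation of $H^2(\mathbb{C}_+)$.

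First I would treat $p=2$. Using the Paley--Wiener representation, every $h\in H^2(\mathbb{C}_+)$ may be written as $h(z)=\int_{\mathbb{R}_+}\phi(t)\mathrm{e}^{\mathrm{i}zt}\,\mathrm{d}t$ with $\phi\in L^2(\mathbb{R}_+)$ and $\lVert\phi\rVert_{L^2(\mathbb{R}_+)}=(2\pi)^{-1/2}\lVert h\rVert_{H^2(\mathbb{C}_+)}$. Writing $z=x+\mathrm{i}y$ and moving absolute values inside the integral gives the $x$-free bound
\[|h(x+\mathrm{i}y)|\leqslant \int_{\mathbb{R}_+}|\phi(t)|\mathrm{e}^{-yt}\,\mathrm{d}t=:g_h(\mathrm{i}y).\]
Now $g_h(\mathrm{i}y)$ is exactly the function produced by Lemma~\ref{lem-170826-1020} from the nonnegative $L^2(\mathbb{R}_+)$ input $|\phi|$, so $\lVert g_h(\mathrm{i}\cdot)\rVert_{L^2(\mathbb{R}_+)}\leqslant \sqrt{\pi}\,\lVert\phi\rVert_{L^2(\mathbb{R}_+)}=2^{-1/2}\lVert h\rVert_{H^2(\mathbb{C}_+)}$, which is the asserted estimate with constant $2^{-1/p}$ in the case $p=2$.

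Next I would pass to general $0<p<\infty$. The case $f\equiv 0$ is trivial, so assume $f\not\equiv 0$ and apply Lemma~\ref{lem-170801-1550} to factor $f=B\,g_1$, where $B$ is the Blaschke product over the zeros of $f$ (so $|B|\leqslant 1$ on $\mathbb{C}_+$ by Lemma~\ref{lem-170801-1530}) and $g_1\in H^p(\mathbb{C}_+)$ is zero-free with $\lVert g_1\rVert_{H^p}=\lVert f\rVert_{H^p}$. Since $g_1$ is zero-free on the simply connected domain $\mathbb{C}_+$, it has a holomorphic power $h:=g_1^{p/2}\in H^2(\mathbb{C}_+)$, and because $|h|^2=|g_1|^p$ one gets $\lVert h\rVert_{H^2}^2=\lVert g_1\rVert_{H^p}^p=\lVert f\rVert_{H^p}^p$. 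Using $|B|\leqslant 1$ together with the $p=2$ step applied to $h$,
\[|f(x+\mathrm{i}y)|\leqslant |g_1(x+\mathrm{i}y)|=|h(x+\mathrm{i}y)|^{2/p}\leqslant g_h(\mathrm{i}y)^{2/p}=:g(\mathrm{i}y).\]
This positive $g$ dominates $|f(x+\mathrm{i}y)|$ for every $x$, which yields the left inequality immediately, while
\[\lVert g(\mathrm{i}\cdot)\rVert_{L^p(\mathbb{R}_+)}^p=\int_{\mathbb{R}_+}g_h(\mathrm{i}y)^2\,\mathrm{d}y=\lVert g_h(\mathrm{i}\cdot)\rVert_{L^2(\mathbb{R}_+)}^2\leqslant 2^{-1}\lVert h\rVert_{H^2}^2=2^{-1}\lVert f\rVert_{H^p}^p,\]
which rearranges to the right inequality $\lVert g(\mathrm{i}\cdot)\rVert_{L^p(\mathbb{R}_+)}\leqslant 2^{-1/p}\lVert f\rVert_{H^p(\mathbb{C}_+)}$.

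The step I expect to be the main obstacle is keeping the constant sharp, namely matching the normalization $\lVert\phi\rVert_{L^2(\mathbb{R}_+)}=(2\pi)^{-1/2}\lVert h\rVert_{H^2}$ in Paley--Wiener against the $\sqrt{\pi}$ of Lemma~\ref{lem-170826-1020} so that exactly $2^{-1/2}$ (and hence $2^{-1/p}$) survives; the second delicate point is justifying the zero-free holomorphic power $g_1^{p/2}$ and its membership in $H^2$ with the clean identity $\lVert h\rVert_{H^2}^2=\lVert f\rVert_{H^p}^p$, which is precisely what the factorization Lemma~\ref{lem-170801-1550} secures by removing the zeros without changing the norm. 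The remaining manipulations are routine bookkeeping with the two inequalities.
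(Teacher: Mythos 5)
Your proposal is correct and follows essentially the same route as the paper: Paley--Wiener plus Lemma~\ref{lem-170826-1020} for the case $p=2$, then Blaschke factorization (Lemmas~\ref{lem-170801-1530} and~\ref{lem-170801-1550}) to reduce general $p$ to $p=2$ via the zero-free power $Q^{p/2}$, with the same constant bookkeeping. Your $g_1$, $h$, and $g_h^{2/p}$ correspond exactly to the paper's $Q$, $Q^{p/2}$, and $g_2^{2/p}$.
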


\begin{proof}
  We consider the $p=2$ case first, then by one of Paley-Wiener 
  theorems~\cite{Ru87}, there exists $h(t)\in L^2(\mathbb{R}_+)$, such that
  \[f(z)= \frac1{\sqrt{2\pi}} \int_{\mathbb{R}_+} 
      h(t)\mathrm{e}^{\mathrm{i}tz}\,\mathrm{d}t,\]
  with $\lVert f\rVert_{H^2(\mathbb{C}_+)}
    = \lVert h\rVert_{L^2(\mathbb{R}_+)}$, then
  \[|f(x+\mathrm{i}y)|
    = \frac1{\sqrt{2\pi}} \bigg|\int_{\mathbb{R}_+} 
        h(t)\mathrm{e}^{\mathrm{i}t(x+\mathrm{i}y)}\,\mathrm{d}t\bigg|
    \leqslant \frac1{\sqrt{2\pi}}\int_{\mathbb{R}_+} 
        |h(t)|\mathrm{e}^{-ty}\,\mathrm{d}t,\] 
  and we denote the last expression as $g_1(\mathrm{i}y)$. Since $|h(t)|$ 
  and $h(t)$ have the same $L^2(\mathbb{R}_+)$ norm, 
  we have, by Lemma~\ref{lem-170826-1020}, 
  \[\lVert f(x+\mathrm{i}\cdot)\rVert_{L^2(\mathbb{R}_+)}
    \leqslant \lVert g_1(\mathrm{i}\cdot)\rVert_{L^2(\mathbb{R}_+)}
    \leqslant \frac1{\sqrt2} \lVert h\rVert_{L^2(\mathbb{R}_+)}
    = \frac1{\sqrt2} \lVert f\rVert_{H^2(\mathbb{C}_+)}.\]
  
  For other $p\in(0,\infty)$, we could write $f(z)= B(z)Q(z)$ 
  where $Q(z)\neq0$ with $\lVert Q\rVert_{H^p(\mathbb{C}_+)}
    = \lVert f\rVert_{H^p(\mathbb{C}_+)}$, and $B(z)$ is the Blaschke product.
  Then $Q^{\frac{p}2}(z)\in H^2(\mathbb{C}_+)$ and by what we have proved, 
  there exists a positive function $g_2(\mathrm{i}y)$ such that
  $|Q^{\frac{p}2}(x+\mathrm{i}y)|\leqslant g_2(\mathrm{i}y)$ 
  for all $x\in\mathbb{R}$, and
  \[\lVert Q^{\frac{p}2}(x+\mathrm{i}\cdot)\rVert_{L^2(\mathbb{R}_+)}
    \leqslant \lVert g_2(\mathrm{i}\cdot)\rVert_{L^2(\mathbb{R}_+)}
    \leqslant \frac1{\sqrt2}\lVert Q^{\frac{p}2}\rVert_{H^2(\mathbb{C}_+)},\]
  or
  \[\lVert Q(x+\mathrm{i}\cdot)\rVert_{L^p(\mathbb{R}_+)}
    \leqslant \lVert g_2^{\frac2p}(\mathrm{i}\cdot)\rVert_{L^p(\mathbb{R}_+)}
    \leqslant 2^{-\frac1p}\lVert Q\rVert_{H^p(\mathbb{C}_+)}.\]
  It follows that $|f(x+\mathrm{i}y)|\leqslant |Q(x+\mathrm{i}y)|
    \leqslant g_2^{\frac2p}(\mathrm{i}y)$, and
  \[\lVert f(x+\mathrm{i}\cdot)\rVert_{L^p(\mathbb{R}_+)}
    \leqslant \lVert g_2^{\frac2p}(\mathrm{i}\cdot)\rVert_{L^p(\mathbb{R}_+)}
    \leqslant 2^{-\frac1p}\lVert f\rVert_{H^p(\mathbb{C}_+)}.\]
  Denote $g_2^{\frac2p}(\mathrm{i}y)$ as $g(\mathrm{i}y)$, 
  then the proof is finished.
\end{proof}

Hardy space $H^p(D)$ for $0< p\leqslant \infty$, where $D$ is the 
translation and rotation of $\mathbb{C}_+$, is defined similarly as that of 
$H^p(\mathbb{C}_+)$. For example, if $0< p< \infty$, then 
$H^p(\{\mathrm{Re}\,w> -\sigma\})$ are analytic functions equipped 
with $H^p$-norm
\[\lVert F\rVert_{H^p(\{\mathrm{Re}\,w> -\sigma\})}
  = \sup_{u>-\sigma} \bigg(\int_{\mathbb{R}} |F(u+\mathrm{i}v)|^p
      \,\mathrm{d}v\bigg)^{\frac1p}.\] 

\begin{proposition}[\cite{Vi94}]\label{pro-170826-1200}
  If $1\leqslant p\leqslant \infty$, $\sigma>0$, 
  $F_1(w)\in H^p(\{\mathrm{Re}\,w> -\sigma\})$,
  $F_2(w)\in H^p(\mathbb{C}_+)$ and
  $F_3(w)\in H^p(\{\mathrm{Re}\,w< \sigma\})$. 
  Define $F(w)=F_1(w)+F_2(w)+F_3(w)$ for $\Omega_+= D_{\sigma,0}$, then 
  $F(w)\in H^p(\Omega_+)$. We may simply write
  \[H^p(\{\mathrm{Re}\,w> -\sigma\})+ H^p(\mathbb{C}_+)
    + H^p(\{\mathrm{Re}\,w< \sigma\})\subset H^p(\Omega_+).\]
\end{proposition}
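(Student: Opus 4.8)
The plan is to bound the $H^p(\Omega_+)$-norm of $F$ straight from the definition, i.e.\ to show $\sup_{0<s<\sigma,\,t>0} m(s,t,F)<\infty$. First note that $\Omega_+=D_{\sigma,0}$ is contained in each of the three half-planes $\{\mathrm{Re}\,w>-\sigma\}$, $\mathbb{C}_+$ and $\{\mathrm{Re}\,w<\sigma\}$, so every $F_k$ is analytic on $\Omega_+$ and hence so is $F$. For $1\leqslant p<\infty$ the elementary inequality $|F|^p\leqslant 3^{p-1}(|F_1|^p+|F_2|^p+|F_3|^p)$ reduces the task to bounding, uniformly in $s$ and $t$, each of the nine integrals $\int_{\Gamma_{s,t,j}}|F_k(w)|^p\,|\mathrm{d}w|$, where $\Gamma_{s,t,1},\Gamma_{s,t,3}$ are the vertical segments $\{\mp s+\mathrm{i}v:v>t\}$ and $\Gamma_{s,t,2}$ is the horizontal segment $\{u+\mathrm{i}t:|u|\leqslant s\}$.

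The guiding observation is that each boundary piece is either parallel or perpendicular to the integration direction used to define the relevant $H^p$-norm. When it is parallel the bound is immediate from the definition; for example, since $-s\in(-\sigma,\infty)$,
\[\int_{\Gamma_{s,t,1}}|F_1|^p\,|\mathrm{d}w|
  =\int_t^\infty|F_1(-s+\mathrm{i}v)|^p\,\mathrm{d}v
  \leqslant\int_{\mathbb{R}}|F_1(-s+\mathrm{i}v)|^p\,\mathrm{d}v
  \leqslant\|F_1\|_{H^p(\{\mathrm{Re}\,w>-\sigma\})}^p.\]
The same direct estimate handles $\Gamma_{s,t,3}$ for $F_1$ and both vertical pieces for $F_3$, as well as $\Gamma_{s,t,2}$ for $F_2$, where the horizontal integral over $|u|\leqslant s$ is dominated by the full line integral $\int_{\mathbb{R}}|F_2(u+\mathrm{i}t)|^p\,\mathrm{d}u\leqslant\|F_2\|_{H^p(\mathbb{C}_+)}^p$, a direct consequence of the definition of the $H^p(\mathbb{C}_+)$-norm.

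The remaining, harder, estimates are those in which the boundary piece is \emph{perpendicular} to the natural direction, namely $\Gamma_{s,t,1},\Gamma_{s,t,3}$ for $F_2$ and $\Gamma_{s,t,2}$ for $F_1,F_3$. These are precisely the situations addressed by Corollary~\ref{cor-170826-1220}. For $F_2$ it applies verbatim: $\int_t^\infty|F_2(\pm s+\mathrm{i}v)|^p\,\mathrm{d}v\leqslant\int_0^\infty|F_2(\pm s+\mathrm{i}v)|^p\,\mathrm{d}v\leqslant 2^{-1}\|F_2\|_{H^p(\mathbb{C}_+)}^p$. For $F_1$ and $F_3$ I would first transport Corollary~\ref{cor-170826-1220} to their half-planes via the rotation-and-translation $w\mapsto\mathrm{i}(w+\sigma)$ (respectively $w\mapsto\mathrm{i}(\sigma-w)$), which carries each half-plane isometrically onto $\mathbb{C}_+$, preserves the $H^p$-norm, and sends the horizontal line $\{\mathrm{Im}\,w=t\}$ to a vertical line in $\mathbb{C}_+$. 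The corollary then gives $\int_{-\sigma}^{\infty}|F_1(u+\mathrm{i}t)|^p\,\mathrm{d}u\leqslant 2^{-1}\|F_1\|^p$ and its mirror $\int_{-\infty}^{\sigma}|F_3(u+\mathrm{i}t)|^p\,\mathrm{d}u\leqslant 2^{-1}\|F_3\|^p$, so that $\int_{\Gamma_{s,t,2}}|F_k|^p\,|\mathrm{d}w|\leqslant 2^{-1}\|F_k\|^p$ for $k=1,3$ since $[-s,s]\subset(-\sigma,\sigma)$.

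Summing the nine uniform bounds yields
\[m(s,t,F)^p
  \leqslant 3^{p-1}\Big(\tfrac52\|F_1\|_{H^p(\{\mathrm{Re}\,w>-\sigma\})}^p
    +2\|F_2\|_{H^p(\mathbb{C}_+)}^p
    +\tfrac52\|F_3\|_{H^p(\{\mathrm{Re}\,w<\sigma\})}^p\Big),\]
which is independent of $s$ and $t$, whence $F\in H^p(\Omega_+)$. The case $p=\infty$ is immediate, since each $F_k$ is bounded on its half-plane and therefore on $\Omega_+\subset\{\mathrm{Re}\,w>-\sigma\}\cap\mathbb{C}_+\cap\{\mathrm{Re}\,w<\sigma\}$, and boundedness survives addition. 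I expect the main obstacle to be the perpendicular estimates for $F_1$ and $F_3$: one must set up the affine change of variables carefully and verify that it preserves the relevant $H^p$-norm, so that Corollary~\ref{cor-170826-1220} transfers literally to the rotated coordinates; once this is in place, the remaining eight bounds follow directly from the definitions.
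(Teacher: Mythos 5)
Your proof is correct and follows essentially the same route as the paper: the same decomposition of $\Gamma_{s,t}$ into its three pieces, with the ``parallel'' integrals bounded directly from the defining suprema of the half-plane $H^p$-norms and the ``perpendicular'' integrals handled by Corollary~\ref{cor-170826-1220} (transferred to the rotated half-planes, a step the paper leaves implicit), yielding the same $\tfrac52,\,2,\,\tfrac52$ constants. The only cosmetic differences are that you combine the three summands via $|F|^p\leqslant 3^{p-1}\sum_k|F_k|^p$ where the paper uses Minkowski's inequality, and that you write out the rotation $w\mapsto\mathrm{i}(w+\sigma)$ explicitly.
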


\begin{proof}
  Let $1\leqslant p<\infty$, $0<s<\sigma$ and $t>0$, then
  \begin{align*}
    m(s,t,F)
    &= \bigg(\int_{\Gamma{s,t}} |F(w)|^p |\mathrm{d}w|\bigg)^{\frac1p}  \\
    &= \bigg(\int_{\Gamma{s,t}} \Big|\sum_{j=1}^3 F_j(w)\Big|^p 
         |\mathrm{d}w|\bigg)^{\frac1p}
     \leqslant \sum_{j=1}^3 \bigg(\int_{\Gamma{s,t}} |F_j(w)|^p 
         |\mathrm{d}w|\bigg)^{\frac1p}.
  \end{align*}
  By the definition of $H^p(\{\mathrm{Re}\,w> -\sigma\})$ and 
  Corollary~\ref{cor-170826-1220},
  \begin{align*}
    \int_{\Gamma{s,t}} |F_1(w)|^p |\mathrm{d}w|
    &= \sum_{k=1}^3 \int_{\Gamma{s,t,k}} |F_1(w)|^p |\mathrm{d}w|  \\
    &\leqslant \Big(1+\frac12+1\Big) 
        \lVert F_1\rVert_{H^p(\{\mathrm{Re}\,w> -\sigma\})}^p
    = \frac52 \lVert F_1\rVert_{H^p(\{\mathrm{Re}\,w> -\sigma\})}^p.
  \end{align*}
  Similarly, we have
  \begin{align*}
    \int_{\Gamma{s,t}} |F_2(w)|^p |\mathrm{d}w|
    &\leqslant 2 \lVert F_2\rVert_{H^p(\mathbb{C}_+)}^p,             \\
    \int_{\Gamma{s,t}} |F_3(w)|^p |\mathrm{d}w|
    &\leqslant \frac52 \lVert F_3\rVert_{H^p(\{\mathrm{Re}\,w<\sigma\})}^p,
  \end{align*}
  then
  \[m(s,t,F)
    \leqslant \Big(\frac52\Big)^{\frac1p} 
        \lVert F_1\rVert_{H^p(\{\mathrm{Re}\,w> -\sigma\})}
      + 2^{\frac1p} \lVert F_2\rVert_{H^p(\mathbb{C}_+)}
      + \Big(\frac52\Big)^{\frac1p} 
        \lVert F_3\rVert_{H^p(\{\mathrm{Re}\,w< \sigma\})},\]
  which means that $F(w)\in H^p(\Omega_+)$.
\end{proof}

The converse of Proposition~\ref{pro-170826-1200} will be proved 
in Theorem~\ref{thm-170828-1040}, and the $H^p(\Omega_-)$ version is 
much easier to prove by invoking definitions.

\begin{theorem}[\cite{Vi94}]\label{thm-170826-1410}
  If $0<p\leqslant\infty$ and $F(w)$ is analytic on $\Omega_-$,
  then $F(w)\in H^p(\Omega_-)$  if and only if 
  $F(w)$ is in $H^p(\{\mathrm{Re}\,w< -\sigma\})$, $H^p(\mathbb{C}_-)$, and
  $H^p(\{\mathrm{Re}\,w> \sigma\})$.
\end{theorem}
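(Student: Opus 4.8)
The plan is to exploit the fact that the three half-planes appearing in the statement are not merely related to $\Omega_-$ but literally \emph{cover} it: one checks at once that $\{\mathrm{Re}\,w<-\sigma\}$, $\mathbb{C}_-$ and $\{\mathrm{Re}\,w>\sigma\}$ are each contained in $\Omega_-=\{|u|>\sigma\}\cup\{v<0\}$ and that their union is exactly $\Omega_-$. This is the reverse of the situation in Proposition~\ref{pro-170826-1200}, where the three half-planes \emph{contained} $\Omega_+$ and a genuine sum decomposition $F=F_1+F_2+F_3$ was required; here a single analytic $F$ simply restricts to each half-plane, so no splitting of $F$ is needed and the argument is correspondingly softer. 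The key geometric observation is that, for $s_1>\sigma$ and $t_1<0$, the three arcs $\Gamma_{s_1,t_1,1}$, $\Gamma_{s_1,t_1,2}$, $\Gamma_{s_1,t_1,3}$ making up the integration contour in the definition of $\lVert\cdot\rVert_{H^p(\Omega_-)}$ are respectively a portion of the vertical line $\mathrm{Re}\,w=-s_1$ (lying in $\{\mathrm{Re}\,w<-\sigma\}$), a portion of the horizontal line $\mathrm{Im}\,w=t_1$ (lying in $\mathbb{C}_-$), and a portion of the vertical line $\mathrm{Re}\,w=s_1$ (lying in $\{\mathrm{Re}\,w>\sigma\}$).

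For the ``only if'' direction I would assume $F\in H^p(\Omega_-)$ and recover each half-plane norm by an exhaustion argument. To bound $\int_{\mathbb{R}}|F(u_0+\mathrm{i}v)|^p\,\mathrm{d}v$ for a fixed $u_0<-\sigma$, I set $s_1=-u_0>\sigma$, so that $\Gamma_{s_1,t_1,1}=\{u_0+\mathrm{i}v\colon v>t_1\}$, and note
\[\int_{t_1}^{+\infty}|F(u_0+\mathrm{i}v)|^p\,\mathrm{d}v
  \leqslant \int_{\Gamma_{s_1,t_1}}|F(w)|^p\,|\mathrm{d}w|
  \leqslant \lVert F\rVert_{H^p(\Omega_-)}^p.\]
Letting $t_1\to-\infty$ and invoking monotone convergence yields $\int_{\mathbb{R}}|F(u_0+\mathrm{i}v)|^p\,\mathrm{d}v\leqslant\lVert F\rVert_{H^p(\Omega_-)}^p$, and taking the supremum over $u_0<-\sigma$ shows $F\in H^p(\{\mathrm{Re}\,w<-\sigma\})$. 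The right half-plane is handled identically using $\Gamma_{s_1,t_1,3}$ with $s_1=u_0>\sigma$, and the lower half-plane using $\Gamma_{s_1,t_1,2}$ with $t_1=v_0<0$, this time letting $s_1\to+\infty$ so that the bottom segment exhausts the full horizontal line.

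For the ``if'' direction I would run the same estimate in reverse. Assuming $F$ lies in all three half-plane Hardy spaces, each contour piece is estimated against the appropriate norm,
\[\int_{\Gamma_{s_1,t_1,1}}\!\!|F(w)|^p\,|\mathrm{d}w|
  \leqslant \lVert F\rVert_{H^p(\{\mathrm{Re}\,w<-\sigma\})}^p,\quad
  \int_{\Gamma_{s_1,t_1,2}}\!\!|F(w)|^p\,|\mathrm{d}w|
  \leqslant \lVert F\rVert_{H^p(\mathbb{C}_-)}^p,\]
and similarly for $\Gamma_{s_1,t_1,3}$, simply because each arc sits on an admissible line of the corresponding half-plane; summing the three gives a bound on $m(s_1,t_1,F)$ that is uniform in $s_1>\sigma$ and $t_1<0$, whence $F\in H^p(\Omega_-)$. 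There is no serious obstacle in this argument; the only points requiring care are the matching of the admissibility constraints ($s_1>\sigma$ against $\mathrm{Re}\,w<-\sigma$ and $\mathrm{Re}\,w>\sigma$, and $t_1<0$ against $\mathrm{Im}\,w<0$) and the passage from the partial to the full line integrals. The endpoint $p=\infty$ must be treated separately but is even easier: since $\bigcup_{s_1>\sigma,\,t_1<0}\Gamma_{s_1,t_1}=\Omega_-$, the quantity $\lVert F\rVert_{H^\infty(\Omega_-)}$ coincides with $\sup_{\Omega_-}|F|$, which is finite precisely when $F$ is bounded on each of the three covering half-planes.
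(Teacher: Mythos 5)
Your proposal is correct and takes essentially the same approach as the paper: in the ``only if'' direction you restrict the $H^p(\Omega_-)$ contour integral to a piece lying on a fixed vertical or horizontal line and pass to the full line by monotone convergence (the paper does the identical step via Fatou's lemma, writing out only the $\mathbb{C}_-$ case), and in the ``if'' direction you bound the three contour pieces by the corresponding half-plane norms, which is precisely the argument the paper compresses into ``obvious by definition.'' The only difference is one of completeness, not of method: you also spell out the remaining inclusions and the $p=\infty$ endpoint that the paper leaves to the reader.
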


\begin{proof}
  We only prove the ``only if'' part with $0<p<\infty$, as the other parts
  is obvious by definition. For any $s>\sigma$ and $t<0$, we have
  \[\int_{-s}^s |F(u+\mathrm{i}t)|^p\,\mathrm{d}u
    = \int_{\Gamma_{s,t,2}} |F(w)|^p|\mathrm{d}w|
    \leqslant \lVert F\rVert_{H^p(\Omega_-)}^p,\]
  then, by Fatou's lemma,
  \begin{align*}
    \int_{\mathbb{R}} |F(u+\mathrm{i}t)|^p\,\mathrm{d}u
    &= \int_{\mathbb{R}} \liminf_{s\to\infty} \chi_{[-s,s]} 
          |F(u+\mathrm{i}t)|^p\,\mathrm{d}u                  \\
    &\leqslant \liminf_{s\to\infty} \int_{\mathbb{R}} \chi_{[-s,s]} 
          |F(u+\mathrm{i}t)|^p\,\mathrm{d}u
    \leqslant \lVert F\rVert_{H^p(\Omega_-)}^p.
  \end{align*}
  Hence, 
  \[\lVert F\rVert_{H^p(\mathbb{C}_-)}^p
    = \sup_{t<0} \int_{\mathbb{R}} |F(u+\mathrm{i}t)|^p\,\mathrm{d}u
    \leqslant \lVert F\rVert_{H^p(\Omega_-)}^p,\]
  and $F(w)\in H^p(\mathbb{C}_-)$. The other two inclusions could be 
  similarly verified.
\end{proof}

Before proving that Cauchy transform is bounded on  
$L^p(\Gamma,|\mathrm{d}\zeta|)$ ($1<p<\infty$), we introduce the boundedness 
of Cauchy transform on $L^p(\mathbb{R})$ ($1<p<\infty$).

\begin{lemma}[\cite{De10}]\label{lem-170826-1500}
  Suppose $1<p<\infty$, $f(t)\in L^p(\mathbb{R})$, and define 
  \[Cf(z)= \frac1{2\pi\mathrm{i}} \int_{\mathbb{R}} 
      \frac{f(t)}{t-z}\,\mathrm{d}t\quad
    \text{for } z\neq \mathbb{R},\]
  then 
  \[\sup_{y>0}\bigg(\int_{\mathbb{R}} |Cf(x+\mathrm{i}y)|^p
      \,\mathrm{d}x\bigg)^{\frac1p}
    \leqslant A_p \lVert f\rVert_{L^p(\mathbb{R})},\]
  where $A_p^p= \max\{\frac{p}{p-1}, p^{p-1}\}$.
\end{lemma}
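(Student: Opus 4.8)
The plan is to recognize this as the M.~Riesz theorem on the $L^p$-boundedness of the conjugate function, recast in the language of the Cauchy integral, and to exploit the analyticity of $Cf$ on $\mathbb{C}_+$ together with the duality between the ranges $1<p\leqslant 2$ and $2\leqslant p<\infty$. First I would compute the real and imaginary parts of the kernel: writing $z=x+\mathrm{i}y$ with $y>0$ and separating
\[\frac1{2\pi\mathrm{i}}\,\frac1{t-z}
  = \frac12\cdot\frac1\pi\frac{y}{(x-t)^2+y^2}
  + \frac{\mathrm{i}}2\cdot\frac1\pi\frac{x-t}{(x-t)^2+y^2},\]
which exhibits $Cf(x+\mathrm{i}y)=\tfrac12(P_y*f)(x)+\tfrac{\mathrm{i}}2(Q_y*f)(x)$, where $P_y$, $Q_y$ are the Poisson and conjugate Poisson kernels. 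Since $\lVert P_y\rVert_{L^1}=1$, Young's inequality gives $\lVert P_y*f\rVert_p\leqslant\lVert f\rVert_p$ for free, so the whole content sits in the conjugate part $Q_y*f$. Equivalently, setting $F=2Cf=u+\mathrm{i}v$ with $u$ the Poisson extension of $f$, the task reduces to proving $\sup_{y>0}\lVert F(\cdot+\mathrm{i}y)\rVert_p\leqslant 2A_p\lVert f\rVert_p$.

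For the base case $p=2$, a Fourier computation gives $\widehat{Cf(\cdot+\mathrm{i}y)}(\xi)=\hat f(\xi)\,\mathrm{e}^{-\xi y}\mathbf{1}_{(0,\infty)}(\xi)$, so Plancherel yields
\[\lVert Cf(\cdot+\mathrm{i}y)\rVert_2^2
  = \int_0^\infty |\hat f(\xi)|^2\mathrm{e}^{-2\xi y}\,\mathrm{d}\xi
  \leqslant \lVert f\rVert_2^2,\]
which is dominated by $A_2\lVert f\rVert_2$ since $A_2=\sqrt2$. This already shows the stated constant is not sharp, so the later bookkeeping need not be optimized.

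For general $p$ I would establish the estimate first on $2\leqslant p<\infty$ and obtain $1<p\leqslant 2$ by duality. Because the Hilbert transform underlying $v$ is, up to sign, skew-adjoint for the pairing $\int f\overline g$, its $L^p$ and $L^{p'}$ operator norms coincide, and tracking constants through this duality sends the bound $A_p^p=p^{p-1}$ on $[2,\infty)$ to $A_p^p=q=\tfrac{p}{p-1}$ on $(1,2]$; the two agree at $p=2$, giving exactly $\max\{\tfrac{p}{p-1},p^{p-1}\}$. On the primary range $p\geqslant2$ I would work with the subharmonic function $|F|^p$ and the harmonic function $y$, applying Green's identity on truncated boxes $\{|x|<R,\ \varepsilon<y<T\}$ to convert the boundary integral $\int_{\mathbb R}|F|^p$ into the area integral $\iint y\,\Delta|F|^p$, where $\Delta|F|^p=p^2|F|^{p-2}|F'|^2$; comparing with the analogous identity for $|u|^p$ and using $|\nabla u|=|F'|$ from the Cauchy--Riemann equations isolates the factor $p^{p-1}$. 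To legitimize the integrations by parts I would first prove the inequality for $f$ in a dense class (smooth, compactly supported, in $L^2\cap L^p$, so that $Cf$ decays and $F$ is controlled near infinity, using vertical decay estimates in the spirit of Corollary~\ref{cor-170826-1220} and Lemma~\ref{lem-170825-1930}), and then pass to general $f\in L^p(\mathbb R)$ by density together with Fatou's lemma.

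The hard part will be extracting the \emph{explicit} constant $p^{p-1}$, rather than merely some finite $C_p$, from the Green's-theorem comparison: the naive estimate $|u|\leqslant|F|$ points the wrong way when $p\geqslant2$, so the argument must instead exploit positivity, reducing to $f\geqslant0$ where $u>0$ and $F$ takes values in a half-plane, or a subharmonic-minorant inequality relating $|F|^p$ to $u^p$, and it is precisely the optimization of this pointwise inequality that produces $p^{p-1}$. Controlling the boundary contributions as $y\to\infty$ and $|x|\to\infty$ so that only the term $\int_{\mathbb R}|f|^p$ survives is the remaining technical point.
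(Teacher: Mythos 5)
First, a point of comparison: the paper does not prove this lemma at all --- it is quoted from \cite{De10} and used later only through the explicit value of $A_p$ --- so your attempt can only be measured against the classical M.~Riesz circle of ideas you are reconstructing. Within that frame, much of your scaffolding is sound: the kernel splitting $Cf=\tfrac12(P_y*f)+\tfrac{\mathrm{i}}2(Q_y*f)$, the Plancherel argument for $p=2$, the Green's identity set-up with $\Delta|F|^p=p^2|F|^{p-2}|F'|^2$ and $\Delta u^p=p(p-1)u^{p-2}|F'|^2$, and the duality arithmetic (operator norms agree at conjugate exponents, and $\big(\tfrac{q}{q-1}\big)^{1/q}$ transcribes exactly into $p^{(p-1)/p}$) are all correct.

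The genuine gap is that you have the two ranges the wrong way around, and the step you defer to ``optimization'' is not a technical point but a dead end. The comparison-of-Laplacians mechanism you describe produces the ratio $p^2/\big(p(p-1)\big)=\tfrac{p}{p-1}$, \emph{not} $p^{p-1}$, and to run it pointwise you need $|F|^{p-2}\leqslant u^{p-2}$; since $0<u\leqslant|F|$ (for $f\geqslant 0$), this holds precisely when $p-2\leqslant 0$. Thus the direct subharmonic argument lives on $1<p\leqslant 2$, where it yields $A_p^p=\tfrac{p}{p-1}$, and the constant $p^{p-1}$ on $[2,\infty)$ is then obtained by duality --- the mirror image of your plan. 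On $p\geqslant 2$ no pointwise substitute exists: $|F|^p\leqslant Cu^p$ fails for every $C$ because $u/|F|=\cos(\arg F)$ can vanish, and Calder\'on's three-term repair $|F|^p\leqslant A\,u^p-B\,\mathrm{Re}(F^p)$ is unavailable for $p>2$ (already at $p=3$, taking $\arg F=\pi/2$ forces $1\leqslant A\cos^3\tfrac\pi2-B\cos\tfrac{3\pi}2=0$). So ``exploiting positivity'' cannot rescue the range you chose; the fix is simply to swap the ranges, and your own duality bookkeeping confirms the constants then come out as stated. A secondary gap: the reduction to $f\geqslant 0$ and back (splitting into positive/negative and real/imaginary parts) inflates the constant by a factor as large as $2^{2-1/p}$, so recovering the exact $A_p$ of the statement --- which the paper uses quantitatively in Theorem~\ref{thm-170826-1440} --- needs an additional device (for instance, carrying out the duality step directly for general complex $f$) that your sketch does not supply.
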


The above lemma clearly implies that $Cf(z)\in H^p(\mathbb{C}_\pm)$ 
for $1<p<\infty$, since it is easy to verify that $Cf(z)$ is analytic on 
$\mathbb{C}\setminus \mathbb{R}$. Also, the transform norm do not exceed $A_p$.

\begin{theorem}\label{thm-170826-1440}
  If $1<p<\infty$, $F(\zeta)\in L^p(\Gamma,|\mathrm{d}\zeta|)$ and 
  $CF(w)$ is the Cauchy integral of $F(\zeta)$ on $\Gamma$ with 
  $w\in\Omega_\pm$, then $CF(w)\in H^p(\Omega_\pm)$, and 
  \begin{align*}
    \lVert CF\rVert_{H^p(\Omega_+)}
    &\leqslant \Big(\frac52\Big)^{\frac1p} A_p
        \lVert F\rVert_{L^p(\Gamma,|\mathrm{d}\zeta|)},          \\
    \lVert CF\rVert_{H^p(\Omega_-)}
    &\leqslant 3^{\frac1p} A_p
        \lVert F\rVert_{L^p(\Gamma,|\mathrm{d}\zeta|)},   
  \end{align*}
  where $A_p^p= \max\{\frac{p}{p-1}, p^{p-1}\}$.
\end{theorem}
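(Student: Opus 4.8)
The plan is to reduce the Cauchy transform over the bent contour $\Gamma$ to the straight-line Cauchy transform of Lemma~\ref{lem-170826-1500}. Split $\Gamma=\Gamma_1\cup\Gamma_2\cup\Gamma_3$ and write $CF=C_1F+C_2F+C_3F$, where $C_jF$ is the Cauchy integral of $F$ over the piece $\Gamma_j$. Extending $F|_{\Gamma_j}$ by zero to the full straight line $L_j$ carrying $\Gamma_j$ turns each $C_jF$ into a Cauchy transform over a full line; after the rotation and translation sending $L_j$ to $\mathbb{R}$ (under which $H^p$ of a half-plane is preserved), Lemma~\ref{lem-170826-1500} shows that $C_jF$ lies in $H^p$ of the half-plane on either side of $L_j$, with transform norm at most $A_p$. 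Concretely $C_2F\in H^p(\mathbb{C}_\pm)$, $C_1F\in H^p(\{\mathrm{Re}\,w\gtrless-\sigma\})$ and $C_3F\in H^p(\{\mathrm{Re}\,w\gtrless\sigma\})$, each with norm at most $A_p\|F\|_{L^p(\Gamma,|\mathrm{d}\zeta|)}$ after bounding $\|F\|_{L^p(\Gamma_j)}$ by $\|F\|_{L^p(\Gamma)}$. Analyticity of $CF$ on $\mathbb{C}\setminus\Gamma$ is already guaranteed by Lemma~\ref{lem-170828-2000}.

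For $\Omega_-$ I would argue through Theorem~\ref{thm-170826-1410}. The three half-planes $\{\mathrm{Re}\,w<-\sigma\}$, $\mathbb{C}_-$ and $\{\mathrm{Re}\,w>\sigma\}$ are each disjoint from $\Gamma$, since $\Gamma\subset\{\mathrm{Re}\,w\geq-\sigma\}\cap\{\mathrm{Im}\,w\geq0\}\cap\{\mathrm{Re}\,w\leq\sigma\}$, so $CF$ is genuinely analytic on each of them. The goal is to show $CF\in H^p$ of each, with norm at most $A_p\|F\|_{L^p(\Gamma,|\mathrm{d}\zeta|)}$, whence $CF\in H^p(\Omega_-)$ by Theorem~\ref{thm-170826-1410}. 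The constant $3^{1/p}$ then appears naturally: for $s>\sigma$ and $t<0$ the three boundary segments $\Gamma_{s,t,1},\Gamma_{s,t,2},\Gamma_{s,t,3}$ lie one in each of these half-planes and run \emph{along} a boundary-parallel line, so $\int_{\Gamma_{s,t,k}}|CF|^p|\mathrm{d}w|$ is dominated by the corresponding half-plane $H^p$-norm with weight $1$, and the three weights sum to $3$.

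For $\Omega_+$ the three half-planes $\{\mathrm{Re}\,w>-\sigma\}$, $\mathbb{C}_+$ and $\{\mathrm{Re}\,w<\sigma\}$ all contain $\Omega_+$ but also meet $\Gamma$, so $CF$ is not analytic on any one of them and the previous route is unavailable. Here membership follows at once from Proposition~\ref{pro-170826-1200}, since $C_1F,C_2F,C_3F$ belong to $H^p$ of exactly these three half-planes. For the norm I would retrace the estimate in that proposition: on the two vertical segments of $\Gamma_{s,t}$ a side half-plane is integrated \emph{along} its boundary direction (weight $1$ each), whereas on the bottom segment it is integrated \emph{transverse} to its boundary, which costs a factor $\tfrac12$ via Corollary~\ref{cor-170826-1220}; the resulting pattern $1+\tfrac12+1=\tfrac52$ for the side half-planes is what produces the constant $(5/2)^{1/p}$.

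The hard part is the \emph{sharp} constant, not mere boundedness. If one simply applies the triangle inequality to $CF=C_1F+C_2F+C_3F$ on each $\Gamma_{s,t}$ and then the per-piece bounds, one only obtains
\[A_p\Big[(5/2)^{1/p}\|F\|_{L^p(\Gamma_1)}+2^{1/p}\|F\|_{L^p(\Gamma_2)}+(5/2)^{1/p}\|F\|_{L^p(\Gamma_3)}\Big],\]
which for $p>1$ in general exceeds $(5/2)^{1/p}A_p\|F\|_{L^p(\Gamma)}$ and so does not establish the claimed bound, because $|\sum_jC_jF|^p$ is not controlled by $\sum_j|C_jF|^p$. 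To reach the stated constants one must treat $CF$ as a \emph{single} function and bound its restriction to each line of $\Gamma_{s,t}$ directly by $A_p\|F\|_{L^p(\Gamma,|\mathrm{d}\zeta|)}$ — that is, reduce the whole bent-contour transform, rather than its three pieces separately, to Lemma~\ref{lem-170826-1500}. Verifying that this restriction inherits exactly the constant $A_p$, using the analyticity of $CF$ off $\Gamma$ and its decay at infinity, is where the real work lies.
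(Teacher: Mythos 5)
You have reconstructed the paper's own proof: the decomposition $CF=G_1+G_2+G_3$ over the three lines $\gamma_j$ carrying $\Gamma_1,\Gamma_2,\Gamma_3$, Lemma~\ref{lem-170826-1500} for each straightened piece, Corollary~\ref{cor-170826-1220} for the integrals transverse to a half-plane boundary, and Proposition~\ref{pro-170826-1200} for $\Omega_+$. For $\Omega_-$ the paper does not invoke Theorem~\ref{thm-170826-1410} but estimates $\int_{\Gamma_{s,t}}|G_j|^p$ directly, splitting each $G_j$ between the two half-planes on either side of $\gamma_j$ with total weight $3$ per piece; your route through the three outer half-planes is a cosmetic variant of this. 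So the membership statements $CF\in H^p(\Omega_\pm)$ are proved correctly and by the same method as the paper.

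Where your proposal and the paper part ways is exactly the point you flag at the end, and there you are the one who is right. The paper's proof is precisely the naive estimate you wrote down: after Minkowski's inequality it reaches $\big(\tfrac52\big)^{1/p}A_p\sum_{j=1}^{3}\lVert \chi_{\Gamma_j}F\rVert_{L^p(\gamma_j,|\mathrm{d}\zeta|)}$ (resp.\ $3^{1/p}A_p\sum_{j}\lVert F\rVert_{L^p(\Gamma_j,|\mathrm{d}\zeta|)}$), and then simply declares this sum \emph{equal} to $\lVert F\rVert_{L^p(\Gamma,|\mathrm{d}\zeta|)}$. Since the $\Gamma_j$ are disjoint up to endpoints, $\lVert F\rVert_{L^p(\Gamma,|\mathrm{d}\zeta|)}=\big(\sum_j a_j^p\big)^{1/p}$ with $a_j=\lVert F\rVert_{L^p(\Gamma_j,|\mathrm{d}\zeta|)}$, and for $p>1$ one has $\sum_j a_j\geqslant\big(\sum_j a_j^p\big)^{1/p}$, with equality only when at most one $a_j$ is nonzero; so the paper's ``$=$'' is an inequality pointing in the useless direction. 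What the decomposition argument honestly yields, via H\"{o}lder in the form $\sum_j a_j\leqslant 3^{1-1/p}\big(\sum_j a_j^p\big)^{1/p}$, is the theorem with constants $\big(\tfrac52\big)^{1/p}3^{1-1/p}A_p$ and $3A_p$ in place of the stated $\big(\tfrac52\big)^{1/p}A_p$ and $3^{1/p}A_p$. Consequently the ``real work'' you defer --- bounding the restriction of the single function $CF$ to each line of $\Gamma_{s,t}$ directly by $A_p\lVert F\rVert_{L^p(\Gamma,|\mathrm{d}\zeta|)}$ --- is not carried out in the paper either, and you should not expect to recover the stated constants by this route. Your write-up becomes a complete and correct proof (indeed more careful than the paper's) once you either assert only the weaker constants above or remark that the sharper displayed constants are not established by this argument.
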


\begin{proof}
  We have already proved that $CF(w)$ is analytic on $\Omega_+\cup\Omega_-$
  in Lemma~\ref{lem-170828-2000}, thus only need to verify the bounded 
  integrability in definition of $H^p$ spaces. Let 
  $\gamma_1= \{\mathrm{Re}\,w= -\sigma\}$, 
  $\gamma_2=\mathbb{R}$ and $\gamma_3= \{\mathrm{Re}\,w= \sigma\}$, then
  \[CF(w)
    = \frac1{2\pi\mathrm{i}} \int_{\Gamma} 
        \frac{F(\zeta)}{\zeta-w}\,\mathrm{d}\zeta
    = \sum_{j=1}^3 \frac1{2\pi\mathrm{i}} \int_{\gamma_j} 
        \frac{\chi_{\Gamma_j}F(\zeta)}{\zeta-w}\,\mathrm{d}\zeta
    = \sum_{j=1}^3 G_j(w),\]
  and $G_j(w)$ is well-defined on $\mathbb{C}\setminus \overline{\Gamma_j}$
  for $j=1$, $2$, $3$.
  
  If $w\in\Omega_+$, then $CF(w)$ is the sum of 
  $G_1(w)\in H^p(\{\mathrm{Re}\,w> -\sigma\})$,
  $G_2(w)\in H^p(\mathbb{C}_+)$, 
  and $G_3(w)\in H^p(\{\mathrm{Re}\,w< \sigma\})$. Let $0<s<\sigma$, $t>0$, 
  then by Proposition~\ref{pro-170826-1200} and Lemma~\ref{lem-170826-1500},
  \begin{align*}
    m(s,t,CF)
    &= \bigg(\int_{\Gamma_{s,t}} |CF(w)|^p |\mathrm{d}w|\bigg)^{\frac1p}  \\
    &\leqslant \Big(\frac52\Big)^{\frac1p} 
          \lVert G_1\rVert_{H^p(\{\mathrm{Re}\,w> -\sigma\})}
        + 2^{\frac1p} \lVert G_2\rVert_{H^p(\mathbb{C}_+)}
        + \Big(\frac52\Big)^{\frac1p} 
          \lVert G_3\rVert_{H^p(\{\mathrm{Re}\,w< \sigma\})}         \\
    &\leqslant \Big(\frac52\Big)^{\frac1p}\sum_{j=1}^3 A_p 
        \lVert \chi_{\Gamma_j}F \rVert_{L^p(\gamma_j,|\mathrm{d}\zeta|)}  \\
    &= \Big(\frac52\Big)^{\frac1p} A_p 
            \lVert F \rVert_{L^p(\Gamma,|\mathrm{d}\zeta|)},
  \end{align*}
  and it shows that $F(w)\in H^p(\Omega_+)$ with
  \[\lVert CF\rVert_{H^p(\Omega_+)}
    \leqslant \Big(\frac52\Big)^{\frac1p} A_p
        \lVert F\rVert_{L^p(\Gamma,|\mathrm{d}\zeta|)}.\]
  
  If $w\in\Omega_-$, then $CF(w)$ is the sum of three $H^p$ functions $G_j(w)$
  for $j=1$, $2$, $3$, where
  
  $G_1(w)\in H^p(\{\mathrm{Re}\,w> -\sigma\})$ 
  or $H^p(\{\mathrm{Re}\,w< -\sigma\})$,
  
  $G_2(w)\in H^p(\mathbb{C}_+)$ or $H^p(\mathbb{C}_-)$, and
  
  $G_3(w)\in H^p(\{\mathrm{Re}\,w< \sigma\})$
  or $H^p(\{\mathrm{Re}\,w> \sigma\})$, \\
  depending on the location of $w$. 
  Let $s>\sigma$, $t<0$, then by Lemma~\ref{lem-170826-1500}, definitions of
  $H^p(\{\mathrm{Re}\,w> -\sigma\})$ and $H^p(\{\mathrm{Re}\,w< -\sigma\})$,
  and Corollary~\ref{cor-170826-1220}, 
  \begin{align*}
    \int_{\Gamma_{s,t}} |G_1(w)|^p |\mathrm{d}w|
    &= \sum_{k=1}^3 \int_{\Gamma_{s,t,k}} |G_1(w)|^p |\mathrm{d}w|  \\
    &\leqslant \Big(1+\frac12\Big) 
          \lVert G_1\rVert_{H^p(\{\mathrm{Re}\,w< -\sigma\})}^p
        + \Big(\frac12+1\Big) 
          \lVert G_1\rVert_{H^p(\{\mathrm{Re}\,w> -\sigma\})}^p     \\
    &\leqslant 3A_p^p \lVert \chi_{\Gamma_1}F 
        \rVert_{L^p(\gamma_1,|\mathrm{d}\zeta|)}^p.
  \end{align*}
  Similarly, we have
  \[\int_{\Gamma_{s,t}} |G_j(w)|^p |\mathrm{d}w|
    \leqslant 3A_p^p \lVert \chi_{\Gamma_j}F 
         \rVert_{L^p(\gamma_j,|\mathrm{d}\zeta|)}^p\]
  for $j=2$, $3$, then 
  \begin{align*}
    m(s,t,CF)
    &\leqslant \sum_{j=1}^3 \bigg(\int_{\Gamma_{s,t}} |G_j(w)|^p 
           |\mathrm{d}w|\bigg)^{\frac1p}                       \\
    &\leqslant 3^{\frac1p}A_p \sum_{j=1}^3 \lVert F 
           \rVert_{L^p(\Gamma_j,|\mathrm{d}\zeta|)}
     = 3^{\frac1p}A_p \lVert F\rVert_{L^p(\Gamma,|\mathrm{d}\zeta|)},
  \end{align*}
  which implies that $F(w)\in H^p(\Omega_-)$ and
  \[\lVert CF\rVert_{H^p(\Omega_-)}
    \leqslant 3^{\frac1p} A_p\lVert F\rVert_{L^p(\Gamma,|\mathrm{d}\zeta|)}.\]
  The proof of this theorem is thus finished.
\end{proof}

The lines $\gamma_1= \{\mathrm{Re}\,w= -\sigma\}$, $\gamma_2=\mathbb{R}$, 
$\gamma_3= \{\mathrm{Re}\,w= \sigma\}$ introduced in the proof of 
Theorem~\ref{thm-170826-1440} are also important for proving some of the
following results. Let $\gamma= \gamma_1\cup \gamma_2\cup \gamma_3$, 
and for $s>0$, $t\in\mathbb{R}$, let 
$\gamma_{s,t,1}= \gamma_1\cap \{\mathrm{Im}\,w>t\}$, 
$\gamma_{s,t,2}= \gamma_2\cap \{|\mathrm{Re}\,w|\leqslant s\}$, 
$\gamma_{s,t,3}= \gamma_3\cap \{\mathrm{Im}\,w>t\}$, 
and $\gamma_{s,t}= \gamma_{s,t,1}\cup \gamma_{s,t,2}\cup \gamma_{s,t,3}$. 
The orientation of $\gamma_1$ is from top to bottom, that of $\gamma_2$ from 
left to right, and that of $\gamma_3$ from bottom to top, 
then $\Gamma=\gamma_{\sigma,0}$ with the same orientation.

We now define a one-to-one mapping~$P_{s,t}$ from $\gamma_{s,t}$ onto 
$\Gamma_{s,t}$. For $\zeta\in\gamma_{s,t}$, define
\[\zeta_{s,t}= P_{s,t}(\zeta)= \left\{\!\!
        \begin{array}{ll}
          \zeta+(\sigma-s)  & \text{if $\zeta\in \gamma_{s,t,1}$},\\
          \zeta+\mathrm{i}t & \text{if $\zeta\in \gamma_{s,t,2}$},\\
          \zeta-(\sigma-s)  & \text{if $\zeta\in \gamma_{s,t,3}$},
        \end{array}\right.\]
then the inverse mapping $P_{s,t}^{-1}$ is 
\[\zeta= P_{s,t}^{-1}(w)= \left\{\!\!
        \begin{array}{ll}
          w-(\sigma-s)  & \text{if $w\in \Gamma_{s,t,1}$},\\
          w-\mathrm{i}t & \text{if $w\in \Gamma_{s,t,2}$},\\
          w+(\sigma-s)  & \text{if $w\in \Gamma_{s,t,3}$},
        \end{array}\right.\]
and $\zeta\in \gamma_{s,t,j}$ if and only if $\zeta_{s,t}\in \Gamma_{s,t,j}$
for $j=1$, $2$, $3$. In fact, $P_{s,t}$ and $P_{s,t}^{-1}$ are just 
combinations of translation.

Then for $G(w)$ defined on $\Gamma_{s,t}$, we may view it as a function
$G_{s,t}(\zeta)$ defined on $\gamma$, that is, we let
\[G_{s,t}(\zeta)= \left\{\!\!
      \begin{array}{ll}
        G(\zeta_{s,t})= G(P_{s,t}(\zeta)) & \text{for $\zeta\in\gamma_{s,t}$},\\
        0 & \text{for $\zeta\in \gamma\setminus\gamma_{s,t}$}.
      \end{array}\right.\]
Obviously, $G_{s,t}(\zeta)= \chi_{\gamma_{s,t}}G_{s,t}(\zeta)$,
\[\int_\gamma G_{s,t}(\zeta)\,\mathrm{d}\zeta
  = \int_{\gamma_{s,t}} G_{s,t}(\zeta)\,\mathrm{d}\zeta           
  = \int_{\Gamma_{s,t}} G(\zeta_{s,t})\,\mathrm{d}\zeta_{s,t}
  = \int_{\Gamma_{s,t}} G(w)\,\mathrm{d}w,\]
and, similarly,
\[\int_\gamma G_{s,t}(\zeta)|\mathrm{d}\zeta|
  = \int_{\Gamma_{s,t}} G(w)|\mathrm{d}w|.\]
If $0< s\leqslant \sigma$, $t\geqslant 0$, then 
$\gamma_{s,t}\subset \gamma_{\sigma,0}= \Gamma$, and $G_{s,t}(\zeta)$ 
could be considered as a function only defined on $\Gamma$.

\begin{lemma}\label{lem-170826-1650}
  If $1<p<\infty$ and $f(z)\in H^p(\mathbb{C}_+)$, then for $y>0$, 
  $|f(x+\mathrm{i}y)|$ is dominated by $\frac{10}{\pi}f^*(x)\in L^p(\mathbb{R})$,
  where $f^*(x)$ is the Hardy-Littlewood maximal function of $f(x)$, 
  the non-tangential boundary limit of $f(z)$.
\end{lemma}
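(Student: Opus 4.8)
The plan is to represent $f$ inside the half-plane as the Poisson integral of its boundary values, and then dominate that Poisson integral pointwise by the Hardy--Littlewood maximal function via a dyadic decomposition of the Poisson kernel. The constant $\tfrac{10}{\pi}$ should drop out of that decomposition.

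First I would invoke the classical Poisson representation: since $f(z)\in H^p(\mathbb{C}_+)$ with $1<p<\infty$, its non-tangential boundary limit $f(x)$ lies in $L^p(\mathbb{R})$ and $f$ is recovered as the Poisson integral of $f(x)$ (cf.~\cite{Ga07}). Thus, for $y>0$,
\[
  f(x+\mathrm{i}y)=\frac1\pi\int_{\mathbb{R}}\frac{y}{(x-t)^2+y^2}\,f(t)\,\mathrm{d}t,
\]
so taking absolute values and substituting $s=t-x$,
\[
  |f(x+\mathrm{i}y)|\leqslant \frac1\pi\int_{\mathbb{R}}
    \frac{y}{s^2+y^2}\,|f(x+s)|\,\mathrm{d}s.
\]
I would also record the basic maximal-function estimate $\int_{x-r}^{x+r}|f(t)|\,\mathrm{d}t\leqslant 2r\,f^*(x)$, valid for every $r>0$ with the centered maximal function.

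Next I split the last integral into the central strip $\{|s|\leqslant y\}$ and the dyadic annuli $\{2^{k-1}y<|s|\leqslant 2^k y\}$, $k\geqslant 1$. On the strip the bound $\frac{y}{s^2+y^2}\leqslant\frac1y$ gives a contribution at most $\frac1{\pi y}\cdot 2y\,f^*(x)=\frac2\pi f^*(x)$. On the $k$-th annulus I estimate $\frac{y}{s^2+y^2}\leqslant\frac{1}{4^{k-1}y}$ and enlarge the domain to $\{|s|\leqslant 2^k y\}$, obtaining
\[
  \frac1{\pi\,4^{k-1}y}\int_{|s|\leqslant 2^k y}|f(x+s)|\,\mathrm{d}s
  \leqslant \frac{1}{\pi\,4^{k-1}y}\cdot 2\cdot 2^k y\,f^*(x)
  =\frac8\pi\,2^{-k}f^*(x).
\]
Summing the geometric series $\sum_{k\geqslant1}2^{-k}=1$ contributes $\frac8\pi f^*(x)$, and adding the central strip yields $|f(x+\mathrm{i}y)|\leqslant\bigl(\tfrac2\pi+\tfrac8\pi\bigr)f^*(x)=\frac{10}{\pi}f^*(x)$, as claimed.

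Finally, because $f(x)\in L^p(\mathbb{R})$ with $1<p<\infty$, the Hardy--Littlewood maximal theorem guarantees $f^*\in L^p(\mathbb{R})$, so the dominating function $\frac{10}{\pi}f^*$ genuinely lies in $L^p(\mathbb{R})$; this is exactly where the hypothesis $p>1$ enters. The only real obstacle is the Poisson representation step itself: one must know that an $H^p(\mathbb{C}_+)$ function equals the Poisson integral of its boundary trace, which holds for $p\geqslant1$ by the standard theory but would break down for $0<p<1$, and this is the reason the lemma is stated only for $1<p<\infty$.
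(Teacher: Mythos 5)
Your proof is correct and takes essentially the same approach as the paper, which gives no details beyond citing \cite{MC97} and noting that the argument uses the Poisson representation of $f(z)$ by its boundary function $f(x)$ together with a suitable division of $\mathbb{R}$ --- precisely your Poisson-integral bound combined with the dyadic-annuli decomposition, whose bookkeeping recovers the exact constant $\frac{10}{\pi}=\frac{2}{\pi}+\frac{8}{\pi}$. One small correction to your closing remark: the Poisson representation remains valid at $p=1$, so the restriction to $1<p<\infty$ comes solely from the $L^p$-boundedness of the Hardy--Littlewood maximal operator, as you yourself correctly observe earlier in the same paragraph.
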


The proof of the above lemma is outlined in~\cite{MC97}, which involves
utilizing the Poisson representation of $f(z)$ by $f(x)$ 
and dividing $\mathbb{R}$ properly. We have the following domination theorem
on $\Gamma_{s,t}$.

\begin{theorem}\label{thm-170826-1700}
  If $1<p<\infty$, $s\in\mathbb{R}_+\setminus\{\sigma\}$, 
  $t\in\mathbb{R}\setminus\{0\}$, 
  and $F(\zeta)\in L^p(\Gamma,|\mathrm{d}\zeta|)$, 
  then $|(CF)_{s,t}(\zeta)|$ is dominated by a function 
  $g(\zeta)\in L^p(\gamma,|\mathrm{d}\zeta|)$, 
  where $\zeta\in\gamma\setminus\{\pm\sigma\}$ and $CF(w)$ is 
  the Cauchy integral of $F(\zeta)$ on $\Gamma$.
\end{theorem}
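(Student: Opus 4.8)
The plan is to reuse the splitting $CF = G_1 + G_2 + G_3$ from the proof of Theorem~\ref{thm-170826-1440}, where $G_j(w) = \frac1{2\pi\mathrm{i}}\int_{\gamma_j}\frac{\chi_{\Gamma_j}F(\zeta)}{\zeta - w}\,\mathrm{d}\zeta$ is the Cauchy integral of $\chi_{\Gamma_j}F\in L^p(\gamma_j)$ over the full line $\gamma_j$. By Lemma~\ref{lem-170826-1500}, transported to $\gamma_j$ by the translation/rotation carrying $\gamma_j$ to $\mathbb{R}$, the restriction of each $G_j$ to either open half-plane cut out by $\gamma_j$ is a Hardy-space function there, with norm controlled by $A_p\lVert\chi_{\Gamma_j}F\rVert_{L^p}$. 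Since $(CF)_{s,t}(\zeta)= CF(\zeta_{s,t})$ with $\zeta_{s,t}=P_{s,t}(\zeta)\in\Gamma_{s,t}$ and $(CF)_{s,t}$ is supported on $\gamma_{s,t}$, the triangle inequality reduces the theorem to dominating each $|G_j(\zeta_{s,t})|$, as a function of $\zeta$, by an $L^p(\gamma)$ function independent of $(s,t)$.

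The two tools are Lemma~\ref{lem-170826-1650} and Corollary~\ref{cor-170826-1220}, each read in the half-plane adapted to $\gamma_j$. For a Hardy function $f$, Lemma~\ref{lem-170826-1650} bounds $|f|$ by $\frac{10}{\pi}$ times the Hardy--Littlewood maximal function $f^*$ of its boundary trace, evaluated at the foot of the perpendicular and \emph{uniformly in the distance to the boundary}; this is the correct estimate on a target piece running \emph{parallel} to $\gamma_j$, since there the along-boundary coordinate sweeps a set on which $f^*\in L^p$ by the maximal theorem ($p>1$). Corollary~\ref{cor-170826-1220} instead furnishes, for each $G_j$, a function I denote $g_{G_j}$ of the distance to $\gamma_j$ alone, with $g_{G_j}\in L^p$ of the normal half-line, \emph{uniformly in the along-boundary coordinate}; this is the correct estimate on a piece running \emph{perpendicular} to $\gamma_j$, in particular on the infinite vertical rays.

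Taking the approach from $\Omega_+$ ($0<s<\sigma$, $t>0$; the $\Omega_-$ approach $s>\sigma$, $t<0$ is identical after replacing each trace and each $g_{G_j}$ by the appropriate one-sided version), $\zeta_{s,t}$ runs along $\Gamma_{s,t,1}=\{-s+\mathrm{i}v\colon v>t\}$, along $\Gamma_{s,t,2}=\{u+\mathrm{i}t\colon|u|\leqslant s\}$, or along $\Gamma_{s,t,3}=\{s+\mathrm{i}v\colon v>t\}$ according as $\zeta\in\gamma_1,\gamma_2,\gamma_3$. On the vertical rays $G_1,G_3$ move parallel to their source lines, so Lemma~\ref{lem-170826-1650} dominates them by $\frac{10}{\pi}G_1^*(v)$ and $\frac{10}{\pi}G_3^*(v)$, while $G_2$ moves away from $\gamma_2=\mathbb{R}$, so Corollary~\ref{cor-170826-1220} dominates it by $g_{G_2}(\mathrm{i}v)$; hence on $\gamma_1$ and on $\gamma_3$ I set $g(\zeta)=\frac{10}{\pi}\big(G_1^*(v)+G_3^*(v)\big)+g_{G_2}(\mathrm{i}v)$. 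On the horizontal segment the roles reverse: $G_2$ runs parallel to $\mathbb{R}$, giving $\frac{10}{\pi}G_2^*(u)$, whereas $G_1,G_3$ run away from $\gamma_1,\gamma_3$, giving $g_{G_1}(|u+\sigma|)$ and $g_{G_3}(|u-\sigma|)$; so on $\gamma_2$ I set $g(\zeta)=g_{G_1}(|u+\sigma|)+\frac{10}{\pi}G_2^*(u)+g_{G_3}(|u-\sigma|)$. Each summand lies in $L^p$ of its piece of $\gamma$: the maximal-function terms by the Hardy--Littlewood theorem, the terms $g_{G_2}(\mathrm{i}v)$ because $g_{G_2}\in L^p(\mathbb{R}_+)$ with $v>0$, and the terms $g_{G_1}(|u+\sigma|),g_{G_3}(|u-\sigma|)$ because $g_{G_1},g_{G_3}\in L^p(\mathbb{R}_+)$ while $|u\pm\sigma|$ sweeps a bounded subinterval of $(0,\infty)$ once the corner points $\pm\sigma$ are excluded. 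None of these bounds depends on $(s,t)$, so the assembled $g\in L^p(\gamma)$ dominates $|(CF)_{s,t}|$ uniformly.

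The main obstacle is precisely the mismatch that makes any single estimate fail: along an infinite vertical ray the maximal-function bound for $G_2$ degenerates to the constant $\frac{10}{\pi}G_2^*(\mp s)$, which is not $L^p$ on a ray, and the elementary growth bound of order $v^{-1/p}$ available for $H^p(\mathbb{C}_+)$ functions is likewise not $L^p$ because $\int^{\infty}v^{-1}\,\mathrm{d}v$ diverges; it is exactly Corollary~\ref{cor-170826-1220} that supplies genuinely $L^p$-summable normal decay. The remaining work is bookkeeping: selecting, for each of $G_1,G_3$, the correct one-sided trace according to the sign of $s-\sigma$ and of $t$, and noting that the excluded points $\pm\sigma$ are precisely where the normal-distance arguments $|u\pm\sigma|$ vanish.
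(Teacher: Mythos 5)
Your proposal is correct and follows essentially the same route as the paper's own proof: the same decomposition $CF=G_1+G_2+G_3$ along $\gamma_1,\gamma_2,\gamma_3$, with Lemma~\ref{lem-170826-1650} (maximal-function domination by the one-sided boundary traces) applied on the pieces of $\Gamma_{s,t}$ running parallel to each source line, Corollary~\ref{cor-170826-1220} (normal-decay domination) on the perpendicular pieces, and the resulting bound assembled into a single $L^p(\gamma)$ function independent of $(s,t)$ within each case. The paper's $g_{j\pm}^*$ and $h_{j\pm}$ are exactly your $G_j^*$ and $g_{G_j}$; the only difference is that the paper writes out the one-sided selections for the $\Omega_-$ approach explicitly with characteristic functions, which you defer to a closing remark.
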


\begin{proof}
  We write, by definition of $CF(w)$, for $w\in \Omega_+\cup\Omega_-$,
  \[CF(w)
    = \sum_{j=1}^3 \frac1{2\pi\mathrm{i}} \int_{\gamma_j} 
        \frac{\chi_{\Gamma_j}F(\zeta)}{\zeta-w}\,\mathrm{d}\zeta
    = \sum_{j=1}^3 G_j(w),\]
  then $G_j(w)$'s are $H^p$ functions on corresponding domains, and their
  non-tangential boundary limit functions are denoted as $g_{j\pm}(\zeta)$ 
  with $\zeta\in\gamma_j$ for $j=1$, $2$, $3$. Here the signs in subsripts 
  depend on ``left'' or ``right'' of the domains relative to their boundaries. 
  For example, $\{\mathrm{Re}\,w>-\sigma\}$ is on the left of $\gamma_1$, 
  then $G_1(w)$ with $\mathrm{Re}\,w>-\sigma$ has non-tangential boundary limit 
  $g_{1+}(\zeta)$, while $g_{1-}(\zeta)$ is the non-tangential boundary limit 
  of $G_1(w)$ with $\mathrm{Re}\,w<-\sigma$. The other $g_{j\pm}(\zeta)$'s
  are defined accordingly.
  
  Then, by Lemma~\ref{lem-170826-1650}, $|G_j(P_{s,t}(\zeta))|
    \leqslant \frac{10}{\pi} g_{j\pm}^*(\zeta)$ where $\zeta\in\gamma_{s,t,j}$
  for $j=1$, $2$, $3$, with signs depending on where $P_{s,t}(\zeta)$ locates
  and $g_{j\pm}^*(\zeta)$'s are the Hardy-Littlewood maximal functions.
  By Corollary~\ref{cor-170826-1220}, there exists 
  $h_{j\pm}(\zeta)$ for $j=1$, $2$, $3$, where 
  
  $h_{1+}$ is defined on $\gamma_2\cap\{\mathrm{Re}\,w> -\sigma\}$, 
  $h_{1-}$ on $\gamma_2\cap\{\mathrm{Re}\,w< -\sigma\}$;
  
  $h_{2+}$ on $\{\mathrm{i}v\colon v>0\}$, 
  $h_{2-}$ on $\{\mathrm{i}v\colon v<0\}$;
  
  $h_{3+}$ on $\gamma_2\cap\{\mathrm{Re}\,w< \sigma\}$, 
  $h_{3-}$ on $\gamma_2\cap\{\mathrm{Re}\,w> \sigma\}$,\\
  such that
  $|G_j(w)|\leqslant h_{j\pm}(\mathrm{Re}\,w)$ for $j=1$, $3$, 
  $w\notin\gamma_1\cup\gamma_3$, and 
  $|G_2(w)|\leqslant h_{2\pm}(\mathrm{Im}\,w)$ for $w\notin\gamma_2$.
  Besides, $\lVert h_{j\pm}\rVert_{L^p}
    \leqslant 2^{-\frac1p}\lVert G_j\rVert_{H^p}$.
  Since we mainly consider the $L^p$ integrability along $\gamma$ 
  of each functions, $h_{2\pm}$ could be viewed as defined on $\gamma_1$ or 
  $\gamma_3$ by tranlation, and the tranlated functions are still denoted as 
  $h_{2\pm}$ by abusing of notation.
  
  We are going to treat two special cases: $0<s<\sigma$, $t>0$;
  or $s>\sigma$, $t<0$, and the other cases could be proved similarly.
  For the first case, $\Gamma_{s,t}\in\Omega_+$. 
  If $\zeta\in\gamma_{s,t,1}\setminus\{\pm\sigma\}$, then
  \[|CF(P_{s,t}(\zeta))|
    \leqslant \frac{10}{\pi}g_{1+}^*(\zeta)+ h_{2+}(\zeta)
        + \frac{10}{\pi}g_{3+}^*(\zeta)
    = H_1(\zeta).\]
  Although $g_{3+}^*$ is originally defined on $\gamma_3$, we could 
  translate it to a function defined on $\gamma_1$ which is denoted 
  as $g_{3+}^*$ again. We will do the same change accordingly 
  in the following expressions, without further explanation. 
  If $\zeta\in\gamma_{s,t,2}\setminus\{\pm\sigma\}$, then
  \[|CF(P_{s,t}(\zeta))|
    \leqslant h_{1+}(\zeta)+ \frac{10}{\pi}g_{2+}^*(\zeta)+ h_{3+}(\zeta)
    = H_2(\zeta),\]
  and if $\zeta\in\gamma_{s,t,3}\setminus\{\pm\sigma\}$, then
  \[|CF(P_{s,t}(\zeta))|
    \leqslant \frac{10}{\pi}g_{1+}^*(\zeta)+ h_{2+}(\zeta)
        + \frac{10}{\pi}g_{3+}^*(\zeta)
    = H_1(\zeta).\]  
  Define $g(\zeta)= H_j(\zeta)$ when 
  $\zeta\in\gamma_{s,t,j}\setminus\{\pm\sigma\}$ 
  for $j=1$, $2$, $3$, we have
  \[|(CF)_{s,t}(\zeta)|= |CF(P_{s,t}(\zeta))|\leqslant g(\zeta)\quad
    \text{for } \zeta\in\gamma\setminus\{\pm\sigma\}.\]
  
  In the case of $s>\sigma$, $t<0$, let
  \[g(\zeta)= \left\{\!\!
        \begin{array}{ll}
          \frac{10}{\pi}g_{1-}^*(\zeta)
            + \chi_{\mathbb{C}_+}h_{2+}(\zeta)               
            + \chi_{\mathbb{C}_-}h_{2-}(\zeta)
            + \frac{10}{\pi}g_{3+}^*(\zeta) 
            & \text{if $\zeta\in\gamma_{s,t,1}\setminus\{\pm\sigma\}$}, \\
         \chi_{\{\mathrm{Re}\,w<-\sigma\}}h_{1-}(\zeta)
            + \chi_{\{\mathrm{Re}\,w>-\sigma\}}h_{1+}(\zeta)
            + \frac{10}{\pi}g_{2-}^*(\zeta) &                    \\
            \qquad {}+ \chi_{\{\mathrm{Re}\,w<\sigma\}}h_{3+}(\zeta)
            + \chi_{\{\mathrm{Re}\,w>\sigma\}}h_{3-}(\zeta) 
            & \text{if $\zeta\in\gamma_{s,t,2}\setminus\{\pm\sigma\}$}, \\
          \frac{10}{\pi}g_{1+}^*(\zeta)
            + \chi_{\mathbb{C}_-}h_{2-}(\zeta)                
            + \chi_{\mathbb{C}_+}h_{2+}(\zeta)
            + \frac{10}{\pi}g_{3-}^*(\zeta)
            & \text{if $\zeta\in\gamma_{s,t,3}\setminus\{\pm\sigma\}$},
        \end{array}\right.\]
  then we also have, for $\zeta\in\gamma\setminus\{\pm\sigma\}$, 
  $|(CF)_{s,t}(\zeta)|\leqslant g(\zeta)$.
  
  Since the two $g(\zeta)$'s are sum of $L^p$ functions, we know that
  $g(\zeta)\in L^p(\gamma,|\mathrm{d}\zeta|)$.
\end{proof}

The $g_{j\pm}^*$'s and $h_{j\pm}$'s above could even be extended to functions 
defined on $\gamma$ without changing their $L^p$ norm by letting them 
equal to $0$ on parts where they are originally undefined. This point of view
will be very handy in next section. 

The norm of Hardy-Littlewood maximal operator is less than or equal to
$3^{\frac1p}\frac{p}{p-1}$~\cite{Gr08}, then Theorem~\ref{thm-170826-1700}
also leads to the boundedness of Cauchy transform on $\Gamma$. In fact, 
by carefully examning the proof, we know that, 
\[\lVert CF\rVert_{H^p(\Omega_+)}
  \leqslant A_p\Big(\frac{20}{\pi} B_p+2^{1-\frac1p}\Big) 
      \lVert F\rVert_{L^p(\Gamma,|\mathrm{d}\zeta|)},\]
and
\[\lVert CF\rVert_{H^p(\Omega_-)}
  \leqslant A_p\Big(\frac{20}{\pi} B_p+2^{2-\frac1p}\Big) 
      \lVert F\rVert_{L^p(\Gamma,|\mathrm{d}\zeta|)},\]
where $A_p^p= \max\{\frac{p}{p-1}, p^{p-1}\}$, $B_p=3^{\frac1p}\frac{p}{p-1}$. 

\section{Non-tangential Boundary Limit and Cauchy Representation}

In this section, we are going to prove that, if $1<p<\infty$, 
then every function in $H^p(\Omega_\pm)$ has non-tangential boundary limit
a.e.\@ on $\Gamma$, and is the Cauchy integral of its boundary function.
More details are in Theorem~\ref{thm-170827-1120} and 
Theorem~\ref{thm-170827-1410}.

For $\zeta$, $\zeta_0\in\Gamma$, $z\in\mathbb{C}$ and 
$\zeta_0\pm z\neq \zeta$, define
\begin{equation}\label{equ-170826-2220}
  K_z(\zeta,\zeta_0)
  = \frac1{2\pi\mathrm{i}} \Big(\frac1{\zeta-(\zeta_0+z)}
       - \frac1{\zeta-(\zeta_0-z)}\Big)
  = \frac{1}{\pi\mathrm{i}}\cdot \frac{z}{(\zeta-\zeta_0)^2- z^2}.
\end{equation}

\begin{lemma}\label{lem-170826-2230}
  If $\zeta_0+z\in\Omega_+$ and $\zeta_0-z\in\Omega_-$, then
  \[\int_{\Gamma} K_z(\zeta,\zeta_0)\,\mathrm{d}\zeta= 1.\]
\end{lemma}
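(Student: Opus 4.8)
The plan is to reduce the claim to two applications of Cauchy's integral formula by closing the unbounded contour $\Gamma$ with a horizontal cap at a large height and then letting that height tend to infinity. Writing out the kernel,
\[
  \int_{\Gamma} K_z(\zeta,\zeta_0)\,\mathrm{d}\zeta
  = \frac1{2\pi\mathrm{i}}\int_{\Gamma} \frac{\mathrm{d}\zeta}{\zeta-(\zeta_0+z)}
    - \frac1{2\pi\mathrm{i}}\int_{\Gamma} \frac{\mathrm{d}\zeta}{\zeta-(\zeta_0-z)}.
\]
The first point to record is that, although each individual Cauchy kernel decays only like $|\zeta|^{-1}$ along the vertical rays $\Gamma_1$ and $\Gamma_3$ and is therefore \emph{not} absolutely integrable over $\Gamma$, their difference
\[
  K_z(\zeta,\zeta_0)
  = \frac1{\pi\mathrm{i}}\cdot\frac{z}{(\zeta-\zeta_0)^2-z^2}
\]
satisfies $K_z(\zeta,\zeta_0)=O(|\zeta|^{-2})$ as $|\zeta|\to\infty$ along $\Gamma$, so that the left-hand integral converges absolutely. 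This is exactly why the two fractions must be kept together throughout.

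For $R>0$ let $\Omega_+^R=\{u+\mathrm{i}v\colon |u|<\sigma,\ 0<v<R\}$ be the truncated rectangle, with positively oriented boundary $\partial\Omega_+^R$. This boundary consists of the truncations of $\Gamma_1$, $\Gamma_2$, $\Gamma_3$ together with the horizontal cap $C_R=\{u+\mathrm{i}R\colon |u|\le\sigma\}$ traversed from $\sigma+\mathrm{i}R$ to $-\sigma+\mathrm{i}R$, and its orientation agrees with that of $\Gamma$ on the three truncated pieces. Choosing $R>\max\{\mathrm{Im}(\zeta_0+z),\mathrm{Im}(\zeta_0-z)\}$, the point $\zeta_0+z\in\Omega_+$ lies inside $\Omega_+^R$, whereas $\zeta_0-z\in\Omega_-=\mathbb{C}\setminus\overline{\Omega_+}$ lies outside $\overline{\Omega_+^R}$, since $\partial\Omega_+^R\subset\overline{\Omega_+}$ is disjoint from the open set $\Omega_-$. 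Cauchy's integral formula then gives, for every such $R$,
\[
  \int_{\partial\Omega_+^R} K_z(\zeta,\zeta_0)\,\mathrm{d}\zeta = 1-0 = 1.
\]

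It remains to pass to the limit $R\to\infty$. The cap contribution is controlled by the $|\zeta|^{-2}$ decay: on $C_R$ one has $|(\zeta-\zeta_0)^2-z^2|\ge (R-|\mathrm{Im}\,\zeta_0|)^2-|z|^2$, which is $\gtrsim R^2$ for large $R$, and $C_R$ has length $2\sigma$, so $\int_{C_R}K_z\,\mathrm{d}\zeta=O(R^{-2})\to 0$. Meanwhile the integral over the three truncated pieces tends to $\int_{\Gamma}K_z(\zeta,\zeta_0)\,\mathrm{d}\zeta$ by the absolute integrability established above, via dominated convergence. Letting $R\to\infty$ in the displayed identity yields $\int_{\Gamma}K_z(\zeta,\zeta_0)\,\mathrm{d}\zeta=1$, as claimed.

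The only genuine subtlety is the one already flagged: one cannot split $\int_{\Gamma}K_z\,\mathrm{d}\zeta$ into two separately convergent Cauchy integrals, so both the residue evaluation and the limit must be carried out on the combined kernel over the closed rectangular contour $\partial\Omega_+^R$. Everything else amounts to routine bookkeeping of orientations and a crude length-times-supremum bound on the cap.
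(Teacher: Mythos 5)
Your proof is correct and follows essentially the same route as the paper's: truncate $\Omega_+$ at a large height, apply Cauchy's integral formula on the resulting rectangle (the point $\zeta_0+z$ inside contributes $1$, the point $\zeta_0-z$ outside contributes $0$), show the horizontal cap contributes $O(R^{-2})$, and let the height tend to infinity. The remarks you add on the $O(|\zeta|^{-2})$ decay of the combined kernel and the non-splittability of the integral over the unbounded contour are sound refinements of the same argument, not a different approach.
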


\begin{proof}
  Choose $t>\max\{0,\mathrm{Im}\,(\zeta_0+z),\mathrm{Im}\,(\zeta_0-z)\}$, 
  and let $E=\Omega_+\cap\{\mathrm{Im}\,w<t\}$, 
  $\Gamma_{E1}=\Gamma\cap\{\mathrm{Im}\,w<t\}$, 
  $\Gamma_{E2}= \{u+\mathrm{i}t\colon |u|\leqslant \sigma\}$
  then $\partial E= \Gamma_{E1}\cup\Gamma_{E2}$, and
  \[\int_{\partial E} K_z(\zeta,\zeta_0)\,\mathrm{d}\zeta
    = \frac1{2\pi\mathrm{i}} \int_{\partial E} 
        \frac{\mathrm{d}\,\zeta}{\zeta-(\zeta_0+z)}
      - \frac1{2\pi\mathrm{i}} \int_{\partial E} 
          \frac{\mathrm{d}\,\zeta}{\zeta-(\zeta_0-z)}
    =1- 0
    =1.\]
  Since 
  \begin{align*}
    \lim_{t\to +\infty} \bigg|\int_{\Gamma_{E2}} 
      \frac{\mathrm{d}\zeta}{(\zeta-\zeta_0)^2-z^2}\bigg|
    &\leqslant \lim_{t\to +\infty} \int_{-\sigma}^\sigma \frac{\mathrm{d}u}
        {|u+\mathrm{i}t-(\zeta_0+z)||u+\mathrm{i}t-(\zeta_0-z)|}        \\
    &\leqslant \lim_{t\to +\infty} \frac{2\sigma}
        {(t-\mathrm{Im}\,(\zeta_0+z))(t-\mathrm{Im}\,(\zeta_0-z))}   \\
    &=0,
  \end{align*}
  we then have, by letting $t\to\infty$, 
  $\int_{\Gamma} K_z(\zeta,\zeta_0)\,\mathrm{d}\zeta= 1$, 
  and the lemma is proved.
\end{proof}

For $\alpha>0$, $\zeta\in\Gamma\setminus\{\pm\sigma\}$, define
\[\Omega_{\alpha\pm}(\zeta)=\left\{\!\!
        \begin{array}{ll}
          \zeta\pm\{x+\mathrm{i}y\colon x>0,|y|< \alpha x\} 
            & \text{if $\zeta\in\Gamma_1$},                          \\
          \zeta\pm\{x+\mathrm{i}y\colon y>0,|x|< \alpha y\} 
            & \text{if $\zeta\in\Gamma_2\setminus\{-\sigma,\sigma\}$},  \\
          \zeta\pm\{x+\mathrm{i}y\colon x<0,|y|< -\alpha x\} 
            & \text{if $\zeta\in\Gamma_3$},                          \\
        \end{array}\right.\]
then $\Omega_{\alpha\pm}(\zeta)$ are cones with vertex $\zeta$.
Notice that both $\Omega_{\alpha+}(\zeta)$ and $\Omega_{\alpha-}(\zeta)$
are not defined for $\zeta=\pm\sigma$.

\begin{lemma}\label{lem-170827-0935}
  If $\alpha>0$ and $\zeta$, $\zeta_0\in\Gamma$ with $\zeta_0\neq \pm\sigma$, 
  then there exists constants $C$, $\delta>0$, depending on 
  $\alpha$, $\zeta_0$, respectively, such that
  \[|K_z(\zeta,\zeta_0)|
    \leqslant \frac{C|z|}{|\zeta-\zeta_0|^2+|z|^2}\]
  for $z+\zeta_0\in \Omega_{\alpha\pm}(\zeta_0)$ and $|z|<\delta$.
\end{lemma}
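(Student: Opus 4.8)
The plan is to reduce the claimed inequality to a single lower bound on the denominator of $K_z$. By the closed form~\eqref{equ-170826-2220} we have $|K_z(\zeta,\zeta_0)| = \frac{1}{\pi}\,\frac{|z|}{|(\zeta-\zeta_0)^2-z^2|}$, so it suffices to produce a constant $c>0$, depending only on $\alpha$, such that
\[
  |(\zeta-\zeta_0)^2 - z^2|\geqslant c\bigl(|\zeta-\zeta_0|^2 + |z|^2\bigr)
\]
for all $\zeta\in\Gamma$ and all admissible $z$; one then takes $C=1/(\pi c)$. Writing $\eta=\zeta-\zeta_0$ and factoring $\eta^2-z^2=(\eta-z)(\eta+z)$, I would first dispose of the regime where the two magnitudes are far apart. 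If $|\eta|\geqslant 2|z|$ then $|\eta\pm z|\geqslant |\eta|-|z|\geqslant\frac12|\eta|$, while $|\eta|^2+|z|^2\leqslant\frac54|\eta|^2$, so $|\eta^2-z^2|\geqslant\frac14|\eta|^2\geqslant\frac15(|\eta|^2+|z|^2)$; the symmetric estimate holds when $|z|\geqslant 2|\eta|$. This part is purely algebraic and uses neither the cone nor the position of $\zeta$.

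It remains to treat the comparable regime $\frac12|z|\leqslant|\eta|\leqslant 2|z|$, which is where the geometry enters and where I expect the real work to lie. Here I would choose $\delta$ so that $2\delta$ is smaller than the distance from $\zeta_0$ to the corners $\pm\sigma$; since $\zeta_0\neq\pm\sigma$ such a $\delta>0$ exists and depends only on $\zeta_0$. Then $|z|<\delta$ forces $|\eta|=|\zeta-\zeta_0|<2\delta$, so $\zeta$ lies on the same straight piece of $\Gamma$ as $\zeta_0$. On that piece $\eta$ is purely imaginary when $\zeta_0\in\Gamma_1\cup\Gamma_3$ and purely real when $\zeta_0\in\Gamma_2$. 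The defining cone $\Omega_{\alpha\pm}(\zeta_0)$ opens transverse to this segment, so non-tangentiality gives $|\mathrm{Re}\,z|\geqslant|z|/\sqrt{1+\alpha^2}$ in the first case and $|\mathrm{Im}\,z|\geqslant|z|/\sqrt{1+\alpha^2}$ in the second. Because $\eta$ is orthogonal to this transverse direction, $\mathrm{Re}(\eta\pm z)=\pm\mathrm{Re}\,z$ (respectively $\mathrm{Im}(\eta\pm z)=\pm\mathrm{Im}\,z$), whence $|\eta\pm z|\geqslant|z|/\sqrt{1+\alpha^2}$ for both signs. Therefore $|\eta^2-z^2|\geqslant|z|^2/(1+\alpha^2)\geqslant\frac{1}{5(1+\alpha^2)}(|\eta|^2+|z|^2)$, using $|\eta|^2+|z|^2\leqslant 5|z|^2$ in this regime.

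Combining the two regimes yields the bound with $c=\frac{1}{5(1+\alpha^2)}$, hence $C=\frac{5(1+\alpha^2)}{\pi}$, which depends only on $\alpha$, while $\delta$ depends only on $\zeta_0$, exactly as stated. The $\Omega_{\alpha-}$ case is identical to the $\Omega_{\alpha+}$ case after the sign change in the cone, since only $|\mathrm{Re}\,z|$ or $|\mathrm{Im}\,z|$ enters the estimate. The one delicate point, and the main obstacle, is the transversality step in the comparable regime: everything hinges on the fact that once $\delta$ localizes $\zeta$ to the linear segment through $\zeta_0$, the vector $\eta$ is exactly parallel to $\Gamma$ while the cone forces $z$ to have a definite component in the perpendicular direction, so the factors $\eta-z$ and $\eta+z$ cannot both degenerate. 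Packaging this cleanly amounts to verifying that the chosen $\delta$ really confines $\zeta$ to a single segment and that the perpendicular-component bound survives for both signs $\pm z$ simultaneously.
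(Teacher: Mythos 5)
Your reduction and two-regime split are sound, and your comparable-regime transversality estimate ($|\eta\pm z|\geqslant|\mathrm{Re}\,z|\geqslant|z|/\sqrt{1+\alpha^2}$ when $\eta$ is purely imaginary) is exactly the content of the paper's bound $|\zeta-\zeta_0\pm z|\geqslant|z|\,|\sin(\arg z-\arg(\zeta-\zeta_0))|\geqslant|z|\cos(\arctan\alpha)$, so most of your proof matches the paper's. The genuine gap is your choice of $\delta$: requiring only $2\delta<\operatorname{dist}(\zeta_0,\{\pm\sigma\})$ does \emph{not} confine $\zeta$ to the same straight piece of $\Gamma$, because $\Gamma$ is not a line --- the two vertical rays $\Gamma_1$ and $\Gamma_3$ remain at distance $2\sigma$ from each other no matter how far one is from the corners. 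Concretely, let $\zeta_0=-\sigma+\mathrm{i}v_0\in\Gamma_1$ with $v_0>4\sigma$; your condition allows any $\delta\in(2\sigma,\,v_0/2)$, yet $\zeta=\sigma+\mathrm{i}v_0\in\Gamma_3$ satisfies $|\zeta-\zeta_0|=2\sigma<2\delta$ and lies in your comparable regime for $|z|\approx 2\sigma<\delta$, while not lying on $\Gamma_1$. So the assertion ``$\eta$ is purely imaginary'' fails and the transversality step collapses.

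Moreover, with that $\delta$ this is not merely an unproven step: the inequality itself is false. Taking $z=2\sigma$ (real, hence in $\Omega_{\alpha+}(\zeta_0)-\zeta_0$, and $|z|<\delta$) gives $\zeta_0+z=\sigma+\mathrm{i}v_0\in\Gamma_3\subset\Gamma$, so $K_z(\cdot,\zeta_0)$ has a pole at a point of $\Gamma$ and no finite $C$ can satisfy the bound; likewise $z=-2\sigma$ defeats the $\Omega_{\alpha-}(\zeta_0)$ case via the pole at $\zeta_0-z$. The repair is exactly the paper's choice: $\delta$ must be controlled by the distance from $\zeta_0$ to \emph{all} of $\Gamma\setminus\Gamma_1$, not just to the corners, e.g.\ $\delta=\frac12\min\{\mathrm{Im}\,\zeta_0,\,2\sigma\}$ for $\zeta_0\in\Gamma_1$ (and analogously when $\zeta_0\in\Gamma_2$ or $\Gamma_3$); the cap by $2\sigma$, i.e.\ by the distance to the opposite ray, is precisely what your corner-only condition omits. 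With that single correction your two-regime argument goes through verbatim and yields the lemma with $C=5(1+\alpha^2)/\pi$.
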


\begin{proof}
  We could assume $\zeta_0\in\Gamma_1$ and 
  $z+\zeta_0\in\Omega_{\alpha+}(\zeta_0)$, since the other cases could be 
  similarly proved. In view of \eqref{equ-170826-2220}, we need to prove that
  for all $\zeta\in\Gamma$, 
  \[|\zeta-\zeta_0|^2+|z|^2
    \leqslant C_1|(\zeta-\zeta_0)^2-z^2|,\]
  where $z+\zeta_0\in\Omega_{\alpha+}(\zeta_0)$ and $|z|<\delta$ for some
  $C_1$ and $\delta$.
  
  Since $\zeta_0\in\Gamma_1$, let 
  $\delta=\frac12\min\{\mathrm{Im}\,\zeta_0, 2\sigma\}$, then
  $D(\zeta_0,2\delta)\cap\Gamma_1\subset \Gamma_1$, and 
  $D(\zeta_0,2\delta)\cap\Omega_+\subset \Omega_+$. Now choose $|z|<\delta$,
  $\zeta\in \Gamma\setminus D(\zeta_0,2\delta)$, then
  $|\zeta-\zeta_0|\geqslant 2\delta> 2|z|$, and
  \begin{align*}
    |\zeta-\zeta_0|^2+|z|^2
    &\leqslant \frac54 |\zeta-\zeta_0|^2,         \\
    |(\zeta-\zeta_0)^2-z^2|
    &\geqslant |\zeta-\zeta_0|^2-|z|^2
     \geqslant \frac34 |\zeta-\zeta_0|^2,
  \end{align*}
  which implies that
  \[|\zeta-\zeta_0|^2+|z|^2
    \leqslant \frac53 |(\zeta-\zeta_0)^2-z^2|.\]
  
  If $\zeta\in \Gamma\cap D(\zeta_0,2\delta)$, then $\zeta\in\Gamma_1$, 
  and $\arg(\zeta-\zeta_0)= \pm\frac{\pi}2$. 
  Since $|\arg z|<\arctan\alpha$ for 
  $z\in\Omega_{\alpha+}(\zeta_0)-\zeta_0$, we have
  \begin{align*}
    |\zeta-\zeta_0\pm z|
    &= \big||\zeta-\zeta_0|\mathrm{e}^{\mathrm{i}\arg(\zeta-\zeta_0)}
         \pm |z|\mathrm{e}^{\mathrm{i}\arg z}\big|                   \\
    &= \big||\zeta-\zeta_0|\pm |z|\mathrm{e}^{\mathrm{i}\arg z
         - \mathrm{i}\arg(\zeta-\zeta_0)}\big|                   \\
    &\geqslant |z|\cdot|\sin(\arg z-\arg(\zeta-\zeta_0))|           \\
    &\geqslant |z|\cos(\arctan\alpha)                            \\
    &= \frac{|z|}{\sqrt{1+\alpha^2}}.
  \end{align*}
  We also have $|\zeta-\zeta_0\pm z|
    \geqslant |\zeta-\zeta_0|(1+\alpha^2)^{-\frac12}$ by the same method. 
  If, further, $|\zeta-\zeta_0|\leqslant |z|$, then 
  $|\zeta-\zeta_0|^2+|z|^2\leqslant 2|z|^2$, and
  \[|(\zeta-\zeta_0)^2-z^2|
    = |\zeta-\zeta_0+z|\cdot |\zeta-\zeta_0-z|
    \geqslant \frac{|z|^2}{1+\alpha^2};\]
  or if $|\zeta-\zeta_0|> |z|$, then 
  $|\zeta-\zeta_0|^2+|z|^2\leqslant 2|\zeta-\zeta_0|^2$, and
  $|(\zeta-\zeta_0)^2-z^2|\geqslant |\zeta-\zeta_0|^2(1+\alpha^2)^{-1}$.
  In either case, we have
  \[|\zeta-\zeta_0|^2+|z|^2
    \leqslant 2(1+\alpha^2)|(\zeta-\zeta_0)^2-z^2|.\]
  
  Now let $C_1=\max\{2(1+\alpha^2), \frac53\}= 2(1+\alpha^2)$, then for all
  $\zeta\in\Gamma$,
  \[|\zeta-\zeta_0|^2+|z|^2\leqslant C_1|(\zeta-\zeta_0)^2-z^2|,\]
  where $z+\zeta_0\in \Omega_{\alpha+}(\zeta_0)$ and 
  $|z|<\frac12\min\{\mathrm{Im}\,\zeta_0, 2\sigma\}$. This proves the lemma.
\end{proof}

$\Gamma$ could be parametrized in a natural way, that is,
\[\zeta(b)= \left\{\!\!
      \begin{array}{ll}
        -\sigma+(-b-\sigma)\mathrm{i} & \text{if $b< -\sigma$}, \\
        b & \text{if $-\sigma\leqslant b\leqslant \sigma$},    \\
        \sigma+(b-\sigma)\mathrm{i} & \text{if $b> \sigma$},
      \end{array}\right.\]
where $b$ is the signed arc length parameter of $\Gamma$, starting from 
the origin. Then $F(\zeta)$ defined on $\Gamma$ could be considered as 
$F(\zeta(b))$ which is defined on $\mathbb{R}$. Besides,
\[\int_\Gamma F(\zeta)|\mathrm{d}\zeta|
  = \int_{\mathbb{R}} F(\zeta(b))\,\mathrm{d}b.\]
  
\begin{lemma}\label{lem-170827-0820}
  If $\zeta_0=\zeta(b_0)$, $\zeta=\zeta(b)\in\Gamma$ and $\zeta_0$ is fixed,
  then there exists constants $C>0$, depending on $\zeta_0$, such that
  $|\zeta-\zeta_0|\geqslant C|b-b_0|$ for all $\zeta\in\Gamma$.
\end{lemma}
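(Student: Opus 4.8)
The plan is to study the ratio $\phi(b) = |\zeta(b) - \zeta_0| / |b - b_0|$ for $b \neq b_0$ and to show that its infimum over $\mathbb{R} \setminus \{b_0\}$ is a strictly positive constant $C$, which is exactly the asserted inequality. Since $\zeta(\cdot)$ is the arc-length parametrization, it is a continuous bijection of $\mathbb{R}$ onto $\Gamma$, travelling at unit speed on each of the three pieces $\Gamma_1, \Gamma_2, \Gamma_3$ with corners only at $b = \pm\sigma$; in particular $\phi$ is continuous and strictly positive on $\mathbb{R} \setminus \{b_0\}$, the positivity coming from the injectivity of $\zeta(\cdot)$, which forces $\zeta(b) \neq \zeta_0$ whenever $b \neq b_0$.

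First I would record the two boundary behaviours of $\phi$. As $b \to b_0$, the points $\zeta(b)$ and $\zeta_0$ eventually lie on a common straight segment of $\Gamma$ traversed at unit speed, so that $|\zeta(b) - \zeta_0| = |b - b_0|$ exactly; this is checked directly from the three branches of the formula for $\zeta(b)$ (for instance on $\Gamma_1$ one has $\zeta(b) - \zeta_0 = (b_0 - b)\mathrm{i}$), and it covers both the case where $b_0$ is interior to a piece and the case where $\zeta_0 = \pm\sigma$ is a corner, by treating the two one-sided limits separately. Hence $\phi(b) \to 1$ as $b \to b_0$. Likewise, as $b \to +\infty$ the point $\zeta(b) = \sigma + (b - \sigma)\mathrm{i}$ escapes to infinity with $|\zeta(b)| \sim |b|$ while $\zeta_0$ stays fixed, so $|\zeta(b) - \zeta_0| \sim |b| \sim |b - b_0|$ and $\phi(b) \to 1$; the same holds along $\Gamma_1$ as $b \to -\infty$.

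With these limits in hand, I would extend $\phi$ to a function $\tilde\phi$ on the compact interval $[-\infty, +\infty]$ by setting $\tilde\phi(b_0) = 1$ and $\tilde\phi(\pm\infty) = 1$. The computations of the previous paragraph make $\tilde\phi$ continuous on this compact set, and it is strictly positive everywhere. A positive continuous function on a compact set attains a positive minimum, say $C > 0$; then $\phi(b) \geqslant C$ for every $b \neq b_0$, which is the claim $|\zeta(b) - \zeta_0| \geqslant C|b - b_0|$. Since the construction depends on $\zeta_0$ only through the location of $b_0$, the constant $C$ depends on $\zeta_0$, as allowed.

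The point I expect to require the most care is the corner behaviour at $b_0$, together with the reason the constant cannot be taken uniform in $\zeta_0$. The curve $\Gamma$ is not chord--arc: if one lets $\zeta_0$ run high up $\Gamma_1$ at height $H$ and compares with the point of equal height on $\Gamma_3$, the chord stays equal to $2\sigma$ while the arc length grows like $2H$, so the ratio tends to $0$ as $H \to \infty$. This is precisely why $C$ must be allowed to depend on $\zeta_0$, and it confirms that the infimum defining $C$ is genuinely attained at a finite $b$ (not at $\pm\infty$) for each fixed $\zeta_0$. If an explicit constant is preferred over the compactness argument, the same bound can be produced by a finite case analysis over the configurations of $(\zeta, \zeta_0)$ among $\Gamma_1, \Gamma_2, \Gamma_3$, reduced by the left--right symmetry of $\Gamma$, the binding case being the one where $\zeta$ and $\zeta_0$ sit on different vertical rays.
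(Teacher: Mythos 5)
Your proof is correct, but it takes a genuinely different route from the paper's. The paper argues by a finite case analysis on which of the three pieces $\Gamma_1$, $\Gamma_2$, $\Gamma_3$ contain $\zeta_0$ and $\zeta$: same-piece pairs give $|\zeta-\zeta_0|=|b-b_0|$ exactly; a vertical ray paired with the horizontal segment is handled by the elementary inequality $a^2+b^2\geqslant \frac12(a-b)^2$; and the two-vertical-rays case is handled by explicitly minimizing $g(b)=\bigl(4\sigma^2+(b+b_0)^2\bigr)/(b-b_0)^2$ with calculus, yielding the explicit constant $C=\min\{1/\sqrt2,\ \sigma/\sqrt{b_0^2+\sigma^2}\}$. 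You instead run a soft compactness argument: the ratio $\phi(b)=|\zeta(b)-\zeta_0|/|b-b_0|$ is continuous and strictly positive away from $b_0$ (by injectivity of the parametrization), tends to $1$ as $b\to b_0$ (your one-sided treatment of the corner case $\zeta_0=\pm\sigma$ is the right care to take, since the corner is an endpoint of both adjacent segments) and tends to $1$ as $b\to\pm\infty$, hence extends to a positive continuous function on the two-point compactification of $\mathbb{R}$ and attains a positive minimum $C$. Both proofs are complete. The trade-off: the paper's computation produces an explicit constant and makes visible the exact rate at which $C$ degenerates as $\zeta_0$ recedes along a vertical ray, namely $C\approx \sigma/|b_0|$ for large $|b_0|$ --- quantitative information your argument deliberately discards; your argument is shorter, avoids the minimization, and is more robust, applying verbatim to any simple piecewise-linear curve whose arc-length parametrization is asymptotically isometric at infinity. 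Your closing observation that the binding configuration is the two-vertical-rays case, and that the failure of a uniform chord--arc bound is exactly why $C$ must depend on $\zeta_0$, matches precisely the source of the paper's dominant constant $\sigma/\sqrt{b_0^2+\sigma^2}$.
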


\begin{proof}
  We first deal with the case of $\zeta_0\in\Gamma_1$. If $\zeta\in\Gamma_1$,
  then $|\zeta-\zeta_0|=|b-b_0|$. If $\zeta\in\Gamma_2$, then
  \begin{align*}
    |\zeta-\zeta_0|^2
    &= |b-(-\sigma+(-b_0-\sigma)\mathrm{i})|^2             \\
    &= (b+\sigma)^2+(b_0+\sigma)^2  
     \geqslant \frac12(b-b_0)^2.
  \end{align*}
  The last inequality comes from the elementary inequality 
  $a^2+b^2\geqslant \frac12(a-b)^2$ for $a$, $b\in\mathbb{R}$. It follows that
  $|\zeta-\zeta_0|\geqslant \frac1{\sqrt2}|b-b_0|$. If $\zeta\in\Gamma_3$, 
  we define
  \[g(b)
    = \frac{|\zeta-\zeta_0|^2}{|b-b_0|^2}
    = \frac{4\sigma^2+(b+b_0)^2}{(b-b_0)^2},\]
  where $b>\sigma$ and $b_0< -\sigma$. Since
  \[g'(b)= \frac{-4b_0}{(b-b_0)^3}\big(b+b_0+\frac{2\sigma^2}{b_0}\big),\]
  we know that
  \[\min\{g(b)\colon b>\sigma\}
    = g\big(-b_0-\frac{2\sigma^2}{b_0}\big)
    = \frac{\sigma^2}{b_0^2+\sigma^2}
    < \frac12,\]
  or $|\zeta-\zeta_0|^2\geqslant \frac{\sigma^2}{b_0^2+\sigma^2}|b-b_0|^2$.
  Let $C_1=\min\{1,\frac1{\sqrt2},\frac{\sigma}{\sqrt{b_0^2+\sigma^2}}\}
    = \frac{\sigma}{\sqrt{b_0^2+\sigma^2}}$, then 
  $|\zeta-\zeta_0|\geqslant C_1|b-b_0|$.
  
  Similarly, for all $\zeta\in\Gamma$, if $\zeta_0\in\Gamma_2$, then 
  $|\zeta-\zeta_0|\geqslant \frac1{\sqrt2}|b-b_0|$; if $\zeta_0\in\Gamma_3$, 
  then $|\zeta-\zeta_0|\geqslant C_1|b-b_0|$. Define 
  $C=\min\{C_1, \frac1{\sqrt2}\}
    =\min\{\frac{\sigma}{\sqrt{b_0^2+\sigma^2}}, \frac1{\sqrt2}\}$, then
  $|\zeta-\zeta_0|\geqslant C|b-b_0|$ for all $\zeta\in\Gamma$,
  and the proof is finished.
\end{proof}

\begin{corollary}\label{cor-170827-0930}
  If $1\leqslant p<\infty$, $F(\zeta)\in L^p(\Gamma,|\mathrm{d}\zeta|)$, 
  $\alpha>0$ is fixed, $b_0\neq\pm\sigma$ is the Lebesgue point 
  of $F(\zeta(b))$, then for $z+\zeta_0\in 
    \Omega_{\alpha+}(\zeta_0)\cap\Omega_+$,
  \begin{equation}\label{equ-170827-0940}
    \lim_{z\to 0}\int_\Gamma K_z(\zeta,\zeta_0)F(\zeta)\,\mathrm{d}\zeta
    = F(\zeta_0),
  \end{equation}
  where $\zeta_0=\zeta(b_0)$.
\end{corollary}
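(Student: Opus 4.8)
The plan is to recognize $K_z(\cdot,\zeta_0)$ as a Poisson-type approximate identity on $\Gamma$ and to run the classical Lebesgue-point argument. First I would check that the normalization $\int_\Gamma K_z(\zeta,\zeta_0)\,\mathrm{d}\zeta=1$ of Lemma~\ref{lem-170826-2230} is available. That lemma needs $\zeta_0+z\in\Omega_+$, which is assumed, together with $\zeta_0-z\in\Omega_-$; the latter is a purely geometric fact, since $z$ lies in the cone $\Omega_{\alpha+}(\zeta_0)-\zeta_0$ opening into $\Omega_+$, so $-z$ points into $\Omega_-$ and, for $|z|$ small, $\zeta_0-z\in\Omega_-$. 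Granting this, I subtract the constant and reduce to showing
\[\lim_{z\to0}\int_\Gamma K_z(\zeta,\zeta_0)\big(F(\zeta)-F(\zeta_0)\big)\,\mathrm{d}\zeta=0.\]

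Next I would pass to the arc-length parameter and reduce to a genuine one-dimensional Poisson kernel. By Lemma~\ref{lem-170827-0935}, the nontangential hypothesis $z+\zeta_0\in\Omega_{\alpha+}(\zeta_0)$ gives $|K_z(\zeta,\zeta_0)|\leqslant C|z|/(|\zeta-\zeta_0|^2+|z|^2)$ for $|z|<\delta$, and by Lemma~\ref{lem-170827-0820} there is $c>0$ with $|\zeta-\zeta_0|\geqslant c|b-b_0|$. Writing $\phi(b)=|F(\zeta(b))-F(\zeta_0)|$ and $\eta=|z|$, these combine to
\[\bigg|\int_\Gamma K_z(\zeta,\zeta_0)\big(F(\zeta)-F(\zeta_0)\big)\,\mathrm{d}\zeta\bigg| \leqslant \int_{\mathbb{R}}\frac{C\eta}{c^2(b-b_0)^2+\eta^2}\,\phi(b)\,\mathrm{d}b,\]
and after the substitution $\tau=c(b-b_0)$ the kernel is, up to the constant $C/c$, the half-plane Poisson kernel $\eta/(\tau^2+\eta^2)$ with parameter $\eta\to0$.

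I would then split this integral at $|b-b_0|=\delta_1$. For the near part I use that $b_0$ is a Lebesgue point: given $\varepsilon>0$ I pick $\delta_1$ so that $\Psi(r):=\int_{-r}^{r}\phi(b_0+t)\,\mathrm{d}t<\varepsilon r$ for $r<\delta_1$, symmetrize the even kernel, and integrate by parts against $\Psi$; the boundary term is $O(\varepsilon\eta/\delta_1)$ and the remaining integral is bounded by $\varepsilon\int_0^\infty 2\eta t^2/(t^2+\eta^2)^2\,\mathrm{d}t$, a constant independent of $\eta$ (it equals $\pi/2$ after $t=\eta s$). For the far part I bound $\phi(b)\leqslant|F(\zeta(b))|+|F(\zeta_0)|$: the constant contributes $|F(\zeta_0)|\int_{|t|\geqslant\delta_1}\eta/(t^2+\eta^2)\,\mathrm{d}t$, which vanishes as $\eta\to0$ because the Poisson mass concentrates at the origin, while the $L^p$ term is controlled by H\"older's inequality by $\lVert F\rVert_{L^p}$ times the $L^q$-norm of the kernel over $\{|t|\geqslant\delta_1\}$, which is $O(\eta)$ (for $p=1$ one instead uses the pointwise bound $\eta/\delta_1^2$). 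Letting $\eta\to0$ and then $\varepsilon\to0$ gives the claim.

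The main obstacle is the far part: since $F$ is only assumed to be in $L^p$ and not in $L^1$, the constant $F(\zeta_0)$ keeps $\phi$ from being globally integrable, so one cannot simply dominate the tail by $\eta/\delta_1^2$ times $\lVert\phi\rVert_{L^1}$. The remedy is exactly the split above, handling the constant through the concentration of the Poisson mass and the $L^p$ part through H\"older against the $L^q$ tail of the kernel. A secondary, easily overlooked point is the geometric verification that $\zeta_0-z\in\Omega_-$, without which the normalization used in the very first step would be unjustified.
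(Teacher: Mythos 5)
Your proposal is correct and follows essentially the same route as the paper's proof: normalize via Lemma~\ref{lem-170826-2230}, bound the kernel with Lemma~\ref{lem-170827-0935}, pass to the arc-length parameter with Lemma~\ref{lem-170827-0820}, and reduce to convergence of a half-plane Poisson integral at a Lebesgue point. The only difference is that the paper simply cites \cite{De10} for that final convergence (and asserts $\zeta_0-z\in\Omega_-$ without comment), whereas you verify the geometry and prove the Lebesgue-point fact by hand with the near/far split, integration by parts against $\Psi$, and H\"older control of the $L^p$ tail -- all of which is sound.
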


\begin{proof}
  Since $\zeta_0+z\in\Omega_{\alpha+}(\zeta_0)\cap\Omega_+$, then 
  $\zeta_0-z\in\Omega_-$ and by Lemma~\ref{lem-170826-2230},
  $\int_{\Gamma} K_z(\zeta,\zeta_0)\,\mathrm{d}\zeta= 1$. 
  Lemma~\ref{lem-170827-0935} shows that there exists $C_1$, $\delta>0$, 
  such that for $\zeta_0+z\in\Omega_{\alpha+}(\zeta_0)$ and $|z|<\delta$,
  we have
  \begin{align*}
    I&= \bigg|\int_\Gamma K_z(\zeta,\zeta_0)F(\zeta)\,\mathrm{d}\zeta
        - F(\zeta_0)\bigg|                           \\
    &= \bigg|\int_\Gamma K_z(\zeta,\zeta_0) (F(\zeta)
        - F(\zeta_0))\,\mathrm{d}\zeta\bigg|               \\
    &\leqslant C_1 \int_\Gamma \frac{|z||F(\zeta)-F(\zeta_0)|}
        {|\zeta-\zeta_0|^2+|z|^2} |\mathrm{d}\zeta|          \\
    &\leqslant C_1 \int_{\mathbb{R}} \frac{|z||F(\zeta(b))-F(\zeta(b_0))|}
            {C_2^2|b-b_0|^2+|z|^2} \mathrm{d}b,
  \end{align*}
  where $C_2>0$ and the last inequality follows from 
  Lemma~\ref{lem-170827-0820}, then
  \begin{align*}
    I
    &\leqslant \frac{\pi C_1}{C_2} \int_{\mathbb{R}} P_{\frac{|z|}{C_2}}(b-b_0)
          |F(\zeta(b))-F(\zeta(b_0))|\,\mathrm{d}b                  \\
    &\leqslant \frac{\pi C_1}{C_2} \int_{\mathbb{R}} P_{\frac{|z|}{C_2}}(b) 
          |F(\zeta(b+b_0))-F(\zeta(b_0))|\,\mathrm{d}b.
  \end{align*}
  Here, $P_x(y)= \frac1\pi\cdot \frac{y}{x^2+y^2}$ is the Poisson kernel on 
  $\mathbb{C}_+$. Since $b_0$ is the Lebesgue point of $F(\zeta(b))$, we have 
  $\lim_{|z|\to 0} I= 0$~\cite{De10}, which is the desired result.
\end{proof}

Obviously, under the condition of Corollary~\ref{cor-170827-0930}, we could 
prove that if $z+\zeta_0\in\Omega_{\alpha-}(\zeta_0)$, then 
\eqref{equ-170827-0940} becomes
\[\lim_{z\to 0}\int_\Gamma K_z(\zeta,\zeta_0)F(\zeta)\,\mathrm{d}\zeta
  = -F(\zeta_0).\] 
We say that function $F(w)$, defined on 
$\Omega_+$ has non-tangential boundary limit $F(\zeta_0)$ at 
$\zeta_0\in\Gamma$, if for all $\alpha>0$,
\[\lim_{w\to\zeta_0} F(w)= F(\zeta_0)\quad
  \text{for } w\in\Omega_{\alpha+}(\zeta_0)\cap\Omega_+.\]
The non-tangential boundary limit of functions on $\Omega_-$ is analogously
defined. Corollary~\ref{cor-170827-0930} tells us that the function
\[G(w)=G(\zeta_0+z)
  = \int_{\Gamma} K_z(\zeta,\zeta_0) F(\zeta)\,\mathrm{d}\zeta\]
has non-tangential boundary limit $F(\zeta_0)$ at $\zeta_0$, although $G(w)$
may be only well-defined in $\Omega_+$ and near $\zeta_0$.

\begin{lemma}\label{lem-170827-1000}
  If $0<p<\infty$, $F(w)\in H^p(\Omega_+)$, $0<s<\sigma$, $t>0$, then
  \[\frac1{2\pi\mathrm{i}} \int_{\Gamma_{s,t}} \frac{F(w)}{w-w_0}\mathrm{d}w
    = \left\{\!\!
        \begin{array}{ll}
          F(w_0) & \text{if $w_0\in D_{s,t}$},\\
          0 & \text{if $w_0\notin \overline{D_{s,t}}$},
        \end{array}\right.\]
\end{lemma}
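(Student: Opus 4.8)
The plan is to realize the integral over the unbounded contour $\Gamma_{s,t}$ as a limit of Cauchy integrals over bounded rectangles, where the classical Cauchy integral formula applies directly. First I would note that, since $0<s<\sigma$ and $t>0$, the closed half-strip $\overline{D_{s,t}}$ is contained in the open set $\Omega_+=D_{\sigma,0}$, so $F$ is analytic on a neighborhood of $\overline{D_{s,t}}$. For $T>\max\{t,\mathrm{Im}\,w_0\}$ set $R_T=D_{s,t}\cap\{\mathrm{Im}\,w<T\}$, the open rectangle $(-s,s)\times(t,T)$, whose positively oriented boundary $\partial R_T$ is the truncated contour $\Gamma_{s,t}\cap\{\mathrm{Im}\,w\le T\}$ together with the top segment $S_T=\{u+\mathrm{i}T\colon |u|\le s\}$ traversed from right to left. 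Since $F$ is analytic on a neighborhood of $\overline{R_T}$, the classical Cauchy integral formula gives
\[\frac1{2\pi\mathrm{i}}\int_{\partial R_T}\frac{F(w)}{w-w_0}\,\mathrm{d}w
  =\begin{cases}F(w_0)&\text{if }w_0\in R_T,\\ 0&\text{if }w_0\notin\overline{R_T}.\end{cases}\]
Once $T$ exceeds $\mathrm{Im}\,w_0$, the right-hand side already equals the value claimed in the lemma (recall $w_0\in D_{s,t}$ or $w_0\notin\overline{D_{s,t}}$), so it remains to pass to the limit $T\to+\infty$ on the left.

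Next I would show that the contribution of the top segment vanishes and that the truncated contour integral converges to the integral over $\Gamma_{s,t}$. For the top segment, since $w_0$ is fixed and $|w-w_0|\ge T-\mathrm{Im}\,w_0$ on $S_T$, I would estimate
\[\Big|\frac1{2\pi\mathrm{i}}\int_{S_T}\frac{F(w)}{w-w_0}\,\mathrm{d}w\Big|
  \le\frac{s}{\pi}\cdot\frac{\max\{|F(u+\mathrm{i}T)|\colon|u|\le s\}}{T-\mathrm{Im}\,w_0},\]
which tends to $0$ because $F(u+\mathrm{i}T)\to0$ uniformly for $|u|\le s$ by Lemma~\ref{lem-170825-1930}. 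For the two vertical sides I must check both that $F(w)/(w-w_0)$ is absolutely integrable over $\Gamma_{s,t}$ and that the tails beyond height $T$ vanish; note that $d=\operatorname{dist}(w_0,\Gamma_{s,t})>0$ since $w_0\notin\Gamma_{s,t}$, so $|w-w_0|^{-1}\le d^{-1}$ on the whole contour.

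The hard part will be this tail estimate on the two infinite vertical rays, where the argument must be split according to $p$. For $1\le p<\infty$, with conjugate exponent $q$, I would use H\"older's inequality together with Lemma~\ref{lem-170826-0720} (which yields $(w-w_0)^{-1}\in L^q(\Gamma_{s,t})$ as $1<q\le\infty$): on $\Gamma_{s,t}$ one has $F\in L^p$ (indeed $m(s,t,F)\le\lVert F\rVert_{H^p(\Omega_+)}$), so the integral converges and its tail over $\{\mathrm{Im}\,w>T\}$ is bounded by the product of $\big(\int_{\mathrm{Im}\,w>T}|F|^p\big)^{1/p}$, which tends to $0$ by Lemma~\ref{lem-170825-1930}, and a bounded factor. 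For $0<p<1$ H\"older is unavailable, and instead I would invoke the sub-mean-value estimate behind Lemma~\ref{lem-170825-1930}: writing $\Phi(v)$ for the mass of $|F|^p$ in the horizontal slab $\{|\mathrm{Im}\,w-v|<\rho\}$ over a slightly wider strip, that estimate gives $|F(-s+\mathrm{i}v)|\le C\Phi(v)^{1/p}$, while a Fubini computation shows $\Phi\in L^1(t,\infty)$; since $\Phi$ is also bounded, $\Phi^{1/p}\le C'\Phi\in L^1$, so $\int_t^\infty|F(-s+\mathrm{i}v)|\,\mathrm{d}v<\infty$ with vanishing tails, and multiplying by the bounded weight $|w-w_0|^{-1}$ handles this case (the ray $\Gamma_{s,t,3}$ being identical). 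Combining the three limits and letting $T\to+\infty$ then yields the stated formula.
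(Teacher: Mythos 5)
Your proposal is correct and follows essentially the same route as the paper's proof: truncate the half-strip at height $T$, apply the classical Cauchy integral formula on the resulting rectangle, kill the top edge using the uniform decay of $F$ from Lemma~\ref{lem-170825-1930}, and let $T\to+\infty$. The only difference is that you also verify absolute integrability of $F(w)/(w-w_0)$ over the infinite contour (via H\"older and Lemma~\ref{lem-170826-0720} for $p\geqslant 1$, and a sub-mean-value/Fubini argument for $p<1$), a convergence point the paper's proof leaves implicit; this is a sound refinement rather than a different method.
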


\begin{proof}
  For fixed $w_0= u_0+\mathrm{i}v_0\notin \Gamma_{s,t}$, 
  let $t_1>\max\{t,v_0\}$, $E= D_{s,t}\cap\{\mathrm{Im}\,w<t_1\}$ with 
  the usual orientation of the boundary, 
  $\Gamma_{E1}= \Gamma_{s,t}\cap\{\mathrm{Im}\,w<t_1\}$,
  $\Gamma_{E2}= \{u+\mathrm{i}t_1\colon |u|\leqslant s\}$, then
  $\partial E= \Gamma_{E1}\cup\Gamma_{E2}$ and $w_0\in D_{s,t}$ implies that
  $w_0\in E$. Since $F(w)$ is analytic,
  \[\frac1{2\pi\mathrm{i}} \int_{\partial E} \frac{F(w)}{w-w_0}\mathrm{d}w
    = \left\{\!\!
        \begin{array}{ll}
          F(w_0) & \text{if $w_0\in D_{s,t}$},\\
          0 & \text{if $w_0\notin \overline{D_{s,t}}$}.
        \end{array}\right.\]
  Define $M(t_1)= \max\{|F(w)|\colon \zeta\in \Gamma_{E2}\}$, then 
  $M(t_1)\to 0$ as $t_1\to +\infty$ by Lemma~\ref{lem-170825-1930}, and
  \begin{align*}
    \bigg|\int_{\Gamma_{E2}} \frac{F(w)}{w-w_0}\mathrm{d}w\bigg|
    &\leqslant M(t_1)\int_{-s}^s \frac{\mathrm{d}u}
        {|u+\mathrm{i}t_1-u_0-\mathrm{i}v_0|}             \\
    &\leqslant M(t_1)\cdot \frac{2s}{t_1-v_0}
     \to 0,
  \end{align*}
  thus
  \[\frac1{2\pi\mathrm{i}} \int_{\Gamma_{s,t}} \frac{F(w)}{w-w_0}\mathrm{d}w
    = \lim_{t_1\to +\infty} \int_{\Gamma_{E1}} \frac{F(w)}{w-w_0}\mathrm{d}w\]
  and the lemma is proved. 
\end{proof}

The $H^p(\Omega_-)$ version of the above lemma is as follows.

\begin{lemma}\label{lem-170827-1010}
  If $1\leqslant p<\infty$, $F(w)\in H^p(\Omega_-)$, $s>\sigma$ and $t<0$, 
  then
  \[\frac1{2\pi\mathrm{i}} \int_{\Gamma_{s,t}} \frac{F(w)}{w-w_0}\mathrm{d}w
    = \left\{\!\!
        \begin{array}{ll}
          0 & \text{if $w_0\in D_{s,t}$},\\
          -F(w_0) & \text{if $w_0\notin \overline{D_{s,t}}$},
        \end{array}\right.\]
\end{lemma}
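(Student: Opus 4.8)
The plan is to adapt the proof of Lemma~\ref{lem-170827-1000}, with the essential difference that $\Omega_-$ is an \emph{exterior} region: instead of integrating over the boundary of a truncated half-strip, I would integrate over the positively oriented boundary of the ``annular'' region squeezed between two nested half-strip boundaries, and then let the outer one escape to infinity. Fix $w_0=u_0+\mathrm{i}v_0\notin\Gamma_{s,t}$, choose $S>s$, $T_1<t$ and $T_2>\max\{0,v_0\}$, and set
\[A= \big(D_{S,T_1}\setminus\overline{D_{s,t}}\big)\cap\{\mathrm{Im}\,w< T_2\}.\]
Because $s>\sigma$, $t<0$, $S>\sigma$ and $T_1<0$, the closure $\overline A$ lies in $\Omega_-$, so $F$ is analytic on $\overline A$. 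The positively oriented boundary of $A$ is the outer contour $\Gamma_{S,T_1}$ truncated at height $T_2$ (standard orientation, $A$ on the left), the inner contour $\Gamma_{s,t}$ truncated at $T_2$ but traversed in reverse (the hole $D_{s,t}$ on the right), and the two top segments $\{u+\mathrm{i}T_2\colon s\leqslant|u|\leqslant S\}$. The Cauchy integral formula on $A$ then gives $\frac1{2\pi\mathrm{i}}\oint_{\partial A}\frac{F(w)}{w-w_0}\,\mathrm{d}w$ equal to $F(w_0)$ if $w_0\in A$ and $0$ if $w_0\notin\overline A$; the ``hole on the right'' orientation is exactly what produces the minus sign in front of the $\Gamma_{s,t}$-integral.

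Next I would send $T_2\to+\infty$. On the two top segments Lemma~\ref{lem-170825-2130} gives $F(u+\mathrm{i}T_2)\to0$ uniformly for $s\leqslant|u|\leqslant S$, while $|w-w_0|\geqslant T_2-v_0$ there, so their contribution vanishes. Since $F\in L^p(\Gamma_{s,t},|\mathrm{d}w|)$ by the definition of $H^p(\Omega_-)$ and $(w-w_0)^{-1}\in L^q(\Gamma_{s,t},|\mathrm{d}w|)$ by Lemma~\ref{lem-170826-0720} (here $1\leqslant p<\infty$, hence $1<q\leqslant\infty$), Hölder's inequality shows $F(w)/(w-w_0)\in L^1(\Gamma_{s,t})$, and dominated convergence replaces the $T_2$-truncated integrals over $\Gamma_{s,t}$ and over $\Gamma_{S,T_1}$ by the full ones. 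This leaves
\[\frac1{2\pi\mathrm{i}}\bigg(\int_{\Gamma_{S,T_1}}-\int_{\Gamma_{s,t}}\bigg)
    \frac{F(w)}{w-w_0}\,\mathrm{d}w
  =\begin{cases} F(w_0) & \text{if }w_0\notin\overline{D_{s,t}},\\
                 0      & \text{if }w_0\in D_{s,t}.\end{cases}\]

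The crux, which I expect to be the main obstacle, is to show that the outer integral $\int_{\Gamma_{S,T_1}}F(w)(w-w_0)^{-1}\,\mathrm{d}w$ tends to $0$ as $S\to+\infty$ and $T_1\to-\infty$. On each piece I would estimate by Hölder,
\[\bigg|\int_{\Gamma_{S,T_1,j}}\frac{F}{w-w_0}\,\mathrm{d}w\bigg|
  \leqslant \lVert F\rVert_{L^p(\Gamma_{S,T_1,j})}\,
      \lVert (w-w_0)^{-1}\rVert_{L^q(\Gamma_{S,T_1,j})}
  \leqslant \lVert F\rVert_{H^p(\Omega_-)}\,
      \lVert (w-w_0)^{-1}\rVert_{L^q(\Gamma_{S,T_1,j})},\]
where the factor $\lVert F\rVert_{L^p(\Gamma_{S,T_1,j})}\leqslant\lVert F\rVert_{H^p(\Omega_-)}$ is merely bounded and does not decay, so the entire gain must come from the kernel. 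Repeating the computation in the proof of Lemma~\ref{lem-170826-0720} with $p$ replaced by $q$, the $L^q$-norm of $(w-w_0)^{-1}$ over the vertical side $\Gamma_{S,T_1,3}$ is $O\big((S-u_0)^{1/q-1}\big)$ and over the bottom $\Gamma_{S,T_1,2}$ is $O\big(|T_1-v_0|^{1/q-1}\big)$, both tending to $0$ because $q>1$; the side $\Gamma_{S,T_1,1}$ is symmetric. Hence the outer integral vanishes in the limit, and we obtain $-\frac1{2\pi\mathrm{i}}\int_{\Gamma_{s,t}}\frac{F(w)}{w-w_0}\,\mathrm{d}w$ equal to $F(w_0)$ when $w_0\notin\overline{D_{s,t}}$ and to $0$ when $w_0\in D_{s,t}$, which is exactly the asserted formula. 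This final decay step is precisely where the hypothesis $p\geqslant1$ (equivalently $q>1$) is indispensable: for $0<p<1$ the kernel fails to lie in any $L^q$ with $q>1$, the bounded-but-non-decaying factor $\lVert F\rVert_{L^p(\Gamma_{S,T_1,j})}$ cannot be absorbed, and the outer contribution need not vanish—explaining why this lemma is stated only for $1\leqslant p<\infty$ whereas its interior counterpart Lemma~\ref{lem-170827-1000} holds for all $0<p<\infty$.
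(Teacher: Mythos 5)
Your proposal is correct and follows essentially the same route as the paper's own proof: the identical ``annular'' region $(D_{s_1,t_1}\setminus\overline{D_{s,t}})\cap\{\mathrm{Im}\,w<t_2\}$, the same appeal to Lemma~\ref{lem-170825-2130} to kill the top segments as $t_2\to+\infty$, and the same H\"older-plus-Lemma~\ref{lem-170826-0720} estimate to make the outer contour integral vanish as the outer strip expands (the paper merely treats $p=1$ as a separate sup-norm case, which your $q=\infty$ reading of H\"older subsumes). One small aside: your closing explanation of why $0<p<1$ fails is misstated---the kernel $(w-w_0)^{-1}$ does lie in $L^q(\Gamma_{S,T_1})$ for every $q>1$ regardless of $p$; what is actually missing for $p<1$ is a conjugate exponent making H\"older applicable---but this remark sits outside the proof proper and does not affect its validity.
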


\begin{proof}
  Fix $w_0= u_0+\mathrm{i}v_0\in \Gamma_{s,t}$. Let 
  $s_1>\max\{s,|u_0|\}$, $t_1<\min\{t,v_0\}$, then
  $D_{s,t}\subset D_{s_1,t_1}$ and $w_0\in D_{s_1,t_1}$.
  Let $t_2>\max\{t,v_0\}$, $E= (D_{s_1,t_1}\setminus\overline{D_{s,t}})
    \cap \{\mathrm{Im} w< t_2\}$ with its boundary be oriented such that 
  $E$ is on the left side of $\partial E$. Define 
  $\Gamma_{E1}= \Gamma_{s_1,t_1}\cap\{\mathrm{Im}\,w<t_2\}$,
  $\Gamma_{E2}= \Gamma_{s,t}\cap\{\mathrm{Im}\,w<t_2\}$,
  $\Gamma_{E3}= \{u+\mathrm{i}t_2\colon s\leqslant |u|\leqslant s_1\}$,
  then $\partial E= \Gamma_{E1}\cup\Gamma_{E2}\cup\Gamma_{E3}$. It is 
  not hard to deduce from Lemma~\ref{lem-170825-2130} that
  \[\frac1{2\pi\mathrm{i}} \bigg(\int_{\Gamma_{s_1,t_1}}
      - \int_{\Gamma_{s,t}}\bigg) \frac{F(w)}{w-w_0}\mathrm{d}w
    = \left\{\!\!
        \begin{array}{ll}
          0 & \text{if $w_0\in D_{s,t}$},\\
          F(w_0) & \text{if $w_0\notin \overline{D_{s,t}}$},
        \end{array}\right.\]  
  
  If $1<p<\infty$, let $\frac1p+\frac1q=1$, then by the proof of 
  Lemma~\ref{lem-170826-0720},
  \begin{align*}
    \bigg|\int_{\Gamma_{s_1,t_1}} 
      \frac{F(w)}{w-w_0}\mathrm{d}w\bigg|
    &\leqslant \bigg(\int_{\Gamma_{s_1,t_1}} 
          |F(w)|^p |\mathrm{d}w|\bigg)^{\frac1p}
        \bigg(\int_{\Gamma_{s_1,t_1}} 
          \frac{|\mathrm{d}w|}{|w-w_0|^q}\bigg)^{\frac1q}           \\
    &\leqslant \lVert F\rVert_{H^p(\Omega_-)} \cdot C^{\frac1p}
        ((s_1+u_0)^{1-p}+(v_0-t_1)^{1-p}+(s_1-u_0)^{1-p})^{\frac1p},
  \end{align*}
  where $C=B(\frac12,\frac{p-1}2)$. If $p=1$, then
  \begin{align*}
    \bigg|\int_{\Gamma_{s_1,t_1}} 
      \frac{F(w)}{w-w_0}\mathrm{d}w\bigg|
    &\leqslant \int_{\Gamma_{s_1,t_1}} |F(w)| |\mathrm{d}w|
        \cdot \sup_{w\in \Gamma_{s_1,t_1}} \frac{1}{|w-w_0|}          \\
    &\leqslant \lVert F\rVert_{H^1(\Omega_-)} \cdot 
        \max\{(s_1-|u_0|)^{-1}, (v_0-t_1)^{-1}\},
  \end{align*}  
  Then the lemma is proved if we let $s_1\to +\infty$ and $t_1\to -\infty$. 
\end{proof}

Now we are in the position of proving the existence of non-tangential buondary
limit of functions in $H^p(\Omega_\pm)$ for $1<p<\infty$.

\begin{theorem}\label{thm-170827-1120}
  If $1<p<\infty$, $F(w)\in H^p(\Omega_+)$, then $F(w)$ has non-tangential 
  boundary limit, which we denote as $F(\zeta)$, a.e.\@ on $\Gamma$, 
  $F(\zeta)\in L^p(\Gamma,|\mathrm{d}\zeta|)$, 
  $\lVert F\rVert_{L^p(\Gamma,|\mathrm{d}\zeta|)}
    \leqslant \lVert F\rVert_{H^p(\Omega_+)}$, and
  \[\frac1{2\pi\mathrm{i}} \int_{\Gamma} 
      \frac{F(\zeta)}{\zeta-w}\mathrm{d}\zeta
    = \left\{\!\!
        \begin{array}{ll}
          F(w) & \text{if $w\in \Omega_+$},\\
          0 & \text{if $w\in \Omega_-$}.
        \end{array}\right.\]
  Besides, $\lVert F_{\sigma-\tau,\tau}
    - \chi_{\Gamma}F\rVert_{L^p(\gamma,|\mathrm{d}\zeta|)}\to 0$
  as $\tau\to 0$, which implies that $\lVert F_{\sigma-\tau,\tau}
    - F\rVert_{L^p(\Gamma,|\mathrm{d}\zeta|)}\to 0$.
  Here, $0<\tau<\sigma$, and $F_{\sigma-\tau,\tau}(\zeta)$ is defined 
  in the same way which is before Lemma~\ref{lem-170826-1650}.
\end{theorem}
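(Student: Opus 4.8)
The plan is to realize $F$ as the Cauchy integral of a weak-$L^p$ limit of its ``inner'' boundary traces, and then to extract non-tangential convergence from the symmetric-kernel Corollary~\ref{cor-170827-0930}. First I would record the uniform bound $\lVert F_{\sigma-\tau,\tau}\rVert_{L^p(\gamma,|\mathrm{d}\zeta|)}^p = \int_{\Gamma_{\sigma-\tau,\tau}}|F(w)|^p|\mathrm{d}w|\leqslant \lVert F\rVert_{H^p(\Omega_+)}^p$, valid for all $0<\tau<\sigma$ by the definition of the $H^p(\Omega_+)$-norm and the transplantation identity $\int_\gamma |G_{s,t}||\mathrm{d}\zeta|=\int_{\Gamma_{s,t}}|G||\mathrm{d}w|$. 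Since $1<p<\infty$, the space $L^p(\gamma,|\mathrm{d}\zeta|)$ is reflexive, so some sequence $\tau_n\to 0$ yields a weak limit $F_{\sigma-\tau_n,\tau_n}\rightharpoonup \Phi$ in $L^p(\gamma)$. Testing against functions supported off $\Gamma$ shows $\Phi$ is supported on $\Gamma$, hence $\Phi\in L^p(\Gamma,|\mathrm{d}\zeta|)$, and weak lower semicontinuity of the norm already delivers $\lVert \Phi\rVert_{L^p(\Gamma)}\leqslant \lVert F\rVert_{H^p(\Omega_+)}$, which will become the claimed norm inequality.

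Next I would identify $\Phi$ as the boundary datum by proving $C\Phi=F$ on $\Omega_+$ and $C\Phi\equiv 0$ on $\Omega_-$. Fix $w_0\in\Omega_+$; for $\tau$ small enough that $w_0\in D_{\sigma-\tau,\tau}$, Lemma~\ref{lem-170827-1000} together with the change of variables $w=P_{\sigma-\tau,\tau}(\zeta)$ gives $F(w_0)=\frac1{2\pi\mathrm{i}}\int_\gamma \frac{F_{\sigma-\tau,\tau}(\zeta)}{P_{\sigma-\tau,\tau}(\zeta)-w_0}\,\mathrm{d}\zeta$. Because $P_{\sigma-\tau,\tau}$ is a translation by a vector of length $\tau$ and $\mathrm{dist}(w_0,\Gamma)>0$, the kernels $(P_{\sigma-\tau,\tau}(\zeta)-w_0)^{-1}$ converge to $(\zeta-w_0)^{-1}$ in $L^q(\gamma)$, using Lemma~\ref{lem-170826-0720} for membership. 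Splitting the integral into one piece bounded by the uniform $L^p$ estimate times the $L^q$ kernel-difference and one piece governed by weak convergence against $(\zeta-w_0)^{-1}\in L^q(\gamma)$, I would pass to the limit along $\tau_n$ to get $F(w_0)=\frac1{2\pi\mathrm{i}}\int_\Gamma \frac{\Phi(\zeta)}{\zeta-w_0}\,\mathrm{d}\zeta=C\Phi(w_0)$. The identical computation, now using the vanishing alternative of Lemma~\ref{lem-170827-1000} (available since each $w_0\in\Omega_-$ lies outside $\overline{D_{\sigma-\tau,\tau}}$ for every $\tau$), yields $C\Phi(w_0)=0$ on $\Omega_-$.

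With both sides of $\Gamma$ understood, the non-tangential limit falls out of Corollary~\ref{cor-170827-0930}: for a Lebesgue point $\zeta_0\neq\pm\sigma$ and $z=w-\zeta_0$ with $w\in\Omega_{\alpha+}(\zeta_0)\cap\Omega_+$, one checks $\zeta_0-z\in\Omega_-$, so $\int_\Gamma K_z(\zeta,\zeta_0)\Phi(\zeta)\,\mathrm{d}\zeta=C\Phi(\zeta_0+z)-C\Phi(\zeta_0-z)=F(w)-0$, and the Corollary forces $F(w)\to\Phi(\zeta_0)$. Thus $F$ has non-tangential boundary limit $F(\zeta):=\Phi(\zeta)$ a.e., and the identity $C\Phi=F$ on $\Omega_+$, $C\Phi=0$ on $\Omega_-$, is precisely the asserted Cauchy representation. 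For the final strong-convergence claim I would use that $F=CF$ is now the Cauchy integral of its own trace, so Theorem~\ref{thm-170826-1700} dominates $|F_{\sigma-\tau,\tau}(\zeta)|=|(CF)_{\sigma-\tau,\tau}(\zeta)|$ by a single $g\in L^p(\gamma)$ independent of $\tau$; since $P_{\sigma-\tau,\tau}(\zeta)$ approaches $\zeta$ from within the cone $\Omega_{\alpha+}(\zeta)$, the pointwise convergence $F_{\sigma-\tau,\tau}(\zeta)\to\chi_\Gamma F(\zeta)$ holds a.e., and dominated convergence gives $\lVert F_{\sigma-\tau,\tau}-\chi_\Gamma F\rVert_{L^p(\gamma)}\to 0$, hence $\lVert F_{\sigma-\tau,\tau}-F\rVert_{L^p(\Gamma)}\to 0$.

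I expect the main obstacle to be the identification step in the second paragraph: one must handle the $\tau$-dependent kernel introduced by the transplantation $P_{\sigma-\tau,\tau}$, proving its $L^q$ convergence and correctly combining it with mere weak convergence of the traces. A secondary but essential point is the logical ordering, since the uniform domination of Theorem~\ref{thm-170826-1700} presupposes that $F$ equals the Cauchy integral of its boundary values; the strong $L^p$ convergence can therefore only be invoked after the Cauchy representation has been established, and it is this representation that also retroactively pins down $\Phi=\chi_\Gamma F$ uniquely, independent of the chosen subsequence $\tau_n$.
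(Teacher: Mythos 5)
Your proposal is correct and follows essentially the same route as the paper's proof: a uniform $L^p(\gamma)$ bound on the traces $F_{\sigma-\tau,\tau}$, extraction of a weak(-$*$) limit supported on $\Gamma$, identification of $F$ with its Cauchy integral via Lemma~\ref{lem-170827-1000} and the same kernel-difference/weak-convergence splitting, non-tangential convergence through the symmetric kernel of Corollary~\ref{cor-170827-0930}, and finally domination by Theorem~\ref{thm-170826-1700} plus dominated convergence for the strong $L^p$ claim. The only cosmetic deviations (reflexivity of $L^p(\gamma)$ in place of weak-$*$ compactness from separability of $L^q(\gamma)$, and weak lower semicontinuity in place of Fatou's lemma for the norm inequality) are equivalent for $1<p<\infty$ and do not change the argument.
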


\begin{proof}
  Since $0<\tau<\sigma$, $\Gamma_{\sigma-\tau,\tau}\subset \Omega_+$, then
  by definition of $F_{\sigma-\tau,\tau}(\zeta)$,
  \[\int_\gamma |F_{\sigma-\tau,\tau}(\zeta)|^p|\mathrm{d}\zeta|
    = \int_{\Gamma_{\sigma-\tau,\tau}} |F(\zeta_{\sigma-\tau,\tau})|^p 
        |\mathrm{d}\zeta_{\sigma-\tau,\tau}|
    \leqslant \lVert F\rVert_{H^p(\Omega_+)}^p,\]
  where $\gamma= \{\mathrm{Re}\,w= \pm\sigma\}\cup\mathbb{R}$, and it means 
  that $\{F_{\sigma-\tau,\tau}\}$ is bounded in 
  $L^p(\gamma,|\mathrm{d}\zeta|)$. Let $\frac1p+\frac1q=1$, then $1<q<\infty$. 
  Since $L^q(\gamma,|\mathrm{d}\zeta|)$ is seperable Banach space, 
  $\{F_{\sigma-\tau,\tau}\}$ is weak-$\ast$ compact as bounded linear 
  functional on $L^q(\gamma,|\mathrm{d}\zeta|)$, and we could extract a 
  subsequence which weak-$\ast$ converges to a function in 
  $L^p(\gamma,|\mathrm{d}\zeta|)$.
  We denote the subsequence still as $\{F_{\sigma-\tau,\tau}\}$, and the 
  convergence function as $F(\zeta)$ with $\zeta\in\gamma$, then for any
  $G(\zeta)\in L^q(\gamma,|\mathrm{d}\zeta|)$,
  \begin{equation}\label{equ-170827-1150}
    \lim_{\tau\to 0} \int_\gamma F_{\sigma-\tau,\tau}(\zeta)G(\zeta)
        |\mathrm{d}\zeta|
    = \int_\gamma F(\zeta)G(\zeta)|\mathrm{d}\zeta|.
  \end{equation}
  
  Suppose $F(\zeta)\neq 0$ on compact set $E\subset \gamma\setminus\Gamma$ 
  which has positive length measure, we let 
  $G(\zeta)= \chi_E F(\zeta)/|F(\zeta)|$,
  then $G(\zeta)\in L^q(\Gamma,|\mathrm{d}\zeta|)$, 
  and \eqref{equ-170827-1150} becomes
  \[\lim_{\tau\to 0} \int_E F_{\sigma-\tau,\tau}(\zeta)G(\zeta)
        |\mathrm{d}\zeta|
    = \int_E |F(\zeta)||\mathrm{d}\zeta|
    \neq 0.\]
  But if $\tau>0$ is small, we would have $F_{\sigma-\tau,\tau}(\zeta)=0$ on $E$,
  which contradicts with the above limit. Hence $F(\zeta)$ could be replaced 
  with $\chi_\Gamma F(\zeta)$ while in an integral.
  
  For $w_0\notin\Gamma$, there exists $\delta>0$, such that 
  $w_0\notin D_{\sigma+\delta,-\delta}\setminus D_{\sigma-\delta,\delta}$.
  By Lemma~\ref{lem-170827-1000}, if $0<\tau<\delta$, then
  \[\frac1{2\pi\mathrm{i}} \int_{\gamma_{\sigma+\tau,-\tau}} 
        \frac{F_{\sigma-\tau,\tau}(\zeta)}
           {\zeta_{\sigma-\tau,\tau}-w_0}\mathrm{d}\zeta
    = \frac1{2\pi\mathrm{i}} \int_{\Gamma_{\sigma-\tau,\tau}} 
        \frac{F(w)}{w-w_0}\mathrm{d}w
    = \left\{\!\!
        \begin{array}{ll}
          F(w_0) & \text{if $w_0\in \Omega_+$},\\
          0 & \text{if $w_0\in \Omega_-$},
        \end{array}\right.\]
  We let
  \[G(\zeta)= \left\{\!\!
      \begin{array}{ll}
        \frac{\mathrm{i}}{\zeta-w_0} 
          & \text{if $\zeta\in \gamma_{\sigma+\delta,-\delta,1}$},\\
        \frac{1}{\zeta-w_0} 
          & \text{if $\zeta\in \gamma_{\sigma+\delta,-\delta,2}$},\\
        \frac{-\mathrm{i}}{\zeta-w_0} 
          & \text{if $\zeta\in \gamma_{\sigma+\delta,-\delta,3}$},\\
        0 & \text{if $\zeta\in \gamma\setminus\gamma_{\sigma+\delta,-\delta}$},
      \end{array}\right.\]
  then by the proof of Lemma~\ref{lem-170826-0720}, 
  $G(\zeta)\in L^q(\gamma,|\mathrm{d}\zeta|)$, and we rewrite 
  \eqref{equ-170827-1150} as
  \begin{equation}\label{equ-170827-1220}
    \lim_{\substack{0<\tau<\delta,\\ \tau\to0}}
        \int_{\gamma_{\sigma+\tau,-\tau}} 
        \frac{F_{\sigma-\tau,\tau}(\zeta)}{\zeta-w_0}\mathrm{d}\zeta
    = \int_{\Gamma} \frac{F(\zeta)}{\zeta-w_0}\mathrm{d}\zeta
    = \int_{\gamma_{\sigma+\tau,-\tau}} 
        \frac{\chi_\Gamma F(\zeta)}{\zeta-w_0}\mathrm{d}\zeta.
  \end{equation}
  
  Consider
  \begin{align*}
    I&= \int_{\Gamma_{\sigma-\tau,\tau}} 
          \frac{F(w)}{w-w_0}\mathrm{d}w
        - \int_{\Gamma} \frac{F(\zeta)}{\zeta-w_0}\mathrm{d}\zeta    \\
    &= \int_{\gamma_{\sigma+\tau,-\tau}} 
          \bigg(\frac{F_{\sigma-\tau,\tau}(\zeta)}
                  {\zeta_{\sigma-\tau,\tau}-w_0}
          - \frac{\chi_\Gamma F(\zeta)}{\zeta-w_0}\bigg)\mathrm{d}\zeta  \\
    &= \int_{\gamma_{\sigma+\tau,-\tau}} 
          F_{\sigma-\tau,\tau}(\zeta)\bigg(\frac1{\zeta_{\sigma-\tau,\tau}-w_0}
          - \frac1{\zeta-w_0}\bigg)\mathrm{d}\zeta                    \\
    &\qquad{}+ \int_{\gamma_{\sigma+\tau,-\tau}} 
          \frac1{\zeta-w_0}(F_{\sigma-\tau,\tau}(\zeta)
            - \chi_\Gamma F(\zeta))\mathrm{d}\zeta                    \\
    &= I_1+ I_2.
  \end{align*}
  By \eqref{equ-170827-1220}, $I_2\to 0$ as $\tau\to 0$. By definition of
  $F_{\sigma-\tau,\tau}$ and $\zeta_{\sigma-\tau,\tau}$,
  \[|I_1|\leqslant \int_{\gamma_{\sigma-\tau,\tau}} 
           \frac{\tau |F(\zeta_{\sigma-\tau,\tau}(\zeta))|\,|\mathrm{d}\zeta|}
              {|\zeta_{\sigma-\tau,\tau}-w_0||\zeta-w_0|}.\]
  Let $0<\tau<\frac12\delta$, then for all $\zeta\in\Gamma$, we have
  $|\zeta-w_0|\geqslant 2\tau$ and
  \[|\zeta_{\sigma-\tau,\tau}-w_0|
    \geqslant |\zeta-w_0|- |\zeta_{\sigma-\tau,\tau}-\zeta|
    = |\zeta-w_0|- \tau
    \geqslant \frac12|\zeta-w_0|,\]
  thus
  \begin{align*}
    |I_1|
    &\leqslant 2\tau \int_{\gamma_{\sigma-\tau,\tau}} 
        |F_{\sigma-\tau,\tau}(\zeta)|
        \frac{|\mathrm{d}\zeta|}{|\zeta-w_0|^2}                           \\
    &\leqslant 2\tau \bigg(\int_{\gamma_{\sigma-\tau,\tau}} 
          |F_{\sigma-\tau,\tau}(\zeta)|^p |\mathrm{d}\zeta|\bigg)^{\frac1p}
        \bigg(\int_{\gamma_{\sigma-\tau,\tau}} 
          \frac{|\mathrm{d}\zeta|}{|\zeta-w_0|^{2q}}\bigg)^{\frac1q}      \\
    &\leqslant 2\tau \lVert F\rVert_{H^p(\Omega_+)} 
        \lVert G\rVert_{L^{2q}(\gamma,|\mathrm{d}\zeta|)}^2,
  \end{align*}
  which follows that
  \[\lim_{\tau\to 0} |I|\leqslant \lim_{\tau\to 0}(|I_1|+ |I_2|)= 0,\]
  and
  \begin{equation}\label{equ-170827-1350}
    \frac1{2\pi\mathrm{i}} \int_{\Gamma} 
      \frac{F(\zeta)}{\zeta-w_0}\mathrm{d}\zeta
    =\lim_{\tau\to 0} \frac1{2\pi\mathrm{i}} \int_{\Gamma_{\sigma-\tau,\tau}} 
         \frac{F(w)}{w-w_0}\mathrm{d}w
    = \left\{\!\!
        \begin{array}{ll}
          F(w_0) & \text{if $w_0\in \Omega_+$},\\
          0 & \text{if $w_0\in \Omega_-$}.
        \end{array}\right.
  \end{equation}
  
  For $\alpha>0$ fixed, $\zeta_0= \zeta(b_0)\in \Gamma\setminus\{\pm\sigma\}$, 
  where $b_0$ is the Lebesgue point of $F(\zeta(b))$, choose 
  $z\in \Omega_{\alpha+}(\zeta_0)\cap\Omega_+ - \zeta_0$, 
  then $\zeta_0+ z\in\Omega_+$ and $\zeta_0- z\in \Omega_-$. 
  By \eqref{equ-170827-1350},
  \[F(\zeta_0+z)= \frac1{2\pi\mathrm{i}} \int_{\Gamma} 
        \frac{F(\zeta)}{\zeta-(\zeta_0+z)}\mathrm{d}\zeta,\quad
    0= \frac1{2\pi\mathrm{i}} \int_{\Gamma} 
        \frac{F(\zeta)}{\zeta-(\zeta_0-z)}\mathrm{d}\zeta,\]
  then 
  \begin{align*}
    F(\zeta_0+z)
    &= \frac1{2\pi\mathrm{i}} \int_{\Gamma} \Big(\frac1{\zeta-(\zeta_0+z)} 
         - \frac1{\zeta-(\zeta_0-z)}\Big)F(\zeta)\,\mathrm{d}\zeta         \\
    &= \int_{\Gamma} K_z(\zeta,\zeta_0) F(\zeta)\,\mathrm{d}\zeta.
  \end{align*}
  Corollary~\ref{cor-170827-0930} shows that $F(\zeta_0+z)\to F(\zeta_0)$ 
  as $z\to0$, and this implies that $F(w)$ has non-tangential boundary limit 
  $F(\zeta)$ a.e.\@ on $\Gamma$. Thus, 
  $\lVert F\rVert_{L^p(\Gamma,|\mathrm{d}\zeta|)}
    \leqslant \lVert F\rVert_{H^p(\Omega_+)}$ 
  is an easy consequence of Fatou's lemma.
  
  Since $F(w)$ is the Cauchy integral of $F(\zeta)$ on $\Gamma$, then by 
  Theorem~\ref{thm-170826-1700}, $|F_{\sigma-\tau,\tau}(\zeta)|$ is dominated 
  by a function $g(\zeta)\in L^p(\gamma,|\mathrm{d}\zeta|)$, and we deduce 
  from Lebesgue's dominated convergence theorem that,
  \[\lim_{\tau\to 0} \lVert F_{\sigma-\tau,\tau}
      - \chi_{\Gamma}F\rVert_{L^p(\gamma,|\mathrm{d}\zeta|)}= 0\quad
    \text{or }\lim_{\tau\to 0} \lVert F_{\sigma-\tau,\tau}
      - F\rVert_{L^p(\Gamma,|\mathrm{d}\zeta|)}= 0,\]
  and the proof is completed.
\end{proof}

By using the same method as above, we could prove the corresponding theorem on
$H^p(\Omega_-)$

\begin{theorem}\label{thm-170827-1410}
  If $1<p<\infty$, $F(w)\in H^p(\Omega_-)$, then $F(w)$ has non-tangential 
  boundary limit $F(\zeta)\in L^p(\Gamma,|\mathrm{d}\zeta|)$ a.e.\@ 
  on $\Gamma$ with $\lVert F\rVert_{L^p(\Gamma,|\mathrm{d}\zeta|)}
    \leqslant \lVert F\rVert_{H^p(\Omega_-)}$, and
  \[\frac1{2\pi\mathrm{i}} \int_{\Gamma} 
      \frac{F(\zeta)}{\zeta-w}\mathrm{d}\zeta
    = \left\{\!\!
        \begin{array}{ll}
          0 & \text{if $w\in \Omega_+$},\\
          -F(w) & \text{if $w\in \Omega_-$}.
        \end{array}\right.\]
  We also have $\lVert F_{\sigma+\tau,-\tau}
    - \chi_{\Gamma}F\rVert_{L^p(\gamma,|\mathrm{d}\zeta|)}\to 0$
  as $\tau>0$ and $\tau\to 0$.
\end{theorem}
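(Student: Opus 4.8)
The plan is to mirror the proof of Theorem~\ref{thm-170827-1120}, replacing the inner approximating curves $\Gamma_{\sigma-\tau,\tau}\subset\Omega_+$ by the outer curves $\Gamma_{\sigma+\tau,-\tau}\subset\Omega_-$ and invoking the $\Omega_-$-versions of the auxiliary lemmas. First I would pull $F$ back along $P_{\sigma+\tau,-\tau}$ to a function $F_{\sigma+\tau,-\tau}(\zeta)$ on $\gamma$; since $\Gamma_{\sigma+\tau,-\tau}\subset\Omega_-$, a change of variables gives
\[\int_\gamma |F_{\sigma+\tau,-\tau}(\zeta)|^p\,|\mathrm{d}\zeta| = \int_{\Gamma_{\sigma+\tau,-\tau}} |F(w)|^p\,|\mathrm{d}w| \leqslant \lVert F\rVert_{H^p(\Omega_-)}^p,\]
so $\{F_{\sigma+\tau,-\tau}\}$ is bounded in $L^p(\gamma,|\mathrm{d}\zeta|)=(L^q(\gamma,|\mathrm{d}\zeta|))^{\ast}$, and weak-$\ast$ compactness yields a subsequential limit $F(\zeta)\in L^p(\gamma,|\mathrm{d}\zeta|)$. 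The support step needs one change: here $\gamma_{\sigma+\tau,-\tau}$ \emph{contains} $\Gamma$ and exceeds it by $O(\tau)$ near the corners, instead of being contained in it as in the $\Omega_+$ case. Still, for compact $E\subset\gamma\setminus\Gamma$ one has $\mathrm{dist}(E,\Gamma)>0$, and every point of $\gamma_{\sigma+\tau,-\tau}$ lies within $\tau$ of $\Gamma$, so $F_{\sigma+\tau,-\tau}$ vanishes on $E$ once $\tau$ is small; hence $F(\zeta)$ is supported on $\Gamma$ and may be replaced by $\chi_\Gamma F$ inside integrals.

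Next I would apply Lemma~\ref{lem-170827-1010} on $\Gamma_{\sigma+\tau,-\tau}$: for $w_0\in\Omega_+\subset D_{\sigma+\tau,-\tau}$ the contour integral vanishes, while for $w_0\in\Omega_-$ it equals $-F(w_0)$ as soon as $\tau$ is small enough that $w_0\notin\overline{D_{\sigma+\tau,-\tau}}$. Passing to the limit $\tau\to0$ uses the same decomposition $I=I_1+I_2$ as before, with $I_2\to0$ from weak-$\ast$ convergence tested against $(\zeta-w_0)^{-1}\in L^q(\gamma,|\mathrm{d}\zeta|)$ and $I_1\to0$ from the Hölder bound $|I_1|\leqslant 2\tau\,\lVert F\rVert_{H^p(\Omega_-)}\lVert(\cdot-w_0)^{-1}\rVert_{L^{2q}(\gamma,|\mathrm{d}\zeta|)}^2$. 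This produces the asserted representation, equal to $0$ on $\Omega_+$ and $-F(w)$ on $\Omega_-$.

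For the boundary limit, fix a Lebesgue point $\zeta_0=\zeta(b_0)\neq\pm\sigma$ and take $z$ with $\zeta_0+z\in\Omega_{\alpha-}(\zeta_0)\cap\Omega_-$, so that $\zeta_0-z\in\Omega_+$. Evaluating the representation at both points and subtracting gives
\[-F(\zeta_0+z) = \int_\Gamma K_z(\zeta,\zeta_0)\,F(\zeta)\,\mathrm{d}\zeta,\]
and the $\Omega_{\alpha-}$ form of Corollary~\ref{cor-170827-0930} shows the right side tends to $-F(\zeta_0)$; hence $F(\zeta_0+z)\to F(\zeta_0)$ non-tangentially a.e. Fatou's lemma then gives $\lVert F\rVert_{L^p(\Gamma,|\mathrm{d}\zeta|)}\leqslant\lVert F\rVert_{H^p(\Omega_-)}$. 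Finally, since $F=-CF$ on $\Omega_-$, Theorem~\ref{thm-170826-1700} dominates $|F_{\sigma+\tau,-\tau}(\zeta)|=|(CF)_{\sigma+\tau,-\tau}(\zeta)|$ by a fixed $g\in L^p(\gamma,|\mathrm{d}\zeta|)$; as $P_{\sigma+\tau,-\tau}(\zeta)\to\zeta$ non-tangentially from $\Omega_-$ at a.e.\ point of $\Gamma$ while $F_{\sigma+\tau,-\tau}=\chi_\Gamma F=0$ off $\Gamma$ eventually, dominated convergence gives $\lVert F_{\sigma+\tau,-\tau}-\chi_\Gamma F\rVert_{L^p(\gamma,|\mathrm{d}\zeta|)}\to0$.

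The main difficulty is bookkeeping rather than analysis: the two sign changes, namely the $-F$ appearing in both the Cauchy representation and the kernel limit, must be tracked so that they cancel and the non-tangential limit emerges as $F(\zeta_0)$ with the correct normalization; the support step is the one place where the outer approximation genuinely behaves differently from the inner one, and so deserves the separate distance argument given above.
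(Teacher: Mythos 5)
Your proposal is correct and is exactly the route the paper intends: the paper's own ``proof'' of Theorem~\ref{thm-170827-1410} is simply the remark that one repeats the argument of Theorem~\ref{thm-170827-1120}, and you have carried out that repetition faithfully, using Lemma~\ref{lem-170827-1010} in place of Lemma~\ref{lem-170827-1000} and the $\Omega_{\alpha-}$ form of Corollary~\ref{cor-170827-0930}. The two points you single out --- the modified support argument for the outer curves $\gamma_{\sigma+\tau,-\tau}\supset\Gamma$ and the cancellation of the two minus signs --- are precisely the places where the adaptation is not mechanical, and you handle both correctly.
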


\begin{theorem}[\cite{Vi94}]\label{thm-170828-1040}
  If $1<p\leqslant \infty$, then
  \[H^p(\Omega_+)
    = H^p(\{\mathrm{Re}\,w> -\sigma\})+ H^p(\mathbb{C}_+)
      + H^p(\{\mathrm{Re}\,w< \sigma\}),\]
  in the sense of that in Proposition~\ref{pro-170826-1200}.
\end{theorem}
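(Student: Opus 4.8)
The reverse inclusion is exactly Proposition~\ref{pro-170826-1200}, so the plan is to prove $H^p(\Omega_+)\subseteq H^p(\{\mathrm{Re}\,w>-\sigma\})+H^p(\mathbb{C}_+)+H^p(\{\mathrm{Re}\,w<\sigma\})$, reusing the line decomposition $\gamma=\gamma_1\cup\gamma_2\cup\gamma_3$ and the functions $G_j$ introduced in the proof of Theorem~\ref{thm-170826-1440}.

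For $1<p<\infty$ this is almost a corollary of that proof. Given $F\in H^p(\Omega_+)$, Theorem~\ref{thm-170827-1120} furnishes its non-tangential trace $F(\zeta)\in L^p(\Gamma,|\mathrm{d}\zeta|)$ together with the reproducing formula $F(w)=\frac1{2\pi\mathrm{i}}\int_\Gamma \frac{F(\zeta)}{\zeta-w}\,\mathrm{d}\zeta$ for $w\in\Omega_+$. Since each $\chi_{\Gamma_j}F$ lies in $L^p(\gamma_j,|\mathrm{d}\zeta|)$, I set $F_j(w)=\frac1{2\pi\mathrm{i}}\int_{\gamma_j}\frac{\chi_{\Gamma_j}F(\zeta)}{\zeta-w}\,\mathrm{d}\zeta$, which are precisely the $G_j$ of Theorem~\ref{thm-170826-1440}; Lemma~\ref{lem-170826-1500}, in the translated and rotated forms valid on the vertical lines $\gamma_1,\gamma_3$, already gives $F_1\in H^p(\{\mathrm{Re}\,w>-\sigma\})$, $F_2\in H^p(\mathbb{C}_+)$ and $F_3\in H^p(\{\mathrm{Re}\,w<\sigma\})$. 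As $\chi_{\Gamma_j}F$ vanishes on $\gamma_j\setminus\Gamma_j$, summing yields $\sum_{j=1}^3 F_j(w)=\frac1{2\pi\mathrm{i}}\int_\Gamma\frac{F(\zeta)}{\zeta-w}\,\mathrm{d}\zeta=F(w)$ on $\Omega_+$, which is the desired decomposition.

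The case $p=\infty$ is the real obstacle, and the argument above fails twice over: a merely bounded trace need not be integrable against $\frac1{\zeta-w}$ along the two infinite rays of $\Gamma$, and, decisively, the Cauchy (Hilbert) transform is unbounded on $L^\infty$, so the pieces $F_j$ would land only in the $\mathrm{BMOA}$ of their half-planes rather than in $H^\infty$. My plan here is first to secure boundary values by transporting $F$ through a conformal map of $\Omega_+$ onto $\mathbb{C}_+$, where the classical Fatou theorem yields a trace $F(\zeta)\in L^\infty(\Gamma)$. To restore convergence I would replace the kernel by $\frac1{\zeta-w}-\frac1{\zeta-w_0}$ with a fixed $w_0\in\Omega_+$; it decays like $|\zeta|^{-2}$, and a truncation argument modelled on Lemma~\ref{lem-170827-1000} produces $F(w)-F(w_0)=\frac1{2\pi\mathrm{i}}\int_\Gamma F(\zeta)\bigl(\frac1{\zeta-w}-\frac1{\zeta-w_0}\bigr)\,\mathrm{d}\zeta$, which again splits into three line integrals, the additive constant $F(w_0)$ being harmless since constants belong to $H^\infty$ of every half-plane.

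Writing $P_1=\{\mathrm{Re}\,w>-\sigma\}$, $P_2=\mathbb{C}_+$ and $P_3=\{\mathrm{Re}\,w<\sigma\}$, the genuinely hard step is then to upgrade each piece from $\mathrm{BMOA}$ to $H^\infty$. I do not expect the naive splitting to give this directly; instead I would run a $\bar\partial$-construction subordinate to the three half-planes, writing $F=\sum_j\phi_jF$ for a smooth partition of unity adapted to $\Gamma_1,\Gamma_2,\Gamma_3$ and correcting each $\phi_jF$ to an analytic $F_j$ on $P_j$ by solving $\bar\partial u_j=(\bar\partial\phi_j)F$ with uniform bounds and $\sum_j u_j=0$. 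The required $L^\infty$ control of the solid Cauchy transform of the $\bar\partial$-data rests on the fact that $|\bar\partial\phi_j|\,|F|\,\mathrm{d}\lambda$ is a Carleson measure for $P_j$, and verifying this Carleson bound against the half-strip geometry — equivalently, producing a bounded correction — is where I expect essentially all the difficulty of the $p=\infty$ case to concentrate.
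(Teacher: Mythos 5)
Your treatment of $1<p<\infty$ is correct and is exactly the paper's own proof: invoke Theorem~\ref{thm-170827-1120} to get the non-tangential trace $F(\zeta)\in L^p(\Gamma,|\mathrm{d}\zeta|)$ and the Cauchy representation, split the integral over $\gamma_1$, $\gamma_2$, $\gamma_3$, and apply Lemma~\ref{lem-170826-1500} (as in the proof of Theorem~\ref{thm-170826-1440}) to place each piece $F_j$ in the Hardy space of the corresponding half-plane. Nothing to add there.

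The genuine gap is the case $p=\infty$, which your proposal does not prove: the $\bar\partial$-scheme is a plan, not an argument, and you say yourself that its decisive step --- the $L^\infty$ bound on the corrections $u_j$, i.e.\ the Carleson-measure estimate for $|\bar\partial\phi_j|\,|F|\,\mathrm{d}\lambda$ together with the cocycle arrangement forcing $\sum_j u_j=0$ --- is left unverified; as written, your construction only delivers the three pieces in $\mathrm{BMOA}$ of their half-planes, which is not the statement. You should know, however, that the paper fares no better here: its entire $p=\infty$ argument is the single sentence that one may take $F_1$, $F_2$, $F_3$ to be the constant $\frac13\lVert F\rVert_{H^\infty(\Omega_+)}$, but then $F_1+F_2+F_3$ is a constant, not $F$, so the paper's proof of the $p=\infty$ case is simply wrong as written. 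Your diagnosis of why the naive route fails (a bounded trace is not integrable against the Cauchy kernel along the infinite rays, and the Cauchy projection is unbounded on $L^\infty$) is accurate, and your outline --- the modified kernel $\frac1{\zeta-w}-\frac1{\zeta-w_0}$ to restore convergence, then a $\bar\partial$-correction with Carleson control, where in this geometry the transition zones can be taken to be a vertical strip of bounded width parallel to $\gamma_1$, $\gamma_3$ and a horizontal strip of bounded height parallel to $\gamma_2$, so that the Carleson bounds are plausible --- is a credible route to filling a hole that the paper itself leaves open. But until that estimate is actually carried out, the theorem stands proved only for $1<p<\infty$.
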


\begin{proof}
  We only need to prove that functions in $H^p(\Omega_+)$ are sum of 
  functions in the other three $H^p$ spaces. Let $1<p<\infty$, 
  $F(w)\in H^p(\Omega_+)$, then its non-tangential boundary limit 
  $F(\zeta)\in L^p(\Gamma,|\mathrm{d}\zeta|)$, and
  \[F(w)
    = \frac1{2\pi\mathrm{i}} \int_{\Gamma} 
        \frac{F(\zeta)}{\zeta-w}\,\mathrm{d}\zeta
    = \sum_{j=1}^3 \frac1{2\pi\mathrm{i}} \int_{\gamma_j} 
        \frac{\chi_{\Gamma_j}F(\zeta)}{\zeta-w}\,\mathrm{d}\zeta
    = \sum_{j=1}^3 F_j(w).\]
  By Lemma~\ref{lem-170826-1500}, $F_1(w)\in H^p(\{\mathrm{Re}\,w> -\sigma\})$,
  $F_2(w)\in H^p(\mathbb{C}_+)$, $F_3(w)\in H^p(\{\mathrm{Re}\,w< \sigma\})$.
  
  If $p=\infty$, we simply let $F_1(w)$, $F_2(w)$ and $F_3(w)$ be the constant
  $\frac13\lVert F\rVert_{H^p(\Omega_+)}$.
\end{proof}

The following theorem shows that each $L^p(\Gamma,|\mathrm{d}\zeta|)$ function
is the sum of non-tangential boundary limits of two functions in 
$H^p(\Omega_+)$ and $H^p(\Omega_-)$ for $1<p<\infty$, and we usually write 
it as $L^p(\Gamma,|\mathrm{d}\zeta|)= H^p(\Omega_+)+ H^p(\Omega_-)$.

\begin{theorem}
  If $1<p<\infty$, then $F(\zeta)\in L^p(\Gamma,|\mathrm{d}\zeta|)$ if 
  and only if it is the sum of $F_+(\zeta)$ and $F_-(\zeta)$, which are 
  non-tangential boundary limits of $F_+(w)\in H^p(\Omega_+)$ and 
  $F_-(w)\in H^p(\Omega_-)$, respectively.
\end{theorem}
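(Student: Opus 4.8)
The plan is to prove both implications, with the forward direction resting on the Cauchy transform and on the Plemelj--Sokhotski jump relation that is already essentially contained in Corollary~\ref{cor-170827-0930}.

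The ``if'' direction is immediate. If $F=F_++F_-$, where $F_+$ and $F_-$ are the non-tangential boundary limits of some $F_+(w)\in H^p(\Omega_+)$ and $F_-(w)\in H^p(\Omega_-)$, then Theorem~\ref{thm-170827-1120} gives $F_+\in L^p(\Gamma,|\mathrm{d}\zeta|)$ and Theorem~\ref{thm-170827-1410} gives $F_-\in L^p(\Gamma,|\mathrm{d}\zeta|)$, so their sum $F$ lies in $L^p(\Gamma,|\mathrm{d}\zeta|)$.

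For the ``only if'' direction I would start from the Cauchy integral $CF(w)$ of $F(\zeta)$ on $\Gamma$ and set
\[
  F_+(w)=CF(w)\ \text{for}\ w\in\Omega_+,\qquad
  F_-(w)=-CF(w)\ \text{for}\ w\in\Omega_-.
\]
By Theorem~\ref{thm-170826-1440}, $CF(w)\in H^p(\Omega_+)$ on $\Omega_+$ and $CF(w)\in H^p(\Omega_-)$ on $\Omega_-$, so $F_+(w)\in H^p(\Omega_+)$ and $F_-(w)\in H^p(\Omega_-)$. By Theorem~\ref{thm-170827-1120} and Theorem~\ref{thm-170827-1410} these two functions possess non-tangential boundary limits a.e.\ on $\Gamma$, which I denote $F_+(\zeta_0)$ and $F_-(\zeta_0)$, and both lie in $L^p(\Gamma,|\mathrm{d}\zeta|)$. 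It then remains only to show $F_+(\zeta_0)+F_-(\zeta_0)=F(\zeta_0)$ a.e. To obtain this, I fix $\alpha>0$ and a point $\zeta_0=\zeta(b_0)\in\Gamma\setminus\{\pm\sigma\}$ that is a Lebesgue point of $F(\zeta(b))$, a set of full measure in $\Gamma$. For $z$ with $\zeta_0+z\in\Omega_{\alpha+}(\zeta_0)\cap\Omega_+$ one has $\zeta_0-z\in\Omega_{\alpha-}(\zeta_0)\cap\Omega_-$, and the definition~\eqref{equ-170826-2220} of $K_z$ gives
\[
  \int_{\Gamma} K_z(\zeta,\zeta_0)F(\zeta)\,\mathrm{d}\zeta
  = CF(\zeta_0+z)-CF(\zeta_0-z).
\]
Letting $z\to 0$ inside the cone, the left-hand side tends to $F(\zeta_0)$ by Corollary~\ref{cor-170827-0930}, while on the right $CF(\zeta_0+z)\to F_+(\zeta_0)$ and $CF(\zeta_0-z)\to -F_-(\zeta_0)$ by the definitions of the non-tangential limits of $F_+$ and $F_-$. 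Hence $F_+(\zeta_0)+F_-(\zeta_0)=F(\zeta_0)$ at every such $\zeta_0$, i.e.\ a.e.\ on $\Gamma$, which is precisely the desired decomposition.

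The main obstacle is this last step: aligning the single cone limit furnished by Corollary~\ref{cor-170827-0930} with the two one-sided non-tangential limits of $F_\pm$ guaranteed by Theorems~\ref{thm-170827-1120} and~\ref{thm-170827-1410}. Care is needed to check that the single parameter $z\to 0$ is simultaneously an admissible non-tangential approach to $\Omega_+$ through $\zeta_0+z$ and to $\Omega_-$ through $\zeta_0-z$ (which follows since $\Omega_{\alpha-}(\zeta_0)-\zeta_0=-(\Omega_{\alpha+}(\zeta_0)-\zeta_0)$), and to restrict to the full-measure set of Lebesgue points while discarding the two corners $\pm\sigma$; on this null set the pointwise identity may fail, but it does not affect the $L^p$ conclusion.
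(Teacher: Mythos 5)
Your proposal is correct and follows essentially the same route as the paper: the same Cauchy-transform definitions $F_+=CF$ on $\Omega_+$ and $F_-=-CF$ on $\Omega_-$ via Theorem~\ref{thm-170826-1440}, the same identity $F_+(\zeta_0+z)+F_-(\zeta_0-z)=\int_\Gamma K_z(\zeta,\zeta_0)F(\zeta)\,\mathrm{d}\zeta$, and the same passage to the limit through Corollary~\ref{cor-170827-0930} together with Theorems~\ref{thm-170827-1120} and~\ref{thm-170827-1410}. Your closing remarks on matching the symmetric cones $\Omega_{\alpha\pm}(\zeta_0)$ and intersecting the full-measure sets (Lebesgue points and points where both non-tangential limits exist) only make explicit what the paper leaves implicit.
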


\begin{proof}
  ``$\Leftarrow$'': since $F_+(\zeta)$, 
  $F_-(\zeta)\in L^p(\Gamma,|\mathrm{d}\zeta|)$,
  then $F(\zeta)\in L^p(\Gamma,|\mathrm{d}\zeta|)$.
  
  ``$\Rightarrow$'': For $F(\zeta)\in L^p(\Gamma,|\mathrm{d}\zeta|)$, define
  \[F_+(w_1)= \frac1{2\pi\mathrm{i}} \int_{\Gamma} 
          \frac{F(\zeta)}{\zeta-w_1}\,\mathrm{d}\zeta\quad
    \text{for } w_1\in\Omega_+,\]
  and
  \[F_-(w_2)= -\frac1{2\pi\mathrm{i}} \int_{\Gamma} 
          \frac{F(\zeta)}{\zeta-w_2}\,\mathrm{d}\zeta\quad
    \text{for } w_2\in\Omega_-,\]
  then $F_+(w)\in H^p(\Omega_+)$ and $F_-(w)\in H^p(\Omega_-)$, by 
  Theorem~\ref{thm-170826-1440}. If $b_0\neq\pm\sigma$ is the Lebesgue point 
  of $F(\zeta(b))$, $\alpha>0$ and we choose 
  $z\in \Omega_{\alpha+}(\zeta_0)\cap\Omega_+ - \zeta_0$, 
  then $\zeta_0+ z\in\Omega_+$, $\zeta_0- z\in \Omega_-$ and
  \[F_+(\zeta_0+z)= \frac1{2\pi\mathrm{i}} \int_{\Gamma} 
        \frac{F(\zeta)}{\zeta-(\zeta_0+z)}\mathrm{d}\zeta,\quad
    F_-(\zeta_0-z)= -\frac1{2\pi\mathrm{i}} \int_{\Gamma} 
        \frac{F(\zeta)}{\zeta-(\zeta_0-z)}\mathrm{d}\zeta,\]
  then 
  \[F_+(\zeta_0+z)+ F_-(\zeta_0-z)
    = \int_{\Gamma} K_z(\zeta,\zeta_0) F(\zeta)\,\mathrm{d}\zeta.\]
  By Corollary~\ref{cor-170827-0930},
  \[\lim_{z\to 0} (F_+(\zeta_0+z)+ F_-(\zeta_0-z))
    = \lim_{z\to 0} \int_{\Gamma} K_z(\zeta,\zeta_0) 
         F(\zeta)\,\mathrm{d}\zeta
    = F(\zeta_0),\]
  that is $F(\zeta_0)= F_+(\zeta_0)+ F_-(\zeta_0)$ a.e.\@ on $\Gamma$.
\end{proof}

\begin{lemma}\label{lem-170827-1520}
  If $1\leqslant p\leqslant \infty$, $\frac1p+\frac1q=1$, 
  $F(w)\in H^p(\Omega_+)$, $G(w)\in H^q(\Omega_+)$, $0<s<\sigma$ and $t>0$, 
  then
  \[\int_{\Gamma_{s,t}} F(w)G(w)\,\mathrm{d}w= 0.\]
\end{lemma}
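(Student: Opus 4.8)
The plan is to observe that $FG \in H^1(\Omega_+)$ by H\"older's inequality (as already noted following the definition of $H^p(\Omega_\pm)$), hence $FG$ is analytic on $\Omega_+$, and then to reduce the claim to Cauchy's theorem on a bounded truncation of $D_{s,t}$. First I would record that the boundary integral is absolutely convergent: since $0<s<\sigma$ and $t>0$, the quantity $\int_{\Gamma_{s,t}} |F(w)G(w)|\,|\mathrm{d}w| = m(s,t,FG)$ is bounded by $\lVert FG\rVert_{H^1(\Omega_+)} < \infty$, so $\int_{\Gamma_{s,t}} FG\,\mathrm{d}w$ is well-defined.

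Next, following exactly the truncation used in the proof of Lemma~\ref{lem-170827-1000}, I would fix a large height $t_1 > t$ and set $E = D_{s,t} \cap \{\mathrm{Im}\,w < t_1\}$, whose positively oriented boundary splits as $\partial E = \Gamma_{E1} \cup \Gamma_{E2}$ with $\Gamma_{E1} = \Gamma_{s,t} \cap \{\mathrm{Im}\,w < t_1\}$ and the closing segment $\Gamma_{E2} = \{u+\mathrm{i}t_1 : |u| \leqslant s\}$. Since $FG$ is analytic on a neighborhood of $\overline{E} \subset \Omega_+$, Cauchy's theorem gives $\int_{\Gamma_{E1}} FG\,\mathrm{d}w + \int_{\Gamma_{E2}} FG\,\mathrm{d}w = 0$.

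The remaining work is to let $t_1 \to \infty$. On the one hand, by absolute convergence of the boundary integral, $\int_{\Gamma_{E1}} FG\,\mathrm{d}w \to \int_{\Gamma_{s,t}} FG\,\mathrm{d}w$. On the other hand, applying the uniform-decay part of Lemma~\ref{lem-170825-1930} to $FG \in H^1(\Omega_+)$ shows $\max\{|FG(u+\mathrm{i}t_1)| : |u| \leqslant s\} \to 0$ as $t_1 \to \infty$, whence $\left|\int_{\Gamma_{E2}} FG\,\mathrm{d}w\right| \leqslant 2s \max\{|FG(u+\mathrm{i}t_1)| : |u| \leqslant s\} \to 0$. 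Combining the two limits yields $\int_{\Gamma_{s,t}} FG\,\mathrm{d}w = 0$.

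I do not expect a genuine obstacle: the argument is a verbatim adaptation of Lemma~\ref{lem-170827-1000}, with the Cauchy integrand $F(w)/(w-w_0)$ replaced by the entire product $FG$, so no pole appears and the closed-contour integral is simply zero rather than a residue. The only point needing minor care is confirming that Lemma~\ref{lem-170825-1930} may be invoked at $p=1$; this is legitimate precisely because H\"older places $FG$ in $H^1(\Omega_+)$, which covers all cases $1 \leqslant p \leqslant \infty$ uniformly (including $p=1$, $q=\infty$, where $G$ is merely bounded).
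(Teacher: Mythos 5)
Your proof is correct, but it is not the paper's proof; the two differ in contour geometry. You close the single contour $\Gamma_{s,t}$ with a horizontal segment at height $t_1$, apply Cauchy's theorem to the rectangle $E=D_{s,t}\cap\{\mathrm{Im}\,w<t_1\}$ (legitimate, since $\overline{E}$ is a compact subset of $\Omega_+$ because $0<s<\sigma$ and $t>0$), and let $t_1\to+\infty$, using the uniform-decay half of Lemma~\ref{lem-170825-1930} applied to $FG\in H^1(\Omega_+)$ for the closing segment, and dominated convergence (via $m(s,t,FG)\leqslant\lVert FG\rVert_{H^1(\Omega_+)}<\infty$) for the rest. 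The paper instead takes two nested contours $\Gamma_{s_1,t_1}$ and $\Gamma_{s_2,t_2}$, applies Cauchy's theorem to the truncated region between them --- the horizontal pieces vanish exactly as in your argument --- to obtain the deformation identity $\int_{\Gamma_{s_1,t_1}}FG\,\mathrm{d}w=\int_{\Gamma_{s_2,t_2}}FG\,\mathrm{d}w$, and then kills the inner integral by H\"older's inequality, $\bigl|\int_{\Gamma_{s_2,t_2}}FG\,\mathrm{d}w\bigr|\leqslant\lVert F\rVert_{L^p(\Gamma_{s_2,t_2},|\mathrm{d}w|)}\lVert G\rVert_{H^q(\Omega_+)}$, combined with the $L^p$-decay half of Lemma~\ref{lem-170825-1930} as $t_2\to+\infty$; this last step forces the paper's reduction ``assume $1\leqslant p<\infty$ without loss of generality'', which your argument avoids entirely since you only ever use $FG\in H^1$. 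So your route is shorter and uniform in $p$, being, as you note, Lemma~\ref{lem-170827-1000} with the pole removed. What the paper's two-contour deformation buys is reusability: in the exterior version, Lemma~\ref{lem-170827-1530} for $H^p(\Omega_-)$ with $s>\sigma$ and $t<0$, one cannot close $\Gamma_{s,t}$ by a horizontal segment without the enclosed region swallowing $\overline{\Omega_+}$, where $FG$ fails to be analytic; there the contour-to-contour identity is the only available first step, and the paper's proof of the present lemma is written precisely so that ``arguing as in Lemma~\ref{lem-170827-1520}'' carries over verbatim.
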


\begin{proof}
  Let $0< s_2< s_1< \sigma$, $0< t_1< t_2< t_3$, then 
  $D_{s_2,t_2}\subset D_{s_1,t_1}$ and 
  \[E= (D_{s_1,t_1}\setminus\overline{D_{s_2,t_2}})
       \cap \{\mathrm{Im}\,w< t_3\}\] is not empty. 
  The boundary of $E$ is 
  \begin{align*}
    \partial E
    &= (\Gamma_{s_1,t_1}\cap\{\mathrm{Im}\,w< t_3\})\cup 
       (\Gamma_{s_2,t_2}\cap\{\mathrm{Im}\,w< t_3\})\cup
       \{u+\mathrm{i}t_3\colon s_2\leqslant |u|\leqslant s_1\}         \\
    &= \Gamma_{E1}\cup \Gamma_{E2}\cup \Gamma_{E3},
  \end{align*}
  with the usual orientation. Since $F(w)G(w)$ is analytic on $\Omega_+$, then
  \[0= \int_{\partial E} F(w)G(w)\,\mathrm{d}w
    = \bigg(\int_{\Gamma_{E1}}- \int_{\Gamma_{E2}}- \int_{\Gamma_{E3}}\bigg) 
         F(w)G(w)\,\mathrm{d}w.\]
  Togother with
  \begin{align*}
    \lim_{t_3\to +\infty}\bigg|\int_{\Gamma_{E3}} F(w)G(w)\,\mathrm{d}w\bigg|
    &\leqslant \lim_{t_3\to +\infty} \int_{s_2\leqslant |u|\leqslant s_1}
         |F(u+\mathrm{i}t_3)G(u+\mathrm{i}t_3)|\,\mathrm{d}u             \\
    &\leqslant 2(s_1-s_2) \lim_{t_3\to +\infty} 
         \max\{|F(w)G(w)|\colon w\in \Gamma_{E3}\}                       \\
    &= 0,
  \end{align*}
  by Lemma~\ref{lem-170825-1930}, and the fact that 
  $F(w)G(w)\in H^1(\Omega_+)$, we have
  \[\int_{\Gamma_{s_1,t_1}}F(w)G(w)\,\mathrm{d}w
    = \int_{\Gamma_{s_2,t_2}}F(w)G(w)\,\mathrm{d}w.\]
  
  Assume $1\leqslant p< \infty$ without loss of generality, if we combine
  \[\bigg|\int_{\Gamma_{\Gamma_{s_2,t_2}}}F(w)G(w)\,\mathrm{d}w\bigg|
    \leqslant \lVert F\rVert_{L^p(\Gamma_{s_2,t_2},|\mathrm{d}w|)}
        \lVert G\rVert_{H^q(\Omega_+)}\]
  and Lemma~\ref{lem-170825-1930}, then, by letting $t_2\to +\infty$,
  \[\int_{\Gamma_{s_1,t_1}}F(w)G(w)\,\mathrm{d}w= 0,\]
  and this proves the lemma.
\end{proof}

\begin{lemma}\label{lem-170827-1530}
  If $1< p< \infty$, $\frac1p+\frac1q=1$, 
  $F(w)\in H^p(\Omega_-)$, $G(w)\in H^q(\Omega_-)$, $s>\sigma$ and $t<0$, 
  then
  \[\int_{\Gamma_{s,t}} F(w)G(w)\,\mathrm{d}w= 0.\]
\end{lemma}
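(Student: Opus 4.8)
The plan is to mirror the argument of Lemma~\ref{lem-170827-1520}, the $\Omega_+$ version, but to account for the fact that $\Omega_-$ is an exterior domain, where no single one-parameter limit shrinks $\Gamma_{s,t}$ away. First I would record the structural facts: $F(w)G(w)$ is analytic on $\Omega_-$, and by H\"older's inequality $FG\in H^1(\Omega_-)$; hence, by Theorem~\ref{thm-170826-1410}, $FG$ also lies in $H^1(\{\mathrm{Re}\,w<-\sigma\})$, $H^1(\mathbb{C}_-)$ and $H^1(\{\mathrm{Re}\,w>\sigma\})$. These half-plane memberships are what will control the vertical sides at the end.

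The first main step is to show that $\int_{\Gamma_{s,t}}FG\,\mathrm{d}w$ is unchanged when the contour is pushed outward. Fix the target $\sigma<s_0$, $t_0<0$, and take any $s>s_0$, $t<t_0$. Put $E=(D_{s,t}\setminus\overline{D_{s_0,t_0}})\cap\{\mathrm{Im}\,w<t_3\}$ for large $t_3$. Since $\overline{D_{\sigma,0}}\subset D_{s_0,t_0}$, the region $E$ lies in $\Omega_-$, so $FG$ is analytic there and $\int_{\partial E}FG\,\mathrm{d}w=0$. The boundary splits into the outer arc of $\Gamma_{s,t}$, the inner arc of $\Gamma_{s_0,t_0}$, and the two top caps $\{u+\mathrm{i}t_3:\ s_0\leqslant|u|\leqslant s\}$. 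Applying Lemma~\ref{lem-170825-2130} to both $F$ and $G$, on the caps $|FG|\to0$ uniformly as $t_3\to+\infty$ while their total length stays equal to $2(s-s_0)$, so those integrals vanish in the limit. Exactly as in Lemma~\ref{lem-170827-1520}, the orientation bookkeeping then yields $\int_{\Gamma_{s,t}}FG\,\mathrm{d}w=\int_{\Gamma_{s_0,t_0}}FG\,\mathrm{d}w$.

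The second step forces this common value to be $0$ by letting the outer contour recede, coupling the two parameters through $t=-s^2$. Write $\Gamma_{s,t}=\Gamma_{s,t,1}\cup\Gamma_{s,t,2}\cup\Gamma_{s,t,3}$. On the bottom segment $\Gamma_{s,t,2}$, Lemma~\ref{lem-170825-2140} gives $|F(u+\mathrm{i}t)|\leqslant C|t|^{-1/p}$ and $|G(u+\mathrm{i}t)|\leqslant C'|t|^{-1/q}$, hence $|FG|\leqslant CC'|t|^{-1}$; integrating over the length $2s$ gives a bound $2sCC'|t|^{-1}=2CC'/s\to0$. For the right vertical side $\Gamma_{s,t,3}=\{s+\mathrm{i}v:\ v\geqslant t\}$ I would estimate
\[
  \Big|\int_{\Gamma_{s,t,3}}FG\,\mathrm{d}w\Big|
  \leqslant\int_t^{+\infty}|F(s+\mathrm{i}v)G(s+\mathrm{i}v)|\,\mathrm{d}v
  \leqslant\int_{\mathbb{R}}|F(s+\mathrm{i}v)G(s+\mathrm{i}v)|\,\mathrm{d}v,
\]
and invoke the standard fact that, for a function in $H^1$ of a half-plane, the $L^1$ mean over lines parallel to the boundary tends to $0$ as the line recedes into the half-plane; here $FG\in H^1(\{\mathrm{Re}\,w>\sigma\})$, so the last integral tends to $0$ as $s\to+\infty$. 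The left side $\Gamma_{s,t,1}$ is handled identically with $H^1(\{\mathrm{Re}\,w<-\sigma\})$. Adding the three contributions, $\int_{\Gamma_{s,-s^2}}FG\,\mathrm{d}w\to0$, so the constant value from the first step is $0$; as $(s_0,t_0)$ with $s_0>\sigma$, $t_0<0$ was arbitrary, the lemma follows.

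I expect the vertical-side estimate to be the main obstacle. The bottom cap is finite in length and is killed by the crude pointwise bounds of Lemma~\ref{lem-170825-2140}, but the vertical sides have infinite length, so those pointwise bounds (which are only $O((s-\sigma)^{-1/p})$, constant along the line) are useless there; one must genuinely use the decay of the line means as $\mathrm{Re}\,w\to\infty$, the one ingredient not already stated in the excerpt and hence needing a citation. Moreover, since the bottom segment grows like $s$ while its integrand decays like $|t|^{-1}$, the two parameters must be sent to infinity together with $|t|$ outpacing $s$, which is precisely why the single-parameter limit that sufficed for $\Omega_+$ cannot be reused here.
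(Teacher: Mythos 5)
Your proposal is correct, and its first step (contour independence via Cauchy's theorem on the region between two contours, with the top caps killed by Lemma~\ref{lem-170825-2130}) is exactly the paper's first step. But your second step takes a genuinely different route. The paper applies H\"older's inequality to peel off $\lVert G\rVert_{H^q(\Omega_-)}$, reducing the problem to showing $\int_{\Gamma_{s,t}}|F|^p|\mathrm{d}w|\to 0$ as $s,|t|\to\infty$, and proves that by combining the Cauchy representation of $F$ (Theorem~\ref{thm-170827-1410}), the domination theorem for Cauchy integrals (Theorem~\ref{thm-170826-1700}), the pointwise decay of Lemma~\ref{lem-170825-2140}, and Lebesgue's dominated convergence theorem; this keeps everything internal to the paper but leans on its two heaviest results. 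You instead split the contour geometrically: the bottom segment dies from the pointwise bounds of Lemma~\ref{lem-170825-2140} under the coupling $t=-s^2$, and the vertical sides die because $FG\in H^1$ of the two outer half-planes (Theorem~\ref{thm-170826-1410} plus H\"older) together with the classical fact that the $L^1$ line means of an $H^1$ half-plane function tend to zero as the line recedes. That fact, which you rightly flag as the one ingredient needing a citation, is indeed true and can even be assembled from tools the paper already invokes: factor $h=BQ$ with $Q$ zero-free and $\lVert Q\rVert_{H^1}=\lVert h\rVert_{H^1}$ (Lemma~\ref{lem-170801-1550}), note $|h|\leqslant|Q|=|Q^{\frac12}|^2$ with $Q^{\frac12}\in H^2$, and apply the Paley--Wiener representation (as in Corollary~\ref{cor-170826-1220}) to get $\int_{\mathbb{R}}|Q^{\frac12}(x+\mathrm{i}y)|^2\,\mathrm{d}x\to 0$ by dominated convergence. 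So your argument buys independence from the boundary-limit and domination machinery of Sections 4--5 at the cost of one (citable or short) half-plane lemma, whereas the paper's argument avoids any external input at the cost of invoking Theorems~\ref{thm-170827-1410} and~\ref{thm-170826-1700}. Your closing observation --- that the vertical sides are the real obstacle and force both parameters to infinity with $|t|$ outpacing $s$ --- is accurate and is precisely why the single-parameter limit of Lemma~\ref{lem-170827-1520} does not transfer.
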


\begin{proof}
  Let $\sigma< s_1< s_2$, $0> t_1> t_2$, by arguing as in 
  Lemma~\ref{lem-170827-1520}, we have
  \[\int_{\Gamma_{s_1,t_1}}F(w)G(w)\,\mathrm{d}w
    = \int_{\Gamma_{s_2,t_2}}F(w)G(w)\,\mathrm{d}w.\]
  and, by supposing $1< p< \infty$,
  \begin{align*}
    \bigg|\int_{\Gamma_{s_2,t_2}}F(w)G(w)\,\mathrm{d}w\bigg|
    &\leqslant \bigg(\int_{\Gamma_{s_2,t_2}} |F(w)|^p |\mathrm{d}w|\bigg)^{\frac1p}
        \lVert G\rVert_{H^q(\Omega_+)}                                \\
    &= \bigg(\int_{\gamma_{s_2,t_2}} |F_{s_2,t_2}(\zeta)|^p 
          |\mathrm{d}\zeta| \bigg)^{\frac1p} \lVert G\rVert_{H^q(\Omega_+)}.
  \end{align*}
  Since Theorem~\ref{thm-170827-1410} and Theorem~\ref{thm-170826-1700} 
  imply that $|F_{s_2,t_2}(\zeta)|$ is dominated by a function in 
  $L^p(\gamma,|\mathrm{d}\zeta|)$, and Lemma~\ref{lem-170825-2140} 
  shows that $|F_{s_2,t_2}(\zeta)|\to 0$ as $|s_2|$,
  $|t_2|\to +\infty$, we have, by Lebesgue's dominated convergence theorem
  \[\lim_{|s_2|,\,|t_2|\to 0} \int_{\gamma_{s_2,t_2}} 
      |F_{s_2,t_2}(\zeta)|^p |\mathrm{d}\zeta|= 0,\]
  then 
  \[\int_{\Gamma_{s_1,t_1}}F(w)G(w)\,\mathrm{d}w= 0,\]
  and the proof is finished.
\end{proof}

\begin{proposition}\label{pro-170827-1630}
  If $1<p<\infty$, $\frac1p+\frac1q=1$, $F(w)\in H^p(\Omega_+)$, 
  $G(w)\in H^q(\Omega_+)$ and $F(\zeta)$, $G(\zeta)$ are the corresponding 
  non-tangential boundary limit on $\Gamma$, then
  \[\int_\Gamma F(\zeta)G(\zeta)\,\mathrm{d}\zeta= 0.\] 
\end{proposition}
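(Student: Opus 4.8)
The plan is to integrate the analytic function $F(w)G(w)$ over the interior approximating contours $\Gamma_{\sigma-\tau,\tau}\subset\Omega_+$ and let $\tau\to 0^+$, transporting everything back to the fixed contour $\gamma$ by means of the translation maps $P_{s,t}$ introduced before Lemma~\ref{lem-170826-1650}. The point is that $FG$ is annihilated on every interior contour by Lemma~\ref{lem-170827-1520}, while the boundary values converge in the appropriate norms by Theorem~\ref{thm-170827-1120}.

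First, since $F(w)G(w)\in H^1(\Omega_+)$ by H\"older's inequality, Lemma~\ref{lem-170827-1520} applies with $s=\sigma-\tau$ and $t=\tau$ for every $\tau\in(0,\sigma)$, giving $\int_{\Gamma_{\sigma-\tau,\tau}} F(w)G(w)\,\mathrm{d}w=0$. Because $P_{\sigma-\tau,\tau}$ is a translation on each of the three straight pieces of $\gamma_{\sigma-\tau,\tau}$, it preserves both orientation and arc length, and $(FG)_{\sigma-\tau,\tau}(\zeta)=F_{\sigma-\tau,\tau}(\zeta)G_{\sigma-\tau,\tau}(\zeta)$; hence, using the identity $\int_\gamma H_{s,t}(\zeta)\,\mathrm{d}\zeta=\int_{\Gamma_{s,t}}H(w)\,\mathrm{d}w$ recorded before Lemma~\ref{lem-170826-1650}, I obtain
\[\int_\gamma F_{\sigma-\tau,\tau}(\zeta)\,G_{\sigma-\tau,\tau}(\zeta)\,\mathrm{d}\zeta=0\qquad\text{for all }\tau\in(0,\sigma).\]

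Next I would invoke Theorem~\ref{thm-170827-1120}, applied once to $F\in H^p(\Omega_+)$ and once to $G\in H^q(\Omega_+)$ (legitimate since $1<q<\infty$), to get
\[\lVert F_{\sigma-\tau,\tau}-\chi_\Gamma F\rVert_{L^p(\gamma,|\mathrm{d}\zeta|)}\to 0,\qquad \lVert G_{\sigma-\tau,\tau}-\chi_\Gamma G\rVert_{L^q(\gamma,|\mathrm{d}\zeta|)}\to 0,\]
as $\tau\to 0$, where $F(\zeta),G(\zeta)$ denote the non-tangential boundary limits; in particular $\lVert G_{\sigma-\tau,\tau}\rVert_{L^q(\gamma,|\mathrm{d}\zeta|)}$ stays bounded. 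Writing the product difference as $(F_{\sigma-\tau,\tau}-\chi_\Gamma F)\,G_{\sigma-\tau,\tau}+\chi_\Gamma F\,(G_{\sigma-\tau,\tau}-\chi_\Gamma G)$ and applying H\"older's inequality to each term, I conclude $\lVert F_{\sigma-\tau,\tau}G_{\sigma-\tau,\tau}-\chi_\Gamma F\,\chi_\Gamma G\rVert_{L^1(\gamma,|\mathrm{d}\zeta|)}\to 0$. Since $\bigl|\int_\gamma(\cdot)\,\mathrm{d}\zeta\bigr|\leqslant\int_\gamma|\cdot|\,|\mathrm{d}\zeta|$, the vanishing integral of the previous display passes to the limit, yielding $\int_\gamma \chi_\Gamma F\,\chi_\Gamma G\,\mathrm{d}\zeta=\int_\Gamma F(\zeta)G(\zeta)\,\mathrm{d}\zeta=0$, which is the assertion.

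Almost every step is routine; the one point demanding care is the bookkeeping of the second step, namely verifying that the translation $P_{\sigma-\tau,\tau}$ genuinely turns the oriented contour integral over $\Gamma_{\sigma-\tau,\tau}$ into the integral over the fixed set $\gamma$ of the factorized product $F_{\sigma-\tau,\tau}G_{\sigma-\tau,\tau}$ with the correct orientation and measure. Once this identification is in place, so that Lemma~\ref{lem-170827-1520} can be married to the $L^p$/$L^q$ convergence of Theorem~\ref{thm-170827-1120}, the H\"older splitting in the limit argument is standard and poses no real obstacle.
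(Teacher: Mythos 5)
Your proposal is correct and follows essentially the same route as the paper's own proof: both reduce to $\int_{\Gamma_{\sigma-\tau,\tau}}F(w)G(w)\,\mathrm{d}w=0$ via Lemma~\ref{lem-170827-1520}, transport this to the fixed contour $\gamma$ using the identity $\int_\gamma H_{s,t}(\zeta)\,\mathrm{d}\zeta=\int_{\Gamma_{s,t}}H(w)\,\mathrm{d}w$, and then pass to the limit $\tau\to 0$ by splitting the product difference into two terms and applying H\"older's inequality together with the $L^p(\gamma)$ and $L^q(\gamma)$ convergence from Theorem~\ref{thm-170827-1120} applied to $F$ and $G$ respectively. The only cosmetic difference is that you state the conclusion as an $L^1(\gamma)$ convergence of the products before integrating, whereas the paper estimates the difference of the two integrals directly; the content is identical.
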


\begin{proof}
  For $0<\tau<\sigma$, by Lemma~\ref{lem-170827-1520} and definition of 
  $F_{\sigma-\tau,\tau}(\zeta)$, we have
  \[\int_{\gamma_{\sigma-\tau,\tau}} F_{\sigma-\tau,\tau}(\zeta)
        G_{\sigma-\tau,\tau}(\zeta)\,\mathrm{d}\zeta
    = \int_{\Gamma_{\sigma-\tau,\tau}}F(w)G(w)\,\mathrm{d}w= 0.\]
  Since $F(\zeta)$ could be replaced by $\chi_\Gamma F(\zeta)$ while 
  in integrand, we then have
  \begin{align*}
    &\bigg|\int_\Gamma F(\zeta)G(\zeta)\,\mathrm{d}\zeta
      - \int_{\gamma_{\sigma-\tau,\tau}} F_{\sigma-\tau,\tau}(\zeta)
            G_{\sigma-\tau,\tau}(\zeta)\,\mathrm{d}\zeta \bigg|            \\
    \leqslant{}& \bigg|\int_\gamma \chi_\Gamma F(\zeta)(\chi_\Gamma G(\zeta)
         - G_{\sigma-\tau,\tau}(\zeta))\,\mathrm{d}\zeta\bigg|
      + \bigg|\int_\gamma (F_{\sigma-\tau,\tau}(\zeta)-\chi_\Gamma F(\zeta)) 
           G_{\sigma-\tau,\tau}(\zeta))\,\mathrm{d}\zeta\bigg|             \\
    \leqslant{}& \lVert F\rVert_{H^p(\Omega_+)} \lVert \chi_\Gamma G
        - G_{\sigma-\tau,\tau}\rVert_{L^q(\gamma,|\mathrm{d}\zeta|)}
      + \lVert F_{\sigma-\tau,\tau}- \chi_\Gamma F
           \rVert_{L^q(\gamma,|\mathrm{d}\zeta|)}
        \lVert G\rVert_{H^q(\Omega_+)},
  \end{align*}
  which tends to $0$ as $\tau\to 0$ by Theorem~\ref{thm-170827-1120}.
\end{proof}

\begin{proposition}\label{pro-170827-2304}
  If $1<p<\infty$, $\frac1p+\frac1q=1$, $F(w)\in H^p(\Omega_-)$, 
  $G(w)\in H^q(\Omega_-)$, and $F(\zeta)$, $G(\zeta)$ are their 
  non-tangential boundary limits, then
  \[\int_\Gamma F(\zeta)G(\zeta)\,\mathrm{d}\zeta= 0.\] 
\end{proposition}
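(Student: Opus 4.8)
The plan is to mirror the proof of Proposition~\ref{pro-170827-1630} verbatim, replacing the $\Omega_+$ toolbox by its $\Omega_-$ counterparts. The role of Lemma~\ref{lem-170827-1520} is played by Lemma~\ref{lem-170827-1530}, and the role of Theorem~\ref{thm-170827-1120} by Theorem~\ref{thm-170827-1410}. The approximating curves are now $\Gamma_{\sigma+\tau,-\tau}$ for small $\tau>0$, which exhaust $\Omega_-$ from inside as $\tau\to 0$, so the relevant truncated boundary functions are $F_{\sigma+\tau,-\tau}$ and $G_{\sigma+\tau,-\tau}$.

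First I would apply Lemma~\ref{lem-170827-1530} with $s=\sigma+\tau>\sigma$ and $t=-\tau<0$ to get $\int_{\Gamma_{\sigma+\tau,-\tau}} F(w)G(w)\,\mathrm{d}w=0$. Since $P_{\sigma+\tau,-\tau}$ is a composition of translations, the change of variable $w=P_{\sigma+\tau,-\tau}(\zeta)$ preserves the oriented line element, and this rewrites as $\int_{\gamma_{\sigma+\tau,-\tau}} F_{\sigma+\tau,-\tau}(\zeta)\,G_{\sigma+\tau,-\tau}(\zeta)\,\mathrm{d}\zeta=0$; as both factors are supported in $\gamma_{\sigma+\tau,-\tau}$, I regard it as an integral over all of $\gamma$. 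I would then compare this vanishing quantity with $\int_\Gamma F(\zeta)G(\zeta)\,\mathrm{d}\zeta=\int_\gamma \chi_\Gamma F\,\chi_\Gamma G\,\mathrm{d}\zeta$ by telescoping,
\begin{align*}
&\int_\gamma \chi_\Gamma F\,\chi_\Gamma G\,\mathrm{d}\zeta - \int_\gamma F_{\sigma+\tau,-\tau}\,G_{\sigma+\tau,-\tau}\,\mathrm{d}\zeta\\
={}& \int_\gamma \chi_\Gamma F\,(\chi_\Gamma G - G_{\sigma+\tau,-\tau})\,\mathrm{d}\zeta + \int_\gamma (\chi_\Gamma F - F_{\sigma+\tau,-\tau})\,G_{\sigma+\tau,-\tau}\,\mathrm{d}\zeta,
\end{align*}
and bounding the absolute value by H\"older's inequality with
\[\lVert F\rVert_{L^p(\Gamma,|\mathrm{d}\zeta|)}\,\lVert \chi_\Gamma G - G_{\sigma+\tau,-\tau}\rVert_{L^q(\gamma,|\mathrm{d}\zeta|)} + \lVert \chi_\Gamma F - F_{\sigma+\tau,-\tau}\rVert_{L^p(\gamma,|\mathrm{d}\zeta|)}\,\lVert G_{\sigma+\tau,-\tau}\rVert_{L^q(\gamma,|\mathrm{d}\zeta|)}.\]

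To finish I would note that $\lVert G_{\sigma+\tau,-\tau}\rVert_{L^q(\gamma,|\mathrm{d}\zeta|)}^q = \int_{\Gamma_{\sigma+\tau,-\tau}}|G(w)|^q\,|\mathrm{d}w|\leqslant \lVert G\rVert_{H^q(\Omega_-)}^q$ is uniformly bounded, because $(\sigma+\tau,-\tau)$ lies in the admissible range $\sigma<s_1$, $t_1<0$ defining the $H^q(\Omega_-)$ supremum, while $\lVert F\rVert_{L^p(\Gamma,|\mathrm{d}\zeta|)}\leqslant\lVert F\rVert_{H^p(\Omega_-)}$ is finite; meanwhile Theorem~\ref{thm-170827-1410} gives $\lVert \chi_\Gamma F - F_{\sigma+\tau,-\tau}\rVert_{L^p(\gamma,|\mathrm{d}\zeta|)}\to 0$ and, applied to $G$, $\lVert \chi_\Gamma G - G_{\sigma+\tau,-\tau}\rVert_{L^q(\gamma,|\mathrm{d}\zeta|)}\to 0$ as $\tau\to 0^+$. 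Letting $\tau\to 0$ then forces $\int_\Gamma F(\zeta)G(\zeta)\,\mathrm{d}\zeta=0$.

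The only genuine difference from the $\Omega_+$ case, and thus the one point requiring care, is bookkeeping rather than analysis: here the support set $\gamma_{\sigma+\tau,-\tau}$ \emph{contains} $\Gamma$, whereas in Proposition~\ref{pro-170827-1630} the analogous $\gamma_{\sigma-\tau,\tau}$ was contained in $\Gamma$. One must therefore line up the $\chi_\Gamma$ truncation and the direction of the $L^p$/$L^q$ convergence statements from Theorem~\ref{thm-170827-1410} accordingly, and confirm that $(\sigma+\tau,-\tau)$ falls in the admissible range for the $H^q(\Omega_-)$ supremum. Everything else is a transcription of the $\Omega_+$ argument.
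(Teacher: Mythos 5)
Your proposal is correct and takes essentially the same approach as the paper: the paper's own proof of Proposition~\ref{pro-170827-2304} consists of the single remark that it is the same as that of Proposition~\ref{pro-170827-1630}, i.e., precisely your substitution of Lemma~\ref{lem-170827-1530} for Lemma~\ref{lem-170827-1520}, Theorem~\ref{thm-170827-1410} for Theorem~\ref{thm-170827-1120}, and the curves $\Gamma_{\sigma+\tau,-\tau}$ for $\Gamma_{\sigma-\tau,\tau}$. Your explicit bookkeeping---that $\gamma_{\sigma+\tau,-\tau}\supset\Gamma$ rather than the reverse inclusion, and that $(\sigma+\tau,-\tau)$ lies in the admissible range of the $H^q(\Omega_-)$ supremum---supplies exactly the details the paper leaves implicit.
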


The proof is the same as above. We now give a characterization of 
$L^p(\Gamma,|\mathrm{d}\zeta|)$ functions be the non-tangential boundary 
limit of $H^p(\Omega_\pm)$ functions, where $1<p<\infty$.

\begin{theorem}
  If $1<p<\infty$, $F(\zeta)\in L^p(\Gamma,|\mathrm{d}\zeta|)$, 
  then $F(\zeta)$ is the non-tangential boundary limit of a function in 
  $H^p(\Omega_+)$ if and only if
  \[\frac1{2\pi\mathrm{i}} \int_\Gamma \frac{F(\zeta)}{\zeta-\alpha}
      \mathrm{d}\zeta= 0 \quad
    \text{for all } \alpha\in\Omega_-.\]
\end{theorem}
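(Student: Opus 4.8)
The plan is to prove both implications, drawing almost entirely on the Cauchy integral representation of Theorem~\ref{thm-170827-1120} and on the decomposition theorem stated immediately before this one.

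For the forward implication, I would suppose that $F(\zeta)$ is the non-tangential boundary limit of some $F_+(w)\in H^p(\Omega_+)$. Theorem~\ref{thm-170827-1120} then represents $F_+$ by the Cauchy integral of its own boundary values, and its second case reads
\[\frac1{2\pi\mathrm{i}}\int_\Gamma \frac{F(\zeta)}{\zeta-w}\,\mathrm{d}\zeta = 0 \quad\text{for } w\in\Omega_-.\]
Specializing $w=\alpha\in\Omega_-$ gives the stated vanishing condition with no further work.

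For the reverse implication, I would set
\[F_+(w)= \frac1{2\pi\mathrm{i}}\int_\Gamma \frac{F(\zeta)}{\zeta-w}\,\mathrm{d}\zeta \ \ (w\in\Omega_+), \qquad F_-(w)= -\frac1{2\pi\mathrm{i}}\int_\Gamma \frac{F(\zeta)}{\zeta-w}\,\mathrm{d}\zeta \ \ (w\in\Omega_-),\]
exactly as in the preceding decomposition theorem, so that $F_+\in H^p(\Omega_+)$ and $F_-\in H^p(\Omega_-)$ by Theorem~\ref{thm-170826-1440}, and $F(\zeta)=F_+(\zeta)+F_-(\zeta)$ a.e.\ on $\Gamma$ for the respective non-tangential boundary limits. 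The hypothesis says precisely that $F_-(w)=0$ for every $w\in\Omega_-$, i.e.\ $F_-\equiv 0$ on $\Omega_-$; hence its non-tangential boundary limit $F_-(\zeta)$ vanishes a.e.\ on $\Gamma$. Consequently $F(\zeta)=F_+(\zeta)$ a.e., exhibiting $F$ as the boundary limit of $F_+\in H^p(\Omega_+)$.

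There is essentially no hard step here: both directions are corollaries of results already in hand, and the reverse direction hinges on the single observation that the vanishing hypothesis is exactly the assertion that the ``minus'' piece $F_-$ of the Cauchy decomposition is the zero function. The only point demanding a word of care is that $F_-\equiv 0$ on $\Omega_-$ forces its boundary limit to be $0$ a.e.\ on $\Gamma$, which is immediate since the identically zero analytic function has zero non-tangential limit everywhere. If one preferred a self-contained argument avoiding the decomposition theorem, the same conclusion could be reached by running the $K_z$-kernel computation of Corollary~\ref{cor-170827-0930} directly on $F_+$, but invoking the decomposition makes this unnecessary.
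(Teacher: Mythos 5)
Your proof is correct, but it runs along genuinely different lines from the paper's in both directions. For the forward implication the paper does not invoke the vanishing case of Theorem~\ref{thm-170827-1120} at all: it notes that $G(w)=1/(w-\alpha)$ lies in $H^q(\Omega_+)$ for $\alpha\in\Omega_-$ (Corollary~\ref{cor-170826-0740}), with boundary limit $1/(\zeta-\alpha)$, and then applies the orthogonality relation $\int_\Gamma F(\zeta)G(\zeta)\,\mathrm{d}\zeta=0$ of Proposition~\ref{pro-170827-1630}. Your reading of the condition straight off the second case of the Cauchy representation is shorter and equally valid, since that representation applies to the boundary limit of $F_+$, which agrees with $F$ a.e. For the converse, the paper defines $G(w)=\frac1{2\pi\mathrm{i}}\int_\Gamma \frac{F(\zeta)}{\zeta-w}\,\mathrm{d}\zeta$ on $\Omega_+$ (in $H^p(\Omega_+)$ by Theorem~\ref{thm-170826-1440}) and reruns the kernel computation: the hypothesis lets it subtract the vanishing integral at the reflected point $\zeta_0-z$, giving $G(\zeta_0+z)=\int_\Gamma K_z(\zeta,\zeta_0)F(\zeta)\,\mathrm{d}\zeta\to F(\zeta_0)$ at Lebesgue points by Corollary~\ref{cor-170827-0930}; you instead reduce to the preceding decomposition theorem and observe that the hypothesis says precisely that the $\Omega_-$ piece vanishes identically. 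This is more economical, with one caveat worth flagging: the bare statement of the decomposition theorem only asserts that \emph{some} $F_\pm$ exist, so your step ``$F(\zeta)=F_+(\zeta)+F_-(\zeta)$ a.e.'' needs the fact, contained in that theorem's proof rather than its statement, that the decomposition is realized by exactly the two Cauchy integrals you wrote down --- which you implicitly supply by reconstructing them. What each approach buys: yours exhibits the theorem as a formal corollary of results already in hand, making the logical dependence transparent; the paper's is self-contained at the level of the $K_z$ estimate, repeating the same computation it uses in the decomposition theorem itself, so nothing has to be unwound from another proof.
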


\begin{proof}
  ``$\Rightarrow$'': let $\frac1p+\frac1q=1$, then $1<q<\infty$, 
  $G(w)=\frac1{w-\alpha}\in H^q(\Omega_+)$ for $\alpha\in\Omega_-$ by 
  Corollary~\ref{cor-170826-0740}, 
  and has non-tangential boundary limit $G(\zeta)=\frac1{\zeta-\alpha}$ 
  a.e.\@ on $\Gamma$. By Proposition~\ref{pro-170827-1630},
  \[\int_\Gamma F(\zeta)G(\zeta)\,\mathrm{d}\zeta= 0\quad
    \text{or } \int_\Gamma \frac{F(\zeta)}{\zeta-\alpha}\mathrm{d}\zeta= 0.\]
  
  ``$\Leftarrow$'': define
  \[G(w)= \frac1{2\pi\mathrm{i}} \int_\Gamma \frac{F(\zeta)}{\zeta-w} \mathrm{d}\zeta
    \quad\text{for } w\in\Omega_+,\]
  then $G(w)\in H^p(\Omega_+)$ by Theorem~\ref{thm-170826-1440}, thus has non-tangential
  boundary limit $G(\zeta)$ a.e.\@ on $\Gamma$. 
  Fix $\zeta_0=\zeta(b_0)\in\Gamma\setminus\{\pm\sigma\}$ where $b_0$ is the Lebesgue
  point of both $F(\zeta(b))$ and $G(\zeta(b))$, let $\alpha>0$, 
  $z+\zeta_0\in \Omega_{\alpha+}\cap\Omega_+$, then $\zeta_0-z\in \Omega_-$ 
  and
  \[\frac1{2\pi\mathrm{i}} \int_\Gamma \frac{F(\zeta)}{\zeta-(\zeta_0-z)} 
      \mathrm{d}\zeta=0,\]
  which follows that
  \begin{align*}
    G(\zeta_0+z)
    &= \frac1{2\pi\mathrm{i}} \int_{\Gamma} 
        \frac{F(\zeta)}{\zeta-(\zeta_0+z)}\mathrm{d}\zeta
      - \frac1{2\pi\mathrm{i}} \int_{\Gamma} 
        \frac{F(\zeta)}{\zeta-(\zeta_0-z)}\mathrm{d}\zeta                 \\
    &= \int_{\Gamma} K_z(\zeta,\zeta_0) F(\zeta)\,\mathrm{d}\zeta.
  \end{align*}
  By Corollary~\ref{cor-170827-0930},
  \[\lim_{z\to 0} G(\zeta_0+z)= F(\zeta_0)\quad
    \text{or } G(\zeta_0)= F(\zeta_0),\]
  that is, $F(\zeta)$ is the non-tangential boundary limit function of 
  $G(w)\in H^p(\Omega_+)$ a.e.\@ on $\Gamma$.
\end{proof}

We have the following characterization of the non-tangential boundary limit 
of $H^p(\Omega_\pm)$ functions with $1<p<\infty$.

\begin{theorem}
  If $1<p<\infty$, $F(\zeta)\in L^p(\Gamma,|\mathrm{d}\zeta|)$, then 
  $F(\zeta)$ is the non-tangential boundary limit of a function in 
  $H^p(\Omega_-)$ if and only if
  \[\frac1{2\pi\mathrm{i}} \int_\Gamma \frac{F(\zeta)}{\zeta-\alpha}
      \mathrm{d}\zeta= 0 \quad
    \text{for all } \alpha\in\Omega_+.\]
\end{theorem}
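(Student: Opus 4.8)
The plan is to mirror the proof of the preceding $\Omega_+$ theorem, replacing each appeal to the $\Omega_+$-machinery by its $\Omega_-$-counterpart and carefully tracking the sign that the $\Omega_-$ Cauchy representation (Theorem~\ref{thm-170827-1410}) forces on the argument.

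For the ``$\Rightarrow$'' direction, suppose $F(\zeta)$ is the non-tangential boundary limit of some $G(w)\in H^p(\Omega_-)$, and let $\frac1p+\frac1q=1$, so that $1<q<\infty$. For a fixed $\alpha\in\Omega_+$, the function $G_\alpha(w)=\frac1{w-\alpha}$ is rational, vanishes at infinity, and has its only pole in $\Omega_+$; hence $G_\alpha(w)\in H^q(\Omega_-)$ by the corollary (immediately following Corollary~\ref{cor-170826-0740}) asserting that such rational functions lie in $H^q(\Omega_-)$, and its non-tangential boundary limit on $\Gamma$ is $\frac1{\zeta-\alpha}$. Applying the $\Omega_-$ orthogonality relation of Proposition~\ref{pro-170827-2304} to the pair $F,G_\alpha$ then gives $\int_\Gamma F(\zeta)(\zeta-\alpha)^{-1}\,\mathrm{d}\zeta=0$, which is exactly the stated condition.

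For the ``$\Leftarrow$'' direction, I would define
\[G(w)=-\frac1{2\pi\mathrm{i}}\int_\Gamma \frac{F(\zeta)}{\zeta-w}\,\mathrm{d}\zeta
  \quad\text{for } w\in\Omega_-,\]
the minus sign being chosen to match the $\Omega_-$ representation. By Theorem~\ref{thm-170826-1440} the Cauchy integral of an $L^p(\Gamma,|\mathrm{d}\zeta|)$ function restricted to $\Omega_-$ lies in $H^p(\Omega_-)$, so $G(w)\in H^p(\Omega_-)$ and therefore possesses a non-tangential boundary limit $G(\zeta)$ a.e.\ on $\Gamma$. Now fix $\zeta_0=\zeta(b_0)$ with $b_0\neq\pm\sigma$ a common Lebesgue point of $F(\zeta(b))$ and $G(\zeta(b))$, fix $\alpha>0$, and choose $z$ so that $\zeta_0+z\in\Omega_{\alpha-}(\zeta_0)\cap\Omega_-$; then for $z$ small the reflected point $\zeta_0-z$ lies in $\Omega_+$, so the hypothesis forces $\frac1{2\pi\mathrm{i}}\int_\Gamma \frac{F(\zeta)}{\zeta-(\zeta_0-z)}\,\mathrm{d}\zeta=0$. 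Subtracting this vanishing integral and invoking the definition~\eqref{equ-170826-2220} of $K_z$, I obtain
\[G(\zeta_0+z)=-\int_\Gamma K_z(\zeta,\zeta_0)F(\zeta)\,\mathrm{d}\zeta.\]
By the remark following Corollary~\ref{cor-170827-0930}, when $\zeta_0+z\in\Omega_{\alpha-}(\zeta_0)$ one has $\lim_{z\to0}\int_\Gamma K_z(\zeta,\zeta_0)F(\zeta)\,\mathrm{d}\zeta=-F(\zeta_0)$, so $\lim_{z\to0}G(\zeta_0+z)=F(\zeta_0)$, i.e.\ $G(\zeta_0)=F(\zeta_0)$ a.e.\ on $\Gamma$, as required.

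The step I expect to demand the most care is the sign bookkeeping in the ``$\Leftarrow$'' direction: the minus sign in the definition of $G$ dictated by Theorem~\ref{thm-170827-1410} and the value $-F(\zeta_0)$ coming from the $\Omega_{\alpha-}$ variant of Corollary~\ref{cor-170827-0930} must cancel to yield $+F(\zeta_0)$, and one must also verify that $\zeta_0+z\in\Omega_{\alpha-}(\zeta_0)$ places the reflected point $\zeta_0-z$ inside $\Omega_+$ so the hypothesis applies. Once the geometry of the cones $\Omega_{\alpha\pm}(\zeta_0)$ and these signs are pinned down, the remainder is a direct transcription of the $\Omega_+$ argument.
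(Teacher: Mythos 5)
Your proposal is correct and takes essentially the approach the paper intends: the paper states this $\Omega_-$ theorem without proof, immediately after proving the $\Omega_+$ version, and your argument is precisely that proof transcribed to $\Omega_-$ --- using the rational-function corollary and Proposition~\ref{pro-170827-2304} for the forward direction, and $G=-CF$ together with Theorem~\ref{thm-170826-1440} and the $\Omega_{\alpha-}$ variant of Corollary~\ref{cor-170827-0930} for the converse. The sign bookkeeping and the geometric check that $\zeta_0+z\in\Omega_{\alpha-}(\zeta_0)\cap\Omega_-$ forces $\zeta_0-z\in\Omega_+$ for small $|z|$ are both handled correctly.
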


\section{Isomorphism of $H^p(\mathbb{C}_\pm)$ and $H^p(\Omega_\pm)$}

We will prove that if $0<p<\infty$, then $H^p(\mathbb{C}_+)$ and 
$H^p(\mathbb{C}_-)$ are isomorphic to $H^p(\Omega_+)$ and $H^p(\Omega_-)$, 
respectively, under proper defined transforms. Then $H^p(\Omega_+)$ is 
isomorphic to $H^p(\Omega_-)$, since $H^p(\mathbb{C}_+)$ and 
$H^p(\mathbb{C}_-)$ are isometric to each other. Most of
our results here are parallel to those in~\cite{DL171} and \cite{DL172}, 
often with exactly the same proving method, although there $\Omega_+$ 
is the domain over a Lipschitz curve.

Since $\Omega_+$ and $\Omega_-$ are simply connected domains, then by 
Riemann mapping theorem, there exists holomorphic representations $\Phi_+(z)$ 
from $\mathbb{C_+}$ onto $\Omega_+$, and $\Phi_-(z)$ from $\mathbb{C_-}$ 
onto $\Omega_-$. We denote the inverse of $\Phi_+(z)$ as $\Psi_+(w)$ and 
that of $\Phi_-(z)$ as $\Psi_-(w)$. All of them extend to the boundaries, 
and we let the extensions on the boundaries be $\Phi_\pm(x)$ for 
$x\in\mathbb{R}$ and 
$\Psi_\pm(\zeta)$ for $\zeta\in\Gamma$, then $\Phi_\pm'(z)\to\Phi_\pm'(x)$,  
$\Psi_\pm'(w)\to\Psi_\pm'(\zeta)$ non-tangentially a.e.\@, where the latters 
are derivatives along the boundaries. If $\Phi_+(z)=w$ for 
$\overline{\mathbb{C}_+}$, then $\Phi_+'(z)\Psi_+'(w)=1$ a.e.\@. The same is true 
for $\Phi_-(z)$ and $\Psi_-(w)$.

Without loss of generality, we suppose $\Phi_\pm(-1)= -\sigma$ and 
$\Phi_\pm(1)= \sigma$, then by Schwarz-Christoffel formula~\cite{SZ71}, 
\[\Phi_+(z)= \frac{2\sigma}{\pi} \int_0^z 
    \frac{\mathrm{d}\xi_1}{\sqrt{1-\xi_1^2}}
  \quad \text{and}\quad
  \Phi_-(z)= \frac{4\sigma}{\pi} \int_0^z 
    \sqrt{1-\xi_2^2}\mathrm{d}\xi_2.\]
Here, we choose the branch of $\sqrt{1-\xi_1^2}$ which makes it analytic 
on $\mathbb{C}_+$ and positive when $\xi_1\in (-1,1)\subset\mathbb{R}$, 
and that of $\sqrt{1-\xi_2^2}$ which makes it analytic on $\mathbb{C}_-$ 
and positive when $\xi_2\in (-1,1)\subset\mathbb{R}$. More specifically, 
for $\xi_1\in\mathbb{C}_+$, $\arg(1-\xi_1)\in (-\pi,0)$ and 
$\arg(1+\xi_1)\in (0,\pi)$, while for $\xi_2\in\mathbb{C}_-$, 
$\arg(1-\xi_2)\in (0,\pi)$ and $\arg(1+\xi_1)\in (-\pi,0)$.
Actually, one could verify that $\Phi_+(z)= \frac{2\sigma}{\pi} \arcsin z$ 
with principle value in $\Omega_+$, 
and $\Psi_+(w)= \sin(\frac{\pi}{2\sigma}w)$.

For $0<p<\infty$, define transform $T_+$ from $H^p(\Omega_+)$ to analytic 
functions on $\mathbb{C}_+$ as
\begin{equation}\label{equ-170827-1810}
  T_+F(z)= F(\Phi_+(z))(\Phi_+'(z))^{\frac1p} \quad
  \text{for } F(w)\in H^p(\Omega_+),
\end{equation}
and transform $T_-$ from $H^p(\Omega_-)$ to analytic functions on 
$\mathbb{C}_-$ as
\begin{equation}\label{equ-170827-1820}
  T_-F(z)= F(\Phi_-(z))(\Phi_-'(z))^{\frac1p} \quad
  \text{for } F(w)\in H^p(\Omega_-),
\end{equation}
then both $T_+$ and $T_-$ are one-to-one and linear. If $p=\infty$, 
then $T_\pm$ become $F(\Phi_\pm)$, which implies $H^\infty(\mathbb{C}_\pm)$ 
are isometric to $H^\infty(\Omega_\pm)$.

Let $D$ be an arbitrary simply connected domain with at least 
two boundary points. A function~$f$ analytic on $D$ is said to be of 
class~$E^p(D)$~\cite{Du70} if there exists a sequence of rectifiable 
Jordan curves~$C_1$, $C_2$, $\ldots$ in $D$, which eventually 
surround each compact subdomain of $D$, such that
\[\sup_{n\geqslant 1}\int_{C_n} |f(z)|^p |\mathrm{d}z|< \infty.\]

\begin{lemma}\label{lem-170827-1800}
  If $0<p<\infty$, $T_+$ is defined on $E^p(\Omega_+)$ as in 
  \eqref{equ-170827-1810}, and $T_-$ is defined on $E^p(\Omega_-)$ as in 
  \eqref{equ-170827-1820}, then $T_+(E^p(\Omega_+))\subset H^p(\mathbb{C}_+)$ 
  and $T_-(E^p(\Omega_-))\subset H^p(\mathbb{C}_-)$.
\end{lemma}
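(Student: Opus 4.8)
The plan is to prove the inclusion for $T_+$; the $T_-$ statement is identical with $\mathbb{C}_+,\Omega_+,\Phi_+$ replaced throughout by $\mathbb{C}_-,\Omega_-,\Phi_-$. Fix $F\in E^p(\Omega_+)$ and set $g=T_+F$ as in \eqref{equ-170827-1810}. First I would check that $g$ is well defined and analytic on $\mathbb{C}_+$: since $\Phi_+$ is conformal, $\Phi_+'$ is zero-free on $\mathbb{C}_+$, so a single-valued analytic branch of $(\Phi_+')^{1/p}$ exists and $g(z)=F(\Phi_+(z))(\Phi_+'(z))^{1/p}$ is analytic. By the definition of $H^p(\mathbb{C}_+)$ (the analogue of $H^p(\{\mathrm{Re}\,w>-\sigma\})$ given earlier), it then suffices to bound $\sup_{y>0}\int_{\mathbb{R}}|g(x+\mathrm{i}y)|^p\,\mathrm{d}x$.

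The central computation is the conformal change of variables. For any rectifiable curve $C$ in $\Omega_+$ with preimage $\widetilde C=\Phi_+^{-1}(C)$, the substitution $w=\Phi_+(z)$, $|\mathrm{d}w|=|\Phi_+'(z)|\,|\mathrm{d}z|$, gives $\int_{\widetilde C}|g(z)|^p\,|\mathrm{d}z|=\int_C|F(w)|^p\,|\mathrm{d}w|$, because $|g(z)|^p=|F(\Phi_+(z))|^p|\Phi_+'(z)|$. Applying this to the exhausting curves $C_n$ supplied by the definition of $E^p(\Omega_+)$ yields curves $\widetilde C_n=\Phi_+^{-1}(C_n)$ that eventually surround every compact subset of $\mathbb{C}_+$ (as $\Phi_+$ is a homeomorphism) and satisfy $\int_{\widetilde C_n}|g|^p\,|\mathrm{d}z|\leqslant M:=\sup_n\int_{C_n}|F|^p\,|\mathrm{d}w|<\infty$. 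Taking instead $C=\Gamma_y:=\Phi_+(L_y)$, the image of the horizontal line $L_y=\{x+\mathrm{i}y:x\in\mathbb{R}\}$, identifies exactly the quantity to be controlled: $\int_{\mathbb{R}}|g(x+\mathrm{i}y)|^p\,\mathrm{d}x=\int_{\Gamma_y}|F|^p\,|\mathrm{d}w|$. Thus everything reduces to transferring the uniform bound from the \emph{given} family $\{C_n\}$ to the \emph{specific} level curves $\{\Gamma_y\}$.

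To effect that transfer I would exploit that $u:=|g|^p$ is a nonnegative subharmonic function on $\mathbb{C}_+$. Solving the Dirichlet problem on the Jordan domain $D_n$ bounded by $\widetilde C_n$ gives harmonic $h_n$ with $h_n=u$ on $\widetilde C_n$ and $u\leqslant h_n$ in $D_n$ by the maximum principle; a normal-families argument extracts a harmonic majorant $h$ of $u$ on all of $\mathbb{C}_+$. By the Herglotz--Riesz representation the positive harmonic function $h$ has the form $h(x+\mathrm{i}y)=\int_{\mathbb{R}}P_y(x-t)\,\mathrm{d}\mu(t)+cy$ for a finite positive measure $\mu$ and a constant $c\geqslant0$, where $P_y$ is the Poisson kernel of $\mathbb{C}_+$. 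Since $\int_{\mathbb{R}}P_y(x-t)\,\mathrm{d}x=1$, Fubini gives $\int_{\mathbb{R}}h(x+\mathrm{i}y)\,\mathrm{d}x=\mu(\mathbb{R})$ when $c=0$ and $+\infty$ when $c>0$; so once $c=0$ one obtains $\int_{\mathbb{R}}|g(x+\mathrm{i}y)|^p\,\mathrm{d}x\leqslant\mu(\mathbb{R})$ uniformly in $y$, hence $g\in H^p(\mathbb{C}_+)$.

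The main obstacle is precisely the point just glossed over: one must show that the uniform bound $\int_{\widetilde C_n}u\,|\mathrm{d}z|\leqslant M$ over arbitrary, possibly highly oscillatory, arc-length curves genuinely forces the harmonic majorant to be a pure Poisson integral (that is, $c=0$) with $\mu(\mathbb{R})$ finite, rather than merely guaranteeing that \emph{some} harmonic majorant exists. Converting arc-length control over the $\widetilde C_n$ into control over horizontal-line integrals is exactly the conformal invariance (well-definedness) of the class $E^p$, and here I would adapt the classical disk argument for $E^p$ spaces in Duren~\cite{Du70} to the half-plane to certify that the limiting majorant carries no linear part and that $\sup_{y>0}\int_{\mathbb{R}}|g(x+\mathrm{i}y)|^p\,\mathrm{d}x$ is finite.
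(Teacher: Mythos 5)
Your reduction is set up correctly: the branch of $(\Phi_+')^{1/p}$ exists, the change-of-variables identity $\int_{\Phi_+^{-1}(C)}\lvert T_+F\rvert^p\,\lvert\mathrm{d}z\rvert=\int_C\lvert F\rvert^p\,\lvert\mathrm{d}w\rvert$ is right, and the problem is indeed to transfer the uniform bound $M$ from the curves $\widetilde C_n=\Phi_+^{-1}(C_n)$ to horizontal lines. (Note the paper itself gives no proof here; it defers to \cite{DL172}.) But the mechanism you propose for that transfer --- harmonic majorant plus Herglotz --- has two genuine gaps. The first is the normal-families step: to extract a limit from the Dirichlet solutions $h_n$ on the interiors $D_n$ of $\widetilde C_n$ you need $\sup_n h_n(z_0)<\infty$ at some fixed point (Harnack), and the only datum available is $\int_{\widetilde C_n}\lvert g\rvert^p\,\lvert\mathrm{d}z\rvert\leqslant M$. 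Since $h_n(z_0)=\int_{\widetilde C_n}\lvert g\rvert^p\,\mathrm{d}\omega_{z_0}$, this requires harmonic measure of $D_n$ to have density bounded with respect to arc length, uniformly in $n$; for arbitrary rectifiable Jordan curves this is false (harmonic measure of a boundary arc can be comparable to the square root of its length, cf.\ Beurling's projection theorem), so the majorant $h$ is never actually produced.

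The second gap is fatal to the framework itself: on the half-plane, existence of a harmonic majorant of $\lvert g\rvert^p$ is \emph{strictly weaker} than membership in $H^p(\mathbb{C}_+)$ as defined in this paper (uniform integrals over horizontal lines). The function $g\equiv 1$ has the majorant $h\equiv 1$ but $\int_{\mathbb{R}}1\,\mathrm{d}x=\infty$. Correspondingly, the Herglotz representation only guarantees $\int_{\mathbb{R}}\mathrm{d}\mu(t)/(1+t^2)<\infty$, not $\mu(\mathbb{R})<\infty$ as you state; the assertions $c=0$ and $\mu(\mathbb{R})<\infty$ \emph{are} the lemma, and nothing in your construction links them back to the bound $M$. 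This is also why ``adapting the classical disk argument'' is not a routine patch: on the disk, a harmonic majorant bounds all circle means through its value at the center, and that is exactly the feature the half-plane lacks, so within the majorant framework there is nothing to adapt. A correct completion must keep quantitative control of $M$ all the way to the line integrals. Two routes do this: (i) Duren's actual proof of his Theorem 10.1 (Carath\'eodory kernel convergence of the conformal maps of the $D_n$ onto the disk, rectifiability giving $H^1$ membership of their derivatives, and monotonicity of integral means on the disk); or, cleanest, (ii) reduce to the disk outright: with $\kappa\colon\mathbb{D}\to\mathbb{C}_+$ the Cayley map one has $T_+F(\kappa(\xi))\,(\kappa'(\xi))^{1/p}=F(\varphi(\xi))\,(\varphi'(\xi))^{1/p}$ where $\varphi=\Phi_+\circ\kappa\colon\mathbb{D}\to\Omega_+$, so the lemma is precisely Duren's Theorem 10.1 in \cite{Du70} combined with the standard identification of $H^p(\mathbb{C}_+)$ with $H^p(\mathbb{D})$ via the factor $(\kappa')^{1/p}$, both of which are citable.
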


The proof of the above lemma is exactly the same as in~\cite{DL172}, so we 
omit it here.

\begin{proposition}\label{pro-170827-2150}
  If $0<p<\infty$, then for $T_+$ defined on $H^p(\Omega_+)$ by 
  \eqref{equ-170827-1810}, $T_+(H^p(\Omega_+))\subset H^p(\mathbb{C}_+)$. 
  In addition, $\lVert T_+\rVert\leqslant 1$ for $1<p<\infty$.
\end{proposition}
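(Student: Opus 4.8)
The plan is to reduce the inclusion to Lemma~\ref{lem-170827-1800}: first I would show that $H^p(\Omega_+)\subset E^p(\Omega_+)$, which immediately gives $T_+(H^p(\Omega_+))\subset H^p(\mathbb{C}_+)$ for every $0<p<\infty$, and then I would establish the norm bound for $1<p<\infty$ separately by transporting the boundary integral through the conformal map $\Phi_+$.

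To verify $H^p(\Omega_+)\subset E^p(\Omega_+)$, fix $F\in H^p(\Omega_+)$ and choose sequences $s_n\uparrow\sigma$, $t_n\downarrow 0$, $R_n\uparrow+\infty$. Let $C_n$ be the (rectifiable Jordan) boundary of the bounded region $D_{s_n,t_n}\cap\{\mathrm{Im}\,w<R_n\}$, positively oriented; since these regions exhaust $\Omega_+$, the $C_n$ eventually surround every compact subdomain. Each $C_n$ splits into the portion lying on $\Gamma_{s_n,t_n}$ (the two vertical sides and the bottom segment) and the horizontal cap $L_n=\{u+\mathrm{i}R_n\colon |u|\leqslant s_n\}$. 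The first portion contributes at most $\int_{\Gamma_{s_n,t_n}}|F|^p|\mathrm{d}w|\leqslant\lVert F\rVert_{H^p(\Omega_+)}^p$ by the definition of the $H^p(\Omega_+)$-norm, while for the fixed value $s_n$ Lemma~\ref{lem-170825-1930} gives $F(u+\mathrm{i}v)\to 0$ uniformly for $|u|\leqslant s_n$ as $v\to+\infty$, so I may pick $R_n$ large enough that $\int_{L_n}|F|^p|\mathrm{d}w|\leqslant 2\sigma\max_{|u|\leqslant s_n}|F(u+\mathrm{i}R_n)|^p\leqslant 1$. Hence $\sup_n\int_{C_n}|F|^p|\mathrm{d}z|\leqslant\lVert F\rVert_{H^p(\Omega_+)}^p+1<\infty$, so $F\in E^p(\Omega_+)$, and Lemma~\ref{lem-170827-1800} yields $T_+F\in H^p(\mathbb{C}_+)$.

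For the norm bound when $1<p<\infty$, I would use that the $H^p(\mathbb{C}_+)$-norm of $T_+F$ equals the $L^p(\mathbb{R})$-norm of its non-tangential boundary function, so $\lVert T_+F\rVert_{H^p(\mathbb{C}_+)}^p=\int_{\mathbb{R}}|T_+F(x)|^p\,\mathrm{d}x$. Because $\Phi_+$ extends to a homeomorphism of the closures carrying $\mathbb{R}$ onto $\Gamma$ and taking non-tangential approach to non-tangential approach, the boundary value is $T_+F(x)=F(\Phi_+(x))(\Phi_+'(x))^{1/p}$, where $F(\Phi_+(x))$ is the non-tangential boundary limit of $F$ provided by Theorem~\ref{thm-170827-1120}. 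Writing $\zeta=\Phi_+(x)$ so that $|\mathrm{d}\zeta|=|\Phi_+'(x)|\,\mathrm{d}x$, the change of variables gives
\[\int_{\mathbb{R}}|T_+F(x)|^p\,\mathrm{d}x
  =\int_{\mathbb{R}}|F(\Phi_+(x))|^p\,|\Phi_+'(x)|\,\mathrm{d}x
  =\int_{\Gamma}|F(\zeta)|^p\,|\mathrm{d}\zeta|
  \leqslant\lVert F\rVert_{H^p(\Omega_+)}^p,\]
the final inequality being $\lVert F\rVert_{L^p(\Gamma,|\mathrm{d}\zeta|)}\leqslant\lVert F\rVert_{H^p(\Omega_+)}$ from Theorem~\ref{thm-170827-1120}. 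Thus $\lVert T_+F\rVert_{H^p(\mathbb{C}_+)}\leqslant\lVert F\rVert_{H^p(\Omega_+)}$ and $\lVert T_+\rVert\leqslant 1$.

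The hard part will be the boundary-value identification in the last paragraph: one must justify that the non-tangential boundary function of the product $F(\Phi_+(z))(\Phi_+'(z))^{1/p}$ is a.e.\ the product of the separate boundary limits, despite $\Phi_+'$ blowing up at $z=\pm1$, which are exactly the preimages of the two corners $\pm\sigma$ of $\Omega_+$. This relies on the classical boundary correspondence for $\Phi_+=\frac{2\sigma}{\pi}\arcsin$: the two exceptional points form a null set, $\Phi_+'$ has finite nonzero non-tangential limits a.e.\ on $\mathbb{R}$, and a non-tangential approach to $x\in\mathbb{R}$ is carried to a non-tangential approach to $\Phi_+(x)\in\Gamma$, so that the boundary limit of $F\circ\Phi_+$ coincides with $F$ along $\Gamma$ in the sense of Theorem~\ref{thm-170827-1120}.
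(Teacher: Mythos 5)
Your proposal is correct and follows essentially the same route as the paper: the paper likewise verifies $H^p(\Omega_+)\subset E^p(\Omega_+)$ with exhausting rectangular curves $C_n=\partial\bigl(D_{n\sigma/(n+1),\,1/(n+1)}\cap\{\mathrm{Im}\,w<n\}\bigr)$ (bounding the horizontal cap by $\lVert F\rVert_{H^p(\Omega_+)}^p$, since it is part of $\Gamma_{n\sigma/(n+1),\,n}$, rather than by $1$ via Lemma~\ref{lem-170825-1930} --- an immaterial difference), and then gets $\lVert T_+\rVert\leqslant 1$ for $1<p<\infty$ exactly as you do, from Theorem~\ref{thm-170827-1120} and the change of variables $\zeta=\Phi_+(x)$. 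The only distinction is that you explicitly flag and justify the a.e.\ identification of the boundary function of $F(\Phi_+(z))(\Phi_+'(z))^{1/p}$ with the product of the separate boundary limits, a point the paper passes over in silence.
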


\begin{proof}
  We only need to verify that $H^p(\Omega_+)\subset E^p(\Omega_+)$. 
  For $n\in\mathbb{N}$, let $E_n= D_{\frac{n\sigma}{n+1},\frac1{n+1}}
    \cap \{\mathrm{Im}\,w< n\}$ and
  \[C_n=\partial E_n
    = (\Gamma_{\frac{n\sigma}{n+1},\frac1{n+1}}\cap \{\mathrm{Im}\,w< n\})
      \cup \{u+\mathrm{i}n\colon |u|\leqslant n\sigma/(n+1)\}
    = \Gamma_{n1}\cup \Gamma_{n2},\]
  then $E_n\neq\emptyset$ and $E_n\to\Omega_+$. If $F(w)\in H^p(\Omega_+)$, then
  \begin{align*}
    \int_{C_n} |F(w)|^p |\mathrm{d}w|
    &= \bigg(\int_{\Gamma_{n1}}+ \int_{\Gamma_{n2}}\bigg)|F(w)|^p 
         |\mathrm{d}w|                                             \\
    &\leqslant \bigg(\int_{\Gamma_{\frac{n\sigma}{n+1},\frac1{n+1}}}
         + \int_{\Gamma_{\frac{n\sigma}{n+1},n}}\bigg)|F(w)|^p |\mathrm{d}w| \\
    &\leqslant 2\lVert F\rVert_{H^p(\Omega_+)}^p,
  \end{align*}
  which follows that $F(w)\in E^p(\Omega_+)$ and $T_+F(z)\in H^p(\mathbb{C}_+)$.
  
  If $1<p<\infty$, then $F(w)$ has non-tangential boundary limit $F(\zeta)$ 
  a.e.\@ on $\Gamma$, and by Fatou's lemma,
  \[\lVert T_+F\rVert_{H^p(\mathbb{C}_+)}^p
    = \int_{\mathbb{R}} |T_+F(x)|^p\mathrm{d}x
    = \int_\Gamma |F(\zeta)|^p |\mathrm{d}\zeta|
    \leqslant \lVert F\rVert_{H^p(\Omega_+)}^p,\]
  which shows that $\lVert T_+\rVert\leqslant 1$.
\end{proof}

\begin{proposition}\label{pro-170828-0940}
  If $0<p<\infty$, then for $T_-$ defined on $H^p(\Omega_-)$ by 
  \eqref{equ-170827-1820}, $T_-(H^p(\Omega_-))\subset H^p(\mathbb{C}_-)$. 
  Besides, $\lVert T_-\rVert\leqslant 1$ for $1<p<\infty$.
\end{proposition}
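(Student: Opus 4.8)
The plan is to imitate the proof of Proposition~\ref{pro-170827-2150}: first establish the inclusion $H^p(\Omega_-)\subset E^p(\Omega_-)$, then invoke Lemma~\ref{lem-170827-1800} to conclude $T_-F\in H^p(\mathbb{C}_-)$, and finally, when $1<p<\infty$, upgrade this to $\lVert T_-\rVert\leqslant 1$ through the boundary values. The only genuinely new work lies in the first step, because $\Omega_-$ is unbounded and of a more complicated shape than the half-strip $\Omega_+$: it consists of the two ``channels'' $\{\mathrm{Re}\,w<-\sigma\}$ and $\{\mathrm{Re}\,w>\sigma\}$ running up to $+\infty$ together with the lower region $\{\mathrm{Im}\,w<0\}$. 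In particular the strip boundaries $\Gamma_{s,t}$ with $s>\sigma$, $t<0$ are simple arcs going to infinity, not Jordan curves, so they must be closed off before they can serve in the definition of $E^p(\Omega_-)$.

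First I would construct the approximating Jordan curves explicitly. For large $n$ put $s_n=\sigma+\frac1n$, $t_n=-\frac1n$, $a_n=b_n=c_n=n$, and let
\[
E_n=\{w\colon |\mathrm{Re}\,w|<a_n,\ -c_n<\mathrm{Im}\,w<b_n\}\setminus \overline{D_{s_n,t_n}},
\]
a large rectangle from which the notch $\overline{D_{s_n,t_n}}$ (reaching the top edge) has been removed. Then $E_n$ is simply connected, $C_n=\partial E_n$ is a rectifiable Jordan curve in $\Omega_-$, and since $s_n\to\sigma^+$, $t_n\to0^-$ and $a_n,b_n,c_n\to\infty$, the $E_n$ increase to $\Omega_-$. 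As any compact $K\subset\Omega_-$ has positive distance to $\Gamma$ and is bounded, the $C_n$ eventually surround $K$, so the $\{C_n\}$ are admissible for $E^p(\Omega_-)$.

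The heart of the matter, and the step I expect to be the main obstacle, is the uniform bound $\sup_n\int_{C_n}|F|^p\,|\mathrm{d}w|<\infty$. I would break $C_n$ into its eight straight pieces and estimate each exactly as the three terms $\int_{\Gamma_{s,t,k}}|F_1|^p$ are estimated in Proposition~\ref{pro-170826-1200}. By Theorem~\ref{thm-170826-1410}, $F$ lies in $H^p(\{\mathrm{Re}\,w<-\sigma\})$, $H^p(\mathbb{C}_-)$ and $H^p(\{\mathrm{Re}\,w>\sigma\})$, each with norm $\leqslant\lVert F\rVert_{H^p(\Omega_-)}$. The two vertical outer edges ($\mathrm{Re}\,w=\pm a_n$) and the two strip walls ($\mathrm{Re}\,w=\pm s_n$) are segments of lines parallel to the boundary of the relevant channel half-plane, so each contributes at most $\lVert F\rVert_{H^p(\Omega_-)}^p$ directly from the definition of those $H^p$-norms; the rectangle bottom and the strip bottom are horizontal slices with $\mathrm{Im}\,w<0$, hence bounded by $\lVert F\rVert_{H^p(\mathbb{C}_-)}^p\leqslant\lVert F\rVert_{H^p(\Omega_-)}^p$. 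The delicate pieces are the two top segments $\{\mathrm{Im}\,w=b_n,\ s_n\leqslant|\mathrm{Re}\,w|\leqslant a_n\}$, which are \emph{perpendicular} slices of the channel half-planes: here the crude pointwise decay of Lemma~\ref{lem-170825-2140} only gives $\int_{s_n}^{a_n}(u-\sigma)^{-1}\mathrm{d}u$, a divergent $\log\frac1{s_n-\sigma}$, so the decay lemmas do not suffice. Instead I would apply Corollary~\ref{cor-170826-1220} to $H^p(\{\mathrm{Re}\,w>\sigma\})$ and $H^p(\{\mathrm{Re}\,w<-\sigma\})$ (after the obvious affine identification with $\mathbb{C}_+$), which bounds the perpendicular slice $\int_\sigma^\infty|F(u+\mathrm{i}b_n)|^p\,\mathrm{d}u$ by $2^{-1}\lVert F\rVert_{H^p(\{\mathrm{Re}\,w>\sigma\})}^p$ uniformly in $b_n$. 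Summing the eight contributions gives $\int_{C_n}|F|^p\,|\mathrm{d}w|\leqslant 7\lVert F\rVert_{H^p(\Omega_-)}^p$ for all $n$; hence $F\in E^p(\Omega_-)$ and $T_-F\in H^p(\mathbb{C}_-)$ by Lemma~\ref{lem-170827-1800}.

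For the norm estimate with $1<p<\infty$ I would copy the end of Proposition~\ref{pro-170827-2150}. Since $T_-F\in H^p(\mathbb{C}_-)$, its $H^p$-norm is the $L^p(\mathbb{R})$-norm of its boundary function, which is $F(\Phi_-(x))(\Phi_-'(x))^{1/p}$ a.e.\ (using that $\Phi_-$ extends to the boundary with a.e.\ non-tangential derivative, and that $F$ has non-tangential boundary limits on $\Gamma$ by Theorem~\ref{thm-170827-1410}). The change of variables $\zeta=\Phi_-(x)$, $|\mathrm{d}\zeta|=|\Phi_-'(x)|\,\mathrm{d}x$ then yields
\[
\lVert T_-F\rVert_{H^p(\mathbb{C}_-)}^p=\int_{\mathbb{R}}|T_-F(x)|^p\,\mathrm{d}x=\int_\Gamma|F(\zeta)|^p\,|\mathrm{d}\zeta|\leqslant\lVert F\rVert_{H^p(\Omega_-)}^p,
\]
the last inequality being exactly the content of Theorem~\ref{thm-170827-1410}. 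Therefore $\lVert T_-\rVert\leqslant 1$, completing the proof.
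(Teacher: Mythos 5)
Your proposal is correct, and its overall skeleton is the same as the paper's: the paper also takes exhausting Jordan curves of exactly your ``rectangle minus notch'' shape, namely $C_n=\partial\big(E_n\cap\{\mathrm{Im}\,w<t_n\}\big)$ with $E_n= D_{(n+2)\sigma,-n}\setminus\overline{D_{\frac{n+1}{n}\sigma,-\frac{1}{n+1}}}$, bounds $\int_{C_n}|F(w)|^p\,|\mathrm{d}w|$ uniformly in $n$, invokes Lemma~\ref{lem-170827-1800}, and then gets $\lVert T_-\rVert\leqslant 1$ for $1<p<\infty$ from the boundary function and Fatou's lemma, just as you do. The one genuine divergence is the treatment of the top cap, which is indeed the delicate piece. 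The paper keeps its caps over the \emph{bounded} channel strips $\frac{n+1}{n}\sigma\leqslant|u|\leqslant(n+2)\sigma$ and chooses the cap height $t_n>n$ \emph{adaptively, depending on $F$}: by the uniform decay of Lemma~\ref{lem-170825-2130} the cap can be pushed high enough that it contributes less than $1$, giving $\int_{C_n}|F|^p\leqslant 2\lVert F\rVert_{H^p(\Omega_-)}^p+1$. You instead fix the cap height $b_n=n$ in advance and let the caps grow in width out to $|u|=a_n=n$; as you correctly observe, the pointwise decay of Lemma~\ref{lem-170825-2140} then fails (logarithmic divergence), so you import the quantitative perpendicular-slice estimate of Corollary~\ref{cor-170826-1220}, transported to the channel half-planes $\{\mathrm{Re}\,w\gtrless\pm\sigma\}$ via Theorem~\ref{thm-170826-1410}, to bound $\int_{\sigma}^{\infty}|F(u+\mathrm{i}b_n)|^p\,\mathrm{d}u\leqslant\frac12\lVert F\rVert_{H^p(\Omega_-)}^p$ uniformly in $b_n$, yielding the clean $n$-independent bound $7\lVert F\rVert_{H^p(\Omega_-)}^p$. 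Both routes are valid: yours leans on the heavier machinery behind Corollary~\ref{cor-170826-1220} (the Laplace-transform Lemma~\ref{lem-170826-1020} plus Blaschke factorization), but it eliminates the $F$-dependent choice of curves and recycles estimates the paper already made in Proposition~\ref{pro-170826-1200} and Theorem~\ref{thm-170826-1440}; the paper's route is more elementary at this point, needing only the decay Lemma~\ref{lem-170825-2130}, at the cost of an adaptive construction. Your closing step for $1<p<\infty$ matches the paper's in substance, the inequality $\int_\Gamma|F(\zeta)|^p|\mathrm{d}\zeta|\leqslant\lVert F\rVert_{H^p(\Omega_-)}^p$ being exactly Theorem~\ref{thm-170827-1410} (itself a Fatou argument, which is what the paper cites).
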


\begin{proof}
  We should also verify that $H^p(\Omega_-)\subset E^p(\Omega_-)$. 
  Fix $n\in\mathbb{N}$, $F(w)\in H^p(\Omega_-)$, let 
  \[E_n= D_{(n+2)\sigma,-n}\setminus
     \overline{D_{\frac{n+1}n\sigma,-\frac1{n+1}}},\]
  then by Lemma~\ref{lem-170825-2130},
  \begin{align*}
    &\lim_{t\to +\infty} \bigg(\int_{-(n+2)\sigma}^{-\frac{n+1}n\sigma}
      + \int_{\frac{n+1}n\sigma}^{(n+2)\sigma}\bigg) 
         |F(u+\mathrm{i}t)|^p\mathrm{d}u                           \\
    \leqslant{}& \lim_{t\to +\infty} 2\sigma\Big(n+1-\frac1n\Big) 
        \max\{|F(u+\mathrm{i}t)| \colon  (n+1)\sigma/n\leqslant 
           |u|\leqslant (n+2)\sigma\}                               \\
    ={}& 0,
  \end{align*}
  and we could choose $t_n>n$, such that
  \[\bigg(\int_{-(n+2)\sigma}^{-\frac{n+1}n\sigma}
      + \int_{\frac{n+1}n\sigma}^{(n+2)\sigma}\bigg) 
          |F(u+\mathrm{i}t_n)|^p\mathrm{d}u
    <1.\]
  Now define $C_n$ as the boundary of $E_n\cap\{\mathrm{Im}\,w< t_n\}$, then
  \begin{align*}
    \int_{C_n} |F(w)|^p |\mathrm{d}w|
    &\leqslant \bigg(\int_{\gamma_{\frac{n+1}n\sigma,-\frac1{n+1}}}
        + \int_{\gamma_{(n+2)\sigma,-n}}\bigg)|F(w)|^p |\mathrm{d}w|+ 1  \\
    &\leqslant 2\lVert F\rVert_{H^p(\Omega_-)}^p+ 1.
  \end{align*}
  Since $E_n\cap\{\mathrm{Im}\,w< t_n\}\to\Omega_-$, we have 
  $F(w)\in E^p(\Omega_-)$. The boundedness of $\lVert T_-\rVert$ when 
  $1<p<\infty$ also comes from Fatou's lemma.
\end{proof}

Remeber that $\Phi_+'(z)= \frac{2\sigma}{\pi\sqrt{1-z^2}}$ and 
$\Phi_-'(z)= \frac{4\sigma}{\pi} \sqrt{1-z^2}$, both with properly 
chosen branch.

\begin{lemma}\label{lem-170827-1920}
  If $y>0$, then $\mathrm{Re}\,\Phi_+'(x+\mathrm{i}y)>0$, 
  $x\mathrm{Im}\,\Phi_+'(x+\mathrm{i}y)>0$ for $x\neq 0$, and 
  $\mathrm{Im}\,\Phi_+'(\mathrm{i}y)=0$.  
  Also, $\mathrm{Re}\,\Phi_-'(x-\mathrm{i}y)>0$, 
  $x\mathrm{Im}\,\Phi_-'(x-\mathrm{i}y)>0$ for $x\neq 0$, and 
  $\mathrm{Im}\,\Phi_-'(-\mathrm{i}y)=0$.  
\end{lemma}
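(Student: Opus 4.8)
The plan is to work directly from the explicit formulas $\Phi_+'(z)=\frac{2\sigma}{\pi\sqrt{1-z^2}}$ and $\Phi_-'(z)=\frac{4\sigma}{\pi}\sqrt{1-z^2}$ together with the prescribed branch, reducing every sign assertion to a statement about $\arg(1-z^2)$. The branch is arranged so that, for $z$ in the relevant open half-plane, $\arg(1-z)$ and $\arg(1+z)$ lie in opposite open intervals of length $\pi$; hence $\arg(1-z^2)=\arg(1-z)+\arg(1+z)\in(-\pi,\pi)$ and $\arg\sqrt{1-z^2}\in(-\frac{\pi}2,\frac{\pi}2)$. Writing $\sqrt{1-z^2}=r\mathrm{e}^{\mathrm{i}\theta}$ with $r>0$ and $\theta\in(-\frac{\pi}2,\frac{\pi}2)$, I get $\mathrm{Re}\,\sqrt{1-z^2}=r\cos\theta>0$; hence $\mathrm{Re}\,\Phi_-'>0$ directly, while for $\Phi_+'$ the inversion $r\mathrm{e}^{\mathrm{i}\theta}\mapsto\frac1r\mathrm{e}^{-\mathrm{i}\theta}$ keeps the argument in $(-\frac{\pi}2,\frac{\pi}2)$, so $\mathrm{Re}\,\Phi_+'>0$ as well. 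The sign of the imaginary part is then governed entirely by the sign of $\theta$, i.e.\ by the sign of $\mathrm{Im}(1-z^2)$.

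For $\Phi_+'$ I would take $z=x+\mathrm{i}y$ with $y>0$ and compute $1-z^2=(1-x^2+y^2)-2\mathrm{i}xy$, so $\mathrm{Im}(1-z^2)=-2xy$. As $1-z^2$ is never a negative real for such $z$ (it is real only when $x=0$, where it equals $1+y^2>0$), the sign of $\theta$ equals the sign of $-2xy$, hence of $-x$. Because $\Phi_+'$ carries the reciprocal $\frac1{\sqrt{1-z^2}}=\frac1r\mathrm{e}^{-\mathrm{i}\theta}$, its imaginary part is $-\frac{2\sigma}{\pi r}\sin\theta$, which has the sign of $-\theta$, that is the sign of $x$. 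This gives $x\,\mathrm{Im}\,\Phi_+'(x+\mathrm{i}y)>0$ for $x\neq0$, while at $x=0$ we have $\theta=0$ and $\mathrm{Im}\,\Phi_+'(\mathrm{i}y)=0$.

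The $\Phi_-'$ case is parallel but with two compensating sign changes. With $z=x-\mathrm{i}y$ and $y>0$ I would compute $1-z^2=(1-x^2+y^2)+2\mathrm{i}xy$, so now $\mathrm{Im}(1-z^2)=+2xy$ has the sign of $x$, making $\theta$ have the sign of $x$. Since $\Phi_-'$ uses $\sqrt{1-z^2}$ directly, with no inversion, its imaginary part $\frac{4\sigma}{\pi}r\sin\theta$ has the sign of $\theta$, again the sign of $x$, so $x\,\mathrm{Im}\,\Phi_-'(x-\mathrm{i}y)>0$ for $x\neq0$ and $\mathrm{Im}\,\Phi_-'(-\mathrm{i}y)=0$. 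The two sign reversals relative to the $\Phi_+'$ computation---one from replacing $x+\mathrm{i}y$ by $x-\mathrm{i}y$ in $\mathrm{Im}(1-z^2)$, one from the absence of the reciprocal---cancel, so the conclusion takes the same shape.

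The only real obstacle is the branch bookkeeping, and the step I would nail down first is that $\mathrm{Im}(1-z)$ and $\mathrm{Im}(1+z)$ carry the fixed opposite signs forced by $y>0$: this is precisely what places $\arg(1-z^2)$ in $(-\pi,\pi)$ with no $2\pi$ ambiguity and lets the half-angle land cleanly in $(-\frac{\pi}2,\frac{\pi}2)$. Once that is in place, together with the observation that $z\mapsto1-z^2$ carries the open half-plane off the negative real axis, the correspondence between the sign of $\theta$ and the sign of $\mathrm{Im}(1-z^2)$ is exact and the chain of sign deductions above is rigorous.
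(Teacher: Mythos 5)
Your proof is correct, and it takes a genuinely cleaner route than the paper's. Both arguments start from the same branch bookkeeping: $\arg\Phi_\pm'$ is $\mp\frac12\bigl(\arg(1-z)+\arg(1+z)\bigr)$, the two arguments lie in opposite open intervals of length $\pi$, so $\arg\Phi_\pm'\in(-\frac\pi2,\frac\pi2)$ and the real parts are positive --- that step is identical. Where you diverge is the sign of the imaginary part: the paper determines it by explicit case analysis on $x$ (separately treating $x<-1$, $x=-1$, $-1<x<0$, $x=0$, and symmetrically for $x>0$), estimating $\arg(1-z)$ and $\arg(1+z)$ in each regime via arctangents. You instead observe that the sum $\arg(1-z)+\arg(1+z)$, being in $(-\pi,\pi)$, is the principal argument of $1-z^2$, so its sign agrees with the sign of $\mathrm{Im}(1-z^2)=\mp 2xy$, and halving preserves that sign; the single computation of $\mathrm{Im}(1-z^2)$ then settles every case at once, including the degenerate $x=0$ one (where $1-z^2=1+y^2>0$ forces $\theta=0$). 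Your route buys uniformity in $x$, a symmetric treatment of $\Phi_+'$ and $\Phi_-'$ (the paper only sketches the $\Phi_-'$ case as ``similar''), and a transparent accounting of why the reciprocal in $\Phi_+'$ and the conjugated half-plane in $\Phi_-'$ produce compensating sign flips; the paper's casework is more pedestrian but requires no observation beyond monotonicity of $\arctan$. The one point you rightly flag as needing care --- that $1-z^2$ avoids the closed negative real axis, so sign of principal argument equals sign of imaginary part with equality only at positive reals --- is exactly verified by your remark that $\mathrm{Im}(1-z^2)$ vanishes only at $x=0$, where $1-z^2=1+y^2>0$, so the argument is airtight.
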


\begin{proof}
  We only prove the $\Phi_+'$ case and the $\Phi_-'$ case could be 
  similarly proved. Since 
  $\Phi_+'(z)= \frac{2\sigma}{\pi}(1-z)^{-\frac12}(1+z)^{-\frac12}$ with
  $\arg(1-z)\in (-\pi,0)$ and $\arg(1+z)\in (0,\pi)$ for $z\in\mathbb{C}_+$, 
  we have 
  \[\arg\Phi_+'(z)= -\frac12(\arg(1-z)+\arg(1+z))
      \in\Big(-\frac\pi2,\frac\pi2\Big),\]
  then $\mathrm{Re}\,\Phi_+'(z)>0$. Let $z= x+\mathrm{i}y\in \mathbb{C}_+$, 
  then $y>0$, $1-z=1-x-\mathrm{i}y$ and $1+z=1+x+\mathrm{i}y$. 
  
  If $x<-1$, then $\arg(1-z)\in (-\frac\pi2,0)$, 
  $\arg(1+z)\in (\frac\pi2,\pi)$, and $\arg\Phi_+'(z)\in (-\frac\pi2,0)$;
  
  If $x=-1$, then $\arg(1-z)\in (-\frac\pi2,0)$, 
    $\arg(1+z)=\frac\pi2$, and $\arg\Phi_+'(z)\in (-\frac\pi4,0)$;
  
  If $-1<x<0$, then 
  \[\arg(1-z)= \arctan\frac{-y}{1-x},\quad \arg(1+z)= \arctan\frac{y}{1+x},\]
  and 
  \[\arg\Phi_+'(z)
    = \frac12\Big(\arctan\frac{y}{1-x}- \arctan\frac{y}{1+x}\Big)
    \in (-\pi/4,0).\]
  In each case, $\mathrm{Im}\,\Phi_+'(z)<0$.
  
  If $x=0$, then $\arg(1-z)=\arctan(-y)$, $\arg(1+z)= \arctan y$, 
  and $\arg\Phi_+'(\mathrm{i}y)= 0$ which means that 
  $\mathrm{Im}\,\Phi'_+(\mathrm{i}y)= 0$. If $x>0$, we analyse the 
  three cases of $0<x<1$, $x=1$ and $x>1$, and would have 
  $\mathrm{Im}\,\Phi'_+(x+\mathrm{i}y)>0$. Then we have proved the lemma.
\end{proof}

\begin{lemma}\label{lem-170827-2105}
  Suppose that $1<q<\infty$, $\alpha\in\mathbb{C}$, $\varepsilon>0$, 
  and let $E(\alpha,\varepsilon)=\{z\in\mathbb{C}_+\colon
      |\Phi_+(z)-\alpha|\geqslant \varepsilon\}$.
  Let $E_y=\{t\in\mathbb{R}\colon t+\mathrm{i}y\in E(\alpha,\varepsilon)\}$ 
  for $y>0$, then
  \[I= \int_{E_y} \frac{\lvert\Phi_+'(t+\mathrm{i}y)\rvert\,\mathrm{d}t}
         {\lvert\Phi_+(t+\mathrm{i}y)-\alpha\rvert^q}
     \leqslant \frac{3\cdot2^{q+1}}{(q-1)\varepsilon^{q-1}}.\]

  As a consequence, if $\alpha\in \Omega_-$, and we define
  \[g(z)=\frac{\big(\Phi_+'(z)\big)^{\frac1q}}{\Phi_+(z)-\alpha},\quad
    \text{for } z\in\mathbb{C}_+,\]
  then $g(z)\in H^q(\mathbb{C}_+)$.
\end{lemma}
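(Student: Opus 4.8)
The plan is to prove the integral bound first and then read off $g\in H^q(\mathbb{C}_+)$ as a short consequence. The guiding device is the conformal change of variables $w=\Phi_+(t+\mathrm{i}y)$. For fixed $y>0$, as $t$ runs over $\mathbb{R}$ the point $w(t)=\Phi_+(t+\mathrm{i}y)$ traces a curve $C_y$ in $\Omega_+$, and since $w'(t)=\Phi_+'(t+\mathrm{i}y)$ we have $|\Phi_+'(t+\mathrm{i}y)|\,\mathrm{d}t=|\mathrm{d}w|$ along $C_y$ while $|\Phi_+(t+\mathrm{i}y)-\alpha|=|w-\alpha|$. Hence $I=\int_{C_y\cap E}|w-\alpha|^{-q}\,|\mathrm{d}w|$, where $E=\{w:|w-\alpha|\geqslant\varepsilon\}$, so the whole estimate becomes a statement about the single curve $C_y$.

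The decisive structural input is Lemma~\ref{lem-170827-1920}. Writing $w(t)=u(t)+\mathrm{i}v(t)$, it gives $u'(t)=\mathrm{Re}\,\Phi_+'(t+\mathrm{i}y)>0$ for all $t$, so $u$ is strictly increasing and maps $\mathbb{R}$ into $(-\sigma,\sigma)$; and $v'(t)=\mathrm{Im}\,\Phi_+'(t+\mathrm{i}y)$ has the sign of $t$ with $v'(0)=0$, so $v$ is strictly decreasing on $(-\infty,0]$ and strictly increasing on $[0,\infty)$. In other words $C_y$ is $\cup$-shaped: it splits at $t=0$ into a left and a right branch, on each of which both $u$ and $v$ are monotone. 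I would record this bi-monotonicity explicitly, since it is exactly what makes the substitutions to the variables $u$ and $v$ bijective and hence legitimate.

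For the estimate, write $\alpha=u_\alpha+\mathrm{i}v_\alpha$ and use $|\mathrm{d}w|\leqslant|\mathrm{d}u|+|\mathrm{d}v|$ along $C_y$ to split $I\leqslant I_u+I_v$, where $I_u=\int_{C_y\cap E}|w-\alpha|^{-q}\,|\mathrm{d}u|$ and $I_v$ is the analogous integral in $v$. Since $|w-\alpha|\geqslant|u-u_\alpha|$ and $|w-\alpha|\geqslant\varepsilon$ on $E$, we have $|w-\alpha|^{-q}\leqslant\max(|u-u_\alpha|,\varepsilon)^{-q}$; as $u$ is \emph{globally} monotone I may change variables to $u$ over the whole curve, obtaining
\[
  I_u\leqslant\int_{\mathbb{R}}\frac{\mathrm{d}u}{\max(|u-u_\alpha|,\varepsilon)^{q}}
    =\frac{2\varepsilon}{\varepsilon^{q}}+\frac{2}{(q-1)\varepsilon^{q-1}}
    =\frac{2q}{(q-1)\varepsilon^{q-1}}.
\]
For $I_v$ the coordinate $v$ is monotone only on each branch, so I apply the same one–variable estimate (now with $|w-\alpha|\geqslant|v-v_\alpha|$) separately on the left and right branch, each contributing at most $\frac{2q}{(q-1)\varepsilon^{q-1}}$, whence $I_v\leqslant\frac{4q}{(q-1)\varepsilon^{q-1}}$. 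Adding gives $I\leqslant\frac{6q}{(q-1)\varepsilon^{q-1}}$, and since $q\leqslant2^{q}$ for $q\geqslant1$ (so $6q\leqslant 6\cdot2^{q}=3\cdot2^{q+1}$) this is at most $\frac{3\cdot2^{q+1}}{(q-1)\varepsilon^{q-1}}$, the asserted bound.

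Finally, for the consequence, $\alpha\in\Omega_-$ forces $d_0:=\mathrm{dist}(\alpha,\overline{\Omega_+})>0$; since $\Phi_+(z)\in\Omega_+$ for every $z\in\mathbb{C}_+$, this yields $|\Phi_+(z)-\alpha|\geqslant d_0$ throughout $\mathbb{C}_+$. In particular $\Phi_+(z)\neq\alpha$, and as $\Phi_+'$ is nonvanishing on the simply connected $\mathbb{C}_+$ a branch of $(\Phi_+')^{1/q}$ exists, so $g$ is analytic on $\mathbb{C}_+$. Taking $\varepsilon=d_0$ makes $E(\alpha,d_0)$ all of $\mathbb{C}_+$, so $E_y=\mathbb{R}$ and the first part gives $\int_{\mathbb{R}}|g(t+\mathrm{i}y)|^{q}\,\mathrm{d}t=\int_{\mathbb{R}}|\Phi_+'(t+\mathrm{i}y)|\,|\Phi_+(t+\mathrm{i}y)-\alpha|^{-q}\,\mathrm{d}t\leqslant\frac{3\cdot2^{q+1}}{(q-1)d_0^{q-1}}$ uniformly in $y>0$; taking the supremum shows $g\in H^q(\mathbb{C}_+)$. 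The main obstacle lies entirely in the geometric step: one must be certain that Lemma~\ref{lem-170827-1920} genuinely yields global monotonicity of $u$ together with branchwise monotonicity of $v$, so that the $\cup$-shape of $C_y$ is justified and the crude bound $|\mathrm{d}w|\leqslant|\mathrm{d}u|+|\mathrm{d}v|$ can be integrated coordinatewise. Once that is secured, the remaining estimates are routine one-dimensional computations.
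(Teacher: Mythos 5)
Your proof is correct and takes essentially the same approach as the paper's: both rely on Lemma~\ref{lem-170827-1920} for the global monotonicity of $\mathrm{Re}\,\Phi_+(t+\mathrm{i}y)$ and the branchwise monotonicity of $\mathrm{Im}\,\Phi_+(t+\mathrm{i}y)$, split the integrand via $|\Phi_+'|\leqslant |\mathrm{Re}\,\Phi_+'|+|\mathrm{Im}\,\Phi_+'|$ (your $|\mathrm{d}w|\leqslant|\mathrm{d}u|+|\mathrm{d}v|$), and conclude by a monotone change of variables in each coordinate, with the consequence for $\alpha\in\Omega_-$ handled identically by taking $\varepsilon=\mathrm{dist}(\alpha,\overline{\Omega_+})$ so that $E_y=\mathbb{R}$ uniformly in $y$. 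The only difference is cosmetic: you bound $|w-\alpha|^{-q}$ by $\max(|u-u_\alpha|,\varepsilon)^{-q}$ where the paper uses $2^q(|u-u_\alpha|+\varepsilon)^{-q}$, yielding the slightly sharper intermediate constant $\frac{6q}{(q-1)\varepsilon^{q-1}}$ before relaxing to the stated bound.
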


\begin{proof}
  Since $\lvert\Phi_+(z)-\alpha\rvert\geqslant \varepsilon$ 
  for $z\in E(\alpha,\varepsilon)$, then 
  \begin{align*}
    \lvert \Phi_+(z)-\alpha\rvert
    &\geqslant \frac12(\lvert \mathrm{Re}\,\Phi(z)
          - \mathrm{Re}\,\alpha\rvert+ \varepsilon),        \\
    \lvert \Phi_+(z)-\alpha\rvert
    &\geqslant \frac12(\lvert \mathrm{Im}\,\Phi(z)
          - \mathrm{Im}\,\alpha\rvert+ \varepsilon).
  \end{align*}
  For fixed $y>0$, define $h(t)= \mathrm{Re}\,\Phi_+(t+\mathrm{i}y)$, then
  \[h'(t)= \frac{\mathrm{d}}{\mathrm{d}t} \mathrm{Re}\,\Phi_+(t+\mathrm{i}y)
    = \mathrm{Re}\,\Phi_+'(t+\mathrm{i}y)
    >0,\]
  which shows $\mathrm{Re}\,\Phi_+(t+\mathrm{i}y)$ is an increasing function 
  of $t$. Similarly, $\mathrm{Im}\,\Phi_+(t+\mathrm{i}y)$ as a function 
  of $t$ is decreasing if $t\leqslant 0$ while increasing if $t>0$, then
  \begin{align*}
    I_1
    &= \int_{E_y} \frac{|\mathrm{Re}\,\Phi_+'(t+\mathrm{i}y)|\,\mathrm{d}t}
             {\lvert\Phi_+(t+\mathrm{i}y)-\alpha\rvert^q}       \\      
    &\leqslant \int_{E_y} \frac{\mathrm{d}\,\mathrm{Re}\,\Phi_+(t+\mathrm{i}y)}
        {2^{-q}(\lvert \mathrm{Re}\,\Phi_+(t+\mathrm{i}y)
          -\mathrm{Re}\,\alpha\rvert+ \varepsilon)^q}         \\
    &\leqslant \int_{\mathbb{R}}\frac{2^q\,\mathrm{d}t}{(|t|+\varepsilon)^q}   
     = \frac{2^{q+1}}{(q-1)\varepsilon^{q-1}},
  \end{align*}
  and
  \begin{align*}
    I_2
    &= \int_{E_y} \frac{\lvert\mathrm{Im}\,\Phi_+'(t+\mathrm{i}y)\rvert
           \,\mathrm{d}t} {\lvert\Phi_+(t+\mathrm{i}y)-\alpha\rvert^q}   \\
    &\leqslant \int_{E_y\cap\mathbb{R}_-} 
        \frac{-2^q\mathrm{d}\,\mathrm{Im}\,\Phi_+(t+\mathrm{i}y)}
        {(\lvert \mathrm{Im}\,\Phi_+(t+\mathrm{i}y)
          -\mathrm{Im}\,\alpha\rvert+ \varepsilon)^q}                  \\
    &\qquad {}+ \int_{E_y\cap\mathbb{R}_+} 
          \frac{2^q\mathrm{d}\,\mathrm{Im}\,\Phi_+(t+\mathrm{i}y)}
            {(\lvert \mathrm{Im}\,\Phi_+(t+\mathrm{i}y)
              -\mathrm{Im}\,\alpha\rvert+ \varepsilon)^q}              \\
    &\leqslant 2\int_{\mathbb{R}}\frac{2^q\,\mathrm{d}t}{(|t|+\varepsilon)^q}   
     = \frac{2^{q+2}}{(q-1)\varepsilon^{q-1}},
  \end{align*}
  It follows that
  \[I\leqslant I_1+I_2
    \leqslant \frac{3\cdot 2^{q+1}}{(q-1)\varepsilon^{q-1}}.\]
  
  If $\alpha\in \Omega_-$, then there exists $\varepsilon>0$, 
  such that $\lvert\Phi_+(z)-\alpha\rvert\geqslant \varepsilon$ 
  for all $z\in\mathbb{C}_+$. Hence $E_y=\mathbb{R}$, and for $y>0$,
  \[\int_{\mathbb{R}} \lvert g(t+\mathrm{i}y)\rvert^q \,\mathrm{d}t
    = \int_{\mathbb{R}} \frac{\lvert\Phi_+'(t+\mathrm{i}y)\rvert\,\mathrm{d}t}
           {\lvert\Phi_+(t+\mathrm{i}y)-\alpha\rvert^q}
    \leqslant \frac{3\cdot 2^{q+1}}{(q-1)\varepsilon^{q-1}},\]
  which implies that $g(z)\in H^q(\mathbb{C}_+)$, as the boundary above 
  is independent of $y$.
\end{proof}

Obviously, Lemma~\ref{lem-170827-2105} has a $\Phi_-(z)$ version.

\begin{lemma}
  Suppose that $1<q<\infty$, $\alpha\in\mathbb{C}$, $\varepsilon>0$, 
  and let $E(\alpha,\varepsilon)=\{z\in\mathbb{C}_-\colon
      |\Phi_-(z)-\alpha|\geqslant \varepsilon\}$.
  Let $E_y=\{t\in\mathbb{R}\colon t+\mathrm{i}y\in E(\alpha,\varepsilon)\}$ 
  for $y<0$, then
  \[I= \int_{E_y} \frac{\lvert\Phi_-'(t+\mathrm{i}y)\rvert\,\mathrm{d}t}
         {\lvert\Phi_-(t+\mathrm{i}y)-\alpha\rvert^q}
     \leqslant \frac{3\cdot2^{q+1}}{(q-1)\varepsilon^{q-1}}.\]

  Consequently, if $\alpha\in \Omega_+$, and we define
  \[g(z)=\frac{\big(\Phi_-'(z)\big)^{\frac1q}}{\Phi_-(z)-\alpha},\quad
    \text{for } z\in\mathbb{C}_-,\]
  then $g(z)\in H^q(\mathbb{C}_-)$.
\end{lemma}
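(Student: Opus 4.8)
The plan is to run the proof of Lemma~\ref{lem-170827-2105} almost verbatim, with $\Phi_+$, $\mathbb{C}_+$ and the restriction $y>0$ replaced by $\Phi_-$, $\mathbb{C}_-$ and $y<0$, drawing the required monotonicity from the $\Phi_-'$ part of Lemma~\ref{lem-170827-1920} rather than the $\Phi_+'$ part. Lemma~\ref{lem-170827-1920} guarantees $\mathrm{Re}\,\Phi_-'(t+\mathrm{i}y)>0$ and $t\,\mathrm{Im}\,\Phi_-'(t+\mathrm{i}y)>0$ for $t\neq 0$ whenever $y<0$, so that for fixed $y<0$ the function $h(t)=\mathrm{Re}\,\Phi_-(t+\mathrm{i}y)$ is strictly increasing, while $\mathrm{Im}\,\Phi_-(t+\mathrm{i}y)$ is decreasing on $(-\infty,0)$ and increasing on $(0,\infty)$. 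These are precisely the structural properties the original argument exploited.

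First I would record, from $\lvert\Phi_-(z)-\alpha\rvert\geqslant\varepsilon$ on $E(\alpha,\varepsilon)$, the two elementary lower bounds $\lvert\Phi_-(z)-\alpha\rvert\geqslant\frac12(\lvert\mathrm{Re}\,\Phi_-(z)-\mathrm{Re}\,\alpha\rvert+\varepsilon)$ and $\lvert\Phi_-(z)-\alpha\rvert\geqslant\frac12(\lvert\mathrm{Im}\,\Phi_-(z)-\mathrm{Im}\,\alpha\rvert+\varepsilon)$. Using $\lvert\Phi_-'\rvert\leqslant\lvert\mathrm{Re}\,\Phi_-'\rvert+\lvert\mathrm{Im}\,\Phi_-'\rvert$ I would split $I\leqslant I_1+I_2$, where $I_1$ carries the real-part estimate and $I_2$ the imaginary-part estimate. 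For $I_1$ the substitution $u=\mathrm{Re}\,\Phi_-(t+\mathrm{i}y)$, legitimate by the strict monotonicity of $h$, bounds $I_1$ by $\int_{\mathbb{R}}2^q\,\mathrm{d}u/(\lvert u-\mathrm{Re}\,\alpha\rvert+\varepsilon)^q=2^{q+1}/((q-1)\varepsilon^{q-1})$. For $I_2$ I would break $E_y$ into its intersections with $\mathbb{R}_-$ and $\mathbb{R}_+$, on each of which $\mathrm{Im}\,\Phi_-(t+\mathrm{i}y)$ is monotone, and the analogous substitution yields $I_2\leqslant 2^{q+2}/((q-1)\varepsilon^{q-1})$. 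Adding these gives $I\leqslant 3\cdot2^{q+1}/((q-1)\varepsilon^{q-1})$.

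For the consequence, the only new point is that $\alpha\in\Omega_+$ stays a positive distance from $\overline{\Omega_-}$: since $\Phi_-(z)\in\Omega_-$ for $z\in\mathbb{C}_-$ and $\Omega_+$ is open and disjoint from $\Omega_-\cup\Gamma=\overline{\Omega_-}$, there is $\varepsilon>0$ with $\lvert\Phi_-(z)-\alpha\rvert\geqslant\varepsilon$ for every $z\in\mathbb{C}_-$. Then $E_y=\mathbb{R}$ for all $y<0$, the uniform estimate just proved applies, and as the resulting bound on $\int_{\mathbb{R}}\lvert g(t+\mathrm{i}y)\rvert^q\,\mathrm{d}t$ is independent of $y$, we conclude $g\in H^q(\mathbb{C}_-)$.

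I do not expect a genuine obstacle, as the argument mirrors Lemma~\ref{lem-170827-2105}; the only place demanding care is matching orientations and signs for $y<0$, that is, verifying that the $\Phi_-'$ statements of Lemma~\ref{lem-170827-1920} (phrased there with $x-\mathrm{i}y$, $y>0$) deliver exactly the monotonicity of $\mathrm{Re}\,\Phi_-$ and $\mathrm{Im}\,\Phi_-$ along the horizontal line $\{t+\mathrm{i}y\colon t\in\mathbb{R}\}$ with $y<0$ that the change-of-variable step requires.
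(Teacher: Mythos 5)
Your proposal is correct and takes essentially the same approach as the paper: the paper gives no separate proof here, stating only that this is the obvious $\Phi_-(z)$ version of Lemma~\ref{lem-170827-2105}, and your transposition — the monotonicity of $\mathrm{Re}\,\Phi_-(t+\mathrm{i}y)$ and piecewise monotonicity of $\mathrm{Im}\,\Phi_-(t+\mathrm{i}y)$ drawn from Lemma~\ref{lem-170827-1920}, the split $I\leqslant I_1+I_2$, the change of variables, and the positive-distance argument for $\alpha\in\Omega_+$ — is precisely that intended argument. The sign bookkeeping you flag does check out, since $t+\mathrm{i}y$ with $y<0$ is exactly the point written as $x-\mathrm{i}y$ ($y>0$) in the lemma's phrasing, so the stated inequalities apply verbatim with $x=t$.
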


\begin{proposition}\label{pro-170827-2222}
  If $1<p<\infty$, then for $T_+$ defined by \eqref{equ-170827-1810}, we have
  $H^p(\mathbb{C}_+)\subset T_+(H^p(\Omega_+))$, 
  or $T_+^{-1}(H^p(\mathbb{C}_+))\subset H^p(\Omega_+)$. Here
  $T_+^{-1}f(w)= f(\Psi_+(w))(\Psi_+'(w))^{\frac1p}$ for $w\in\Omega_+$ and
  $f(z)\in H^p(\mathbb{C}_+)$. In addition, $T_+^{-1}$ is bounded.
\end{proposition}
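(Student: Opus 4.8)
The plan is to identify $F:=T_+^{-1}f$ on $\Omega_+$ with the Cauchy integral of a suitable $L^p(\Gamma)$ function and then quote Theorem~\ref{thm-170826-1440} to get both membership in $H^p(\Omega_+)$ and the norm bound in one stroke. First I would set $F(\zeta)=f(\Psi_+(\zeta))(\Psi_+'(\zeta))^{1/p}$ for $\zeta\in\Gamma$, where the branch of $(\Psi_+')^{1/p}$ is the analytic one (legitimate since $\mathrm{Re}\,\Phi_+'>0$ by Lemma~\ref{lem-170827-1920}, hence $\mathrm{Re}\,\Psi_+'>0$). As $\Psi_+$ maps $\Gamma$ onto $\mathbb{R}$ with $|\Psi_+'(\zeta)|\,|\mathrm{d}\zeta|=|\mathrm{d}x|$, the change of variables $x=\Psi_+(\zeta)$ gives $\int_\Gamma|F(\zeta)|^p|\mathrm{d}\zeta|=\int_{\mathbb{R}}|f(x)|^p\,\mathrm{d}x=\lVert f\rVert_{H^p(\mathbb{C}_+)}^p$, so $F(\zeta)\in L^p(\Gamma,|\mathrm{d}\zeta|)$ with $\lVert F(\zeta)\rVert_{L^p(\Gamma)}=\lVert f\rVert_{H^p(\mathbb{C}_+)}$. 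Theorem~\ref{thm-170826-1440} then yields $CF\in H^p(\Omega_+)$ with $\lVert CF\rVert_{H^p(\Omega_+)}\leqslant(5/2)^{1/p}A_p\lVert f\rVert_{H^p(\mathbb{C}_+)}$.

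The core is to verify $CF(w_0)=F(w_0)$ for every $w_0\in\Omega_+$, since then $F=CF\in H^p(\Omega_+)$ and the boundedness of $T_+^{-1}$ is immediate. Fixing $w_0\in\Omega_+$ and writing $z_0=\Psi_+(w_0)\in\mathbb{C}_+$, I would transport the Cauchy integral to $\mathbb{C}_+$ via $\zeta=\Phi_+(x)$, using $F(\Phi_+(x))=f(x)(\Phi_+'(x))^{-1/p}$ and $\mathrm{d}\zeta=\Phi_+'(x)\,\mathrm{d}x$, obtaining $CF(w_0)=\frac{1}{2\pi\mathrm{i}}\int_{\mathbb{R}}\Theta(x)\,\mathrm{d}x$ with $\Theta(z)=f(z)(\Phi_+'(z))^{1/q}/(\Phi_+(z)-w_0)$ and $\frac1p+\frac1q=1$. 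Here $\Theta$ is analytic on $\mathbb{C}_+$ save for one simple pole at $z_0$ (because $\Phi_+$ is a bijection onto $\Omega_+$), and its residue there is $f(z_0)(\Phi_+'(z_0))^{1/q-1}=f(z_0)(\Phi_+'(z_0))^{-1/p}=F(w_0)$, precisely the target value. Closing $[-R,R]$ by a semicircle in the upper half-plane and applying the residue theorem should give $\frac{1}{2\pi\mathrm{i}}\int_{\mathbb{R}}\Theta=F(w_0)$.

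To make this rigorous I would first check $\Theta\in L^1(\mathbb{R})$: the boundary analogue ($y=0$) of the estimate in Lemma~\ref{lem-170827-2105} shows $(\Phi_+')^{1/q}/(\Phi_+-w_0)\in L^q(\mathbb{R})$, since $|\Phi_+(x)-w_0|\geqslant\mathrm{dist}(w_0,\Gamma)>0$ on $\mathbb{R}$ and the monotonicity of $\mathrm{Re}\,\Phi_+$ and $\mathrm{Im}\,\Phi_+$ along $\Gamma$ lets the same Stieltjes-type bound run on the limiting curve; Hölder then gives $\Theta\in L^1(\mathbb{R})$. The branch-point singularities of $\Phi_+'$ at $z=\pm1$ are integrable and harmless. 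The main obstacle is showing the semicircular arc contributes nothing in the limit: here I would combine the Hardy-space decay $|f(z)|\lesssim\lVert f\rVert_{H^p(\mathbb{C}_+)}(\mathrm{Im}\,z)^{-1/p}$ with the logarithmic growth of $\Phi_+$ and the decay $|\Phi_+'(z)|\sim R^{-1}$ at radius $R$, passing to a sequence $R_n\to\infty$ along which the arc integral vanishes.

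Once $CF=F$ on $\Omega_+$ is established, we conclude $T_+^{-1}f=F\in H^p(\Omega_+)$ with $\lVert T_+^{-1}f\rVert_{H^p(\Omega_+)}\leqslant(5/2)^{1/p}A_p\lVert f\rVert_{H^p(\mathbb{C}_+)}$, proving $T_+^{-1}$ bounded and $H^p(\mathbb{C}_+)\subset T_+(H^p(\Omega_+))$. As a consistency check that also pins down where the $\alpha\in\Omega_-$ part of Lemma~\ref{lem-170827-2105} enters, the same substitution with $w_0\in\Omega_-$ produces $\Theta=fg$ where $g(z)=(\Phi_+'(z))^{1/q}/(\Phi_+(z)-w_0)\in H^q(\mathbb{C}_+)$; then $\Theta\in H^1(\mathbb{C}_+)$ by Hölder, $\int_{\mathbb{R}}\Theta=0$, and this matches $CF\equiv0$ on $\Omega_-$.
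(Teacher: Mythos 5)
Your proposal takes a genuinely different route from the paper's. The paper disposes of the inclusion by reference to the companion paper \cite{DL171} (``the proof is nearly identical''), and then gets boundedness of $T_+^{-1}$ abstractly: $T_+$ is a bounded bijection between the Banach spaces $H^p(\Omega_+)$ and $H^p(\mathbb{C}_+)$ (Proposition~\ref{pro-170827-2150}, Theorem~\ref{thm-170827-2120}), so the Banach open mapping theorem applies. You instead stay inside this paper's toolkit and get a quantitative result: the boundary function $F(\zeta)=f(\Psi_+(\zeta))(\Psi_+'(\zeta))^{1/p}$ satisfies $\lVert F\rVert_{L^p(\Gamma,|\mathrm{d}\zeta|)}=\lVert f\rVert_{H^p(\mathbb{C}_+)}$, Theorem~\ref{thm-170826-1440} puts $CF$ in $H^p(\Omega_+)$, and the residue computation identifies $CF$ with $T_+^{-1}f$, yielding the explicit bound $\lVert T_+^{-1}\rVert\leqslant(5/2)^{1/p}A_p$ with no appeal to \cite{DL171}, to completeness, or to the open mapping theorem. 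Your supporting computations are correct: the change of variables and branch bookkeeping (legitimate since $\mathrm{Re}\,\Phi_+'>0$ by Lemma~\ref{lem-170827-1920}), the residue at $z_0=\Psi_+(w_0)$ being exactly $F(w_0)$, the membership $\Theta\in L^1(\mathbb{R})$ (which is available even more cheaply than you suggest, since $\int_{\mathbb{R}}|g(x)|^q\,\mathrm{d}x=\int_\Gamma|\zeta-w_0|^{-q}|\mathrm{d}\zeta|<\infty$ is precisely Lemma~\ref{lem-170826-0720}), and the arc estimate, where $R^{-1/p}\cdot R^{-1/q}\cdot R=1$, the logarithmic growth of $\Phi_+$ supplies a factor $1/\log R$, and the integrability of $(\sin\theta)^{-1/p}$ on $(0,\pi)$ is exactly where $p>1$ enters; in fact the arc contribution is $O(1/\log R)$ for every $R\to\infty$, so no subsequence is needed.

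The one step that fails as written is ``closing $[-R,R]$ by a semicircle and applying the residue theorem.'' The function $\Theta=fg$ is not analytic, and not even defined pointwise, on $[-R,R]$: the boundary values of $f$ exist only almost everywhere, as nontangential limits of an $L^p$ function, so no closed contour for the residue theorem may run along $\mathbb{R}$ itself; the branch points $\pm1$ that you flag are the lesser problem. The standard repair has to be stated: apply the residue theorem on the contour $([-R,R]+\mathrm{i}\varepsilon)\cup\{z\colon|z-\mathrm{i}\varepsilon|=R,\ \mathrm{Im}\,z>\varepsilon\}$ with $0<\varepsilon<\frac12\mathrm{Im}\,z_0$, on a neighborhood of which $\Theta$ is analytic except for the enclosed pole $z_0$; the arc estimate is uniform in $\varepsilon$, giving $\frac1{2\pi\mathrm{i}}\int_{\mathbb{R}+\mathrm{i}\varepsilon}\Theta=F(w_0)$; then let $\varepsilon\to0$, using $f(\cdot+\mathrm{i}\varepsilon)\to f$ in $L^p(\mathbb{R})$ (classical for $H^p(\mathbb{C}_+)$, $1<p<\infty$) and $g(\cdot+\mathrm{i}\varepsilon)\to g$ in $L^q(\mathbb{R})$ (dominated convergence, from $|1-(x+\mathrm{i}\varepsilon)^2|\geqslant|1-x^2|$, the uniform lower bound $|\Phi_+(x+\mathrm{i}\varepsilon)-w_0|\geqslant\delta>0$ valid for such $\varepsilon$, and the logarithmic growth at infinity), so that $\int_{\mathbb{R}+\mathrm{i}\varepsilon}\Theta\to\int_{\mathbb{R}}\Theta$ by H\"{o}lder's inequality. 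Alternatively you can avoid contours altogether: write $g(z)=c(z-z_0)^{-1}+g_2(z)$ with $c=(\Phi_+'(z_0))^{-1/p}$, verify $g_2\in H^q(\mathbb{C}_+)$ by the estimates of Lemma~\ref{lem-170827-2105}, and combine $\int_{\mathbb{R}}fg_2=0$ (since $fg_2\in H^1(\mathbb{C}_+)$, exactly as in your final consistency check) with the Cauchy representation $\frac1{2\pi\mathrm{i}}\int_{\mathbb{R}}f(t)(t-z_0)^{-1}\,\mathrm{d}t=f(z_0)$. With either repair, your argument is complete and correct.
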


The proof of the inclusion part is nearly identical to the one in \cite{DL171}.
The boundedness of $T_+^{-1}$ comes from Proposition~\ref{pro-170827-2150}, 
Theorem~\ref{thm-170827-2120} and Banach open mapping theorem. The following
is the corresponding result for $T_-$.

\begin{proposition}\label{pro-170827-2223}
  If $1<p<\infty$, then for $T_-$ defined by \eqref{equ-170827-1820}, we have
  $H^p(\mathbb{C}_-)\subset T_-(H^p(\Omega_-))$, 
  or $T_-^{-1}(H^p(\mathbb{C}_-))\subset H^p(\Omega_-)$. Here
  $T_-^{-1}f(w)= f(\Psi_-(w))(\Psi_-'(w))^{\frac1p}$ for $w\in\Omega_-$ and
  $f(z)\in H^p(\mathbb{C}_-)$. In addition, $T_-^{-1}$ is bounded.
\end{proposition}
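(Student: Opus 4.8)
The plan is to mirror the proof of Proposition~\ref{pro-170827-2222}, transferring everything through the conformal map $\Phi_-$ and invoking the $\Phi_-$-version of Lemma~\ref{lem-170827-2105} recorded above. Fix $f(z)\in H^p(\mathbb{C}_-)$ and set $g=T_-^{-1}f$, so that $g(w)=f(\Psi_-(w))(\Psi_-'(w))^{\frac1p}$ is analytic on $\Omega_-$. Since $f$ has non-tangential boundary limits a.e.\ on $\mathbb{R}$ and the boundary correspondence $\Phi_-,\Psi_-$ extends to the boundary with $\Psi_-'$ existing non-tangentially a.e., the function $g$ has non-tangential boundary limit $g(\zeta)=f(\Psi_-(\zeta))(\Psi_-'(\zeta))^{\frac1p}$ a.e.\ on $\Gamma$. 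The change of variables $\zeta=\Phi_-(x)$, under which $|\mathrm{d}\zeta|=|\Phi_-'(x)|\,|\mathrm{d}x|$ and $|\Psi_-'(\zeta)|=|\Phi_-'(x)|^{-1}$, then yields
\[\int_\Gamma |g(\zeta)|^p|\mathrm{d}\zeta|
  =\int_{\mathbb{R}} |f(x)|^p\,\mathrm{d}x
  =\lVert f\rVert_{H^p(\mathbb{C}_-)}^p,\]
so that $g(\zeta)\in L^p(\Gamma,|\mathrm{d}\zeta|)$.

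Next I would verify the moment condition that characterises boundary limits of $H^p(\Omega_-)$ functions. For $\alpha\in\Omega_+$ the same change of variables gives, with $\frac1p+\frac1q=1$ and using $1-\frac1p=\frac1q$,
\[\frac1{2\pi\mathrm{i}}\int_\Gamma \frac{g(\zeta)}{\zeta-\alpha}\,\mathrm{d}\zeta
  =\frac1{2\pi\mathrm{i}}\int_{\mathbb{R}} f(x)\,
     \frac{(\Phi_-'(x))^{\frac1q}}{\Phi_-(x)-\alpha}\,\mathrm{d}x.\]
By the $\Phi_-$-analogue of Lemma~\ref{lem-170827-2105}, the function $\tilde g_\alpha(z)=(\Phi_-'(z))^{\frac1q}/(\Phi_-(z)-\alpha)$ lies in $H^q(\mathbb{C}_-)$, hence $f\tilde g_\alpha\in H^1(\mathbb{C}_-)$, and the integral over $\mathbb{R}$ of an $H^1(\mathbb{C}_-)$ function vanishes; so the displayed integral is $0$ for every $\alpha\in\Omega_+$. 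By the characterisation of non-tangential boundary limits of $H^p(\Omega_-)$ functions (the last theorem of the previous section), there exists $G\in H^p(\Omega_-)$ whose boundary limit equals $g(\zeta)$. To identify $G$ with $g$, apply $T_-$: by Proposition~\ref{pro-170828-0940} $T_-G\in H^p(\mathbb{C}_-)$, and its boundary values are $G(\Phi_-(x))(\Phi_-'(x))^{\frac1p}=f(x)$, whence $T_-G=f$ because an $H^p(\mathbb{C}_-)$ function is determined by its boundary values. As $T_-$ is one-to-one, this forces $G=T_-^{-1}f=g$, proving $g\in H^p(\Omega_-)$ and so the inclusion $H^p(\mathbb{C}_-)\subset T_-(H^p(\Omega_-))$.

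The boundedness of $T_-^{-1}$ then follows formally: combined with Proposition~\ref{pro-170828-0940}, the inclusion just established shows that $T_-$ is a linear bijection of $H^p(\Omega_-)$ onto $H^p(\mathbb{C}_-)$ with $\lVert T_-\rVert\leqslant 1$; both spaces are Banach for $1<p<\infty$ by Theorem~\ref{thm-170827-2120}, so the Banach open mapping theorem makes $T_-^{-1}$ bounded. I expect the main obstacle to be not this algebra but the boundary analysis underpinning it—namely, justifying that $g$ acquires non-tangential boundary limits a.e.\ with the stated formula and that the change-of-variables identities above are valid. This rests on the boundary regularity and absolute continuity of the explicit Schwarz--Christoffel map $\Phi_-$ along $\mathbb{R}$ (equivalently, that $\Omega_-$ behaves as a Smirnov-type domain), which is precisely the point treated in~\cite{DL171} and carries over here essentially verbatim.
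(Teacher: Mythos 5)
Your proof is correct and follows essentially the route the paper intends: the paper defers the inclusion to~\cite{DL171}, but the $\Phi_-$-analogue of Lemma~\ref{lem-170827-2105} and the characterization of non-tangential boundary limits of $H^p(\Omega_-)$ functions via the moment condition are stated in the paper precisely to support this argument, and the boundedness of $T_-^{-1}$ via Proposition~\ref{pro-170828-0940}, completeness (Theorem~\ref{thm-170827-2120}) and the Banach open mapping theorem is exactly the paper's stated mechanism. Your identification step $T_-G=f\Rightarrow G=g$ and the change-of-variables computations are sound, given the boundary regularity of the explicit Schwarz--Christoffel maps that the paper assumes.
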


Before dealing with the $0<p\leqslant 1$ cases of the above two propositions, 
we need factorization theorems on $H^p(\mathbb{C}_\pm)$ which has been 
introduced in Lemma~\ref{lem-170801-1550}.

\begin{proposition}\label{pro-170828-0945}
  Propostion~\ref{pro-170827-2222} and Propostion~\ref{pro-170827-2223}
  are still true if $0<p\leqslant 1$. Besides, 
  \[\lVert T_+^{-1}\rVert\leqslant 5^{\frac1p}\quad
    \text{and}\quad
    \lVert T_-^{-1}\rVert\leqslant 6^{\frac1p},\]
  for all $0<p<\infty$.
\end{proposition}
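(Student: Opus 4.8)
The proposition asks to extend Proposition~\ref{pro-170827-2222} and Proposition~\ref{pro-170827-2223} from the range $1<p<\infty$ to the range $0<p\leqslant 1$, and simultaneously to produce explicit bounds on the operator norms $\lVert T_+^{-1}\rVert$ and $\lVert T_-^{-1}\rVert$ valid across the whole range $0<p<\infty$. Let me think carefully about what these two tasks require.

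Let me focus first on the inclusion $T_+^{-1}(H^p(\mathbb{C}_+))\subset H^p(\Omega_+)$ for $0<p\leqslant 1$. The definition is $T_+^{-1}f(w)=f(\Psi_+(w))(\Psi_+'(w))^{1/p}$ for $f\in H^p(\mathbb{C}_+)$ and $w\in\Omega_+$. The obstruction to copying the $1<p<\infty$ argument is that for small $p$ the proof in \cite{DL171} presumably relied on duality or on the boundedness of the Cauchy transform, neither of which survives below $p=1$. The natural device to compensate is the factorization supplied by Lemma~\ref{lem-170801-1550}: given $f\in H^p(\mathbb{C}_+)$ with $f\not\equiv 0$, write $f=B\cdot Q$ where $B$ is the Blaschke product carrying the zeros of $f$ and $Q=f/B$ is zero-free with $\lVert Q\rVert_{H^p(\mathbb{C}_+)}=\lVert f\rVert_{H^p(\mathbb{C}_+)}$ and $\lvert B\rvert<1$ on $\mathbb{C}_+$.

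**The plan.**

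First I would dispose of the trivial case $f\equiv 0$ and otherwise factor $f=BQ$. Because $Q$ is zero-free on the simply connected domain $\mathbb{C}_+$, I can choose an analytic branch and form $Q^{p/2}\in H^2(\mathbb{C}_+)$ with $\lVert Q^{p/2}\rVert_{H^2}^2=\lVert Q\rVert_{H^p}^p=\lVert f\rVert_{H^p}^p$. This reduces the small-$p$ problem to the already-known case $p=2$: I apply Proposition~\ref{pro-170827-2222} (valid since $2>1$) to $Q^{p/2}$, obtaining $T_+^{-1}(Q^{p/2})\in H^2(\Omega_+)$. Unwinding the definition of $T_+^{-1}$, one checks the pointwise identity $\bigl(T_+^{-1}(Q^{p/2})(w)\bigr)^{2/p}$ reproduces $Q(\Psi_+(w))(\Psi_+'(w))^{1/p}$ up to the branch, so that $\lvert T_+^{-1}f(w)\rvert\leqslant\lvert Q(\Psi_+(w))\rvert\,\lvert\Psi_+'(w)\rvert^{1/p}$, using $\lvert B\rvert<1$. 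Since $T_+^{-1}(Q^{p/2})\in H^2(\Omega_+)$ means the curve-integrals of its modulus to the power $2$ are bounded, raising to the $2/p$ power converts this into a uniform bound on $\int_{\Gamma_{s,t}}\lvert T_+^{-1}f(w)\rvert^p\,\lvert\mathrm{d}w\rvert$, giving $T_+^{-1}f\in H^p(\Omega_+)$. The $T_-^{-1}$ statement is handled identically using the $\Phi_-$ version of the lemmas and $\mathbb{C}_-$.

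**The norm bounds.**

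For the explicit constants, I would combine the already-established facts into a single clean estimate. Proposition~\ref{pro-170827-2150} gives $\lVert T_+\rVert\leqslant 1$, hence $T_+$ is an isometry up to the factor bounding $T_+^{-1}$; what remains is to bound $\lVert T_+^{-1}\rVert$ uniformly. The factor $5^{1/p}$ strongly suggests tracing through the constant $\tfrac52$ already appearing in Proposition~\ref{pro-170826-1200} and Theorem~\ref{thm-170828-1040}, namely the comparison between integrating an $H^p(\mathbb{C}_+)$-type function over the three pieces of $\Gamma_{s,t}$ versus over a full line; each vertical ray contributes a factor $1$ and the base segment a factor $\tfrac12$, summing to $\tfrac52$ across the decomposition. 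For $1<p<\infty$ the bound would follow from the open-mapping argument already cited, but to get the uniform constant $5^{1/p}$ valid for all $p$ I would instead decompose $T_+^{-1}f$ into its three boundary contributions and estimate the curve integral piecewise, each piece controlled by the corresponding half-plane $H^p$ norm via Corollary~\ref{cor-170826-1220}; the worst-case accumulation of the three pieces yields the factor $5$ inside the $p$-th root, and the analogous count on $\Omega_-$ (where both sides of each vertical line can contribute) yields $6$. The main obstacle is precisely this careful bookkeeping of which half-plane controls which segment of $\Gamma_{s,t}$ and ensuring the $(2/p)$-th-power step in the factorization argument is compatible with the piecewise $L^p$ estimates; once the three-piece decomposition is aligned with the Blaschke factorization, the constants drop out mechanically.
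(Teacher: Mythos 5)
Your treatment of the first half of the proposition is correct and is exactly the route the paper has in mind (it defers to \cite{DL172}, flagging Lemma~\ref{lem-170801-1550} as the needed tool): factor $f=BQ$, pass to $Q^{p/2}\in H^2(\mathbb{C}_+)$, apply Proposition~\ref{pro-170827-2222} at the exponent $2$, and use $|B|<1$ to get $\lvert T_+^{-1}f(w)\rvert\leqslant \lvert T_{+,2}^{-1}(Q^{p/2})(w)\rvert^{2/p}$, where $T_{+,2}^{-1}$ denotes $h\mapsto h(\Psi_+)(\Psi_+')^{1/2}$. Note that this computation does more than prove membership: it yields the quantitative reduction $\lVert T_+^{-1}\rVert^p\leqslant \lVert T_{+,2}^{-1}\rVert^2$ (and likewise for $T_-$), so the entire proposition, norm bounds included, reduces to the two estimates $\lVert T_{+,2}^{-1}\rVert\leqslant\sqrt5$ and $\lVert T_{-,2}^{-1}\rVert\leqslant\sqrt6$.

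The genuine gap is in how you propose to obtain those constants. Your bookkeeping (``each vertical ray contributes $1$, the base segment $\tfrac12$, worst-case accumulation gives $5$, resp.\ $6$'') cannot produce them: the purely geometric three-piece count gives only $\tfrac52$ for $\Omega_+$ and $3$ for $\Omega_-$, exactly as in Proposition~\ref{pro-170826-1200} and Theorem~\ref{thm-170826-1440}, and Corollary~\ref{cor-170826-1220} alone cannot close the deficit. Moreover, to have ``three boundary contributions'' of $T_+^{-1}f$ at all, you must know that $T_{+,2}^{-1}f$ equals the Cauchy integral $CF$ of the boundary function $F(\zeta)=f(\Psi_+(\zeta))(\Psi_+'(\zeta))^{1/2}$ (this identification is the substance of the proof of Proposition~\ref{pro-170827-2222}, where Lemma~\ref{lem-170827-2105} is used), and bounding the three half-plane components in terms of $\lVert F\rVert_{L^2(\Gamma,|\mathrm{d}\zeta|)}$ costs the Cauchy-transform constant of Lemma~\ref{lem-170826-1500}. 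That is precisely the missing factor: since $\lVert F\rVert_{L^2(\Gamma,|\mathrm{d}\zeta|)}=\lVert f\rVert_{H^2(\mathbb{C}_+)}$ by change of variables, Theorem~\ref{thm-170826-1440} at $p=2$ gives $\lVert T_{+,2}^{-1}f\rVert_{H^2(\Omega_+)}\leqslant (\tfrac52)^{1/2}A_2\lVert f\rVert_{H^2(\mathbb{C}_+)}$ with $A_2=\sqrt2$, i.e.\ the constant is $5=\tfrac52 A_2^2$, and similarly $6=3A_2^2$ for $\Omega_-$; your count is short by exactly the factor $A_2^2=2$, and no rearrangement of the three pieces recovers it. Two further corrections: the open mapping theorem, which you invoke for $1<p<\infty$, yields some finite bound but never the value $5^{1/p}$; and the three-piece decomposition (Theorem~\ref{thm-170828-1040}) exists only for $p>1$, which is an additional reason the bound for all $0<p<\infty$ must pass through the factorization reduction to $p=2$ rather than through a piecewise estimate performed at the given exponent $p$.
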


The above propostion is proved in the same way as in \cite{DL172}. We also
need the factorization theorems on $H^p(\Omega_\pm)$ to extend 
Theorem~\ref{thm-170827-1120} and Theorem~\ref{thm-170827-1410} to the case
of $0< p\leqslant 1$. The following two corollaries of 
Lemma~\ref{lem-170801-1530} give the definitions of Blaschke product 
on $\Omega_\pm$.

\begin{corollary}\label{lem-170827-2210}
  Let $\{w_n\}$ be a sequence of points in $\Omega_+$, such that 
  \[\sum_{n=1}^\infty \frac{\mathrm{Im}\,\Psi_+(w_n)}{1+|\Psi_+(w_n)|^2}
      < \infty,\]
  and $m$ be the number of $\Psi_+(w_n)$ equal to $\mathrm{i}$. 
  Then the Blaschke product
  \[B_+(w)= \bigg(\frac{\Psi_+(w)-\mathrm{i}}{\Psi_+(w)+\mathrm{i}}\bigg)^m
      \prod_{\Psi_+(w_n)\neq\mathrm{i}} 
        \frac{|\Psi_+^2(w_n)+1|}{\Psi_+^2(w_n)+1}
      \cdot \frac{\Psi_+(w)-\Psi_+(w_n)}{\Psi_+(w)-\overline{\Psi_+(w_n)}},\]
  converges on $\Omega_+$, has non-tangential boundary limit $B_+(\zeta)$ 
  a.e.\@ on $\Gamma$, and the zeros of $B_+(w)$ are precisely 
  the points $w_n$, both counting multiplicity. Moreover, $|B_+(w)|<1$ 
  on $\Omega_+$ and $|B_+(\zeta)|=1$ a.e.\@ on $\Gamma$.
\end{corollary}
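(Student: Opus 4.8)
The plan is to obtain $B_+$ as the pullback, under the conformal map $\Psi_+\colon\Omega_+\to\mathbb{C}_+$, of the classical half-plane Blaschke product furnished by Lemma~\ref{lem-170801-1530}. Put $z_n=\Psi_+(w_n)\in\mathbb{C}_+$. Then the hypothesis
\[\sum_{n=1}^\infty \frac{\mathrm{Im}\,\Psi_+(w_n)}{1+|\Psi_+(w_n)|^2}
  = \sum_{n=1}^\infty \frac{\mathrm{Im}\,z_n}{1+|z_n|^2}<\infty\]
is precisely the Blaschke condition of Lemma~\ref{lem-170801-1530}, so the product
\[B(z)=\Big(\tfrac{z-\mathrm{i}}{z+\mathrm{i}}\Big)^m
  \prod_{z_n\neq\mathrm{i}}\tfrac{|z_n^2+1|}{z_n^2+1}\cdot\tfrac{z-z_n}{z-\overline{z_n}}\]
converges on $\mathbb{C}_+$ with all the properties asserted there. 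Substituting $z=\Psi_+(w)$ and $z_n=\Psi_+(w_n)$ reproduces verbatim the expression defining $B_+$, so $B_+=B\circ\Psi_+$, and the whole corollary becomes the statement that the listed properties survive composition with $\Psi_+$.

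The interior assertions are immediate. Since $\Psi_+$ is a conformal bijection of $\Omega_+$ onto $\mathbb{C}_+$ and $B$ is analytic on $\mathbb{C}_+$, the composite $B_+$ is analytic on $\Omega_+$; because $\Psi_+$ carries compact subsets of $\Omega_+$ to compact subsets of $\mathbb{C}_+$, the uniform-on-compacta convergence of the partial products on $\mathbb{C}_+$ transfers to $\Omega_+$. The zero set transfers exactly: $B_+(w)=0$ iff $\Psi_+(w)=z_n$ for some $n$, i.e.\ iff $w=\Phi_+(z_n)=w_n$, and as $\Psi_+$ is locally biholomorphic the multiplicities are preserved. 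Finally $|B_+(w)|=|B(\Psi_+(w))|<1$ on $\Omega_+$ because $\Psi_+(w)\in\mathbb{C}_+$ and $|B|<1$ there.

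The delicate part is the boundary behaviour, and here I would exploit the explicit formula $\Psi_+(w)=\sin(\tfrac{\pi}{2\sigma}w)$ recorded earlier. This $\Psi_+$ extends to a homeomorphism of $\Gamma$ onto $\mathbb{R}$ that is real-analytic with nonvanishing derivative off the two corner points $\pm\sigma$, where $\cos(\tfrac{\pi}{2\sigma}w)$ vanishes. Hence, away from the length-null set $\{\pm\sigma\}$, $\Psi_+$ is conformal up to the boundary: it carries each truncated cone $\Omega_{\alpha+}(\zeta_0)\cap\Omega_+$ into a region containing a non-tangential cone at $x_0=\Psi_+(\zeta_0)\in\mathbb{R}$, and conversely, while $|\Psi_+'|$ is bounded above and below on compact subsets of $\Gamma\setminus\{\pm\sigma\}$, so $\Psi_+$ sends null sets to null sets in both directions (consistent with $\Phi_+'(z)\Psi_+'(w)=1$ a.e.). Therefore the exceptional null set on $\mathbb{R}$ on which $B$ has no non-tangential limit pulls back to a null set on $\Gamma$, off which $w\to\zeta_0$ non-tangentially forces $\Psi_+(w)\to x_0$ non-tangentially and thus $B_+(w)=B(\Psi_+(w))\to B(x_0)=:B_+(\zeta_0)$. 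The same null-set correspondence converts $|B(x)|=1$ a.e.\ on $\mathbb{R}$ into $|B_+(\zeta)|=1$ a.e.\ on $\Gamma$.

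The main obstacle is exactly this transfer of non-tangential approach and of null sets through $\Psi_+$ across the boundary; everything else is a mechanical composition with Lemma~\ref{lem-170801-1530}. I expect the cleanest route is to lean on the explicit form of $\Psi_+$ so that conformality up to $\Gamma\setminus\{\pm\sigma\}$ and the two-sided control on $|\Psi_+'|$ over compacta are elementary, after which the corner points, being a two-point (hence null) set, may be discarded without affecting any almost-everywhere statement.
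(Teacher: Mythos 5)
Your proposal is correct and takes essentially the same approach as the paper, whose entire proof is the remark that the corollary is obvious from Lemma~\ref{lem-170801-1530} via the conformal mapping $w=\Phi_+(z)$ from $\mathbb{C}_+$ onto $\Omega_+$. You simply make explicit the boundary-transfer details (conformality of $\Psi_+(w)=\sin(\tfrac{\pi}{2\sigma}w)$ off the corners $\pm\sigma$, and the resulting correspondence of non-tangential cones and null sets) that the paper leaves implicit.
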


\begin{proof}
  This corollary of Lemma~\ref{lem-170801-1530} is obvious if we consider 
  the conformal mapping $w=\Phi_+(z)$ from $\mathbb{C}_+$ onto $\Omega_+$.
\end{proof}

\begin{corollary}\label{lem-170827-2220}
  Let $\{w_n\}$ be a sequence of points in $\Omega_-$, such that 
  \[\sum_{n=1}^\infty \frac{-\mathrm{Im}\,\Psi_-(w_n)}{1+|\Psi_-(w_n)|^2}
      < \infty,\]
  and $m$ be the number of $\Psi_-(w_n)$ equal to $-\mathrm{i}$. 
  Then the Blaschke product
  \[B_-(w)= \bigg(\frac{\Psi_-(w)+\mathrm{i}}{\Psi_-(w)-\mathrm{i}}\bigg)^m
      \prod_{\Psi_-(w_n)\neq -\mathrm{i}} 
        \frac{|\Psi_-^2(w_n)+1|}{\Psi_-^2(w_n)+1}
      \cdot \frac{\Psi_-(w)-\Psi_-(w_n)}{\Psi_-(w)-\overline{\Psi_-(w_n)}},\]
  converges on $\Omega_-$, has non-tangential boundary limit $B_-(\zeta)$ 
  a.e.\@ on $\Gamma$, and the zeros of $B_-(w)$ are precisely 
  the points $w_n$, both counting multiplicity. Moreover, $|B_-(w)|<1$ 
  on $\Omega_-$ and $|B_-(\zeta)|=1$ a.e.\@ on $\Gamma$.
\end{corollary}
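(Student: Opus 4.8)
The plan is to deduce this corollary from Lemma~\ref{lem-170801-1530} by two successive reductions: first reflecting $\mathbb{C}_-$ onto $\mathbb{C}_+$ to obtain the lower half-plane analogue of the Blaschke product, and then transporting everything to $\Omega_-$ through the conformal map $\Phi_-$. Since the statement already writes $B_-(w)$ as a product in the variable $z=\Psi_-(w)\in\mathbb{C}_-$, it suffices to show that the function $\tilde B(z)$ given by that product (before composing with $\Psi_-$) is genuinely the Blaschke product on $\mathbb{C}_-$ with zeros $z_n=\Psi_-(w_n)$, and then to push it forward by $\Phi_-$.

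For the first reduction I would set $z_n=\Psi_-(w_n)\in\mathbb{C}_-$ and apply Lemma~\ref{lem-170801-1530} to the reflected sequence $\{\overline{z_n}\}\subset\mathbb{C}_+$. The convergence hypothesis transfers verbatim, since $\mathrm{Im}\,\overline{z_n}=-\mathrm{Im}\,z_n>0$ and $|\overline{z_n}|=|z_n|$, whence $\sum_n\frac{\mathrm{Im}\,\overline{z_n}}{1+|\overline{z_n}|^2}=\sum_n\frac{-\mathrm{Im}\,\Psi_-(w_n)}{1+|\Psi_-(w_n)|^2}<\infty$; moreover $z_n=-\mathrm{i}$ exactly when $\overline{z_n}=\mathrm{i}$, so the exceptional multiplicity $m$ is unchanged. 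Letting $B(z)$ be the resulting $\mathbb{C}_+$ Blaschke product, I would put $\tilde B(z)=\overline{B(\overline z)}$ for $z\in\mathbb{C}_-$. A factor-by-factor conjugation then shows that $\tilde B$ is exactly the product displayed in the statement: indeed $\overline{\big((\overline z-\mathrm{i})/(\overline z+\mathrm{i})\big)^m}=\big((z+\mathrm{i})/(z-\mathrm{i})\big)^m$, $\overline{|\overline{z_n}^2+1|/(\overline{z_n}^2+1)}=|z_n^2+1|/(z_n^2+1)$, and $\overline{(\overline z-\overline{z_n})/(\overline z-z_n)}=(z-z_n)/(z-\overline{z_n})$. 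Because $z\mapsto\overline z$ is an anti-conformal involution exchanging $\mathbb{C}_+$ and $\mathbb{C}_-$ and fixing $\mathbb{R}$, every conclusion of Lemma~\ref{lem-170801-1530} passes to $\tilde B$: it is analytic on $\mathbb{C}_-$, its zeros are precisely the $z_n$ counted with multiplicity, $|\tilde B(z)|<1$ on $\mathbb{C}_-$, and it has non-tangential boundary limits a.e.\@ on $\mathbb{R}$ with $|\tilde B|=1$ a.e.

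For the second reduction I would set $B_-(w)=\tilde B(\Psi_-(w))$, which is the function in the statement. As $\Phi_-\colon\mathbb{C}_-\to\Omega_-$ is a conformal bijection with inverse $\Psi_-$, the composition $B_-$ is analytic on $\Omega_-$, satisfies $|B_-(w)|=|\tilde B(\Psi_-(w))|<1$ there, and has zeros precisely at $w_n=\Phi_-(z_n)$ with the same multiplicities. For the boundary assertion, recall from the setup of this section that $\Phi_-$ and $\Psi_-$ extend to the boundaries and carry non-tangential approach in $\mathbb{C}_-$ to non-tangential approach in $\Omega_-$ (and back) at a.e.\@ boundary point; hence the a.e.\@ existence of non-tangential limits on $\Gamma$ and the identity $|B_-(\zeta)|=1$ a.e.\@ follow from the corresponding facts for $\tilde B$ on $\mathbb{R}$. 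This is exactly the transport already used for Corollary~\ref{lem-170827-2210}.

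I expect the only genuinely new ingredient, compared with Corollary~\ref{lem-170827-2210}, to be the reflection step linking $\mathbb{C}_-$ to $\mathbb{C}_+$, with the verification $\tilde B=\overline{B(\overline{\,\cdot\,})}$ amounting to the short computation above. The one mild subtlety in the transport is the behaviour near the corner points $\pm\sigma$, where $\Gamma$ is not smooth; but these two points form a null set on $\Gamma$, and away from them $\Phi_-$ is conformal up to the boundary, so the non-tangential correspondence of approach regions holds almost everywhere and the corners do not affect any a.e.\@ conclusion.
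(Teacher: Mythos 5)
Your proposal is correct and follows essentially the same route the paper intends: the paper proves the $\Omega_+$ twin (Corollary~\ref{lem-170827-2210}) in one line by transporting Lemma~\ref{lem-170801-1530} through the conformal map, and leaves this $\Omega_-$ version to the identical argument via $\Phi_-\colon\mathbb{C}_-\to\Omega_-$. Your explicit reflection step $\tilde B(z)=\overline{B(\overline z)}$, with the factor-by-factor verification, is exactly the detail the paper suppresses, and your handling of the corners $\pm\sigma$ as a null set is sound.
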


Here comes the factorization theorem on $H^p(\Omega_+)$, see \cite{DL172} 
for proof, and that on $H^p(\Omega_-)$ is analogously stated and proved.

\begin{theorem}\label{thm-170827-2228}
  Let $0<p<\infty$, $F(w)\in H^p(\Omega_+)$, $F\not\equiv 0$, 
  $\{w_n\}$ be the zeros of $F(w)$, and $B_+(w)$ be the Blaschke product 
  associated with $\{w_n\}$. Then
  \[G(w)=\frac{F(w)}{B_+(w)}\in H^p(\Omega_+),\quad
    \text{and } \lVert F\rVert_{H^p(\Omega_+)}
    \leqslant \lVert G\rVert_{H^p(\Omega_+)}.\]  
\end{theorem}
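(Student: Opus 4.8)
The plan is to transport the factorization to the upper half plane through the isomorphism~$T_+$ and then invoke the classical half-plane factorization of Lemma~\ref{lem-170801-1550}. First I would set $f = T_+F$. By Proposition~\ref{pro-170827-2150} we have $f \in H^p(\mathbb{C}_+)$, and since $T_+$ is injective and $F \not\equiv 0$, also $f \not\equiv 0$. Because $\Phi_+$ maps $\mathbb{C}_+$ conformally onto $\Omega_+$, its derivative $\Phi_+'$ never vanishes, so the branch $(\Phi_+')^{1/p}$ appearing in \eqref{equ-170827-1810} is a nonvanishing analytic factor; consequently the zeros of $f(z) = F(\Phi_+(z))(\Phi_+'(z))^{1/p}$ are exactly the points $z_n = \Psi_+(w_n)$, counted with the same multiplicities as the zeros $w_n$ of $F$. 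In particular, as $f \in H^p(\mathbb{C}_+)$ the sequence $\{z_n\}$ satisfies the Blaschke condition, which is precisely the hypothesis of Corollary~\ref{lem-170827-2210}, so $B_+$ is well defined.

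Next I would let $B$ be the Blaschke product of Lemma~\ref{lem-170801-1530} formed from $\{z_n\} = \{\Psi_+(w_n)\}$; by the construction in Corollary~\ref{lem-170827-2210} this is exactly $B(z) = B_+(\Phi_+(z))$, equivalently $B_+(w) = B(\Psi_+(w))$. Lemma~\ref{lem-170801-1550} then gives $g := f/B \in H^p(\mathbb{C}_+)$. The crucial identity is $T_+G = g$, obtained by direct computation: $T_+G(z) = G(\Phi_+(z))(\Phi_+'(z))^{1/p} = F(\Phi_+(z))(\Phi_+'(z))^{1/p}/B_+(\Phi_+(z)) = f(z)/B(z) = g(z)$. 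Hence $G = T_+^{-1}(g)$, and Proposition~\ref{pro-170827-2222} for $1<p<\infty$, together with its extension Proposition~\ref{pro-170828-0945} for $0<p\leqslant 1$, yields $G = T_+^{-1}(g) \in H^p(\Omega_+)$.

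For the norm estimate I would argue directly rather than through $T_+$. Corollary~\ref{lem-170827-2210} guarantees $|B_+(w)| < 1$ on $\Omega_+$, so $|G(w)| = |F(w)|/|B_+(w)| \geqslant |F(w)|$ at every $w \in \Omega_+$. Restricting to each curve $\Gamma_{s,t}$ gives $m(s,t,F) \leqslant m(s,t,G)$, and taking the supremum over $0<s<\sigma$, $t>0$ produces $\lVert F\rVert_{H^p(\Omega_+)} \leqslant \lVert G\rVert_{H^p(\Omega_+)}$. The main obstacle I anticipate is the bookkeeping around the fractional power $(\Phi_+')^{1/p}$: one must fix a single analytic branch on the simply connected $\mathbb{C}_+$ and check it is consistent with the branch used to define $T_+^{-1}$, so that the cancellation $(\Phi_+'(\Psi_+(w)))^{1/p}(\Psi_+'(w))^{1/p} = 1$ holds exactly, and not merely up to a root of unity, when one verifies $T_+^{-1}(g) = G$. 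With the branch conventions inherited from the construction of $T_+$ this is routine, and the matching of zeros with multiplicity under the conformal change of variable is the only other point needing care.
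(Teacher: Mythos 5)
Your proof is correct and, modulo the fact that the paper itself omits the proof of Theorem~\ref{thm-170827-2228} (deferring to~\cite{DL172}), it follows exactly the route the paper's machinery is built for: transport by $T_+$, factor in $H^p(\mathbb{C}_+)$ via Lemma~\ref{lem-170801-1550}, pull back with Proposition~\ref{pro-170827-2222} (for $1<p<\infty$) and Proposition~\ref{pro-170828-0945} (for $0<p\leqslant 1$), and obtain the norm inequality directly from $|B_+(w)|<1$ on $\Omega_+$ (Corollary~\ref{lem-170827-2210}). There is no circularity, since Proposition~\ref{pro-170828-0945} rests only on the half-plane factorization, and the branch bookkeeping for $(\Phi_+')^{\frac1p}$ that you flag is unproblematic because $\mathrm{Re}\,\Phi_+'(z)>0$ on $\mathbb{C}_+$ (Lemma~\ref{lem-170827-1920}), so a single principal branch serves for both $T_+$ and $T_+^{-1}$.
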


The following theorem is one of our main results.
\begin{theorem}\label{thm-170827-2300}
  If $0<p<\infty$, $F(w)\in H^p(\Omega_+)$, then $F(w)$ has non-tangential 
  boundary limit $F(\zeta)\in L^p(\Gamma,|\mathrm{d}\zeta|)$ a.e.\@ 
  on $\Gamma$, $\lVert F\rVert_{L^p(\Gamma,|\mathrm{d}\zeta|)}
    \leqslant \lVert F\rVert_{H^p(\Omega_+)}$, 
  and $\lVert F_{\sigma-\tau,\tau}-\chi_{\Gamma}F
    \rVert_{L^p(\gamma,|\mathrm{d}\zeta|)}\to 0$ as $\tau\to 0$, 
  where $0<\tau<\sigma$. Besides, if $1\leqslant p< \infty$, then
  \[\frac1{2\pi\mathrm{i}} \int_{\Gamma} 
      \frac{F(\zeta)}{\zeta-w}\mathrm{d}\zeta
    = \left\{\!\!
        \begin{array}{ll}
          F(w) & \text{if $w\in \Omega_+$},\\
          0 & \text{if $w\in \Omega_-$},
        \end{array}\right.\]
\end{theorem}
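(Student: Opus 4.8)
The plan is to reduce all assertions to the already-settled range $1<p<\infty$ (Theorem~\ref{thm-170827-1120}) by factoring out a Blaschke product and reducing the zero-free part to an $H^2$ function. For $1<p<\infty$ everything, including the Cauchy representation, is exactly Theorem~\ref{thm-170827-1120}, so I would only treat $0<p\leqslant 1$, and the Cauchy representation only for $p=1$. Assuming $F\not\equiv 0$ (the zero function being trivial), invoke the factorization Theorem~\ref{thm-170827-2228} to write $F(w)=B_+(w)G(w)$, where $B_+$ is the Blaschke product of the zeros of $F$ and $G(w)\in H^p(\Omega_+)$ is zero-free on $\Omega_+$. Since $\Omega_+$ is simply connected and $G$ has no zeros, $g(w):=G(w)^{p/2}$ is a well-defined single-valued analytic function with $|g|^2=|G|^p$, whence $g\in H^2(\Omega_+)$ and $\lVert g\rVert_{H^2(\Omega_+)}^2=\lVert G\rVert_{H^p(\Omega_+)}^p$. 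Theorem~\ref{thm-170827-1120} then applies to $g$.

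For the boundary limit, $g$ has a non-tangential limit $g(\zeta)\in L^2(\Gamma,|\mathrm{d}\zeta|)$ a.e., and since $g\not\equiv 0$ this limit is nonzero a.e.; writing $G=g^{2/p}$ and using continuity of $z\mapsto z^{2/p}$ off the origin shows $G$ has non-tangential limit $G(\zeta)=g(\zeta)^{2/p}$ a.e. As $B_+$ has non-tangential limit $B_+(\zeta)$ with $|B_+(\zeta)|=1$ a.e. by Corollary~\ref{lem-170827-2210}, the product $F=B_+G$ has non-tangential limit $F(\zeta)=B_+(\zeta)G(\zeta)$ a.e. on $\Gamma$, and $|F(\zeta)|^p=|g(\zeta)|^2\in L^1(\Gamma)$, so $F(\zeta)\in L^p(\Gamma,|\mathrm{d}\zeta|)$.

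Next I would prove the norm inequality and the $L^p$ convergence together. The inequality $\lVert F\rVert_{L^p(\Gamma)}\leqslant\lVert F\rVert_{H^p(\Omega_+)}$ is Fatou's lemma applied along the curves $\Gamma_{\sigma-\tau,\tau}$, as in Theorem~\ref{thm-170827-1120}. For the $L^p$ convergence I first note $F_{\sigma-\tau,\tau}(\zeta)\to\chi_\Gamma F(\zeta)$ a.e.: for $\zeta\in\Gamma$ the point $\zeta_{\sigma-\tau,\tau}$ approaches $\zeta$ perpendicularly to $\Gamma$, hence inside a cone $\Omega_{\alpha+}(\zeta)$, so $F_{\sigma-\tau,\tau}(\zeta)=F(\zeta_{\sigma-\tau,\tau})\to F(\zeta)$ at every point of non-tangential convergence, while on $\gamma\setminus\Gamma$ both sides vanish for small $\tau$. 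To upgrade to $L^p$ convergence I would establish convergence of norms $\int_\gamma|F_{\sigma-\tau,\tau}|^p|\mathrm{d}\zeta|=\int_{\Gamma_{\sigma-\tau,\tau}}|F|^p|\mathrm{d}w|\to\int_\Gamma|F|^p|\mathrm{d}\zeta|$. Fatou gives the inequality in the limit inferior; for the reverse I would use $|B_+|<1$ on $\Omega_+$, so $|F|\leqslant|G|$ there, and hence $\int_{\Gamma_{\sigma-\tau,\tau}}|F|^p\leqslant\int_{\Gamma_{\sigma-\tau,\tau}}|G|^p=\int_\gamma|g_{\sigma-\tau,\tau}|^2|\mathrm{d}\zeta|$, which by the $L^2$ convergence of $g$ (Theorem~\ref{thm-170827-1120}) tends to $\int_\Gamma|g|^2=\int_\Gamma|G|^p=\int_\Gamma|F|^p$. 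This pins the limit superior from above, so the norms converge; combined with the a.e.\ convergence, the standard fact that a.e.\ convergence together with convergence of $L^p$ norms forces $L^p$ convergence (valid for all $0<p<\infty$) yields $\lVert F_{\sigma-\tau,\tau}-\chi_\Gamma F\rVert_{L^p(\gamma)}\to 0$. This norm-convergence step is the main obstacle, and the factorization is precisely what makes the ``$\limsup$'' direction accessible for $0<p\leqslant 1$.

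Finally, for the Cauchy representation with $p=1$ (the range $1<p<\infty$ again being Theorem~\ref{thm-170827-1120}), start from Lemma~\ref{lem-170827-1000} with $s=\sigma-\tau$, $t=\tau$: for fixed $w_0\in\Omega_+$ (resp.\ $w_0\in\Omega_-$) and $\tau$ small, $\frac1{2\pi\mathrm{i}}\int_{\Gamma_{\sigma-\tau,\tau}}\frac{F(w)}{w-w_0}\,\mathrm{d}w$ equals $F(w_0)$ (resp.\ $0$). Transferring this integral to $\gamma_{\sigma-\tau,\tau}$ through $P_{\sigma-\tau,\tau}$ and subtracting $\int_\Gamma\frac{F(\zeta)}{\zeta-w_0}\,\mathrm{d}\zeta$, I would split the difference into the two pieces $I_1$ and $I_2$ used in Theorem~\ref{thm-170827-1120}. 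Now $I_2\to 0$ is immediate from the $L^1$ convergence just proved together with the bound $|\zeta-w_0|^{-1}\leqslant\mathrm{dist}(w_0,\Gamma)^{-1}$, and $I_1\to 0$ because $|\zeta_{\sigma-\tau,\tau}-\zeta|=\tau$ contributes a factor $\tau$ against the bounded quantity $\lVert F_{\sigma-\tau,\tau}\rVert_{L^1(\gamma)}\leqslant\lVert F\rVert_{H^1(\Omega_+)}$ with denominators bounded below for small $\tau$. Letting $\tau\to 0$ gives the stated identity.
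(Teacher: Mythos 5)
Your proposal is correct and takes essentially the same route as the paper: the range $1<p<\infty$ is delegated to Theorem~\ref{thm-170827-1120}, the $p=1$ Cauchy representation is proved by the identical $I_1+I_2$ splitting (with $|I_1|\lesssim \tau\lVert F\rVert_{H^1(\Omega_+)}$ and $|I_2|$ controlled by the strong $L^1(\gamma,|\mathrm{d}\zeta|)$ convergence established beforehand), and the boundary-limit and $L^p$-convergence claims for $0<p\leqslant 1$ are obtained via the Blaschke factorization $F=B_+G$ and the reduction of the zero-free factor to $H^2(\Omega_+)$ through $g=G^{p/2}$ --- which is precisely the method the paper invokes but does not spell out when it cites~\cite{DL172}, having stated Theorem~\ref{thm-170827-2228} for exactly this purpose. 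The only difference is that you make that deferred part explicit, including the Riesz-type step that a.e.\ convergence together with convergence of $L^p$ norms yields $L^p$ convergence, so your write-up is a self-contained version of the paper's argument.
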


\begin{proof}
  The $1<p<\infty$ case is Theorem~\ref{thm-170827-1120}. For general 
  $0<p<\infty$, the existence of non-tangential boundary limit and 
  $L^p(\gamma,|\mathrm{d}\zeta|)$ convergence are proved by the same method 
  as in~\cite{DL172}. We only need to prove the last equation under the 
  assumption that $p=1$. For $w_0\notin \Gamma$, there exists 
  $\delta\in(0,\sigma)$, such that $w_0\notin D_{\sigma+\delta,-\delta}
  \setminus\overline{D_{\sigma-\delta,\delta}}$. If $0<\tau<\frac\delta2$,
  then by Lemma~\ref{lem-170827-1000},
  \[\frac1{2\pi\mathrm{i}} \int_{\Gamma_{\sigma-\tau,\tau}} 
      \frac{F(w)}{w-w_0}\mathrm{d}w
    = \left\{\!\!
        \begin{array}{ll}
          F(w_0) & \text{if $w_0\in \Omega_+$},\\
          0 & \text{if $w_0\in \Omega_-$},
        \end{array}\right.\]
  
  Consider the same $I$ as in Theorem~\ref{thm-170827-1120}, that is 
  \begin{align*}
    I&= \int_{\Gamma_{\sigma-\tau,\tau}} 
          \frac{F(w)}{w-w_0}\mathrm{d}w
        - \int_{\Gamma} \frac{F(\zeta)}{\zeta-w_0}\mathrm{d}\zeta    \\
    &= \int_{\gamma_{\sigma+\tau,-\tau}} 
          F_{\sigma-\tau,\tau}(\zeta)\bigg(
            \frac1{\zeta_{\sigma-\tau,\tau}-w_0}
            - \frac1{\zeta-w_0}\bigg)\mathrm{d}\zeta                    \\
    &\qquad{}+ \int_{\gamma_{\sigma+\tau,-\tau}} 
          \frac1{\zeta-w_0}(F_{\sigma-\tau,\tau}(\zeta)
            - \chi_\Gamma F(\zeta))\mathrm{d}\zeta                    \\
    &= I_1+ I_2.
  \end{align*}
  From $|\zeta_{\sigma-\tau,\tau}-w_0|> \delta-\tau>\frac\delta2$ and
  $|\zeta-w_0|>\delta$, we have
  \[|I_1|\leqslant \frac{2\tau}{\delta^2} \int_\gamma 
       |F_{\sigma-\tau,\tau}(\zeta)||\mathrm{d}\zeta|
    \leqslant \frac{2\tau}{\delta^2} \lVert F\rVert_{H^1(\Omega_+)},\]
  and 
  \[|I_2|\leqslant \frac1\delta \lVert F_{\sigma-\tau,\tau}
       - \chi_\Gamma F\rVert_{L^1(\gamma,|\mathrm{d}\zeta|)},\]
  Then
  \[\lim_{\tau\to 0} |I|\leqslant \lim_{\tau\to 0} (|I_1|+|I_2|)= 0.\]
  and the last equation follows.
\end{proof}

Here follows the $H^p(\Omega_-)$ version of Theorem~\ref{thm-170827-2300}.

\begin{theorem}\label{thm-170827-2301}
  If $0<p<\infty$, $F(w)\in H^p(\Omega_-)$, then $F(w)$ has non-tangential 
  boundary limit $F(\zeta)\in L^p(\Gamma,|\mathrm{d}\zeta|)$ a.e.\@ 
  on $\Gamma$, $\lVert F\rVert_{L^p(\Gamma,|\mathrm{d}\zeta|)}
    \leqslant \lVert F\rVert_{H^p(\Omega_-)}$, and 
  $\lVert F_{\sigma+\tau,-\tau}
    - \chi_{\Gamma}F\rVert_{L^p(\gamma,|\mathrm{d}\zeta|)}\to 0$
  as $\tau\to 0$, where $0<\tau<\sigma$. Besides, if $1\leqslant p< \infty$, 
  then
  \[\frac1{2\pi\mathrm{i}} \int_{\Gamma} 
      \frac{F(\zeta)}{\zeta-w}\mathrm{d}\zeta
    = \left\{\!\!
        \begin{array}{ll}
          0 & \text{if $w\in \Omega_+$},\\
          -F(w) & \text{if $w\in \Omega_-$},
        \end{array}\right.\]
\end{theorem}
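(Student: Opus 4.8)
The plan is to mirror the proof of Theorem~\ref{thm-170827-2300} exactly, transferring each step from the $\Omega_+$ setting to $\Omega_-$ while swapping Lemma~\ref{lem-170827-1000} for its $\Omega_-$-counterpart Lemma~\ref{lem-170827-1010}. First I would dispose of the range $1<p<\infty$, which is precisely the content of Theorem~\ref{thm-170827-1410}; that theorem already furnishes the existence of the non-tangential boundary limit $F(\zeta)\in L^p(\Gamma,|\mathrm{d}\zeta|)$, the estimate $\lVert F\rVert_{L^p(\Gamma,|\mathrm{d}\zeta|)}\leqslant \lVert F\rVert_{H^p(\Omega_-)}$, the $L^p(\gamma)$-convergence $\lVert F_{\sigma+\tau,-\tau}-\chi_\Gamma F\rVert_{L^p(\gamma,|\mathrm{d}\zeta|)}\to 0$, and the Cauchy representation. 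So the only genuine work is to extend the non-tangential-limit statement and the convergence statement to the full range $0<p<\infty$, and separately to prove the Cauchy representation in the borderline case $p=1$.

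For the extension to $0<p\leqslant 1$, the approach is the Blaschke-factorization trick already invoked in the $\Omega_+$ case. Given $F\in H^p(\Omega_-)$ with $F\not\equiv 0$, I would factor $F=B_-G$ using the Blaschke product $B_-(w)$ of Corollary~\ref{lem-170827-2220} and the $\Omega_-$-analogue of the factorization Theorem~\ref{thm-170827-2228}, so that $G$ has no zeros and $\lVert G\rVert_{H^p(\Omega_-)}\leqslant\lVert F\rVert_{H^p(\Omega_-)}$ (one could also route this through the isomorphism $T_-$ of Proposition~\ref{pro-170828-0940} back to $H^p(\mathbb{C}_-)$, where the classical theory supplies boundary limits). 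Choosing $n$ with $np>1$, the function $G^{1/2}$ (or a suitable power) lands in a Hardy space with exponent $>1$, where the boundary limit and the $L^p(\gamma)$-convergence are already known; pulling these back through the factorization and using $|B_-(\zeta)|=1$ a.e.\ on $\Gamma$ recovers the limit and the norm inequality for $F$ itself. This is exactly the mechanism cited as ``proved by the same method as in~\cite{DL172},'' so I would simply state that the argument is identical after replacing $\Omega_+$, $B_+$, and Theorem~\ref{thm-170827-2228} by their $\Omega_-$ versions.

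The substantive step is the Cauchy representation for $p=1$, and here I would reproduce the $I=I_1+I_2$ splitting. Fix $w_0\notin\Gamma$ and pick $\delta\in(0,\sigma)$ with $w_0\notin D_{\sigma+\delta,-\delta}\setminus\overline{D_{\sigma-\delta,\delta}}$; for $0<\tau<\tfrac{\delta}{2}$, Lemma~\ref{lem-170827-1010} applied on $\Gamma_{\sigma+\tau,-\tau}$ gives
\[\frac1{2\pi\mathrm{i}} \int_{\Gamma_{\sigma+\tau,-\tau}}
    \frac{F(w)}{w-w_0}\,\mathrm{d}w
  = \left\{\!\!
      \begin{array}{ll}
        0 & \text{if $w_0\in \Omega_+$},\\
        -F(w_0) & \text{if $w_0\in \Omega_-$},
      \end{array}\right.\]
since now $\Omega_-$ is the exterior domain. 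Writing $I$ as the difference between this contour integral and $\int_\Gamma F(\zeta)/(\zeta-w_0)\,\mathrm{d}\zeta$ and splitting via $F_{\sigma+\tau,-\tau}$ exactly as in Theorem~\ref{thm-170827-2300}, the geometric bounds $|\zeta_{\sigma+\tau,-\tau}-w_0|>\delta-\tau>\tfrac{\delta}{2}$ and $|\zeta-w_0|>\delta$ yield $|I_1|\leqslant \tfrac{2\tau}{\delta^2}\lVert F\rVert_{H^1(\Omega_-)}$ and $|I_2|\leqslant \tfrac1\delta\lVert F_{\sigma+\tau,-\tau}-\chi_\Gamma F\rVert_{L^1(\gamma,|\mathrm{d}\zeta|)}$, both of which vanish as $\tau\to 0$ by the convergence established above.

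The main obstacle to watch is bookkeeping with the translation map $P_{\sigma+\tau,-\tau}$ and the orientation of $\Gamma_{\sigma+\tau,-\tau}$: in the $\Omega_-$ setting the approximating strip contains $\Omega_-$'s boundary from the \emph{outside}, so the sign in Lemma~\ref{lem-170827-1010} and the distance estimate $|\zeta_{\sigma+\tau,-\tau}-\zeta|=(\sigma+\tau)-\sigma=\tau$ must be checked against the definitions of $P_{s,t}$ given for $s>\sigma$, $t<0$. Everything else is a verbatim transcription, so I expect no conceptual difficulty beyond ensuring these signs and the exterior/interior roles are consistent throughout.
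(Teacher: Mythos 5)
Your proposal is correct and is essentially the proof the paper intends: the paper offers no separate argument for this theorem, presenting it as the mirror of Theorem~\ref{thm-170827-2300}, whose proof consists of exactly your three steps (cite Theorem~\ref{thm-170827-1410} for $1<p<\infty$, defer to the method of~\cite{DL172} for the boundary limit and $L^p(\gamma,|\mathrm{d}\zeta|)$ convergence when $0<p\leqslant 1$, and run the $I=I_1+I_2$ splitting with Lemma~\ref{lem-170827-1010} in place of Lemma~\ref{lem-170827-1000} for $p=1$). The only adjustment needed is the one you already flag yourself: since the support $\gamma_{\sigma+\tau,-\tau}$ of $F_{\sigma+\tau,-\tau}$ protrudes outside $\Gamma$ by up to $\tau$, the correct bounds are $|\zeta-w_0|\geqslant\delta-\tau$ and $|\zeta_{\sigma+\tau,-\tau}-w_0|\geqslant\delta-2\tau$, so one should take, say, $0<\tau<\delta/4$ rather than $\tau<\delta/2$ --- an inessential change of constants that does not affect the conclusion $|I_1|+|I_2|\to 0$.
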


Proposition~\ref{pro-170827-1630} 
could now be extended to $1\leqslant p\leqslant\infty$, without changing 
the proof. Notice that functions in $H^{\infty}(\Omega_\pm)$ has 
non-tangential boundary limit a.e.\@ on $\Gamma$, since they could be 
transformed to functions in $H^\infty(\mathbb{C}_\pm)$.

\begin{corollary}
  For $1\leqslant p\leqslant\infty$, $\frac1p+\frac1q=1$, 
  if $F(w)\in H^p(\Omega_+)$, $G(w)\in H^q(\Omega_+)$, then
  \[\int_\Gamma F(\zeta)G(\zeta)\,\mathrm{d}\zeta= 0.\] 
\end{corollary}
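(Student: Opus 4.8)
The plan is to reduce to the two endpoint cases, since for $1<p<\infty$ (hence $1<q<\infty$) the assertion is exactly Proposition~\ref{pro-170827-1630}. It therefore suffices to treat $p=1$, $q=\infty$; the case $p=\infty$, $q=1$ then follows by interchanging the roles of $F$ and $G$, because $F(\zeta)G(\zeta)=G(\zeta)F(\zeta)$. So I would assume $F(w)\in H^1(\Omega_+)$ and $G(w)\in H^\infty(\Omega_+)$; by H\"older's inequality $F(w)G(w)\in H^1(\Omega_+)$, and, as the text notes, $G(w)$ has a non-tangential boundary limit a.e.\ on $\Gamma$ via its transform to $H^\infty(\mathbb{C}_+)$.

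The starting point is the same contour identity as in Proposition~\ref{pro-170827-1630}. Since Lemma~\ref{lem-170827-1520} is valid for the whole range $1\leqslant p\leqslant\infty$, for every $0<\tau<\sigma$ we have $\int_{\Gamma_{\sigma-\tau,\tau}}F(w)G(w)\,\mathrm{d}w=0$, which by the definition of $F_{\sigma-\tau,\tau}$, $G_{\sigma-\tau,\tau}$ reads $\int_\gamma F_{\sigma-\tau,\tau}(\zeta)G_{\sigma-\tau,\tau}(\zeta)\,\mathrm{d}\zeta=0$. Using that $FG$ may be written as $\chi_\Gamma F\cdot\chi_\Gamma G$ inside an integral over $\gamma$, I would then write, for each $\tau$,
\begin{align*}
  \int_\Gamma F(\zeta)G(\zeta)\,\mathrm{d}\zeta
  &= \int_\gamma \chi_\Gamma F(\zeta)\,\chi_\Gamma G(\zeta)\,\mathrm{d}\zeta
     - \int_\gamma F_{\sigma-\tau,\tau}(\zeta)G_{\sigma-\tau,\tau}(\zeta)\,\mathrm{d}\zeta \\
  &= \int_\gamma \chi_\Gamma F\,(\chi_\Gamma G- G_{\sigma-\tau,\tau})\,\mathrm{d}\zeta
     + \int_\gamma (\chi_\Gamma F- F_{\sigma-\tau,\tau})\,G_{\sigma-\tau,\tau}\,\mathrm{d}\zeta
   = T_1(\tau)+ T_2(\tau),
\end{align*}
the left-hand side being independent of $\tau$; it then remains to show $T_1(\tau),T_2(\tau)\to0$ as $\tau\to0$.

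For $T_2$ I would use the uniform bound $|G_{\sigma-\tau,\tau}|\leqslant\lVert G\rVert_{H^\infty(\Omega_+)}$ together with the $L^1$-convergence $\lVert F_{\sigma-\tau,\tau}-\chi_\Gamma F\rVert_{L^1(\gamma,|\mathrm{d}\zeta|)}\to0$ supplied by Theorem~\ref{thm-170827-2300} at $p=1$, giving $|T_2(\tau)|\leqslant\lVert G\rVert_{H^\infty(\Omega_+)}\lVert F_{\sigma-\tau,\tau}-\chi_\Gamma F\rVert_{L^1(\gamma,|\mathrm{d}\zeta|)}\to0$. The main obstacle is $T_1$: there is no $L^\infty$-convergence of $G_{\sigma-\tau,\tau}$ to $\chi_\Gamma G$, so the H\"older estimate used in Proposition~\ref{pro-170827-1630} is unavailable and must be replaced by the dominated convergence theorem. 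Here I would check that the translations defining $P_{\sigma-\tau,\tau}$ push $\zeta\mapsto\zeta_{\sigma-\tau,\tau}$ into the appropriate cone, so that $\zeta_{\sigma-\tau,\tau}\to\zeta$ non-tangentially for each fixed $\zeta\in\Gamma\setminus\{\pm\sigma\}$; since $G$ has a non-tangential boundary limit a.e., $G_{\sigma-\tau,\tau}(\zeta)\to G(\zeta)$ a.e.\ on $\Gamma$, while the integrand of $T_1$ is dominated by $2\lVert G\rVert_{H^\infty(\Omega_+)}|\chi_\Gamma F|\in L^1(\gamma,|\mathrm{d}\zeta|)$. Dominated convergence then yields $T_1(\tau)\to0$, and letting $\tau\to0$ in the displayed identity forces $\int_\Gamma F(\zeta)G(\zeta)\,\mathrm{d}\zeta=0$. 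The symmetric case $p=\infty$, $q=1$ is the same argument applied to $\int_\Gamma G(\zeta)F(\zeta)\,\mathrm{d}\zeta$, now invoking Theorem~\ref{thm-170827-2300} for $G\in H^1(\Omega_+)$.
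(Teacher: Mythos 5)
Your proof is correct, and it is worth comparing carefully with what the paper actually does, because the paper's entire ``proof'' of this corollary is the remark that Proposition~\ref{pro-170827-1630} ``could now be extended to $1\leqslant p\leqslant\infty$, without changing the proof.'' Strictly speaking that remark is too glib, and your proposal supplies precisely the change that is in fact needed. The proof of Proposition~\ref{pro-170827-1630} bounds both error terms by H\"older's inequality, and at the endpoint one of the two terms becomes $\lVert F\rVert_{H^1(\Omega_+)}\lVert \chi_\Gamma G- G_{\sigma-\tau,\tau}\rVert_{L^\infty(\gamma,|\mathrm{d}\zeta|)}$; no theorem in the paper gives $L^\infty$ convergence of the translates of an $H^\infty$ factor, and such uniform convergence fails in general since boundary values of $H^\infty$ functions need not be continuous. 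Your replacement of that H\"older step by dominated convergence is exactly the right fix: the translations $P_{\sigma-\tau,\tau}$ move each $\zeta\in\Gamma\setminus\{\pm\sigma\}$ perpendicularly off $\Gamma$ (hence into every cone $\Omega_{\alpha+}(\zeta)$), so $G_{\sigma-\tau,\tau}(\zeta)\to G(\zeta)$ a.e.\ by the non-tangential convergence the paper grants for $H^\infty(\Omega_+)$, while the integrand is dominated by $2\lVert G\rVert_{H^\infty(\Omega_+)}|\chi_\Gamma F|\in L^1(\gamma,|\mathrm{d}\zeta|)$ thanks to Theorem~\ref{thm-170827-2300} at $p=1$; the other term is handled, as in the paper, by the $L^1$ convergence from that same theorem against the uniform bound on $G_{\sigma-\tau,\tau}$. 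Your reduction by symmetry of the product to the single endpoint $p=1$, $q=\infty$ is also clean. In short, your argument follows the paper's skeleton (Lemma~\ref{lem-170827-1520} plus the $\tau\to0$ limit), but where the paper waves its hand you identified the step that genuinely breaks at $q=\infty$ and repaired it; this is the version of the proof the authors presumably intended.
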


\begin{corollary}\label{cor-170828-0900}
  If $0<p<q$, $F(w)\in H^p(\Omega_+)$ and 
  $F(\zeta)\in L^q(\Gamma,|\mathrm{d}\zeta|)$,
  then $F(w)\in H^q(\Omega_+)$.
\end{corollary}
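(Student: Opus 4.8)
The plan is to transport the problem to the upper half-plane via the conformal map $\Phi_+$, but using the exponent $q$ rather than $p$ in the Jacobian weight, so that the $L^q(\Gamma)$ hypothesis turns into exactly an $L^q(\mathbb{R})$ bound on a boundary function. Recalling $\Phi_+'(z)=\frac{2\sigma}{\pi}(1-z^2)^{-1/2}$, I set
\[
  g(z)= F(\Phi_+(z))\,(\Phi_+'(z))^{\frac1q}\qquad(z\in\mathbb{C}_+),
\]
an analytic function on $\mathbb{C}_+$, which is the image $T_+^{(q)}F$ of $F$ under the transform defined as in \eqref{equ-170827-1810} with $p$ replaced by $q$. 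By Theorem~\ref{thm-170827-2300}, $F$ has nontangential boundary limits a.e.\ on $\Gamma$, so, exactly as in Proposition~\ref{pro-170827-2150}, the boundary function of $g$ is $g(x)=F(\Phi_+(x))(\Phi_+'(x))^{1/q}$, and since $|\mathrm{d}\zeta|=|\Phi_+'(x)|\,\mathrm{d}x$,
\[
  \int_{\mathbb{R}}|g(x)|^q\,\mathrm{d}x
    = \int_{\Gamma}|F(\zeta)|^q\,|\mathrm{d}\zeta|<\infty .
\]

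The decisive point is that $g$ need not lie in any $H^r(\mathbb{C}_+)$ a priori, because it differs from the genuine $H^p$-transform by an unbounded factor; hence I aim only to place $g$ in the Smirnov class $N^+(\mathbb{C}_+)$ and then invoke Smirnov's theorem, namely that an $N^+$ function whose boundary values are in $L^q$ already belongs to $H^q$ (see \cite{Du70}). To show $g\in N^+$, write $f:=F(\Phi_+)(\Phi_+')^{1/p}=T_+F$, which is in $H^p(\mathbb{C}_+)\subset N^+(\mathbb{C}_+)$ by Proposition~\ref{pro-170827-2150}. Then
\[
  g = f\cdot(\Phi_+')^{\frac1q-\frac1p}
    = c\,f\,(1-z^2)^{\gamma},\qquad \gamma=\tfrac12\Big(\tfrac1p-\tfrac1q\Big)>0 ,
\]
where the branch of $(1-z^2)^{\gamma}=(1-z)^{\gamma}(1+z)^{\gamma}$ analytic on $\mathbb{C}_+$ is fixed as in Section~6. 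Each factor $(1\pm z)^{\gamma}$ is an outer function, its logarithmic modulus $\gamma\log|1\pm z|$ being the Poisson extension of the locally integrable boundary datum $\gamma\log|1\pm t|$; hence $(1-z^2)^{\gamma}$ is outer and lies in $N^+$, and since $N^+$ is closed under products, $g\in N^+(\mathbb{C}_+)$.

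With $g\in N^+(\mathbb{C}_+)$ and $g^*\in L^q(\mathbb{R})$, Smirnov's theorem yields $g\in H^q(\mathbb{C}_+)$. Finally, applying the inverse transform furnished by Proposition~\ref{pro-170827-2222} and Proposition~\ref{pro-170828-0945} (valid for every $0<q<\infty$ with the weight $1/q$) and using $\Phi_+'(\Psi_+(w))\,\Psi_+'(w)=1$, one checks directly that $(T_+^{(q)})^{-1}g=F$, whence $F\in H^q(\Omega_+)$. I expect the middle step to be the main obstacle: one must resist trying to keep $g$ inside an $H^r$ space and instead argue entirely within the Smirnov class, which requires both the classical (but nontrivial) passage from $N^+$ with $L^q$ boundary values to $H^q$, and the verification that the growth factor $(1-z^2)^{\gamma}$ is outer rather than merely of bounded type.
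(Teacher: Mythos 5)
Your proposal is correct, but it takes a genuinely different route from the paper. The paper never leaves $\Omega_+$: it factors $F=B_+G$ with the Blaschke product of Theorem~\ref{thm-170827-2228}, so that $G$ is zero-free and $|G(\zeta)|=|F(\zeta)|$ a.e.\ on $\Gamma$; choosing $n$ with $np>1$, it gets $G^{1/n}\in H^{np}(\Omega_+)$, writes $G^{1/n}$ as the Cauchy integral of its boundary function (Theorem~\ref{thm-170827-1120}), notes that this boundary function lies in $L^{nq}(\Gamma,|\mathrm{d}\zeta|)$, and invokes the boundedness of the Cauchy transform (Theorem~\ref{thm-170826-1440}) to conclude $G^{1/n}\in H^{nq}(\Omega_+)$, hence $G\in H^q(\Omega_+)$ and $F=B_+G\in H^q(\Omega_+)$. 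You instead transplant the problem to $\mathbb{C}_+$ via the $q$-weighted transform and let classical half-plane theory do the work: the factorization $g=c\,f\,(1-z^2)^\gamma$ with $f=T_+F\in H^p(\mathbb{C}_+)\subset N^+$ (Proposition~\ref{pro-170827-2150}) and $(1-z^2)^\gamma$ outer places $g$ in $N^+(\mathbb{C}_+)$; Smirnov's maximum principle upgrades the $L^q(\mathbb{R})$ boundary bound to $g\in H^q(\mathbb{C}_+)$; and the inverse transform with exponent $q$ (Proposition~\ref{pro-170827-2222} together with Proposition~\ref{pro-170828-0945}) returns $F\in H^q(\Omega_+)$. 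Both arguments hinge on neutralizing the zeros of $F$ so that fractional powers make sense --- the paper by dividing out $B_+$ and extracting roots, you by splitting off an outer factor --- and both rely on Theorem~\ref{thm-170827-2300} to have the boundary function at all. What the paper's route buys is self-containedness: every tool it uses is established in the paper itself. What your route buys is conceptual economy (no root-extraction trick, no Cauchy transform), at the price of importing two classical facts the paper never states --- that $(1\pm z)^\gamma$ is outer on $\mathbb{C}_+$, and the half-plane Smirnov theorem ($N^+$ with $L^q$ boundary data lies in $H^q$) --- plus the a.e.\ correspondence of non-tangential approach regions under $\Phi_+$ needed to identify the boundary values of $g$, a fact the paper itself already uses in Section~6. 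All of these are available in \cite{Du70}, so your proof stands as a legitimate alternative.
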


\begin{proof}
  Choose $n\in\mathbb{N}$ such that $1<np<nq$, and write $F(w)= B(w)G(w)$ by
  Theorem~\ref{thm-170827-2228}, where $B(w)$ is the Blaschke product 
  associated with zeros of $F(w)$, and 
  $\lVert F\rVert_{H^p(\Omega_+)}\leqslant \lVert G\rVert_{H^p(\Omega_+)}$,
  then $|G(\zeta)|=|F(\zeta)|$ a.e.\@ on $\Gamma$, and 
  $G^{\frac1n}(w)\in H^{np}(\Omega_+)$. By Theorem~\ref{thm-170827-1120},
  \[G^{\frac1n}(w)= \frac1{2\pi\mathrm{i}} \int_\Gamma 
        \frac{G^{\frac1n}(\zeta)}{\zeta-w}\,\mathrm{d}\zeta\quad
    \text{for } w\in\Omega_+.\]
  Since $F(\zeta)\in L^q(\Gamma,|\mathrm{d}\zeta|)$, we have
  $G^{\frac1n}(\zeta)\in L^{nq}(\Gamma,|\mathrm{d}\zeta|)$ and,
  by Theorem~\ref{thm-170826-1440}, $G^{\frac1n}(w)\in 
    H^{nq}(\Omega_+)$. Then $G(w)\in H^q(\Omega_+)$, and it follows that
  $F(w)\in H^q(\Omega_+)$.
\end{proof}

The $H^p(\Omega_-)$ version of Corollary~\ref{cor-170828-0900} is stated as
follows.

\begin{corollary}
  If $0<p<q$, $F(w)\in H^p(\Omega_-)$ and 
  $F(\zeta)\in L^q(\Gamma,|\mathrm{d}\zeta|)$,
  then $F(w)\in H^q(\Omega_-)$.
\end{corollary}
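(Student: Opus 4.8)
The plan is to mirror the proof of Corollary~\ref{cor-170828-0900} almost verbatim, replacing every $\Omega_+$ ingredient by its $\Omega_-$ counterpart and keeping track of the sign in the Cauchy representation. First I would fix a positive integer $n$ with $1<np<nq$, which is possible since $0<p<q$. Using the $H^p(\Omega_-)$ factorization theorem (the analog of Theorem~\ref{thm-170827-2228}, built from the Blaschke product of Corollary~\ref{lem-170827-2220}), I write $F(w)=B_-(w)G(w)$, where $B_-(w)$ is the Blaschke product associated with the zeros of $F$, $G(w)\in H^p(\Omega_-)$ is zero-free, and $\lVert F\rVert_{H^p(\Omega_-)}\leqslant\lVert G\rVert_{H^p(\Omega_-)}$. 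Because $|B_-(\zeta)|=1$ a.e.\ on $\Gamma$, the boundary limits satisfy $|G(\zeta)|=|F(\zeta)|$ a.e., so $G(\zeta)\in L^q(\Gamma,|\mathrm{d}\zeta|)$ as well.

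Since $G$ is zero-free on the simply connected domain $\Omega_-$, the branch $G^{1/n}(w)$ is a well-defined analytic function, and $G^{1/n}(w)\in H^{np}(\Omega_-)$ by the power characterization of $H^p$ spaces recorded in Section~2. As $np>1$, Theorem~\ref{thm-170827-2301} applies and yields the Cauchy representation
\[G^{\frac1n}(w)= -\frac1{2\pi\mathrm{i}} \int_\Gamma
    \frac{G^{\frac1n}(\zeta)}{\zeta-w}\,\mathrm{d}\zeta
  \quad\text{for } w\in\Omega_-.\]
Now $G^{1/n}(\zeta)\in L^{nq}(\Gamma,|\mathrm{d}\zeta|)$ with $nq>1$, so by the boundedness of the Cauchy transform (Theorem~\ref{thm-170826-1440}) the right-hand side belongs to $H^{nq}(\Omega_-)$; hence $G^{1/n}(w)\in H^{nq}(\Omega_-)$. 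Taking $n$-th powers gives $G(w)\in H^q(\Omega_-)$, and finally $|F(w)|=|B_-(w)|\,|G(w)|\leqslant|G(w)|$ on $\Omega_-$ forces $\lVert F\rVert_{H^q(\Omega_-)}\leqslant\lVert G\rVert_{H^q(\Omega_-)}<\infty$, so $F(w)\in H^q(\Omega_-)$.

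I do not anticipate any genuine difficulty beyond bookkeeping. The one point that deserves care is the well-definedness of the branch $G^{1/n}$: this is exactly what the factorization theorem buys us, since dividing out $B_-$ removes all zeros and leaves a nonvanishing $G$ on the simply connected $\Omega_-$. The only wrinkle relative to the $\Omega_+$ case is the minus sign in the Cauchy representation of Theorem~\ref{thm-170827-2301}, which is harmless for a membership statement. One must also remember to pick $n$ large enough that $np>1$, so that both Theorem~\ref{thm-170827-2301} and Theorem~\ref{thm-170826-1440} are in force.
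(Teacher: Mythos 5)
Your proof is correct and is essentially the argument the paper intends: you transplant the proof of Corollary~\ref{cor-170828-0900} to $\Omega_-$ verbatim, using the $\Omega_-$ factorization theorem, the choice $1<np<nq$, the Cauchy representation of Theorem~\ref{thm-170827-2301}, and the boundedness of the Cauchy transform from Theorem~\ref{thm-170826-1440}. Your handling of the minus sign in the $\Omega_-$ representation and of the branch $G^{1/n}$ (well-defined because $G$ is zero-free on the simply connected $\Omega_-$) matches the paper's reasoning exactly.
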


Finally, we could prove that $H^p(\mathbb{C}_\pm)$ and $H^p(\Omega_\pm)$
are isomorphic if $0<p<\infty$. Remeber that $T_\pm$ below are defined 
in \eqref{equ-170827-1810} and \eqref{equ-170827-1820}.

\begin{theorem}\label{thm-170914-1100}
  If $0<p<\infty$, then $T_+\colon H^p(\Omega_+)\to H^p(\mathbb{C}_+)$ and
  $T_-\colon H^p(\Omega_-)\to H^p(\mathbb{C}_-)$ are both linear, 
  one-to-one, onto and bounded with
  \[5^{-\frac1p}\leqslant \lVert T_+\rVert\leqslant 1\quad
    \text{and}\quad
    6^{-\frac1p}\leqslant \lVert T_-\rVert\leqslant 1.\]
  Their inverses $T_\pm^{-1}$ are aslo bouned.
\end{theorem}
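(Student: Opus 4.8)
The plan is to assemble the theorem from the propositions already in place, since each of the four asserted properties (linear, one-to-one, onto, and bounded with the stated two-sided norm estimates) has essentially been isolated earlier; the remaining work is to combine them and to upgrade the bound $\lVert T_\pm\rVert\leqslant 1$ from the range $1<p<\infty$ to all $0<p<\infty$. Linearity and injectivity of $T_+$ and $T_-$ were recorded immediately after the definitions \eqref{equ-170827-1810} and \eqref{equ-170827-1820}, so nothing further is needed there. That $T_+$ maps $H^p(\Omega_+)$ into $H^p(\mathbb{C}_+)$ and $T_-$ maps $H^p(\Omega_-)$ into $H^p(\mathbb{C}_-)$ is exactly Proposition~\ref{pro-170827-2150} and Proposition~\ref{pro-170828-0940}, both valid for all $0<p<\infty$.

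For surjectivity I would invoke Proposition~\ref{pro-170827-2222} and Proposition~\ref{pro-170827-2223} in the range $1<p<\infty$, and Proposition~\ref{pro-170828-0945} in the range $0<p\leqslant 1$; together these give $H^p(\mathbb{C}_\pm)\subset T_\pm(H^p(\Omega_\pm))$, so that $T_\pm$ is onto for every $0<p<\infty$. The named candidate inverse $T_+^{-1}f(w)=f(\Psi_+(w))(\Psi_+'(w))^{\frac1p}$ is genuinely the two-sided inverse of $T_+$, which follows from the chain rule identity $\Phi_+'(z)\Psi_+'(w)=1$ recorded in the setup (and analogously for $T_-$).

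For the upper bound, the computation in Proposition~\ref{pro-170827-2150} shows, via the boundary change of variables $\zeta=\Phi_+(x)$ with $\lvert\mathrm{d}\zeta\rvert=\lvert\Phi_+'(x)\rvert\,\mathrm{d}x$, that
\[\lVert T_+F\rVert_{H^p(\mathbb{C}_+)}^p
  =\int_{\mathbb{R}}|T_+F(x)|^p\,\mathrm{d}x
  =\int_\Gamma|F(\zeta)|^p\,|\mathrm{d}\zeta|
  \leqslant\lVert F\rVert_{H^p(\Omega_+)}^p.\]
The final inequality was available only for $1<p<\infty$ when that proposition was proved, but Theorem~\ref{thm-170827-2300} now supplies $\lVert F\rVert_{L^p(\Gamma,|\mathrm{d}\zeta|)}\leqslant\lVert F\rVert_{H^p(\Omega_+)}$ for all $0<p<\infty$, so the same estimate extends $\lVert T_+\rVert\leqslant 1$ to the full range; the identical argument with Theorem~\ref{thm-170827-2301} yields $\lVert T_-\rVert\leqslant 1$.

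Finally, boundedness of the inverses and the lower norm bounds both come from Proposition~\ref{pro-170828-0945}, which gives $\lVert T_+^{-1}\rVert\leqslant 5^{\frac1p}$ and $\lVert T_-^{-1}\rVert\leqslant 6^{\frac1p}$ for all $0<p<\infty$. Writing $F=T_\pm^{-1}(T_\pm F)$ and applying these bounds gives $\lVert T_\pm F\rVert\geqslant\lVert T_\pm^{-1}\rVert^{-1}\lVert F\rVert$, whence $\lVert T_+\rVert\geqslant 5^{-\frac1p}$ and $\lVert T_-\rVert\geqslant 6^{-\frac1p}$; this step is valid verbatim in the quasi-normed range $0<p<1$ as well. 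I therefore expect the proof to be a short combination-and-citation argument. The one genuinely substantive ingredient, which I would flag rather than reprove, is surjectivity for $0<p\leqslant 1$: there the preimage of an $H^p(\mathbb{C}_\pm)$ function is not controlled directly, and the Blaschke factorization (Theorem~\ref{thm-170827-2228} and its $\Omega_-$ analogue, ultimately resting on Lemma~\ref{lem-170801-1550}) does the real work, already packaged inside Proposition~\ref{pro-170828-0945}.
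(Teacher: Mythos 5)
Your proposal is correct and follows essentially the same route as the paper: the paper's proof likewise just combines Propositions~\ref{pro-170827-2150}, \ref{pro-170828-0940} and \ref{pro-170828-0945}, and then extends the bound $\lVert T_\pm\rVert\leqslant 1$ to $0<p\leqslant 1$ via Theorems~\ref{thm-170827-2300}, \ref{thm-170827-2301} and Fatou's lemma, exactly as you do with the boundary change of variables. Your write-up is in fact more explicit than the paper's, since you spell out the derivation of the lower bounds $5^{-\frac1p}$ and $6^{-\frac1p}$ from the inverse bounds in Proposition~\ref{pro-170828-0945}, which the paper leaves implicit.
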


\begin{proof}
  In view of Proposition~\ref{pro-170827-2150}, 
  Proposition~\ref{pro-170828-0940} and Proposition~\ref{pro-170828-0945},
  we only need to prove that $T_\pm$ are bounded if $0<p\leqslant 1$, 
  which could be easily proved by Theorem~\ref{thm-170827-2300},
  Theorem~\ref{thm-170827-2301} and Fatou's lemma.
\end{proof}

\begin{corollary}
  If $0<p<\infty$, then $H^p(\Omega_\pm)$ are seperable.
\end{corollary}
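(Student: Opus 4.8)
The plan is to deduce separability from the isomorphism already proved in Theorem~\ref{thm-170914-1100}, pushing the problem down to the classical half-plane spaces and finally to an $L^p$ space. The whole argument rests on one soft principle: separability is a topological invariant, so it is preserved by any homeomorphism, and moreover every (metric) subspace of a separable metric space is itself separable. Thus I will not need to construct a countable dense set in $H^p(\Omega_\pm)$ directly; it is enough to exhibit a homeomorphism onto, or a topological embedding into, a space already known to be separable.

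First I would record that, by Theorem~\ref{thm-170914-1100}, the maps $T_\pm$ are linear bijections of $H^p(\Omega_\pm)$ onto $H^p(\mathbb{C}_\pm)$ that are bounded and whose inverses $T_\pm^{-1}$ are also bounded. For $1\leqslant p<\infty$ this makes them Banach-space isomorphisms; for $0<p<1$ the estimates $\lVert T_\pm\rVert\leqslant 1$ together with $\lVert T_+^{-1}\rVert\leqslant 5^{1/p}$ and $\lVert T_-^{-1}\rVert\leqslant 6^{1/p}$ show that $T_\pm$ and $T_\pm^{-1}$ are continuous for the metrics $\lVert\cdot\rVert^p$, so in every case $T_\pm$ are homeomorphisms. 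Consequently $H^p(\Omega_\pm)$ is separable as soon as $H^p(\mathbb{C}_\pm)$ is, and it remains only to treat the latter. Since $H^p(\mathbb{C}_+)$ and $H^p(\mathbb{C}_-)$ are isometric (as noted just before Theorem~\ref{thm-170914-1100}), I may restrict attention to $H^p(\mathbb{C}_+)$.

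To finish I would invoke the classical boundary-value theory of Hardy spaces over the half-plane~\cite{Ga07,Du70}: for every $0<p<\infty$ a function $f\in H^p(\mathbb{C}_+)$ admits a non-tangential boundary limit $f^\ast\in L^p(\mathbb{R})$ with $\lVert f^\ast\rVert_{L^p(\mathbb{R})}=\lVert f\rVert_{H^p(\mathbb{C}_+)}$, so that $f\mapsto f^\ast$ is an isometric embedding of $H^p(\mathbb{C}_+)$ into $L^p(\mathbb{R})$. Because $L^p(\mathbb{R})$ is separable for every $0<p<\infty$ (the simple functions built from finitely many intervals with rational endpoints and rational values are dense), and metric subspaces of separable metric spaces are separable, $H^p(\mathbb{C}_+)$ is separable; combining the reductions then yields the separability of $H^p(\Omega_\pm)$. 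The only point demanding care will be uniformity across the whole range $0<p<\infty$: in the quasi-Banach regime $0<p<1$ the spaces carry the metric $\lVert\cdot\rVert^p$ rather than a norm, so I must state explicitly that $T_\pm$ are homeomorphisms (not merely bounded linear bijections) and that both the boundary-value isometry and the inheritance of separability remain valid there. These facts are standard, but making the homeomorphism claim precise is what guarantees that separability genuinely transports through $T_\pm$.
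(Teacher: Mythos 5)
Your proposal is correct and follows essentially the same route as the paper: both transport separability from $H^p(\mathbb{C}_\pm)$ to $H^p(\Omega_\pm)$ through the bounded isomorphisms $T_\pm$ of Theorem~\ref{thm-170914-1100} (the paper pushes a countable dense subset $\{f_n\}$ of $H^p(\mathbb{C}_+)$ forward under the bounded map $T_+^{-1}$, which is exactly your homeomorphism argument made explicit, including the same care about the metric $\lVert\cdot\rVert^p$ when $0<p<1$). The only difference is that the paper simply cites the separability of $H^p(\mathbb{C}_+)$ from~\cite{De10}, whereas you re-prove it via the isometric boundary-value embedding into the separable space $L^p(\mathbb{R})$; that is a sound, self-contained supplement rather than a different method.
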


\begin{proof}
  Suppose $F(w)\in H^p(\Omega_+)$, then $T_+F(z)\in H^p(\mathbb{C}_+)$ with
  $\lVert T_+F\rVert_{H^p(\mathbb{C}_+)}
    \leqslant \lVert F\rVert_{H^p(\Omega_+)}$. Since $H^p(\mathbb{C}_+)$ is
  seperable~\cite{De10}, if we let $\{f_n(z)\}$ be a countable dense subset
  of $H^p(\mathbb{C}_+)$, then for any $\varepsilon>0$, there exists 
  $f_N(z)$ such that $\lVert f_N- T_+F\rVert_{H^p(\mathbb{C}_+)}
    \leqslant \varepsilon$, which follows that
  \[\lVert T_+^{-1}f_N- F\rVert_{H^p(\Omega_+)}
    \leqslant \lVert T_+^{-1}\rVert 
        \lVert f_N- T_+F\rVert_{H^p(\mathbb{C}_+)}
    \leqslant \lVert T_+^{-1}\rVert \varepsilon.\]
  Thus $\{T_+^{-1}f_n(w)\}$ is a countable dense subset of $H^p(\Omega_+)$,
  and $H^p(\Omega_+)$ is seperable. The seperability of $H^p(\Omega_-)$ 
  could be proved by the same method.
\end{proof}

\section*{Funding}
This work is supported by National Natural Science Foundation 
of China(Grant No.\@ 11271045).

\end{document}